\renewcommand{\leq}{\leqslant}
\renewcommand{\geq}{\geqslant}
\theoremstyle{plain}
\newtheorem{theorem}{Theorem}[section]
\newtheorem{lemma}[theorem]{Lemma}
\newtheorem{proposition}[theorem]{Proposition}
\newtheorem{corollary}[theorem]{Corollary}
\newcommand{\ip}[1]{\left\langle#1\right\rangle}
\newcommand{\set}[2]{\left\{#1\,:\,#2\right\}}
\theoremstyle{definition}
\newtheorem*{definition}{Definition}
\theoremstyle{remark}
\newtheorem*{remark}{Remark}
\numberwithin{equation}{section}
\newcommand{\Z}{{\mathbb Z}}
\newcommand{\R}{{\mathbb R}}
\newcommand{\eps}{{\varepsilon}}
\def\curl{\operatorname{curl}}
\def\div{\operatorname{div}}
\title[The vortex method]{The vortex method for 2D ideal flows \\ in exterior domains}
\author[D.\ Ars\'enio, E.\ Dormy \& C.\ Lacave]{Diogo Ars\'enio, Emmanuel Dormy \& Christophe Lacave}
\address[D.\ Ars\'enio]{New York University Abu Dhabi, Abu Dhabi, United Arab Emirates.} 
\email{diogo.arsenio@nyu.edu}
\address[E.\ Dormy]{Department of Mathematics and their Applications, CNRS UMR 8553, \'Ecole Normale Sup\'erieure, Paris, France.}
\email{emmanuel.dormy@ens.fr}
\address[C.\ Lacave]{Univ.\ Grenoble Alpes, CNRS, Institut Fourier, F-38000 Grenoble, France.}
\email{christophe.lacave@univ-grenoble-alpes.fr}
\date{\today}
\begin{document}
\maketitle
\begin{abstract}
The vortex method is a common numerical and theoretical approach used to implement the motion of an ideal flow, in which the vorticity is approximated by a sum of point vortices, so that the Euler equations read as a system of ordinary differential equations. Such a method is well justified in the full plane, thanks to the explicit representation formulas of Biot and Savart. In an exterior domain, we also replace the impermeable boundary by a collection of point vortices generating the circulation around the obstacle. The density of these point vortices is chosen in order that the flow remains tangent at midpoints between adjacent vortices and that the total vorticity around the obstacle is conserved.

In this work, we provide a rigorous justification of this method for any smooth exterior domain, one of the main mathematical difficulties being that the Biot--Savart kernel defines a singular integral operator when restricted to a curve (here, the boundary of the domain). We also introduce an alternative method---the fluid charge method---which, as we argue, is better conditioned and therefore leads to significant numerical improvements.

\bigskip

\noindent \textbf{Keywords.} Euler equations, elliptic problems in exterior domains, double layer potential, discretization of singular integral operators, spectral analysis, Poincar\'e--Bertrand formula, Cauchy integrals.
\end{abstract}

\tableofcontents

Numerical methods describing the evolution of a fluid flow have an important practical interest in engineering and applications. Such approximation methods often also provide deeper theoretical insight and physical intuition into the properties of fluids. It is therefore important to justify that given methods provide good approximations of analytic solutions. The goal of this article is to validate mathematically the vortex method in exterior smooth domains for the two-dimensional Euler equations and to further develop other similar refined methods.

\section{The Euler equations in exterior domains}

The motion of an incompressible ideal fluid filling a domain $\Omega \subset \R^2$ is governed by the Euler equations:
\begin{equation}\label{Euler}
	\left\{
	\begin{array}{lcl}
		\partial_{t} u + u\cdot \nabla u +\nabla p=0 & \text{in} &(0,\infty)\times \Omega, \\
		\div u  =0 & \text{in} &[0,\infty)\times \Omega, \\
		u \cdot n  = 0 & \text{on} &[0,\infty)\times \partial \Omega, \\
		u(0,\cdot) = u_0 & \text{in} & \Omega, \\
	\end{array}
	\right.
\end{equation}
where $u=(u_{1}(t,x_{1},x_{2}),u_{2}(t,x_{1},x_{2}))$ is the velocity, $p=p(t,x_{1},x_{2})$ the pressure and $n$ the unit inward normal vector.

There is an extensive literature about the study of this difficult system, first on physical motivations and second because it provides elegant mathematical problems at the frontier of elliptic theory, dynamical systems, convex geometry and evolution partial differential equations. One may argue that the richness of these equations is due to the role of the vorticity:
\[
\omega(t,x):= \curl u(t,x) = \partial_{1} u_{2} - \partial_{2} u_{1}.
\]
Indeed, taking the curl of the momentum equation in \eqref{Euler}, we note that this quantity satisfies a transport equation:
\begin{equation}\label{Euler vort}
		\partial_{t} \omega + u\cdot \nabla \omega =0  \quad \text{in}\quad (0,\infty)\times \Omega.
\end{equation}
From this form, we may deduce several conservation properties (e.g.\ the conservation of all $L^p\left(\Omega\right)$-norms of $\omega$, for all $1\leq p \leq\infty$) which allow to establish the wellposedness of the Euler equations in various settings (standard references can be found in  \cite{GVL2,MajdaBertozzi}). One of the key steps in the analysis of \eqref{Euler} consists in reconstructing the velocity $u$ from the vorticity $\omega$ by solving the following elliptic problem:
\begin{equation}\label{elliptic}
	\left\{
	\begin{array}{lcl}
		\div u  =0 & \text{in}& \Omega, \\
		\curl u  =\omega & \text{in}& \Omega, \\
		u \cdot n  = 0 & \text{on}&\partial\Omega, \\
		u \rightarrow 0 & \text{as}& x\rightarrow\infty, \\
	\end{array}
	\right.
\end{equation}
where $\omega\in C^{0,\alpha}_c\left(\Omega\right)$, for some $0<\alpha\leq 1$.

In the case of the full plane $\Omega=\R^2$, any solution of
\begin{equation}\label{system BS}
		\div u  =0  \text{ in } \mathbb{R}^2, \quad
		\curl u  =\omega \text{ in } \mathbb{R}^2, \quad
		u  \rightarrow 0  \text{ as } x\rightarrow\infty,
\end{equation}
satisfies
\begin{equation*}
	\Delta u = \nabla^\perp \omega \quad\text{in }\mathbb{R}^2,
\end{equation*}
in the sense of distributions, which easily yields
\begin{equation*}
	u=K_{\mathbb{R}^2}[\omega]=\mathcal{F}^{-1}\frac{-i\xi^\perp}{|\xi|^2}\mathcal{F}\omega.
\end{equation*}
Here, the superscript $\perp$ denotes the rotation by $\pi/2$, that is $(x_{1},x_{2})^\perp =(-x_{2},x_{1})$. It follows, employing standard results on Fourier multipliers, that $K_{\mathbb{R}^2}$ has bounded extensions from $L^p$ to $\dot W^{1,p}$, for any $1<p<\infty$. Furthermore, writing $\Phi(x)=-\frac 1{2\pi}\log|x|$ the fundamental solution of the Laplacian in $\mathbb{R}^2$, it holds that (see e.g.\ \cite{gilbarg})
\begin{equation}\label{BS R2}
	\begin{aligned}
		u= & K_{\mathbb{R}^2}[\omega]=- \Phi*\left(\nabla^\perp\omega\right)=-\nabla^\perp \left(\Phi*\omega\right)
		\\
		= & \frac 1{2\pi}\int_{\mathbb{R}^2} \frac{(x-y)^\perp}{|x-y|^2}\omega(y)dy
		\in C^1\left(\mathbb{R}^2\right).
	\end{aligned}
\end{equation}
We refer to \cite[p.\ 249]{courant} for a justification of the $C^1$-regularity of $K_{\mathbb{R}^2}[\omega]$, for any $\omega\in C_c^{0,\alpha}\left(\Omega\right)$ (which may also be deduced from the representation formula \eqref{representation velocity}, below).

When $\Omega=\mathbb{R}^2\setminus \mathcal{C}$ is an exterior domain, with $\mathcal{C}$ a compact, smooth (i.e.\ its boundary is $C^\infty$) and simply connected set, there are an infinite number of solutions of \eqref{elliptic}, because there exists a unique harmonic vector field $H\in C^1\left(\Omega\right)\cap C\left(\overline\Omega\right)$ (see \cite[Proposition 2.1]{ILL}, for instance) verifying
\begin{equation}\label{harmonic}
	\begin{gathered}
		\div H  =0  \text{ in } \Omega, \quad
		\curl H  =0 \text{ in } \Omega, \quad
		\oint_{\partial\Omega} H\cdot\tau ds=1, \quad
		\\
		H\cdot n  =0 \text{ on } \partial\Omega, \quad
		H(x) \rightarrow 0  \text{ as } x\rightarrow\infty,
	\end{gathered}
\end{equation}
where $\tau:= n^\perp$ is the tangent vector to $\partial\Omega$ (note that $n$ points out of the obstacle $\mathcal{C}$ so that $\tau$ orients $\partial\Omega$ counterclockwise). In fact, it can be shown that $H$ belongs to $C^\infty\left(\Omega\right)$ and that all its derivatives are continuous up to the boundary $\partial\Omega$ (use the representation formula \eqref{harmonic field representation}, below).

Thus, in order to reconstruct uniquely the velocity in terms of the vorticity, the standard idea consists in prescribing the circulation:
\[
\oint_{\partial\Omega} u \cdot \tau \, ds = \gamma,
\]
where $\gamma\in \R$. This constraint is natural because Kelvin's theorem implies then that the circulation of $u$ around an obstacle is a conserved quantity for the Euler equations. With this additional condition, it holds now true that there exists a unique classical solution $u\in C^1\left(\Omega\right)\cap C\left(\overline\Omega\right)$ of
\begin{equation}\label{elliptic2}
	\left\{
	\begin{array}{lcl}
		\div u  =0 & \text{in}& \Omega, \\
		\curl u  =\omega & \text{in}& \Omega, \\
		u \cdot n = 0 & \text{on}& \partial\Omega, \\
		u  \rightarrow 0 &\text{as}& x\rightarrow\infty, \\
		\oint_{\partial\Omega} u \cdot \tau ds  = \gamma, &&
	\end{array}
	\right.
\end{equation}
where $\omega\in C^{0,\alpha}_c\left(\Omega\right)$, for some $0<\alpha\leq 1$, and $\gamma\in\mathbb{R}$.

To solve this elliptic problem, we may introduce (as in \cite[Lemma 2.2 and Proposition 2.1]{ILL}; see also \cite[Section~1.2]{marchioro}) the Green function with Dirichlet boundary condition $G_{\Omega}:\ \Omega\times\Omega\to \R$ as the function verifying:
	\begin{align*}
		G_{\Omega}(x,y) & =G_{\Omega}(y,x) &&\text{for all } (x,y)\in \Omega^2, \\
		\Delta_{x} G_{\Omega}(x,y) & = \delta(x-y)\ &&\text{for all } (x,y)\in \Omega^2, \\
		G_{\Omega}(x,y) & =0 &&\text{for all } (x,y)\in \partial\Omega \times \Omega,
	\end{align*}
where $\delta$ denotes the Dirac function centered at the origin. Using a conformal $C^\infty$-diffeomorphism $T:\Omega\to \left\{|x|>1\right\}$ such that (this transformation can be constructed through the Riemann mapping theorem; see \cite[Lemma 2.1]{ILL}):
\begin{itemize}
	\item it is bijective from $\overline\Omega$ onto $\left\{|x|\geq 1\right\}$,
	\item all its derivatives are continuous up to $\partial\Omega$ and are uniformly bounded over $\Omega$,
	\item all the derivatives of its inverse are continuous up to $\left\{|x|= 1\right\}$ and are uniformly bounded over $\left\{|x|> 1\right\}$,
	\item there is $\beta\in\mathbb{R}\setminus\left\{0\right\}$ such that $T(x)-\beta x$ and $T^{-1}(x)-\beta^{-1} x$ are uniformly bounded over $\Omega$ and $\left\{|x|> 1\right\}$, respectively,
\end{itemize}
one has the formula
\begin{equation*}
	G_{\Omega}(x,y) = \frac1{2\pi} \log \frac{\left|T(x)-T(y)\right|}{\left|T(x)-T(y)^*\right| \left|T(y)\right|},
\end{equation*}
with the notation $y^*=\frac{y}{|y|^2}$, for any $y\in\mathbb{R}^2\setminus\left\{0\right\}$. This expression allows us to write explicitly the solution of \eqref{elliptic2} (for all details, we refer e.g.\ to \cite{ILL}):
\begin{equation}\label{BS exterior}
	\begin{aligned}
		u(x) = & K_{\Omega}[\omega](x) + \alpha H(x) := \int_{\Omega} \nabla^\perp_{x} G_{\Omega}(x,y) \omega(y)dy +  \alpha H(x)
		\\	
		= & \frac1{2\pi} \int_{\Omega} \left( \frac{DT^t(x)\left(T(x)-T(y)\right)}{\left|T(x)-T(y)\right|^2} - \frac{DT^t(x)\left(T(x)-T(y)^*\right)}{\left|T(x)-T(y)^*\right|^2} \right)^{\perp}\omega(y) dy
		\\
		& + \frac{\alpha}{2\pi}
		\frac{\left(DT^t(x)T(x)\right)^\perp}{\left|T(x)\right|^2},
	\end{aligned}
\end{equation}
where we have set
\begin{equation*}
\alpha = \gamma + \int_{\Omega}\omega(y) dy.
\end{equation*}
Note that the total mass of the vorticity is also a conserved quantity of incompressible ideal two-dimensional flows. Note also that \eqref{BS exterior} uses the representation
\begin{equation}\label{harmonic field representation}
	H(x)=\frac{\left(DT^t(x)T(x)\right)^\perp}{2\pi\left|T(x)\right|^2},
\end{equation}
for the unique solution $H$ of \eqref{harmonic}. Further employing that (see \cite[p.\ 249]{courant}, for instance, or use the representation formula \eqref{representation velocity}, below)
\begin{equation}\label{convolution T}
	\int_{\left\{|y|>1\right\}}\frac{x-y}{\left|x-y\right|^2} \omega\left(T^{-1}(y)\right)\left|\det DT^{-1}(y)\right|dy\in C^1\left(\mathbb{R}^2\right),
\end{equation}
it is readily seen from \eqref{BS exterior} that $u\in C^1\left(\overline\Omega\right)$, thus yielding the unique classical solution to \eqref{elliptic2}.

\bigskip

All in all, the Euler equations around the obstacle $\mathcal{C}$ can be seen as the transport of the vorticity \eqref{Euler vort} by the velocity field $u$ defined by \eqref{BS exterior}. This property conveniently allows for the use of various mathematical theories and it is therefore crucial to develop efficient and robust methods to rebuild the velocity field $u$ from the vorticity $\omega$ or an approximation of it. In particular, for the sake of applications, we are going to focus on the theoretical and numerical approximation of \eqref{BS exterior}.

\section{The vortex method}

In the full plane $\mathbb{R}^2$, when the initial vorticity is close to being concentrated at $N$ given points $\left\{x_i^0\right\}_{i=1}^N\subset\mathbb{R}^2$, i.e.\ $\omega(t=0)\sim\sum_{i=1}^N \gamma_{i} \delta_{x_{i}^0}$ in some suitable sense, Marchioro and Pulvirenti \cite{MP91} have shown that the corresponding solution of the Euler equations in the full plane has a vorticity which remains close to a combination of Dirac masses $\omega(t)\sim\sum_{i=1}^N \gamma_{i} \delta_{x_{i}(t)}$ (in some suitable sense) where the centers $\{ x_{i} \}_{i=1}^N$ verify a system of ODE's, called the point vortex system:
\begin{equation}\label{point vortex 0}
	\left\{
	\begin{split}
		\dot{x}_{i}(t) &= \frac1{2\pi} \sum_{j\neq i} \gamma_{j} \frac{(x_{i}(t)-x_{j}(t))^\perp}{|x_{i}(t)-x_{j}(t)|^2},
		\\
		x_{i}(0)&=x_{i}^0.
	\end{split}
	\right.
\end{equation}
Here, the point vortex $\gamma_{i} \delta_{x_{i}(t)}$ moves under the velocity field produced by the other point vortices. This approximation by point vortices remains valid as long as the solution to \eqref{point vortex 0} exists, i.e.\ as long as no collisions occur.

It turns out that this Lagrangian formulation is much easier to handle numerically than the Eulerian formulation \eqref{Euler vort}. Indeed, standard numerical methods on \eqref{Euler vort} generate an ``inherent numerical viscosity'' and some quantities which should be conserved instead decrease (see e.g.\ \cite{Hirsch,Toro}). Actually, smoothing the Biot--Savart kernel by mollifying $\frac{x^\perp}{|x|^2}$ in \eqref{point vortex 0} gives a more stable system, called the vortex blob method (i.e.\ an approximation of the vorticity by Dirac masses and a regularization of the kernel). The stability and the convergence as $N\rightarrow\infty$ of the vortex blob and point vortex methods have been extensively studied: in \cite{CCM88} for the vortex blob method when the initial vorticity is bounded, in \cite{GHL90} for the point vortex method for smooth initial data and in \cite{liu, Schochet} for both methods and for weak solutions as e.g.\ a vortex sheet (see also the textbook \cite{CK00}).

However, all these works use the explicit formula of the Biot--Savart law in the full plane \eqref{BS R2} where the flow $\frac{(x-x_{i})^\perp}{2\pi |x-x_{i}|^2}$ is identified with $K_{\R^2}[\delta_{x_{i}}]$. In an exterior domain, the Biot--Savart law is much more complicated. A possible approach could be to use the  explicit formula \eqref{BS exterior} in order to adapt the previous vortex methods. But such an approach would yield serious practical difficulties. Indeed, explicit Riemann mappings are only available for few domains with specific symmetry properties. In general, if we consider that $\Omega$ is the exterior of a smooth, compact, simply connected subset of $\R^2$, formula \eqref{BS exterior} only gives an implicit representation, which has some theoretical interest, but remains impractical.

Our alternative strategy consists in modeling the impermeable boundary of the exterior domain by a collection of point vortices $\sum_{i=1}^N \frac{\gamma_{i}^N(t)}N \delta_{x_{i}^N}$, where the vortex positions $\{x_{i}^N\}_{i=1}^N$ are fixed on $\partial\Omega$ but the density of points $\{\gamma_{i}^N\}_{i=1}^N$ now evolves with time and is chosen in order that the resulting velocity field remains tangent at midpoints on the boundary between the $x_i^N$'s. Note that this approach appears sometimes in physics and engineering books (see e.g.\ \cite{BL93,GZ09}).

\subsection{Static convergence of the vortex approximation}

We need to explain now how the vortex method is used to replace the obstacle in \eqref{elliptic2} by vortices. To this end, we introduce $u_{P}$ the solution of \eqref{system BS} in the full plane, which is explicitly given by \eqref{BS R2}:
\begin{equation}\label{uP}
	u_{P}:=K_{\R^2}[\omega]\in C^1\left(\mathbb{R}^2\right)\subset C^1\left(\overline\Omega\right),
\end{equation}
and the remainder velocity field $u_{R}$ defined by:
\begin{equation}\label{uR}
	u_R:=u-u_P\in C^0\left(\overline\Omega\right)\cap C^1\left(\Omega\right),
\end{equation}
where $u$ is the unique solution to \eqref{elliptic2}. As $\omega$ is compactly supported in $\Omega$ we get by the Stokes formula that $\oint_{\partial\Omega} u_P \cdot \tau ds =\int_{\overline \Omega^c}\curl u_P = \int_{\overline \Omega^c} \omega=0$. Hence, it is readily seen that $u_R$ solves
\begin{equation}\label{eq uR}
	\left\{
	\begin{array}{lcl}
		\div u_R  =0 & \text{in}& \Omega, \\
		\curl u_R  =0 & \text{in}& \Omega, \\
		u_R \cdot n  = -u_P\cdot n & \text{on}&\partial\Omega, \\
		u_R  \rightarrow 0 & \text{as}& x\rightarrow\infty, \\
		\oint_{\partial\Omega} u_R \cdot \tau ds  = \gamma. &&
	\end{array}
	\right.
\end{equation}
In particular, $u_R$ is harmonic in $\Omega$ and therefore it is smooth in $\Omega$, i.e.\ $u_R\in C^\infty\left(\Omega\right)$ (see \cite[Corollary 8.11]{gilbarg} or \cite{gray}).

The vortex method for the exterior domain $\Omega$ is essentially an approximation procedure of $u_R$ by point vortices on $\partial \Omega$. More precisely, let now $\left(x_{1}^N, x_{2}^N,\dots , x_{N}^N\right)$ be the positions of $N$ distinct point vortices on the boundary $\partial\Omega$. Given an arc-length parametrization $l:\left[0,\left|\partial\Omega\right|\right]\to\mathbb{R}^2$ of $\partial\Omega$, oriented counterclockwise so that $l'=\tau$, we consider
\begin{equation}\label{xi}
	0= s_{1}^N< s_{2}^N<\dots < s_{N}^N<\left|\partial\Omega\right| \text{ such that }x_{i}^N=l\left(s_i^N\right).
\end{equation}
We further introduce some intermediate points on the boundary, for each $i=1,\dots,N$ (setting $s_{N+1}^N=\left|\partial\Omega\right|$):
\begin{equation}\label{tildexi}
	\tilde s_{i}^N \in (s_{i}^N,s_{i+1}^N),\quad \tilde x_{i}^N:=l\left(\tilde s_i^N\right).
\end{equation}
The method consists in approximating the solution $u_R$ to \eqref{eq uR} by a suitable flow
\begin{equation}\label{approx}
u_{\rm app}^N(x):=\frac1{2\pi} \sum_{j=1}^N \frac{\gamma_{j}^N}N \frac{( x - x_{j}^N)^\perp}{|x - x_{j}^N|^2} = K_{\R^2}\Big[ \sum_{j=1}^N \frac{\gamma_{j}^N}N \delta_{x_{j}^N} \Big],
\end{equation}
whose vorticity is precisely made of $N$ point vortices with densities $\left\{\frac{\gamma_i^N}{N}\right\}_{i=1}^N$ on the boundary $\partial\Omega$. Observe that the approximate flow $u_{\rm app}^N$ is not defined at each $x_i^N$ on the boundary, which is why we introduce the intermediate points $\tilde x_i^N$. These points will be used to compute some values of $u_{\rm app}^N$ on the boundary.

It is to be emphasized that this approximation is consistent with and motivated by the physical idea that the circulation around an obstacle is created by a collection of vortices on the boundary of the obstacle, i.e.\ a vortex sheet on the boundary.

However, it is {\it a priori} not obvious that such a flow $u_{\rm app}^N$ can be made a good approximation of $u_R$. Nevertheless, note that $u_{\rm app}^N$ already naturally satisfies
\begin{equation*}
	\left\{
	\begin{array}{lcl}
		\div u_{\rm app}^N  =0 & \text{in}& \Omega, \\
		\curl u_{\rm app}^N  =0 & \text{in}& \Omega, \\
		u_{\rm app}^N  \rightarrow 0 & \text{as}& x\rightarrow\infty.
	\end{array}
	\right.
\end{equation*}
Therefore, the key idea lies in enforcing that the boundary and circulation conditions be satisfied as $N\rightarrow\infty$ by setting $\gamma^N=(\gamma_{1}^N,\dots, \gamma_{N}^N)\in\mathbb{R}^N$ to be the solution of the following system of $N$ linear equations:
\begin{equation}\label{point vortex}
	\begin{aligned}
		& \frac1{2\pi}\sum_{j=1}^N \frac{\gamma_{j}^N}N \frac{(\tilde x_{i}^N - x_{j}^N)^\perp}{|\tilde x_{i}^N - x_{j}^N|^2}\cdot n(\tilde x_{i}^N)
		= -[u_{P}\cdot n](\tilde x_{i}^N), \quad \text{for all }i=1,\dots, N-1,\\
		& \sum_{i=1}^N \frac{\gamma_{i}^N}N = \gamma.
	\end{aligned}
\end{equation}
In order to emphasize the dependence of $u_{\rm app}^N$ on $\omega$ (through $u_P$) and $\gamma$, we will sometimes use the notation $u_{\rm app}^N=u_{\rm app}^N[\omega,\gamma]$. Note that $u_{\rm app}^N$ is linear in $(\omega,\gamma)$.

It is shown below, under suitable hypotheses on the placement of point vortices and provided $N$ is sufficiently large, that the above system always has a unique solution $\gamma^N$. The fact that $u_{\rm app}^N$ is a good approximation of $u_R$ is precisely the content of our first main theorem below (see Theorem \ref{main theo}). Clearly, it then follows that $u$ is well approximated by $u_{\rm app}^N+K_{\mathbb{R}^2}[\omega]$, which concludes the rigorous justification of the vortex method for the boundary of an obstacle applied to the elliptic system \eqref{elliptic2}.

We give now a precise definition of a well distributed mesh $\left\{x_i^N\right\}_{1\leq i\leq N}$ and $\left\{\tilde x_i^N\right\}_{1\leq i\leq N}$.
\begin{definition}
	We say that the points $\left\{x_i^N\right\}_{1\leq i\leq N}$ and $\left\{\tilde x_i^N\right\}_{1\leq i\leq N}$ given by \eqref{xi}-\eqref{tildexi} are \emph{well distributed} on $\partial\Omega$ if there exists an integer $\kappa \geq 2$ such that, as $N\to\infty$,
	\begin{equation}\label{mesh2}
		\max_{i=1,\ldots,N}\left|s_i^N-\theta_i^N\right|=\mathcal{O}\left(N^{-(\kappa +1)}\right)
		\ \text{and}\ 
		\max_{i=1,\ldots,N}\left|\tilde s_i^N-\tilde \theta_i^N\right|=\mathcal{O}\left(N^{-(\kappa +1)}\right),
	\end{equation}
	where
	\begin{equation}\label{mesh}
	\theta_{i}^N = \frac{(i-1) \left|\partial\Omega\right|}{N} \quad \text{and} \quad \tilde\theta_{i}^N = \frac{(i-\frac 12) \left|\partial\Omega\right|}{N} \quad \text{for all } i=1,\dots, N.
	\end{equation}
	The points on $\partial\Omega$ corresponding to $\left\{\theta_i^N\right\}_{1\leq i\leq N}$ and $\left\{\tilde\theta_i^N\right\}_{1\leq i\leq N}$ are said to be \emph{uniformly distributed}.
\end{definition}

Our first main result states that the approximate flow $u_{\rm app}^N$, constructed through the procedure \eqref{point vortex}, is a good approximation of $u_{R}$ provided the vortices are well distributed on $\partial\Omega$.

\begin{theorem}\label{main theo}
	Let $\omega\in L^1_c\left(\Omega\right)$ and $\gamma\in\mathbb{R}$ be given. For any $N\geq 2$, we consider a well distributed mesh satisfying \eqref{mesh2} and $u_{P}$ defined in \eqref{uP}.
	
	Then, there exists $N_0$ (independent of $\omega$ and $\gamma$) such that, for all $N\geq N_0$, the system \eqref{point vortex} admits a unique solution $\gamma^N\in \R^N$. Moreover, for any closed set $K\subset \Omega$  there exists a constant $C>0$ independent of $N$, $K$, $\omega$ and $\gamma$ such that
		\begin{align*}
			\| u_{R} - u_{\rm app}^N \|_{L^\infty(K)}
			\hspace{-5mm}&
			\\
			\leq & \frac{C}{N^\kappa }
			\left(
			\frac 1{\operatorname{dist}\left(K,\partial\Omega\right)} + \frac 1{\operatorname{dist}\left(K,\partial\Omega\right)^{\kappa +2}}
			\right)
			\\
			& \times \left(\left(
			\frac 1{\operatorname{dist}\left(\operatorname{supp}\omega,\partial\Omega\right)} + \frac 1{\operatorname{dist}\left(\operatorname{supp}\omega,\partial\Omega\right)^{\kappa +1}}
			\right)\left\|\omega\right\|_{L^1\left(\mathbb{R}^2\right)} + |\gamma|\right),
		\end{align*}
	where $u_{\rm app}^N$ is given by \eqref{approx} in terms of $\gamma^N$ and $u_{R}$ is the continuous flow \eqref{uR}.
\end{theorem}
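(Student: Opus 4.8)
\emph{Reformulation as a boundary integral equation.} The plan is to represent the harmonic remainder $u_R$ of \eqref{uR}, which solves \eqref{eq uR}, as the planar Biot--Savart field of an unknown vortex sheet density $g$ carried by $\partial\Omega$,
\[
u_R(x)=\frac1{2\pi}\int_{\partial\Omega}\frac{(x-y)^\perp}{|x-y|^2}\,g(y)\d s(y),\qquad x\in\Omega,
\]
and to read the linear system \eqref{point vortex} as a staggered midpoint discretisation of the integral equation characterising $g$. Any such field is divergence and curl free off $\partial\Omega$, decays at infinity, has a normal component continuous across $\partial\Omega$, and has circulation $\oint_{\partial\Omega}g\,\d s$ around the obstacle; so, taking the normal trace, the conditions in \eqref{eq uR} are equivalent to
\[
[\mathcal Ag](x):=-\frac1{2\pi}\,\mathrm{p.v.}\!\int_{\partial\Omega}\frac{(x-y)\cdot\tau(x)}{|x-y|^2}\,g(y)\d s(y)=-[u_P\cdot n](x)\quad(x\in\partial\Omega),\qquad \oint_{\partial\Omega}g\,\d s=\gamma .
\]
In the arc-length parameter the kernel behaves like $\tfrac1{2\pi(s-s')}$ near the diagonal, so $\mathcal A$ is a Cauchy-type singular integral operator on $\partial\Omega$ (after a conformal change of variables onto the circle, essentially the conjugation operator). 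Correspondingly the $N\times N$ system \eqref{point vortex} reads $\mathcal A_N\gamma^N=b^N$, where the first $N-1$ rows of $\mathcal A_N$ apply the kernel of $\mathcal A$ with nodes $x_j^N$, weights $1/N$ and collocation at the midpoints $\tilde x_i^N$, the last row is $\tfrac1N(1,\dots,1)$, and $b^N=\big(-[u_P\cdot n](\tilde x_1^N),\dots,-[u_P\cdot n](\tilde x_{N-1}^N),\gamma\big)$; the staggering makes every entry well defined (the singularity at $\tilde s_i^N$ always lies strictly between two consecutive nodes), and the last row is the discrete counterpart of the one--dimensional deficiency of $\mathcal A$ discussed next.

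\emph{Step 1: isomorphism of the continuous operator.} I would first show that $g\mapsto\big(\mathcal Ag,\oint_{\partial\Omega}g\,\d s\big)$ maps $C^{0,\alpha}(\partial\Omega)$ isomorphically onto $\big\{h:\oint_{\partial\Omega}h\,\d s=0\big\}\times\R$. Injectivity is soft: if $\mathcal Ag=0$ the associated field is tangent to $\partial\Omega$, hence equals $(\oint_{\partial\Omega}g\,\d s)\,H$ in $\Omega$ by uniqueness for \eqref{harmonic}, and it vanishes inside the simply connected obstacle (a harmonic gradient with vanishing normal trace on a simply connected domain is zero), so $g$ is a multiple of the sheet $H\cdot\tau$ and the circulation constraint forces $g=0$. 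For surjectivity I would use the Fredholm theory of Cauchy singular integral operators on a smooth closed curve (they have index zero, so injectivity yields ontoness), or, equivalently, push everything forward by the conformal map $T$ of \eqref{BS exterior} onto the exterior of the unit disk and observe that the principal part of the kernel of $\mathcal A$ is then $-\tfrac1{4\pi}\cot\tfrac{\theta-\phi}2$ up to smooth corrections, so that $\mathcal A$ is a compact perturbation of a nonzero multiple of the conjugation operator on the circle, whose Fourier symbol $\tfrac i2\operatorname{sign}(k)$ is bounded and bounded below off the constant mode---and that mode is exactly compensated by the circulation equation; the Poincar\'e--Bertrand exchange formula serves to compose Cauchy operators and conformal factors. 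This step also yields $g\in C^\infty(\partial\Omega)$, inheriting the smoothness of $u_P\cdot n$, with
\[
\|g\|_{C^{\kappa+1}(\partial\Omega)}\le C\Big(\big(\operatorname{dist}(\operatorname{supp}\omega,\partial\Omega)^{-1}+\operatorname{dist}(\operatorname{supp}\omega,\partial\Omega)^{-(\kappa+1)}\big)\|\omega\|_{L^1(\R^2)}+|\gamma|\Big),
\]
since $u_P=K_{\R^2}[\omega]$ is smooth at distance $\operatorname{dist}(\operatorname{supp}\omega,\partial\Omega)$ from its support with derivatives of exactly that order of magnitude.

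\emph{Step 2: uniform invertibility of the discretised operator (the crux).} The core of the proof is to produce $N_0$, depending only on $\Omega$ and on the constants in \eqref{mesh2}, such that $\mathcal A_N$ is invertible for every $N\ge N_0$ with $\|\mathcal A_N^{-1}\|\le C$ uniformly in $N$ (and in the data, on which $\mathcal A_N$ does not depend). Working in the arc-length variable one peels off from the kernel of $\mathcal A$ its singular principal part $\tfrac\pi{|\partial\Omega|}\cot\tfrac{\pi(s-s')}{|\partial\Omega|}$, the difference being a smooth, hence compact, kernel. The corresponding splitting of $\mathcal A_N$ is: the circulant matrix obtained by sampling $\cot$ on the exactly uniform grid, whose eigenvalues are the discrete Fourier transform of the sampled kernel and, thanks to the staggering, never meet the singularity and converge to the bounded and bounded-below symbol $\tfrac i2\operatorname{sign}(k)$; plus a Nystr\"om discretisation of the compact part, which is collectively compact and pointwise convergent; plus the circulation row, which fixes the constant mode; plus the error from replacing the uniform grid by a well distributed one, of size $\mathcal O(N^{-(\kappa+1)})$ in the nodes. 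Since the singular kernel has effective width $\sim1/N$, this last error is $\mathcal O(N^{-\kappa})$ in operator norm; combining the uniform invertibility of the circulant-plus-circulation part with collectively compact perturbation theory based on the isomorphism of Step 1 then gives, for $N$ large, the invertibility of $\mathcal A_N$ with a uniform bound. I expect this to be the main obstacle, since discretisations of singular integral operators are generically unstable and one must genuinely exploit both the staggered midpoint structure and the spectral gap established in Step 1 (the Poincar\'e--Bertrand formula is again useful to identify the limiting symbol of the discrete operator).

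\emph{Step 3: consistency estimates and conclusion.} Let $\bar g^N:=\big(|\partial\Omega|\,g(x_i^N)\big)_{i=1}^N$ be the exact density sampled on the grid. By the same bookkeeping as in Step 2 one has $\mathcal A_N\bar g^N=b^N+E^N$ with $|E^N|\le CN^{-\kappa}\|g\|_{C^{\kappa+1}(\partial\Omega)}$ (the singular kernel costs one power of $N$ relative to the mesh accuracy, and the circulation component is exact up to a comparable error); hence, by Step 2, $\|\gamma^N-\bar g^N\|_{\ell^\infty}=\|\mathcal A_N^{-1}E^N\|_{\ell^\infty}\le CN^{-\kappa}\|g\|_{C^{\kappa+1}(\partial\Omega)}$, which also gives the uniqueness of $\gamma^N$. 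For $x\in K$, with $d:=\operatorname{dist}(K,\partial\Omega)>0$, I split
\[
u_{\rm app}^N(x)-u_R(x)=\frac1{2\pi}\sum_{i=1}^N\Big(\frac{\gamma_i^N}{N}-\frac{|\partial\Omega|}{N}g(x_i^N)\Big)\frac{(x-x_i^N)^\perp}{|x-x_i^N|^2}+\Big(\frac1{2\pi}\sum_{i=1}^N\frac{|\partial\Omega|}{N}g(x_i^N)\frac{(x-x_i^N)^\perp}{|x-x_i^N|^2}-u_R(x)\Big).
\]
The first sum is bounded by $\|\gamma^N-\bar g^N\|_{\ell^\infty}$ times $\tfrac1N\sum_i|x-x_i^N|^{-1}\le C/d$, which produces the $1/d$ factor; the second bracket is the quadrature error, over the $\mathcal O(N^{-(\kappa+1)})$-perturbed grid, of the curve integral of $y\mapsto g(y)(x-y)^\perp/|x-y|^2$, a function whose $C^{\kappa+1}(\partial\Omega)$-norm is $\le Cd^{-(\kappa+2)}\|g\|_{C^{\kappa+1}(\partial\Omega)}$ when $\operatorname{dist}(x,\partial\Omega)=d$, and is therefore $\le CN^{-\kappa}d^{-(\kappa+2)}\|g\|_{C^{\kappa+1}(\partial\Omega)}$. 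Adding these and inserting the bound on $\|g\|_{C^{\kappa+1}(\partial\Omega)}$ from Step 1 gives exactly the stated inequality, with a constant depending only on $\Omega$ and $\kappa$.
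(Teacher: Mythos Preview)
Your overall architecture (represent $u_R$ as a boundary vortex sheet with density $g$, read \eqref{point vortex} as a staggered Nystr\"om discretisation of $Bg=2\pi u_P\cdot n$ with the circulation constraint, then do stability $+$ consistency) matches the paper's setup in Section~3 and the invertibility result of Section~4. However, the key claim of Step~2, that $\|\mathcal A_N^{-1}\|\le C$ uniformly in $N$, is false in any of the standard norms, and this breaks Step~3. The continuous operator $B$ maps $L^2(\partial\Omega)$ into $L^2_0(\partial\Omega)$ (see \eqref{mean}), and the discrete system inherits this: the correct stability estimate is the paper's Proposition~\ref{inverse perfect},
\[
\|z\|_{\ell^2}\le C\big(\|v\|_{\ell^2}+|\gamma|+\sqrt N\,|\langle v\rangle|\big),
\]
and the $\sqrt N$ term is genuine (for $v=\mathbf 1$, the continuous equation $Bg=1$ has no solution, so the discrete one must be ill-conditioned; in the proof, the ``missing'' $N$-th datum $v_N$ determined by $\operatorname{Im}\Phi$ is of size $\sim N|\langle v\rangle|$). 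Your circulant$+$circulation-row picture does not cure this: by Lemma~\ref{est l2} the cotangent circulant is an exact $\ell^2$-isometry on the mean-zero subspace, but there is no mean-zero constraint on $v\in\R^{N-1}$. In Step~3 one can indeed show $|E_i^N|=O(N^{-\kappa}\|g\|_{C^{\kappa+1}})$ and even $\langle E^N\rangle=O(N^{-\kappa})$ (using $B^*1=0$, \eqref{average adjoint}, and $\int f=0$), but plugging into the stability bound only gives $\|\gamma^N-\bar g^N\|_{\ell^2}=O(N^{1/2-\kappa})$, i.e.\ you lose $\sqrt N$ compared to the stated rate. (Your $\ell^\infty$ formulation is still worse, since the discrete Hilbert transform is not uniformly bounded on $\ell^\infty$.)

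The paper sidesteps this by \emph{not} comparing $\gamma^N$ to the sampled density at all. The point is the identity \eqref{vortex identity}, obtained by matching the two representations \eqref{boundary sheet omega} and \eqref{boundary sheet omega 2} of $u_R$: for $x\in\Omega$ and $y\in\partial\Omega$,
\[
\pi\,\frac{(x-y)^\perp}{|x-y|^2}=\int_{\partial\Omega}\frac{y-z}{|y-z|^2}\cdot\tau(z)\,\frac{x-z}{|x-z|^2}\,dz-\int_{\partial\Omega}\frac{y-z}{|y-z|^2}\cdot n(z)\,\frac{(x-z)^\perp}{|x-z|^2}\,dz.
\]
Summing over $y=x_j^N$ with weights $\gamma_j^N/N$ rewrites $u_{\rm app}^N(x)$ \emph{exactly} as two integrals over $\partial\Omega$ of the boundary traces $f_{\rm app}^N$, $g_{\rm app}^N$ (defined in \eqref{f app}) against the smooth test functions $\frac{x-\cdot}{|x-\cdot|^2}$ and $\frac{(x-\cdot)^\perp}{|x-\cdot|^2}$; the same identity and \eqref{vortex identity 2}, \eqref{harmonic3} do this for $u_R$. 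Thus $u_R-u_{\rm app}^N$ is a pairing of $(f_{\rm app}^N-f)$ and $(g_{\rm app}^N-AB^{-1}f-\pi\gamma H\cdot\tau)$ against smooth kernels, and Proposition~\ref{prop 32} provides the needed \emph{weak} convergence at rate $O(N^{-\kappa})$. In that proof the $\sqrt N$ factor from Proposition~\ref{inverse perfect} is harmless because it is only used to bound $\|\gamma^N\|_{\ell^1}=O(1)$ (see \eqref{d1}), never a difference like $\gamma^N-\bar g^N$. A secondary point: the paper never needs the quantitative bound $\|g\|_{C^{\kappa+1}}\le C(\cdots)$ you assert in Step~1; only $\|f\|_{C^\kappa}$ with $f=2\pi u_P\cdot n$ enters, which is immediate.
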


We refer to \cite{ADLproc} for an equivalent theorem in the much simpler case of the unit disk $\mathcal{C}=\overline{B(0,1)}$. This restricted geometry allows for an easier proof based on the circular Hilbert transform.

If we consider the uniformly distributed mesh, then the estimate in Theorem
\ref{main theo} holds true for any $\kappa \geq 2$ but $C$ depends on
$\kappa $.
This implies that the numerical method converges faster than any fixed
order method. It is likely that assuming the boundary is analytic enforces
exponential convergence (see Appendix \ref{riemann appendix} for the
convergence of Riemann sums).

The proof of Theorem \ref{main theo} is developed over the course of the next few sections. In Section \ref{vortex sheet}, we establish important representation formulas for the solution of \eqref{eq uR} and we show the link between our problem, Cauchy integrals and the Poincar\'e--Bertrand formula. Then, in Section \ref{section discrete}, we prove that the linear system \eqref{point vortex} is invertible for sufficiently many point vortices. In Section \ref{sect:conv}, we establish that $(u_{R}-u_{\rm app}^N)\cdot n\vert_{\partial \Omega}$ converges to zero in a weak sense together with other related convergence properties on $\partial \Omega$. Finally, in Section \ref{proof of main}, we deduce that such a weak convergence implies a stronger form of convergence away from the boundary and thus reach the conclusion of Theorem \ref{main theo}.

\begin{remark}
	Let us already advertise here that, in Section \ref{charges}, we introduce and discuss a novel discretization method of the flow $u_R$---the fluid charge method---and establish convergence results similar to Theorem \ref{main theo} (see Theorems \ref{main theo alt} and \ref{main theo alt alt} therein), which may potentially improve the efficiency of corresponding numerical methods.
\end{remark}

\begin{remark}
	Removing the harmonic part $H(x)$ from \eqref{BS exterior} and the circulation condition in \eqref{elliptic2} and \eqref{eq uR}, the above main result can be readily adapted to describe an ideal fluid inside a bounded domain. It is also possible to consider obstacles with several connected components by prescribing a circulation condition for each obstacle. More precisely, assuming that the whole obstacle is such that its boundary can be decomposed into a disjoint union
	\begin{equation*}
		\partial \Omega=\partial\Omega_1\cup\dots\cup \partial\Omega_n,
	\end{equation*}
	where each $\partial\Omega_i$ is a simple closed curve enclosing a connected component of the obstacle, the elliptic problem \eqref{elliptic2} has then to be generalized to
	\begin{equation*}
		\left\{
		\begin{array}{lcl}
			\div u  =0 & \text{in}& \Omega, \\
			\curl u  =\omega & \text{in}& \Omega, \\
			u \cdot n = 0 & \text{on}& \partial\Omega, \\
			u  \rightarrow 0 &\text{as}& x\rightarrow\infty, \\
			\oint_{\partial\Omega_i} u \cdot \tau ds  = \gamma_i & \text{for each}& i=1,\ldots,n,
		\end{array}
		\right.
	\end{equation*}
	where $\gamma_i\in\mathbb{R}$ is the prescribed circulation around each connected component. The corresponding systems \eqref{eq uR} and \eqref{point vortex} undergo similar modifications. Theorem \ref{main theo} can then be adapted to account for multiple circulations $\gamma_i$.
	
	Finally, one can also adapt the methods in this work to handle flows with non-zero velocities at infinity. Note, however, that the consideration of non-smooth domains (with corners and cusps, for instance) would require subtle and nontrivial adaptations (particularly altering the analysis conducted in Section~\ref{vortex sheet}, below) which we leave for other works.
\end{remark}

Numerically, we indeed verify that the system \eqref{point vortex} is always invertible for large $N$, and that, on any compact set $K$, the flow $u_{\rm app}^N$ (given in \eqref{approx}) converges in the $L^\infty$-norm faster than any $N^{-\kappa}$ with $\kappa>0$ for a regular distribution of points. Deviations from a regular grid yields a slower convergence. The rate obtained in Theorem \ref{main theo} may however not be optimal in the case of random perturbations (of size $N^{-\kappa-1}$) of the uniform grid, because numerical simulations indicate faster convergence than $N^{-\kappa}$.

\subsection{Dynamic convergence of the vortex approximation}\label{dynamic thm}

We have previously explained how the influence of an obstacle on a flow solving \eqref{elliptic2} can be modeled by a collection of vortices on its boundary. We explain now how this approximation procedure is used to replace the obstacle in the Euler equations \eqref{Euler} by an evolving collection of vortices, thereby providing a dynamic picture of the vortex method.

To this end, let $\omega_0\in C_c^1\left(\Omega\right)$ and consider the unique classical solution $\omega\in C^1_c\left([0,t_1]\times \overline\Omega\right)$ (note that all natural definitions of $C^1$ on closed sets are equivalent here, for $\partial\Omega$ is smooth; see e.g.\ \cite{whitney}) constructed in \cite{Kikuchi}, for some fixed but arbitrary time $t_1>0$, of
\begin{equation}\label{dynamic1}
	\left\{
	\begin{aligned}
		& \partial_{t} \omega + u\cdot \nabla \omega =0,
		\\
		& \omega(t=0)=\omega_0,
	\end{aligned}
	\right.
\end{equation}
with a velocity flow
\begin{equation*}
	u=K_{\Omega}[\omega](x) + \alpha H(x)\in C^1\left([0,t_1]\times \overline\Omega\right),
\end{equation*}
where $K_{\Omega}$, $\alpha$ and $H$ are given explicitely in \eqref{BS exterior}, for some prescribed circulation $\gamma\in\mathbb{R}$. It is to be emphasized that the main theorem from \cite{Kikuchi} concerns the Eulerian formulation \eqref{Euler}. The corresponding proofs, however, are based on the vorticity formulation and indeed establish the above-mentioned wellposedness of \eqref{dynamic1}.

We recall that a classical estimate, which we reproduce, later on, in Section \ref{dynamic proofs 1}, shows that the support of $\omega$ in $x$ remains uniformly bounded away from the boundary $\partial\Omega$ (see \eqref{support away boundary}).

\bigskip

Let us also focus on the following vortex approximation of \eqref{dynamic1}, for sufficiently large integers $N$ (at least as large as $N_0$ determined by Theorem \ref{main theo} so that \eqref{point vortex} is invertible):
\begin{equation}\label{dynamic5}
	\left\{
	\begin{aligned}
		& \partial_{t} \omega^N + u^N\cdot \nabla \omega^N =0,
		\\
		& \omega^N(t=0) = \omega_0,
	\end{aligned}
	\right.
\end{equation}
for the same initial data $\omega_0\in C_c^1\left(\Omega\right)$ extended by zero outside $\Omega$ and with a velocity flow
\begin{equation}\label{uN}
	u^N=K_{\mathbb{R}^2}[\omega^N]+u_{\rm app}^N[\omega^N,\gamma],
\end{equation}
where $u_{\rm app}^N[\omega^N,\gamma]$ is given by \eqref{approx}-\eqref{point vortex}, for some prescribed $\gamma\in\mathbb{R}$ and where $u_P$ in the right-hand side of \eqref{point vortex} is now $K_{\mathbb{R}^2}[\omega^N]$.

The difficulty in solving system \eqref{dynamic5} resides in that the velocity flow $u^N$ is singular at the mesh points $x_i^N$ on the boundary $\partial\Omega$. However, we are able to claim the wellposedness of \eqref{dynamic5} in $C^1$ at least on some finite time interval, which can be arbitrarily large provided $N$ is sufficiently large. More precisely we show the following theorem, whose proof relies crucially on Theorem \ref{main theo} and is deferred to Section \ref{dynamic proofs}, for clarity.

\begin{theorem}\label{wellposedness}
	Let $\omega_0\in C_c^1\left(\Omega\right)$, $\gamma\in\mathbb{R}$ and consider any fixed time $t_1>0$. Then, for a well distributed mesh on $\partial \Omega$, there exists $N_1\geq N_0$ ($N_0$ is determined in Theorem \ref{main theo}) such that, for any $N\geq N_1$, there is a unique classical solution $\omega^N\in C^1_c\left([0,t_1]\times \Omega\right)$ to \eqref{dynamic5}. Moreover, the sequence of solutions $\left\{\omega^N\right\}_{N\geq N_1}$ is uniformly bounded in $C^1_c\left([0,t_1]\times \Omega\right)$.
\end{theorem}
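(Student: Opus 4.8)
The plan is to solve the transport equation \eqref{dynamic5} by the classical characteristics/fixed-point method for 2D Euler flows, the only obstruction being that the velocity $u^N$ from \eqref{uN}--\eqref{point vortex} blows up at the mesh points $x_i^N\in\partial\Omega$. The decisive observation is that, by Theorem \ref{main theo} applied to $\omega=\omega^N(t)$, on every compact subset of $\Omega$ the field $u^N(t)$ is an $O(N^{-\kappa})$ perturbation of the \emph{exact} Biot--Savart velocity $\bar u^N(t):=K_{\R^2}[\omega^N(t)]+u_R=K_\Omega[\omega^N(t)]+\big(\gamma+\int\omega^N(t)\big)H$, where $u_R$ is the remainder field of \eqref{uR}--\eqref{eq uR} associated with $\omega^N(t)$ and circulation $\gamma$ (the identity is read off from \eqref{BS exterior}); this $\bar u^N$ is divergence free and tangent to $\partial\Omega$. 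Writing $u^N=\bar u^N+E^N$ with $E^N:=u_{\rm app}^N-u_R$, which is harmonic in $\Omega$, the function $\omega^N$ solves the exterior 2D Euler vorticity equation with an extra perturbation $-E^N\cdot\nabla\omega^N$ that, as we shall see, is uniformly small; in particular $\bar u^N$ keeps the vorticity support away from $\partial\Omega$, where $u^N$ is smooth, so a bootstrap closes. Concretely, for fixed $N\geq N_0$: as long as $\operatorname{supp}\omega^N(t,\cdot)$ is a compact subset of $\Omega$ at positive distance from $\partial\Omega$ (hence from the $x_i^N$), $u^N(t)$ is $C^\infty$ near it ($u_{\rm app}^N$ being harmonic off the $x_i^N$, and $K_{\R^2}[\omega^N]\in C^1$), $\gamma^N(t)$ depends continuously on $\omega^N(t)$ through \eqref{point vortex}, whose invertibility with bounded inverse is established in Section \ref{section discrete}, and $u^N$ decays at infinity; a Picard iteration in $C^1_c$ then yields a unique local solution, continued in $C^1_c$ as long as the support stays in a fixed compact subset of $\Omega$ and the $C^1$ norm stays finite, with time regularity coming from $\partial_t\omega^N=-u^N\cdot\nabla\omega^N$ (cf.\ \cite{Kikuchi,marchioro}).

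Suppose next that $\omega^N$ exists on $[0,T]$ with $\operatorname{dist}(\operatorname{supp}\omega^N(t,\cdot),\partial\Omega)\geq\delta$ throughout. Since $u^N$ is divergence free on $\operatorname{supp}\omega^N$, all $L^p$-norms are conserved, so $\|\omega^N(t)\|_{L^p}=\|\omega_0\|_{L^p}$ and $\int\omega^N(t)=\int\omega_0$ is fixed, which renders the constants of Theorem \ref{main theo} uniform along the flow. On the tube $\mathcal T_\delta:=\{\operatorname{dist}(\cdot,\partial\Omega)\geq\delta/2\}\cap\overline{B_\rho}$ (with $\rho$ large, fixed), classical Schauder and Biot--Savart estimates applied to \eqref{BS exterior} bound $\bar u^N(t)$ in $C^1$ in terms of $\delta$, $\|\omega_0\|_{L^1\cap L^\infty}$, $|\gamma|$ and $\Omega$ only, while $E^N(t)$, harmonic in $\Omega$, satisfies $\|E^N(t)\|_{L^\infty(\mathcal T_\delta)}\leq CN^{-\kappa}Q(1/\delta)(\|\omega_0\|_{L^1}+|\gamma|)$ by Theorem \ref{main theo}, hence by interior gradient estimates $\|\nabla E^N(t)\|_{L^\infty}$ obeys the same bound, up to a factor $C/\delta$, on a slightly smaller tube. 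Since the transport only sees $u^N$ near $\operatorname{supp}\omega^N\subset\mathcal T_\delta$, the classical $C^1$ (log-Lipschitz/Osgood) estimates for exterior 2D Euler, insensitive to the uniformly-$C^1$-small perturbation $E^N$, produce an $N$-uniform bound on $\|\omega^N(t)\|_{C^1}$ over $[0,\min(T,t_1)]$.

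The crux is to show the support indeed stays away from $\partial\Omega$ uniformly in $N$ on $[0,t_1]$. Let $q$ be a fixed smooth function with $q>0$ in $\Omega$, $q=0$ and $|\nabla q|\asymp 1$ on $\partial\Omega$, and $q\asymp\operatorname{dist}(\cdot,\partial\Omega)$ near $\partial\Omega$. Along a characteristic $\dot y=u^N(t,y)$ issued from $\operatorname{supp}\omega_0$,
\[
\tfrac{d}{dt}\,q(y)=\nabla q(y)\cdot\bar u^N(t,y)+\nabla q(y)\cdot E^N(t,y);
\]
since $\bar u^N\cdot n=0$ on $\partial\Omega$ and $\bar u^N$ is Lipschitz near $\partial\Omega$ with constant $L(\delta)$, the first term is $\leq CL(\delta)q(y)$, and the second is $\leq C\|E^N(t)\|_{L^\infty}\leq CN^{-\kappa}Q(1/\delta)(\|\omega_0\|_{L^1}+|\gamma|)$. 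Gr\"onwall's lemma gives, as long as $\operatorname{supp}\omega^N(s,\cdot)\subset\{q\geq\delta\}$ for $s\leq t$,
\[
q(y(t))\geq q(y(0))\,e^{-CL(\delta)t}-C\,\frac{N^{-\kappa}Q(1/\delta)(\|\omega_0\|_{L^1}+|\gamma|)}{L(\delta)}\,e^{CL(\delta)t}.
\]
Fix $\delta_*>0$, depending only on $t_1,\omega_0,\gamma,\Omega$, so small that $\tfrac13\operatorname{dist}(\operatorname{supp}\omega_0,\partial\Omega)e^{-CL(\delta_*)t_1}\geq 2\delta_*$, and then $N_1\geq N_0$ so large that for $N\geq N_1$, $\delta=\delta_*$ and $t\leq t_1$ the last term is $\leq\delta_*$ (possible since $L$ and $Q$ are now frozen). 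A continuity argument concludes: the set of $t\in[0,\min(t_1,T^*_N))$ (with $T^*_N$ the maximal existence time) on which $\operatorname{dist}(\operatorname{supp}\omega^N(s,\cdot),\partial\Omega)\geq\delta_*$ for all $s\leq t$ is nonempty and closed, and it is open because on it the last display forces $q(y(t))\geq 2\delta_*>\delta_*$; hence it is the whole interval, and, together with the finite $C^1$ bound, the continuation criterion forces $T^*_N>t_1$.

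This yields, for every $N\geq N_1$, a unique $\omega^N\in C^1_c([0,t_1]\times\Omega)$ solving \eqref{dynamic5}, supported in $\{q\geq\delta_*\}\cap\overline{B_\rho}$ and with $\|\omega^N\|_{C^1([0,t_1]\times\Omega)}$ bounded uniformly in $N$, as claimed; uniqueness follows from the (log-)Lipschitz bound on $u^N$ over the common tube by a standard argument. The one genuine difficulty is the support estimate of the third step: one must propagate, uniformly in $N$ and up to the fixed time $t_1$, that the vorticity never reaches the set where $u^N$ is singular, and the only lever for this is Theorem \ref{main theo}, whose error degenerates merely polynomially in $\operatorname{dist}(\operatorname{supp}\omega^N,\partial\Omega)$ — exactly what makes the bootstrap feasible.
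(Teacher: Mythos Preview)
Your decomposition $u^N=\bar u^N+E^N$ with $\bar u^N=K_\Omega[\omega^N]+\alpha H$ tangent to $\partial\Omega$ and $E^N=u_{\rm app}^N-u_R$ controlled by Theorem~\ref{main theo} is exactly the mechanism the paper exploits, and the barrier $q\sim\operatorname{dist}(\cdot,\partial\Omega)$ plays the same role as the paper's $|T(Z)|-1$ coming from the conformal map. The continuation/bootstrap format is a legitimate alternative to the paper's explicit Picard iteration in the metric space $C_{\omega_0,\eps}$.

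There is, however, a real gap in the support estimate---precisely the step you flag as the crux. You assert that ``$\bar u^N$ is Lipschitz near $\partial\Omega$ with constant $L(\delta)$'' (and, earlier, that Schauder/Biot--Savart bound $\bar u^N$ in $C^1$ on $\mathcal T_\delta$ in terms of $\delta,\|\omega_0\|_{L^1\cap L^\infty},|\gamma|,\Omega$ only). This is false: at this stage the bootstrap only gives $\omega^N\in L^\infty$, and $K_\Omega[\omega^N]$ is then merely \emph{log}-Lipschitz (see \eqref{quasilipschitz}); a genuine $C^1$ bound would need $\|\nabla\omega^N\|_{L^\infty}$, which is circular. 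If you extract a uniform constant from the log-Lipschitz bound on $\{q\geq\delta\}$ you get $L(\delta)\sim C_0(1+|\log\delta|)$, and your constraint $\tfrac{d_0}{3}e^{-CL(\delta_*)t_1}\geq 2\delta_*$ becomes $\tfrac{d_0}{6}\geq \delta_*^{\,1-CC_0t_1}$, which admits \emph{no} solution $\delta_*\in(0,1)$ once $CC_0t_1>1$. The linear Gr\"onwall therefore fails to close for large $t_1$.

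The paper fixes this by inserting the log-Lipschitz estimate \eqref{quasilipschitz} \emph{directly} into the barrier ODE (and absorbing the $\operatorname{dist}(\cdot,\partial\Omega)^{-(\kappa+2)}$ blow-up of $E^N$ via $y=(|Y|-1)^{\kappa+3}$), obtaining an Osgood inequality of the form $\dot y\geq -Cy(1+|\log y|)-CN^{-\kappa}\eps^{-(\kappa+1)}$ with $C$ \emph{independent of $\eps$}. Its solution satisfies $y+\eta\geq e^{1-Ce^{Ct}}$, a double exponential in $t$ that lets one choose $\eps$ (your $\delta_*$) for \emph{any} fixed $t_1$, and then $N_1$ to make the $\eta$-term harmless. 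Replacing your linear Gr\"onwall by this Osgood argument closes the gap; the rest of your outline (local existence, $C^1$ propagation via the Beale--Kato--Majda type estimate \eqref{mu 3}, uniqueness) is sound.
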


\begin{remark}
	It is to be emphasized that, for a given fixed $N$, it may not be possible to prolong the classical solution from the preceding theorem indefinitely due to the interaction of the vortices defining the singular flow $u_{\rm app}^N$ with the vorticity $\omega^N$. This justifies the introduction of $N_1$ possibly depending on $t_1$ and $\omega_0$.
\end{remark}

\begin{remark}
	In Section \ref{charges}, we give in Theorem \ref{wellposedness alt} a similar wellposedness result for the fluid charge method.
\end{remark}

The following main theorem establishes the convergence of system \eqref{dynamic5} towards system \eqref{dynamic1} as $N\to\infty$, thereby completing the mathematical validation of the vortex method for the boundary of an obstacle in the Euler equations \eqref{Euler}. We emphasize, again, that the practical usefulness of this method lies in the computation of $\gamma^{N}$ through \eqref{point vortex} allowing the construction of an approximate flow $K_{\R^2}\left[\omega^N\right]+ u_{\rm app}^N\left[\omega^N,\gamma\right]$ which only requires the use of the Biot--Savart kernel in the whole plane and does not resort to \eqref{BS exterior}.

\begin{theorem}\label{main convergence}
	Let $\omega_0\in C_c^1\left(\Omega\right)$, $\gamma\in\mathbb{R}$ and consider any fixed time $t_1>0$. Then, for a well distributed mesh on $\partial\Omega$, as $N\to\infty$, the unique classical solution $\omega^N\in C^1_c\left([0,t_1]\times \Omega\right)$ to \eqref{dynamic5} converges uniformly towards the unique classical solution $\omega\in C^1_c\left([0,t_1]\times \Omega\right)$ to \eqref{dynamic1}. More precisely, it holds that
	\begin{equation*}
		\left\|\omega-\omega^N\right\|_{L^\infty([0,t_1]\times\Omega)}=\mathcal{O}\left(N^{-\kappa}\right).
	\end{equation*}
\end{theorem}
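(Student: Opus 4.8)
The plan is to run a Grönwall-type stability argument on the difference $\omega - \omega^N$ along the flow maps, using Theorem \ref{main theo} to control the velocity discrepancy. First I would introduce the flow maps $\Phi(t,\cdot)$ and $\Phi^N(t,\cdot)$ associated respectively to $u$ and $u^N$, so that $\omega(t,\Phi(t,x)) = \omega_0(x)$ and $\omega^N(t,\Phi^N(t,x)) = \omega_0(x)$ — here I use that both $\omega$ and $\omega^N$ are transported (\eqref{dynamic1} and \eqref{dynamic5}), the latter being well-defined and $C^1$ on $[0,t_1]$ thanks to Theorem \ref{wellposedness}, with $\{\omega^N\}$ uniformly bounded in $C^1_c$. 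Since $\omega_0 \in C^1_c$, bounding $\|\omega(t)-\omega^N(t)\|_{L^\infty}$ reduces, via $\omega(t,y) - \omega^N(t,y) = \omega_0(\Phi(t,\cdot)^{-1}y) - \omega_0(\Phi^N(t,\cdot)^{-1}y)$ and the Lipschitz bound on $\omega_0$, to controlling $\|\Phi(t,\cdot) - \Phi^N(t,\cdot)\|_{L^\infty}$ on the (uniformly bounded, uniformly away from $\partial\Omega$) region swept out by the supports.

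Next I would estimate $\frac{d}{dt}|\Phi(t,x) - \Phi^N(t,x)|$. Writing $\dot\Phi = u(t,\Phi)$ and $\dot\Phi^N = u^N(t,\Phi^N)$, split
\[
u(t,\Phi) - u^N(t,\Phi^N) = \bigl(u(t,\Phi) - u(t,\Phi^N)\bigr) + \bigl(u(t,\Phi^N) - u^N(t,\Phi^N)\bigr).
\]
The first term is bounded by $\|\nabla u(t)\|_{L^\infty(K')} |\Phi - \Phi^N|$ on a fixed compact neighbourhood $K'$ of the supports staying away from $\partial\Omega$ — this Lipschitz constant is finite and uniform in time because $u \in C^1([0,t_1]\times\overline\Omega)$. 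For the second term, decompose $u = K_{\R^2}[\omega] + u_R$ and $u^N = K_{\R^2}[\omega^N] + u_{\rm app}^N[\omega^N,\gamma]$, so that
\[
u(t,\cdot) - u^N(t,\cdot) = K_{\R^2}[\omega - \omega^N] + \bigl(u_R[\omega,\gamma] - u_{\rm app}^N[\omega^N,\gamma]\bigr),
\]
and further split the last bracket as $\bigl(u_R[\omega,\gamma] - u_R[\omega^N,\gamma]\bigr) + \bigl(u_R[\omega^N,\gamma] - u_{\rm app}^N[\omega^N,\gamma]\bigr)$, exploiting linearity of both $u_R$ and $u_{\rm app}^N$ in $(\omega,\gamma)$. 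The term $\|K_{\R^2}[\omega-\omega^N]\|_{L^\infty(K')}$ is controlled by $C\|\omega - \omega^N\|_{L^1 \cap L^\infty}$ using standard Biot--Savart estimates on a bounded domain; since all vorticities are supported in a fixed compact set and uniformly bounded in $L^\infty$, this is $\lesssim \|\omega - \omega^N\|_{L^\infty}$, hence $\lesssim \|\Phi - \Phi^N\|_{L^\infty}$. The term $\|u_R[\omega - \omega^N, 0]\|_{L^\infty(K')}$ is bounded the same way — $u_R$ depends continuously on its data by the elliptic theory of \eqref{eq uR} and the representation \eqref{BS exterior} — again giving $\lesssim \|\Phi - \Phi^N\|_{L^\infty}$. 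Finally, $\|u_R[\omega^N,\gamma] - u_{\rm app}^N[\omega^N,\gamma]\|_{L^\infty(K')}$ is exactly the quantity estimated in Theorem \ref{main theo}, applied with $\omega$ replaced by $\omega^N(t)$: since $K' = K$ is fixed and away from $\partial\Omega$, and $\operatorname{supp}\omega^N(t)$ stays uniformly away from $\partial\Omega$ (by \eqref{support away boundary} applied to the approximate system, as recalled in Section \ref{dynamic proofs}), while $\|\omega^N(t)\|_{L^1}$ and $|\gamma|$ are bounded uniformly in $N$ and $t$, this term is $\mathcal{O}(N^{-\kappa})$ with a constant independent of $t \in [0,t_1]$ and of $N$.

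Collecting these bounds yields a differential inequality of the form $\frac{d}{dt}\|\Phi(t,\cdot) - \Phi^N(t,\cdot)\|_{L^\infty} \leq C\|\Phi(t,\cdot) - \Phi^N(t,\cdot)\|_{L^\infty} + C N^{-\kappa}$ on $[0,t_1]$, with $\Phi(0,\cdot) = \Phi^N(0,\cdot) = \mathrm{id}$; Grönwall's lemma then gives $\|\Phi(t,\cdot) - \Phi^N(t,\cdot)\|_{L^\infty([0,t_1])} = \mathcal{O}(N^{-\kappa})$, and transporting back through $\omega_0$ gives the claimed $\|\omega - \omega^N\|_{L^\infty([0,t_1]\times\Omega)} = \mathcal{O}(N^{-\kappa})$. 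The main obstacle I expect is the bookkeeping needed to make Theorem \ref{main theo} genuinely uniform along the approximate trajectory: one must verify that the constant $C$ there, which blows up as $\operatorname{supp}\omega^N(t)$ or $K$ approaches $\partial\Omega$, stays bounded, which requires the a priori confinement estimate showing $\operatorname{dist}(\operatorname{supp}\omega^N(t),\partial\Omega)$ is bounded below uniformly in $N$ and $t \leq t_1$ — and, symmetrically, that the flow maps do not push the relevant compact set $K'$ toward the boundary either. A secondary technical point is ensuring that the inverse flow maps $\Phi(t,\cdot)^{-1}$ and $\Phi^N(t,\cdot)^{-1}$ have Jacobians bounded above and below uniformly, so that $L^1$ and $L^\infty$ norms of the transported vorticities — and the closeness estimate through $\omega_0$ — behave as claimed; this follows from the $C^1$ bounds on $u$ and the uniform $C^1$ bound on $u^N$ furnished by Theorem \ref{wellposedness}, but should be stated carefully.
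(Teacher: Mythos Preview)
Your argument is correct and follows essentially the same Gr\"onwall-based stability strategy as the paper, but with two bookkeeping choices that differ slightly from the paper's proof.

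First, the paper applies Theorem \ref{main theo} with the \emph{exact} vorticity $\omega$ rather than $\omega^N$: it introduces the intermediate field $\breve u^N = K_{\mathbb{R}^2}[\omega] + u_{\rm app}^N[\omega,\gamma]$ and splits $u - u^N = (u - \breve u^N) + (\breve u^N - u^N)$. The first piece is exactly $u_R[\omega,\gamma] - u_{\rm app}^N[\omega,\gamma]$, controlled by Theorem \ref{main theo} using only the confinement of $\operatorname{supp}\omega$ from Lemma \ref{classical supp estimate}; the second piece is $K_{\mathbb{R}^2}[\omega - \omega^N] + u_{\rm app}^N[\omega - \omega^N, 0]$, handled by \eqref{mu 1} and \eqref{mu 2}. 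Your decomposition instead routes the error through $u_R[\omega^N,\gamma] - u_{\rm app}^N[\omega^N,\gamma]$, which is equally valid but requires the uniform confinement of $\operatorname{supp}\omega^N$ established inside the proof of Theorem \ref{wellposedness}---precisely the point you flag as the main obstacle. The paper's choice sidesteps this by shifting the approximation error onto the exact solution.

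Second, the paper runs Gr\"onwall directly on $\|(\omega-\omega^N)(t)\|_{L^\infty}$: integrating the transport equation for $\omega-\omega^N$ along either characteristic $X$ or $Z^N$ gives $(\omega-\omega^N)(t,X(t,x)) = -\int_0^t (u-u^N)\cdot\nabla\omega^N\,ds$, and the uniform $C^1$ bound on $\omega^N$ from Theorem \ref{wellposedness} closes the loop without ever comparing the flow maps themselves. Your route through $\|\Phi-\Phi^N\|$ and inverse maps is correct but adds the ``secondary technical point'' you mention. Either variant works; the paper's is a bit shorter because it avoids the inverse-flow bookkeeping and uses the $C^1$ bound on $\omega^N$ directly in place of the Lipschitz bound on $u$.
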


The proof of the above theorem is given in Section \ref{dynamic proofs}. It relies on both Theorems \ref{main theo} and \ref{wellposedness}.

\begin{remark}
	In Section \ref{charges}, we provide in Theorem \ref{main convergence alt} a similar convergence result for the fluid charge method.
\end{remark}

\begin{remark}
	In the above theorem, our original motivation was to reconstruct classical solutions from the vortex approximation. Observe, however, that the convergence estimate given in Theorem \ref{main theo} only requires that the vorticity $\omega$ be in $L_c^1$, whereas Theorems~\ref{wellposedness} and \ref{main convergence} operate at a much higher $C^1$-regularity for the vorticity.
	
	In fact, in view of the weak assumptions of Theorem \ref{main theo}, we believe that, using the vortex approximation studied in this work, it is also possible to recover solutions of the incompressible Euler equations in the much weaker setting of Yudovich's solutions, where the vorticity $\omega$ merely belongs to $L^\infty_c$ uniformly in time (see \cite[Section 7.2]{bahouri} for Yudovich's theorem, as well as other global existence results in the whole plane).
	
	Indeed, such a convergence result would hinge on the fact that all vorticities produced by the vortex approximation have compact supports that remain uniformly bounded away from the boundary of the domain, in order to ensure that all solutions exist on a uniform interval of time. This fact is a consequence of our analysis from Section \ref{dynamic proofs 1} and, in particular, from the proof of Proposition \ref{linear wellposedness} where the distance from the support of the vorticity to the boundary is controlled by the $L^\infty$-norm and the support of the initial vorticity (and not of its gradient). It seems that the remainder of a convergence proof would follow from standard weak compactness arguments.
	
	For the sake of brevity, however, we will not be going into further detail on this interesting topic.
\end{remark}

\begin{remark}
	Finally, notice that it is also possible to combine the vortex method for the boundary of an exterior domain with the aforementioned vortex method in the whole plane (consisting in an approximation of the vorticity $\omega$ by a collection of point vortices) in order to obtain a full and dynamic vortex method for an exterior domain. To this end, we consider an approximation of the initial vorticity $\omega_{0}$ by a combination of point vortices $\sum_{k=1}^{M} \alpha_{k} \delta_{y_{k}(0)}$. Then, the position $y_{k}(t)$ of each point vortex evolves under the influence of the vector field created by the remaining vortices $\sum_{p\neq k} \alpha_{p} \delta_{y_{p}(t)}$ (with possible regularization of the kernel for the vortex blob method) and the fixed vortices on the boundary $\sum_{i=1}^{N} \frac{\gamma_{i}^N(t)}N \delta_{x_{i}^N}$, where the variable vortex density $\gamma_{i}^N(t)$ is computed through \eqref{point vortex} with $u_{P}$ replaced by $K_{\R^2} [ \sum_{k=1}^{M} \alpha_{k} \delta_{y_{k}(t)} ]$. A rigorous proof of convergence for this full vortex method remains challenging, though.
\end{remark}

\section{Representation formulas}\label{vortex sheet}

In this section, we present some representation formulas for the solution $u_R$ of \eqref{eq uR}, which are crucial for the justification of Theorem \ref{main theo} and whose understanding sheds light on the approximation of $u_R$ by point vortices on the boundary $\partial \Omega$.

Here, we are considering some given vorticity $\omega\in C^{0,\alpha}_c\left(\Omega\right)$, with $0<\alpha\leq 1$, and $\gamma\in\mathbb{R}$, and wish to construct a velocity field $u_R\in C^0\left(\overline\Omega\right)\cap C^1\left(\Omega\right)$ solving \eqref{eq uR}. Essentially, we show below that it is possible to express the solution to \eqref{eq uR} as a vortex sheet on the boundary $\partial\Omega$, which, again, is consistent with the physical idea that the flow around an obstacle is produced by a boundary layer of vortices.

\bigskip

To this end, we need to introduce the integral operators
\begin{equation}\label{AB}
	\begin{aligned}
		A\varphi(x) & = \int_{\partial\Omega}\frac{x-y}{|x-y|^2}\cdot n(x)\varphi(y)dy, & x& \in\partial\Omega,
		\\
		B\varphi(x) & = \int_{\partial\Omega}\frac{x-y}{|x-y|^2}\cdot \tau(x)\varphi(y)dy, & x& \in\partial\Omega,
	\end{aligned}
\end{equation}
and their adjoints
	\begin{align*}
		A^*\varphi(x) & = -\int_{\partial\Omega}\frac{x-y}{|x-y|^2}\cdot n(y)\varphi(y)dy, & x & \in\mathbb{R}^2,
		\\
		B^*\varphi(x) & = -\int_{\partial\Omega}\frac{x-y}{|x-y|^2}\cdot \tau(y)\varphi(y)dy, & x & \in\mathbb{R}^2.
	\end{align*}
These operators are closely related to Cauchy integrals, i.e.\ complex valued integrals of the type $\int_{\Gamma}\frac{\varphi(z_1)}{z-z_1}dz_1$ where $\Gamma\subset\mathbb{C}$ is a contour. Indeed, identifying the contour $\Gamma\subset\mathbb{C}$ with the boundary of our domain $\partial\Omega\subset\mathbb{R}^2$, note that
\begin{equation}\label{complex representation}
	\int_{\Gamma}\frac{\varphi(y_1,y_2)}{(x_1+ix_2)-(y_1+iy_2)}d(y_1+iy_2)
	=
	-\left(B^*+iA^*\right)\varphi(x_1,x_2).
\end{equation}

Up to a multiplicative constant, the function $A^*\varphi$ is the so-called \emph{double layer potential} associated with the density $\varphi$. Such operators have been extensively studied in the context of Dirichlet and Neumann problems for Laplace's equation (see the classical references \cite[Chapter IV]{courant} and \cite{kellogg}, for instance). For the sake of clarity and completeness, we will nevertheless provide below complete justifications of our methods. We also refer to \cite{fabes} for clear proofs of some functional properties, which are explored in this work, of such operators on smooth domains in any dimension. Finally, we emphasize that the smoothness of the domain, which we assume to hold throughout this paper, is not a mere technical simplification and that the loss of regularity of $\partial\Omega$ brings on subtle and difficult questions about the above operators. We avoid this interesting and important discussion altogether, though, and leave it for subsequent works. We refer to \cite{chang, coifman, fabes2, fabes3, verchota} concerning the theory of double layer potentials on Lipschitz domains.

We begin this section by gathering and discussing several properties of the above operators, which will then allow us to establish important representation formulas for $u_R$.

\subsection{Boundedness and adjointness}

Expressing $y-x=\tau(x)\left((y-x)\cdot\tau(x)\right)+\mathcal{O}\left(|y-x|^2\right)$, the integrals defining $A\varphi(x)$ and $A^*\varphi(x)$ are always well defined for any $x\in\partial\Omega$ and $\varphi\in C\left(\partial\Omega\right)$, and the operators $A$ and $A^*$ are bounded over $L^p\left(\partial\Omega\right)$, for all $1\leq p\leq\infty$. In particular, for all $\varphi,\psi\in C\left(\partial\Omega\right)$, it holds that
\begin{equation}\label{adjoint A}
	\int_{\partial\Omega}\psi(x)A\varphi(x) dx
	= \int_{\partial\Omega}A^*\psi(y)\varphi(y) dy.
\end{equation}
In fact, since $\Omega$ is smooth, a slightly more refined analysis (employing an explicit Taylor expansion of a smooth parametrization of $\partial\Omega$, for instance) shows that the integral kernels of $A$ and $A^*$ are smooth so that these operators are regularizing. More precisely, one can show that $A\varphi$ and $A^*\varphi$ belong to $C^\infty\left(\partial\Omega\right)$, for any $\varphi\in L^1\left(\partial\Omega\right)$.

On the other hand, for any $x\in\partial\Omega$, $B\varphi(x)$ and $B^*\varphi(x)$ only make sense in the sense of Cauchy's principal value:
\begin{equation}\label{limit B}
	\begin{aligned}
		B\varphi(x) & = \lim_{ \eps\to 0}\int_{\partial\Omega\setminus B(x, \eps)}\frac{x-y}{|x-y|^2}\cdot \tau(x)\varphi(y)dy,
		\\
		B^*\varphi(x) & = - \lim_{ \eps\to 0}\int_{\partial\Omega\setminus B(x, \eps)}\frac{x-y}{|x-y|^2}\cdot \tau(y)\varphi(y)dy.
	\end{aligned}
\end{equation}
Indeed, notice that $\frac{x-y}{|x-y|^2}\cdot \tau(y)=0$ whenever $y\in\partial B(x, \eps)$ and $x\in\partial\Omega$. It is therefore always possible to replace the integration domain in the above limit defining $B^*$ by $\left(\partial\Omega\setminus B(x,\eps)\right)\cup \left(\partial B(x,\eps)\cap \Omega^c\right)$, thereby avoiding the singularity at $x$ of the kernel, whence
\begin{equation}\label{B1}
	\begin{aligned}
		B^*1(x) = & - \lim_{ \eps\to 0}\int_{\partial\Omega\setminus B(x, \eps)}\frac{x-y}{|x-y|^2}\cdot \tau(y)dy
		\\
		= & - \lim_{ \eps\to 0}\int_{\Omega^c\setminus B(x,\eps)}\curl\frac{x-y}{|x-y|^2} dy=0,
	\end{aligned}
\end{equation}
by the divergence theorem (which holds for piecewise smooth domains). It follows that $B$ and $B^*$ are given by the formulas, for all $x\in\partial\Omega$,
	\begin{align*}
		B\varphi(x) & = \int_{\partial\Omega}\frac{x-y}{|x-y|^2}\cdot \left(\tau(x)-\tau(y)\right)\varphi(y)dy - B^*\varphi(x),
		\\
		B^*\varphi(x) & = \int_{\partial\Omega}\frac{x-y}{|x-y|^2}\cdot \tau(y)\left(\varphi(x)-\varphi(y)\right)dy,
	\end{align*}
which are clearly well defined for every $\varphi\in C^{0,\alpha}\left(\partial\Omega\right)$, with $0<\alpha\leq 1$, and that the limits in \eqref{limit B} are uniform over $\partial\Omega$. In particular, it is now readily verified that, for all $\varphi,\psi\in C^{0,\alpha}\left(\partial\Omega\right)$,
\begin{equation}\label{adjoint B}
	\begin{aligned}
		\int_{\partial\Omega}\psi(x)B\varphi(x) dx
		& =
		\lim_{\eps\to 0}\int_{\partial\Omega\times\partial\Omega \setminus\left\{|x-y|\geq \eps\right\}}
		\frac{x-y}{|x-y|^2}\cdot\tau(x)\psi(x)\varphi(y)dxdy
		\\
		& =
		\int_{\partial\Omega}B^*\psi(y)\varphi(y) dy.
	\end{aligned}
\end{equation}

Employing $\frac{x-y}{|x-y|^2}\cdot n(y)=\frac 1\eps$ whenever $y\in\partial B(x, \eps)\cap\Omega^c$, observe that a similar calculation yields
\begin{equation}\label{A1}
	\begin{aligned}
		A^*1(x) & = - \lim_{ \eps\to 0}\int_{\partial\Omega\setminus B(x, \eps)}\frac{x-y}{|x-y|^2}\cdot n(y)dy
		\\
		& = \lim_{ \eps\to 0}\left( \frac{1}{\eps}\left|\partial B(x, \eps)\cap\Omega^c\right| - \int_{\Omega^c\setminus B(x,\eps)}\div\frac{x-y}{|x-y|^2} dy\right)=\pi.
	\end{aligned}
\end{equation}
By duality, notice that the identities \eqref{B1} and \eqref{A1} establish
\begin{equation}\label{mean}
		\int_{\partial\Omega}B\varphi dx = 0
		\quad\text{and}\quad
		\int_{\partial\Omega}A\varphi dx = \pi \int_{\partial\Omega}\varphi dx,
\end{equation}
for every $\varphi\in C^{0,\alpha}\left(\partial\Omega\right)$.

More generally, the operators $B$ and $B^*$ are bounded over $L^p\left(\partial\Omega\right)$, for any $1<p<\infty$. Perhaps the easiest way to justify such boundedness properties is by considering the arc-length parametrization $l:\left[0,\left|\partial\Omega\right|\right]\to\mathbb{R}^2$ of $\partial\Omega$ (interpreted as a smooth periodic function over $\left[0,\left|\partial\Omega\right|\right]$) and expressing the kernel, whenever $|s-t|<\frac{2\left|\partial\Omega\right|}3$, as
\begin{equation}\label{kernel decomposition}
	\frac{l(s)-l(t)}{\left|l(s)-l(t)\right|^2}
	=
	\frac {\tau\left(l(t)\right)}{s-t} + r(s,t),
\end{equation}
where, using that $\tau\left(l(t)\right)=l'(t)$ and $|l'(t)|=1$, the function $r(s,t)$ can be written as
\begin{equation*}
	\begin{aligned}
		r(s,t) & =
		\frac{l(s)-l(t)}{\left|l(s)-l(t)\right|^2}
		-\frac {l'(t)}{s-t}
		\\
		& =
		\left(l'(t)+\frac{l(s)-l(t)}{s-t}\right)\cdot\left(\frac{l(t)+l'(t)(s-t)-l(s)}{(s-t)^2}\right)
		\frac{\frac{l(s)-l(t)}{s-t}}{\left|\frac{l(s)-l(t)}{s-t}\right|^2}
		\\
		& \quad +\frac {l(s)-l(t)-l'(t)(s-t)}{(s-t)^2}.
	\end{aligned}
\end{equation*}
By smoothness of the parametrization $l$, it is then clear that $r(s,t)$ is smooth over $\{|s-t|<\frac{2\left|\partial\Omega\right|}3\}$ and that (recall that $l'(t)\cdot l''(t)=0$ because $l$ is an arc-length parametrization)
\begin{equation*}
	\lim_{s\to t}r(s,t)=\frac{l''(t)}2.
\end{equation*}
The periodicity of $l$ and the boundedness of the Hilbert transform over $L^p\left(\mathbb{R}\right)$, for any $1<p<\infty$ (see \cite[Section 4.1]{grafakos}), then yields corresponding bounds on $B$ and $B^*$. Similarly, the behavior of the Hilbert transform over H\"older spaces allows us to deduce that $B\varphi,B^*\varphi\in C^{0,\alpha}\left(\partial\Omega\right)$ as soon as $\varphi\in C^{0,\alpha}$, with $0<\alpha<1$, and that $B\varphi,B^*\varphi\in C^{0,1-\eps}\left(\partial\Omega\right)$, for all $0<\eps<1$, as soon as $\varphi\in C^{0,1}$. This result is known as the Plemelj--Privalov theorem (see \cite[Chap.~2, \S~19 and 20]{musk}).

\subsection{The Plemelj formulas and the Poincar\'e--Bertrand formula}

The theory of double layer potentials (or of Cauchy integrals; see \cite{fabes, musk}) instructs us that, for a smooth boundary $\partial\Omega$ and for any $\varphi\in C^{0,\alpha}\left(\partial\Omega\right)$, with $0<\alpha\leq 1$, the functions $A^*\varphi$ and $B^*\varphi$ are continuous up to the boundary $\partial\Omega$ (see \cite[Chap. 2, \S\ 16]{musk}) and that the limiting values of $A^*\varphi$ and $B^*\varphi$ on $\partial\Omega$ are given by the Plemelj formulas (see \cite[Chap.\ 2, \S\ 17]{musk}):
\begin{equation}\label{plemelj}
	\begin{aligned}
		\lim_{\substack{x\rightarrow x_0\in\partial\Omega \\ x\in\Omega\cup\overline{\Omega}^c}}
		B^*\varphi(x) & = B^*\varphi(x_0),
		\\
		\lim_{\substack{x\rightarrow x_0\in\partial\Omega \\ x\in\Omega}}
		A^*\varphi(x) & = A^*\varphi(x_0) - \pi \varphi(x_0),
		\\
		\lim_{\substack{x\rightarrow x_0\in\partial\Omega \\ x\in\overline{\Omega}^c}}
		A^*\varphi(x) & = A^*\varphi(x_0) + \pi \varphi(x_0).
	\end{aligned}
\end{equation}

These limiting formulas can be used to show the celebrated Poincar\'e--Bertrand formula (see \cite[Chap. 3, \S\ 23]{musk}), which we now recall in its simpler version concerning the inversion of Cauchy integrals (see \cite[Chap. 3, \S\ 27]{musk}): for any smooth contour $\Gamma\subset \mathbb{C}$ and any $\varphi\in C^{0,\alpha}\left(\Gamma\right)$, with $0<\alpha\leq 1$, one has that
\begin{equation*}
	\int_{\Gamma}\frac{1}{z-z_1}\int_{\Gamma}\frac{\varphi(z_2)}{z_1-z_2}dz_2 dz_1
	=-\pi^2\varphi(z),\quad \text{for all }z\in \Gamma.
\end{equation*}
Translating this identity into real variables, utilizing \eqref{complex representation}, yields
\begin{equation*}
	\left(B^*+iA^*\right)^2\varphi(x) = -\pi^2\varphi(x).
\end{equation*}
Therefore, we deduce that
\begin{equation}\label{PBadjoint}
	\begin{aligned}
		\left(A^{*2}-B^{*2}\right)\varphi & = \pi^2\varphi,
		\\
		\left(A^*B^*+B^*A^*\right)\varphi & = 0.
	\end{aligned}
\end{equation}
Equivalently, in view of \eqref{adjoint A} and \eqref{adjoint B}, we note that the adjoint operators satisfy
\begin{equation}\label{PB}
	\begin{aligned}
		\left(A^2-B^2\right)\varphi & = \pi^2\varphi,
		\\
		\left(AB+BA\right)\varphi & = 0.
	\end{aligned}
\end{equation}
In the case of the disk $\mathcal{C}=\overline{B(0,1)}$, these identities correspond exactly to the inversion of the circular Hilbert transform.

\subsection{Boundary vortex sheets}

We focus now on velocity flows given as boundary vortex sheets:
\begin{equation}\label{boundary sheet}
	\begin{aligned}
		v(x) = & K_{\mathbb{R}^2}\left[g\delta_{\partial\Omega}\right]
		=\frac 1{2\pi}\int_{\partial\Omega} \frac{(x-y)^\perp}{|x-y|^2}g(y)dy
		\\
		= & \frac 1{2\pi}\left(B^*[ng]-A^*[\tau g]\right)(x)
		\in C^\infty\left(\mathbb{R}^2\setminus\partial\Omega\right),
	\end{aligned}
\end{equation}
for some suitable $g\in C^{0,\alpha}\left(\partial\Omega\right)$, with $0<\alpha\leq 1$. We show, later on, that any flow $u_R$ solving \eqref{eq uR} can be written as a boundary vortex sheet \eqref{boundary sheet}.

Note that the formula \eqref{boundary sheet} merely defines the flow $v$ away from the boundary $\partial\Omega$. However, using the Plemelj formulas \eqref{plemelj}, one can extend this flow by continuity to $\partial\Omega$ in two different ways, yielding either $v\in C\left(\overline \Omega\right)$ or $v\in C\left(\Omega^c\right)$. More precisely, it is understood that the flow $v\in C\left(\overline \Omega\right)$ has the limit boundary values
\begin{equation}\label{plemelj exterior}
	\lim_{\substack{x\rightarrow x_0\in\partial\Omega \\ x\in\Omega}} v(x)=
	\frac 1{2\pi}\int_{\partial\Omega} \frac{(x_0-y)^\perp}{|x_0-y|^2}g(y)dy +
	\frac 12 \tau(x_0)g(x_0),
\end{equation}
whereas the flow $v\in C\left(\Omega^c\right)$ has the limit boundary values
\begin{equation}\label{plemelj interior}
	\lim_{\substack{x\rightarrow x_0\in\partial\Omega \\ x\in\overline{\Omega}^c}} v(x)=
	\frac 1{2\pi}\int_{\partial\Omega} \frac{(x_0-y)^\perp}{|x_0-y|^2}g(y)dy -
	\frac 12 \tau(x_0)g(x_0),
\end{equation}
where, again, the integrals in the right-hand sides above are defined in the sense of Cauchy's principal value. In other words, the normal component of $v(x)$ is continuous across the boundary $\partial\Omega$, where it takes the value
\begin{equation*}
	v\cdot n(x)=-\frac {1}{2\pi}Bg(x),\quad\text{for all }x\in\partial\Omega,
\end{equation*}
whereas its tangential component has a jump of size $g(x_0)$ at $x_0\in\partial\Omega$ and takes the values on the boundary
\begin{equation}\label{tangent plemelj}
	\begin{aligned}
		v\cdot \tau(x) & =\frac {1}{2\pi}Ag(x)+\frac 12 g(x),\quad\text{for all $x\in\partial\Omega$ from within $\Omega$},
		\\
		v\cdot \tau(x) & =\frac {1}{2\pi}Ag(x)-\frac 12 g(x),\quad\text{for all $x\in\partial\Omega$ from within $\overline{\Omega}^c$}.
	\end{aligned}
\end{equation}

Further employing \eqref{mean}, it follows that the flow $v(x)$ solves uniquely the systems
\begin{equation}\label{boundary sheet system}
	\left\{
	\begin{array}{lcl}
		\div v  =0 & \text{in}& \Omega, \\
		\curl v  =0 & \text{in}& \Omega, \\
		v \cdot n  = -\frac {1}{2\pi}Bg(x) & \text{on }&\partial\Omega, \\
		v  \rightarrow 0 & \text{as}& x\rightarrow\infty, \\
		\oint_{\partial\Omega} v \cdot \tau ds  = \oint_{\partial\Omega}gds, &&
	\end{array}
	\right.
\end{equation}
and
\begin{equation}\label{system interior}
	\left\{
	\begin{array}{lcl}
		\div v  =0 & \text{in}& \overline{\Omega}^c, \\
		\curl v  =0 & \text{in}& \overline{\Omega}^c, \\
		v \cdot n  = -\frac {1}{2\pi}Bg(x) & \text{on }&\partial\Omega.
	\end{array}
	\right.
\end{equation}

\subsection{Invertibility of $B$ and $A^2-\pi^2$}\label{construction of inverse}

The operators $B$ and $A^2-\pi^2$ over $L^2\left(\partial\Omega\right)$ are not invertible, for their image is a proper subset of $L^2\left(\partial\Omega\right)$, by \eqref{mean}. However, as we are about to show, they are invertible when their action is restricted to functions of zero mean value.

More precisely, introducing the space
\begin{equation*}
	L^2_0\left(\partial\Omega\right) = \set{h\in L^2\left(\partial\Omega\right)}{\oint_{\partial\Omega}h\, ds = 0},
\end{equation*}
we are now looking for an inverse of the bounded operator $B:L^2_0\left(\partial\Omega\right)\to L^2_0\left(\partial\Omega\right)$. We also consider the bounded operator $A:L^2_0\left(\partial\Omega\right)\to L^2_0\left(\partial\Omega\right)$, which is well defined by \eqref{mean}.

Notice that $A$ is a Hilbert-Schmidt operator, for the kernel of $A$ is smooth. It therefore follows that $A^2$ is compact and that the Fredholm alternative (see \cite[Theorem VI.14]{reed}) applies to $A^2-\pi^2$. More precisely, either $(A^2-\pi^2)^{-1}:L^2_0\left(\partial\Omega\right)\to L^2_0\left(\partial\Omega\right)$ exists or $A^2\varphi=\pi^2\varphi$ (i.e.\ $B^2\varphi=0$ by \eqref{PB}) has a nontrivial solution in $L^2_0\left(\partial\Omega\right)$. It turns out that the latter alternative never holds.

Indeed, suppose that there is some $g\in L^2_0\left(\partial\Omega\right)$ such that $Bg=0$. By \eqref{PB}, it holds that $\pi^2g=A^2g$ so that $g$ is smooth, for $A$ is a regularizing operator. Then, plugging $g$ into \eqref{boundary sheet} yields a velocity field $v(x)$ solving the system
\begin{equation*}
	\left\{
	\begin{array}{lcl}
		\div v  =0 & \text{in}& \Omega\cup\overline{\Omega}^c, \\
		\curl v  =0 & \text{in}& \Omega\cup\overline{\Omega}^c, \\
		v \cdot n  = 0 & \text{on }&\partial\Omega, \\
		v  \rightarrow 0 & \text{as}& x\rightarrow\infty, \\
		\oint_{\partial\Omega} v \cdot \tau ds  = 0. &&
	\end{array}
	\right.
\end{equation*}
By uniqueness, we find that $v\equiv 0$ on $\Omega\cup\overline{\Omega}^c$, whence $g=0$ by \eqref{plemelj exterior} and \eqref{plemelj interior}.

In virtue of the Fredholm alternative, this establishes that $A^2-\pi^2:L^2_0\left(\partial\Omega\right)\to L^2_0\left(\partial\Omega\right)$ always has an inverse. Using \eqref{PB}, it is now possible to produce an inverse for $B:L^2_0\left(\partial\Omega\right)\to L^2_0\left(\partial\Omega\right)$, too. Indeed, noticing that $\left(A^2-\pi^2\right)^{-1}$ commutes with $B$ because $A^2$ commutes with $B$ by virtue of \eqref{PB}, one verifies that
\begin{equation}\label{inverse B}
	B^{-1}=\left(A^2-\pi^2\right)^{-1}B:L^2_0\left(\partial\Omega\right)\to L^2_0\left(\partial\Omega\right).
\end{equation}
Observe, finally, that an inverse for $A-\pi:L^2_0\left(\partial\Omega\right)\to L^2_0\left(\partial\Omega\right)$ is readily given by
\begin{equation*}
	\left(A-\pi\right)^{-1}=\left(A^2-\pi^2\right)^{-1}\left(A+\pi\right):L^2_0\left(\partial\Omega\right)\to L^2_0\left(\partial\Omega\right).
\end{equation*}

\subsection{Representation of $u_R$ as a boundary vortex sheet}

We show now that $u_R$ can be expressed as a boundary vortex sheet \eqref{boundary sheet}.

Since the flow $v(x)$ defined by \eqref{boundary sheet} is the unique solution to \eqref{boundary sheet system}, we conclude that $v(x)$ coincides with the unique solution $u_R(x)\in C^0\left(\overline\Omega\right)\cap C^1\left(\Omega\right)$ of \eqref{eq uR} if and only if $g\in C^{0,\alpha}\left(\partial\Omega\right)$ satisfies
\begin{equation}\label{density 1}
	-\frac 1{2\pi}Bg(x) = u_R\cdot n(x)=-u_P\cdot n(x),
	\quad\text{for every }x\in\partial\Omega,
\end{equation}
and
\begin{equation}\label{density 2}
	\begin{aligned}
		\int_{\partial\Omega} g(x) dx
		= \gamma.
	\end{aligned}
\end{equation}

By \eqref{mean}, all functions in the image of $B$ have zero mean over $\partial\Omega$. Note, in particular, that the right-hand side of \eqref{density 1} has indeed zero mean because $u_P$ is solenoidal in $\mathbb{R}^2$. Moreover, by linearity, considering $g-\frac \gamma{\left|\partial\Omega\right|}$ instead of $g$, inverting \eqref{density 1}-\eqref{density 2} easily reduces to finding an inverse for $B$ over functions with zero mean value, which we have already shown to exist in \eqref{inverse B}.

 The system \eqref{density 1}-\eqref{density 2} is then solved by
\begin{equation}\label{inverse g}
	g=B^{-1}\left[2\pi u_P\cdot n - B\frac\gamma{|\partial\Omega|}\right]+\frac\gamma{|\partial\Omega|}\in L^2\left(\partial\Omega\right).
\end{equation}
It is to be emphasized that $B^{-1}B1\neq 1$, for $B^{-1}B1$ has mean zero.

\begin{remark}
	When the obstacle is the unit disk $\mathcal{C}=\overline{B(0,1)}$, the system \eqref{density 1}-\eqref{density 2} is related to the inversion of the circular Hilbert transform. This restricted geometry leads to more explicit representation formulas for $g$, because $B$ becomes a Hilbert transform and $A$ is an averaging operator (so that $Bg$ and $Ag$ are fully determined by \eqref{density 1} and \eqref{density 2}, respectively). We refer to \cite{ADLproc} for full details on this setting. In general, condition \eqref{density 2} is not easily expressed in terms of $A$ and $B$ unless $\partial\Omega$ is a circle.
\end{remark}

Observe that the above formula {\it a priori} only places the density $g$ in the space $L^2\left(\partial\Omega\right)$. Nonetheless, by \eqref{density 1} and \eqref{PB}, it is readily seen that
\begin{equation*}
	\pi^2 g=A^2g-2\pi B[u_P\cdot n],
\end{equation*}
which implies, by the aforementioned regularity properties of the operators $A$ and $B$, since $g\in L^1\left(\partial\Omega\right)$ and $u_P\cdot n\in C^\infty\left(\partial\Omega\right)$, that $g\in C^{0,\alpha}\left(\partial\Omega\right)$, for all $0<\alpha\leq 1$ (in fact, $g$ is even smoother than this).

On the whole, we have shown, for any given $0<\alpha\leq 1$, that there exists a unique $g\in C^{0,\alpha}(\partial\Omega)$ (given by \eqref{inverse g}) such that $u_R$ is expressed as a boundary vortex sheet \eqref{boundary sheet}. Thus, combining \eqref{boundary sheet} with \eqref{inverse g}, we find that
	\begin{align*}
		u_R(x)
		= & \int_{\partial\Omega} \frac{(x-y)^\perp}{|x-y|^2}
		B^{-1}\left[u_P\cdot n\right](y)dy
		\\
		& +
		\frac \gamma{2\pi|\partial\Omega|}\int_{\partial\Omega} \frac{(x-y)^\perp}{|x-y|^2}\left(1-B^{-1}B1\right)(y) dy.
	\end{align*}
Considering this representation formula for the unique harmonic vector field $H(x)$ in $\Omega$ defined by \eqref{harmonic}, i.e.\ setting $u_P\cdot n=0$ and $\gamma=1$ above, we further obtain that
\begin{equation}\label{harmonic2}
	H(x)
	=
	\frac 1{2\pi|\partial\Omega|}\int_{\partial\Omega} \frac{(x-y)^\perp}{|x-y|^2}\left(1-B^{-1}B1\right)(y) dy \quad\text{in }\Omega.
\end{equation}
It follows that
\begin{equation}\label{boundary sheet omega}
	\begin{aligned}
		u_R(x)
		& =\int_{\partial\Omega} \frac{(x-y)^\perp}{|x-y|^2}
		B^{-1}\left[u_P\cdot n\right](y)dy
		+
		\gamma H(x)
		\\
		& =\int_{\partial\Omega} \frac{(x-y)^\perp}{|x-y|^2}
		\left(A^2-\pi^2\right)^{-1}B\left[u_P\cdot n\right](y)dy
		+
		\gamma H(x).
	\end{aligned}
\end{equation}
(We strongly advise the reader to compare the above representation formula for $u_R$ on the exterior of any smooth obstacle with the corresponding much simpler representation formula (2.10) in \cite{ADLproc} for the exterior of the unit disk. The operator $H$ therein represents the circular Hilbert transform whereas $\frac{x^\perp}{2\pi|x|^2}$ is precisely the harmonic vector field for the unit disk.)

The existence of the density $g\in C^{0,\alpha}\left(\partial\Omega\right)$ satisfying conditions \eqref{density 1} and \eqref{density 2} for any suitable given data is nontrivial and at the heart of the present work, for \eqref{approx} is essentially a discretization of \eqref{boundary sheet}. The abstract construction of the inverse of $B$ in Section \ref{construction of inverse} through the Fredlhom alternative is not suitable for a discretization procedure, though. In order to use the invertibility \eqref{density 1}-\eqref{density 2} to solve system \eqref{point vortex}, we need now to refine our understanding of the operators $A$ and $B$ and their respective spectra.

\subsection{Kernels of $B$ and $A-\pi$}\label{kernels}

Further observe, by \eqref{system interior}, that the right-hand side of \eqref{harmonic2} also defines the unique solution (which is trivially zero) to
\begin{equation*}
	\left\{
	\begin{array}{lcl}
		\div v  =0 & \text{in}& \overline{\Omega}^c, \\
		\curl v  =0 & \text{in}& \overline{\Omega}^c, \\
		v \cdot n  = 0 & \text{on }&\partial\Omega,
	\end{array}
	\right.
\end{equation*}
whereby, by \eqref{tangent plemelj}, we find the relations
	\begin{align*}
		\left|\partial\Omega\right|H\cdot\tau(x) = & \frac 1{2\pi}A\left[1-B^{-1}B1\right](x)+\frac 12\left(1-B^{-1}B1\right)(x),
		\\
		0 = & \frac 1{2\pi}A\left[1-B^{-1}B1\right](x)-\frac 12\left(1-B^{-1}B1\right)(x),
	\end{align*}
for all $x\in\partial\Omega$ (we emphasize here that the values of $H$ on $\partial\Omega$ are given by its limiting values from $\Omega$), which are equivalent to
\begin{equation}\label{harmonic4}
	\begin{aligned}
		(1-B^{-1}B1) & = \left|\partial\Omega\right|H\cdot\tau,
		\\
		A[H\cdot\tau] & = \pi H\cdot\tau,
	\end{aligned}
\end{equation}
on $\partial\Omega$.

We conclude that $H\cdot\tau$ lies in the kernels of $B$ and $A-\pi$, and that one has the representations (using \eqref{harmonic2} and then \eqref{plemelj exterior}, again)
\begin{equation}\label{harmonic3}
	\begin{aligned}
		H(x)
		& =
		\frac 1{2\pi}\int_{\partial\Omega} \frac{(x-y)^\perp}{|x-y|^2}H\cdot\tau(y) dy && \text{in }\Omega,
		\\
		H(x)
		& =
		\frac 1{2\pi}\int_{\partial\Omega} \frac{(x-y)^\perp}{|x-y|^2}H\cdot\tau(y) dy + \frac 12 \tau(x)H\cdot\tau(x) && \text{on }\partial\Omega.
	\end{aligned}
\end{equation}

Finally, since any $g\in L^2\left(\partial\Omega\right)$ satisfies $g-\gamma H\cdot\tau\in L^2_0\left(\partial\Omega\right)$ for some appropriate $\gamma\in\mathbb{R}$ and both operators $A-\pi$ and $B$ are invertible over $L^2_0\left(\partial\Omega\right)$, we deduce that the kernels of $A-\pi:L^2\left(\partial\Omega\right)\to L^2_0\left(\partial\Omega\right)$ and $B:L^2\left(\partial\Omega\right)\to L^2_0\left(\partial\Omega\right)$ coincide exactly with the span of $H\cdot\tau$.

\subsection{Spectrum of $A$}

It is now possible to deduce some simple spectral properties for $A:L^2\left(\partial\Omega\right)\to L^2\left(\partial\Omega\right)$ from the preceding developments. First, since $A$ is compact, by the classical Riesz-Schauder theorem (see \cite[Theorem VI.15]{reed} or \cite[Chapter~X, \S~5]{yosida}), we know that its spectrum $\sigma(A)$ is at most countable with no limit points except, possibly, at zero. Moreover, $\sigma(A)\setminus\{0\}$ is composed solely of eigenvalues with finite multiplicity (i.e.\ corresponding eigenspaces are finite dimensional).

We also consider the spectrum of $A:L^2_0\left(\partial\Omega\right)\to L^2_0\left(\partial\Omega\right)$ which we distinguish from $\sigma(A)$ by denoting it by $\sigma_0(A)$. Clearly, if $\lambda\in \mathbb{C}\setminus\sigma(A)$, then $\lambda\neq\pi$ (for $\pi-A$ has a nontrivial kernel; see \eqref{harmonic4}), the operator $\lambda - A$ is invertible and, by \eqref{mean}, its inverse leaves $L^2_0\left(\partial\Omega\right)$ invariant, whereby $\lambda\in \mathbb{C}\setminus\left(\sigma_0(A)\cup\left\{\pi\right\}\right)$. It follows that $\sigma_0(A)\cup\left\{\pi\right\}\subset\sigma(A)$. On the other hand, if $\lambda\in \mathbb{C}\setminus\left(\sigma_0(A)\cup\left\{\pi\right\}\right)$, then $\lambda-A$ has a bounded inverse over functions with mean zero. It is then possible to extend this inverse to functions with non-zero average with the definition
\begin{equation*}
	\left(\lambda-A\right)^{-1}\varphi
	= \left(\lambda-A\right)^{-1}\left(\varphi-\gamma H\cdot\tau\right) + \frac{\gamma}{\lambda-\pi}H\cdot\tau,
	\quad\text{where }\gamma=\oint_{\partial\Omega}\varphi dx,
\end{equation*}
and one verifies that this produces a well defined inverse which is bounded over $L^2\left(\partial\Omega\right)$, whereby $\lambda\in \mathbb{C}\setminus\sigma(A)$ and therefore $\sigma(A)\subset \sigma_0(A)\cup\{\pi\}$. On the whole, we conclude that $\sigma(A)=\sigma_0(A)\cup\{\pi\}$.

We have already identified, in Section \ref{kernels}, the span of $H\cdot\tau$ as the eigenspace corresponding to the eigenvalue $\pi\in\sigma(A)$. In particular, since $H\cdot\tau$ does not have mean zero over $\partial\Omega$, we see that $\pi\notin\sigma_0(A)$.

Suppose now that $\varphi\in L^2\left(\partial\Omega\right)$ satisfies $A\varphi=-\pi\varphi$. Then, by \eqref{PB}, it holds that $B^2\varphi=0$, whence $B\varphi$ both belongs to the kernel of $B$ and has mean zero by \eqref{mean}, which implies that $B\varphi=0$ (recall that the mean of $H\cdot\tau$ is non-zero). We conclude that $\varphi$ also belongs to the kernel of $B$ and that it is therefore a constant multiple of $H\cdot\tau$. Since we have already shown that $A[H\cdot\tau]=\pi H\cdot\tau$, this establishes that $-\pi$ is not an eigenvalue of $A$.

Finally, by \eqref{PB}, if $\varphi\in L^2_0\left(\partial\Omega\right)$ is an eigenvector of $A$ for some eigenvalue $\lambda\in\mathbb{C}\setminus\left\{\pm\pi\right\}$, we find that $B\varphi\in L^2_0\left(\partial\Omega\right)$ is an eigenvector of $A$ for the eigenvalue $-\lambda$, whence $\sigma_0(A)=-\sigma_0(A)$.

In fact, it is well-known that the spectral radius of $A$ is no larger than $\pi$ (see \cite[Chapter~XI, \S~11]{kellogg}). For convenience of the reader, though, we provide here a short argument showing that any eigenvalue $\lambda\in\mathbb{C}$ is actually real and has modulus bounded by $\pi$. To this end, consider an eigenvector $g\in L^2\left(\partial\Omega\right)$ of $A$ corresponding to some eigenvalue $\lambda\in\mathbb{C}\setminus\left\{0,\pi\right\}$ (note that, since $A$ is regularizing, $g$ is actually smooth and that, by \eqref{mean}, $g$ has mean zero). Then, considering the velocity field given by \eqref{boundary sheet} and defining $h(x)=\frac 1{2\pi}\int_{\partial\Omega}\log\left(|x-y|\right)g(y)dy$, we compute that, employing \eqref{plemelj exterior}, \eqref{plemelj interior} and that $v(x)=v(x)-\frac 1{2\pi}\frac{x^\perp}{|x|^2}\int_{\partial\Omega}gdx=\mathcal{O}\left(|x|^{-2}\right)$ for densities with mean zero,
	\begin{align*}
		\left(\lambda - \pi\right) & \int_{\Omega}|v(x)|^2dx
		+
		\left(\lambda + \pi\right)\int_{\overline\Omega^c}|v(x)|^2dx
		\\
		= &
		\left(\lambda - \pi\right)\int_{\Omega}
		v(x) \cdot \nabla^\perp\overline{h(x)}dx
		+
		\left(\lambda + \pi\right)\int_{\overline\Omega^c}
		v(x) \cdot \nabla^\perp\overline{h(x)}dx
		\\
		= &
		\left(\lambda - \pi\right)\int_{\Omega}
		\curl\left(v(x) \overline{h(x)}\right)dx
		+
		\left(\lambda + \pi\right)\int_{\overline\Omega^c}
		\curl\left(v(x) \overline{h(x)}\right)dx
		\\
		= &
		\frac{\pi-\lambda}{2\pi} \int_{\partial\Omega}
		\left(Ag(x)+\pi g(x)\right) \overline{h(x)} dx
		+
		\frac{\pi+\lambda}{2\pi} \int_{\partial\Omega}
		\left(Ag(x)-\pi g(x)\right) \overline{h(x)} dx
		\\
		& + \lim_{R\to\infty}\left(\lambda-\pi\right)
		\int_{\partial B(0,R)} v(x)\cdot\tau(x) \overline{h(x)} dx = 0,
	\end{align*}
where the tangent vector $\tau(x)$ on $\partial B(0,R)$ points in the counterclockwise direction. Thus, since $v\neq 0$ (otherwise $g=0$ by \eqref{plemelj exterior}-\eqref{plemelj interior}), we conclude that the origin $0\in\mathbb{C}$ can be expressed as a convex combination of $\lambda-\pi$ and $\lambda+\pi$. Some elementary geometry implies then that $\lambda\in \left[-\pi,\pi\right]\subset\mathbb{C}$.

On the whole, we conclude that
\begin{equation*}
	\sigma_0(A)=-\sigma_0(A)\subset \left(-\pi,\pi\right)
	\quad\text{and}\quad
	\sigma(A)=\sigma_0(A)\cup\left\{\pi\right\}\subset \left(-\pi,\pi\right].
\end{equation*}
In particular, by Gelfand's formula for the spectral radius (see \cite[Chapter~VIII, \S~2]{yosida}, for instance), we obtain that
\begin{equation}\label{gelfand}
	\begin{aligned}
		\lim_{k\to\infty}\left\|A^k\right\|_{\mathcal{L}\left(L^2\right)}^\frac 1k
		& =\inf_{k\geq 1}\left\|A^k\right\|_{\mathcal{L}\left(L^2\right)}^\frac 1k
		=\pi,
		\\
		\lim_{k\to\infty}\left\|A^k\right\|_{\mathcal{L}\left(L^2_0\right)}^\frac 1k
		& =\inf_{k\geq 1}\left\|A^k\right\|_{\mathcal{L}\left(L^2_0\right)}^\frac 1k
		<\pi,
	\end{aligned}
\end{equation}
and the inverse of $A^2-\pi^2:L^2_0\left(\partial\Omega\right)\to L^2_0\left(\partial\Omega\right)$ is therefore given by the Neumann series
\begin{equation*}
	\left(A^2-\pi^2\right)^{-1}
	=
	-\pi^{-2}\sum_{n=0}^\infty \left(\frac A\pi\right)^{2n},
\end{equation*}
which is absolutely convergent in $\mathcal{L}\left(L^2_0\right)$ for, by \eqref{gelfand}, there is some $\eps>0$ such that $\left\|\left(\frac A\pi\right)^{k}\right\|_{\mathcal{L}\left(L^2_0\right)}\leq \left(1-\eps\right)^{k}$ for large $k$.

Contrary to the abstract method of contruction of inverses based on the Fredholm alternative from Section \ref{construction of inverse}, the present spectral approach allows us to deduce precise bounds on the inverses by quantifying the spectral gap of $A$ at $\pm\pi$. The ensuing estimates are robust and well adapted for discretization procedures, which will be crucial in the remainder of this work.

\subsection{Other representations of $u_R$}

It turns out that there is yet another convenient representation formula for the flow $u_R$, which is a variant of the boundary vortex sheet \eqref{boundary sheet}.

More precisely, we claim now that in the exterior of a given obstacle, $u_R$ can also be expressed as:
\begin{equation}\label{boundary sheet 2}
	\begin{aligned}
		w(x) = & \frac 1{2\pi}\int_{\partial\Omega} \frac{x-y}{|x-y|^2}h(y)dy
		+ \gamma H(x)
		\\
		= & -\frac 1{2\pi}\left(A^*[nh]+B^*[\tau h]\right)(x) + \gamma H(x)\in C^\infty\left(\mathbb{R}^2\setminus\partial\Omega\right),
	\end{aligned}
\end{equation}
for some suitable $h\in C^{0,\alpha}\left(\partial\Omega\right)$, with $0<\alpha\leq 1$. Recall that $H(x)$ is the harmonic vector field uniquely defined in $\Omega$ by \eqref{harmonic}, which we extend into $\overline\Omega^c$ by zero so that $H(x)$ is represented by \eqref{harmonic2} in $\Omega\cup\overline\Omega^c$.

As before, the theory of Cauchy integrals instructs us that, for a smooth boundary $\partial\Omega$ and for any $h\in C^{0,\alpha}\left(\partial\Omega\right)$, the flow $w$ is continuous up to the boundary $\partial\Omega$, that is $w\in C\left(\overline \Omega\right)\cup C\left(\Omega^c\right)$, and that the limiting values of $w$ on $\partial\Omega$ are given by the Plemelj formulas \eqref{plemelj}. Hence, we deduce that
\begin{equation*}
	\lim_{\substack{x\rightarrow x_0\in\partial\Omega \\ x\in\Omega}} w(x)=
	\frac 1{2\pi}\int_{\partial\Omega} \frac{x_0-y}{|x_0-y|^2}h(y)dy +
	\frac 12 n(x_0)h(x_0)+ \gamma H(x_0),
\end{equation*}
and
\begin{equation*}
	\lim_{\substack{x\rightarrow x_0\in\partial\Omega \\ x\in\overline{\Omega}^c}} w(x)=
	\frac 1{2\pi}\int_{\partial\Omega} \frac{x_0-y}{|x_0-y|^2}h(y)dy -
	\frac 12 n(x_0)h(x_0).
\end{equation*}
Again, we emphasize that the values of $H$ on $\partial\Omega$ are given here by its limiting values from $\Omega$ so that the representation formulas \eqref{harmonic3} are valid.

Therefore, we conclude that the flow $w(x)$ given by \eqref{boundary sheet 2} defines the unique solution $u_R(x)\in C^0\left(\overline\Omega\right)\cap C^1\left(\Omega\right)$ of \eqref{eq uR} if and only if $h\in C^{0,\alpha}\left(\partial\Omega\right)$ satisfies
\begin{equation}\label{density 3}
	\begin{aligned}
		\frac 1{2\pi}
		\left(A+\pi\right)h(x)
		& =
		\frac 1{2\pi}\int_{\partial\Omega} \frac{x-y}{|x-y|^2}\cdot n(x)h(y)dy + \frac 12 h(x)
		\\
		& =u_R\cdot n(x)=-u_P\cdot n(x),
		\quad\text{for every }x\in\partial\Omega.
	\end{aligned}
\end{equation}
Provided \eqref{density 3} is verified and using \eqref{mean}, note that it necessarily holds
	\begin{align*}
		\int_{\partial\Omega} h(x) dx
		=\frac 1{2\pi}
		\int_{\partial\Omega} \left(A+\pi\right)h(x) dx
		= -\int_{\partial\Omega} u_P\cdot n(x)dx=0,
	\end{align*}
and that the circulation condition
	\begin{align*}
		\int_{\partial\Omega} u_R\cdot\tau(x)dx
		= &
		\int_{\partial\Omega}\left(\frac 1{2\pi}\int_{\partial\Omega} \frac{x-y}{|x-y|^2}h(y)dy +
		\frac 12 n(x)h(x) + \gamma H(x) \right)\cdot\tau(x)dx
		\\
		= & \frac 1{2\pi}\int_{\partial\Omega}Bh(x)dx + \gamma=\gamma,
	\end{align*}
is automatically satisfied.

The existence of such a density $h\in C^{0,\alpha}\left(\partial\Omega\right)$ satisfying \eqref{density 3} for any suitable given data is nontrivial (again, we refer to \cite{ADLproc} for a treatment of the simpler case of the unit disk). However, in view of the above spectral analysis of the operator $A$, it is readily seen that $A+\pi:L^2_0\left(\partial\Omega\right)\to L^2_0\left(\partial\Omega\right)$ has an inverse given by the Neumann series
\begin{equation*}
	\left(A+\pi\right)^{-1}
	=
	\pi^{-1}\sum_{n=0}^\infty (-1)^n\left(\frac A\pi\right)^{n},
\end{equation*}
which is absolutely convergent in $\mathcal{L}\left(L^2_0\right)$. In fact, observing that the spectrum of $A-\pi$ is contained in $\left(-2\pi,0\right]$, i.e.\ $\sigma\left(A-\pi\right)\subset \left(-2\pi,0\right]$, yields, by Gelfand's formula again, the precise estimate
\begin{equation}\label{gelfand 2}
	\lim_{k\to\infty}\left\|\left(A-\pi\right)^k\right\|_{\mathcal{L}\left(L^2\right)}^\frac 1k
	=\inf_{k\geq 1}\left\|\left(A-\pi\right)^k\right\|_{\mathcal{L}\left(L^2\right)}^\frac 1k
	< 2\pi,
\end{equation}
which implies that $A+\pi:L^2\left(\partial\Omega\right)\to L^2\left(\partial\Omega\right)$ also has a bounded inverse given by the Neumann series
\begin{equation*}
	\left(A+\pi\right)^{-1}
	=
	\frac 1{2\pi}\sum_{n=0}^\infty \left(\frac {\pi -A}{2\pi}\right)^{n},
\end{equation*}
which is absolutely convergent in $\mathcal{L}\left(L^2\right)$ for, by \eqref{gelfand 2}, there is some $\eps>0$ such that $\left\|\left(\frac {\pi -A}{2\pi}\right)^{k}\right\|_{\mathcal{L}\left(L^2\right)}\leq \left(1-\eps\right)^{k}$ for large $k$.

Therefore, it is now readily seen that the equation \eqref{density 3} is uniquely solved by
\begin{equation*}
	h=-2\pi\left(A+\pi\right)^{-1}[u_P\cdot n]\in C^\infty\left(\partial\Omega\right),
\end{equation*}
whereby, in view of \eqref{boundary sheet 2}, we obtain the following representation formula on the exterior of a smooth obstacle:
\begin{equation}\label{boundary sheet omega 2}
	u_R(x) = -\int_{\partial\Omega} \frac{x-y}{|x-y|^2}\left(A+\pi\right)^{-1}[u_P\cdot n](y)dy
	+ \gamma H(x).
\end{equation}

\begin{remark}
For any velocity field $H_*$ satisfying
	\begin{equation}\label{harmonic5}
		\left\{
		\begin{array}{lcl}
			\div H_*  =0 & \text{in}& \Omega, \\
			\curl H_*  =0 & \text{in}& \Omega, \\
			H_*  \rightarrow 0 & \text{as}& x\rightarrow\infty, \\
			\oint_{\partial\Omega} H_* \cdot \tau ds  = 1, &&
		\end{array}
		\right.
	\end{equation}
we note, by the Plemelj formulas \eqref{plemelj} and by \eqref{mean}, that
\[
\tilde H(x) := H_*(x) - \int_{\partial \Omega} \frac{x-y}{|x-y|^2} (A+\pi)^{-1}[H_*\cdot n](y)dy
\]
is divergence and curl free in $\Omega$, goes to $0$ when $x\to \infty$ and verifies
\begin{gather*}
\tilde H \cdot n = H_*\cdot n-(A+\pi)(A+\pi)^{-1}[H_*\cdot n]=0 \text{ on  }\partial \Omega,\\
\oint_{\partial \Omega} \tilde H \cdot \tau ds = \oint_{\partial \Omega}  H_* \cdot \tau ds-\oint_{\partial \Omega} B (A+\pi)^{-1}[H_*\cdot n] =1.
\end{gather*}
By uniqueness in \eqref{harmonic}, we deduce $\tilde H=H$. This can be used to replace the harmonic vector field $H(x)$ in \eqref{boundary sheet omega 2} with more convenient expressions, thereby yielding a variant formula:
	\begin{equation}\label{boundary sheet omega 4}
		u_R(x) = -\int_{\partial\Omega} \frac{x-y}{|x-y|^2}\left(A+\pi\right)^{-1}\left[\left(u_P+\gamma H_*\right)\cdot n\right](y)dy
		+ \gamma H_*(x).
	\end{equation}
	For instance, one may consider the velocity field $H_*(x)=\frac{(x-x_*)^\perp}{2\pi |x-x_*|^2}=K_{\R^2}[\delta_{x_*}]$, for any given $x_*\in {\overline \Omega^c}$.
\end{remark}

It then follows, by comparing \eqref{boundary sheet omega 2} with \eqref{boundary sheet omega} and by uniqueness of solutions to system \eqref{eq uR}, that
	\begin{align*}
		-\int_{\partial\Omega} \frac{x-y}{|x-y|^2} & \left(A+\pi\right)^{-1}[u_P\cdot n](y)dy
		\\
		& =
		\int_{\partial\Omega} \frac{(x-y)^\perp}{|x-y|^2}
		\left(A^2-\pi^2\right)^{-1}B\left[u_P\cdot n\right](y)dy,
		\quad\text{for every }x\in\Omega,
	\end{align*}
whence we infer that, replacing $u_P\cdot n$ by $B\left(A-\pi\right)\varphi$ in view of the arbitrariness of zero-mean boundary data in \eqref{eq uR} and using the Poincar\'e--Bertrand identities \eqref{PB},
\begin{equation}\label{vortex identity 0}
	\begin{aligned}
		\int_{\partial\Omega} \frac{x-y}{|x-y|^2}B\varphi(y)dy
		= &
		-\int_{\partial\Omega} \frac{x-y}{|x-y|^2}\left(A+\pi\right)^{-1}B\left(A-\pi\right)\varphi(y)dy
		\\
		= &
		\int_{\partial\Omega} \frac{(x-y)^\perp}{|x-y|^2}
		\left(A^2-\pi^2\right)^{-1}B^2\left(A-\pi\right)\varphi(y)dy
		\\
		= &
		\int_{\partial\Omega} \frac{(x-y)^\perp}{|x-y|^2}
		\left(A-\pi\right)\varphi(y)dy,
		\quad \text{for every }x\in\Omega.
	\end{aligned}
\end{equation}
By adjointness (see \eqref{adjoint A} and \eqref{adjoint B}), we further obtain that
	\begin{align*}
		\int_{\partial\Omega} \int_{\partial\Omega} & \frac{y-z}{|y-z|^2} \cdot \tau(z) \frac{x-z}{|x-z|^2}dz\varphi(y)dy
		\\
		& =
		\int_{\partial\Omega} \int_{\partial\Omega}\frac{y-z}{|y-z|^2}\cdot n(z)\frac{(x-z)^\perp}{|x-z|^2}
		dz\varphi(y)dy
		+\pi
		\int_{\partial\Omega} \frac{(x-y)^\perp}{|x-y|^2}
		\varphi(y)dy,
	\end{align*}
and, thus, by the arbitrariness of $\varphi$, we deduce the identity
\begin{equation}\label{vortex identity}
	\begin{aligned}
		\pi
		\frac{(x-y)^\perp}{|x-y|^2}
		=&
		\int_{\partial\Omega}\frac{y-z}{|y-z|^2} \cdot \tau(z) \frac{x-z}{|x-z|^2}dz\\
		&-
		\int_{\partial\Omega}\frac{y-z}{|y-z|^2}\cdot n(z)\frac{(x-z)^\perp}{|x-z|^2}
		dz,
		\quad
		\forall (x,y)\in\Omega\times\partial\Omega,
	\end{aligned}
\end{equation}
which will be useful later on.

Finally, note that, combining \eqref{vortex identity} (or \eqref{vortex identity 0}) with \eqref{boundary sheet omega}, we obtain yet another convenient representation formula
\begin{equation}\label{vortex identity 2}
\begin{aligned}
 	u_R(x)
	=&\frac 1\pi\int_{\partial\Omega} \frac{(x-y)^\perp}{|x-y|^2}
	AB^{-1}\left[u_P\cdot n\right](y)dy\\
	&-
	\frac 1\pi\int_{\partial\Omega} \frac{x-y}{|x-y|^2}
	\left[u_P\cdot n\right](y)dy
	+
	\gamma H(x),
\end{aligned}
\end{equation}
whereas combining \eqref{vortex identity} (or \eqref{vortex identity 0}) with \eqref{boundary sheet omega 2} yields
	\begin{align*}
		u_R(x) = & -\frac 1\pi \int_{\partial\Omega}\frac{(x-z)^\perp}{|x-z|^2}
			B\left(A+\pi\right)^{-1}\left[u_P\cdot n\right](z)dz
			\\
			& -\frac 1\pi
			\int_{\partial\Omega} \frac{x-z}{|x-z|^2}
			A\left(A+\pi\right)^{-1}\left[u_P\cdot n\right](z)dz
		+ \gamma H(x).
	\end{align*}

\begin{remark}
	A variant representation formula is obtained by combining \eqref{vortex identity} (or \eqref{vortex identity 0}) with \eqref{boundary sheet omega 4} instead of \eqref{boundary sheet omega 2}:
	\begin{equation}\label{vortex identity 3}
		\begin{aligned}
			u_R(x) = & -\frac 1\pi \int_{\partial\Omega}\frac{(x-z)^\perp}{|x-z|^2}
				B\left(A+\pi\right)^{-1}\left[\left(u_P+\gamma H_*\right)\cdot n\right](z)dz
				\\
				& -\frac 1\pi
				\int_{\partial\Omega} \frac{x-z}{|x-z|^2}
				A\left(A+\pi\right)^{-1}\left[\left(u_P+\gamma H_*\right)\cdot n\right](z)dz
			+ \gamma H_*(x).
		\end{aligned}
	\end{equation}
\end{remark}

\begin{remark}
As previously explained, our goal is to justify that $u_{\rm app}^N$, defined by \eqref{approx}, is a good discretization of the formulation \eqref{boundary sheet omega}. In fact, it is also possible to discretize \eqref{boundary sheet omega 2} (or \eqref{boundary sheet omega 4}) which provides another approximation of $u_{R}$. We explore this alternative approach in Section \ref{charges}.
\end{remark}

\section{Solving system \eqref{point vortex} and the discrete Poincar\'e--Bertrand formula}\label{section discrete}

In this section, we explain how system \eqref{point vortex} can be uniquely solved as soon as $N$ is sufficiently large provided $\{ x_i^N \}$ and $\{ \tilde x_i^N \} $ are well distributed. This will be achieved by employing a strategy inspired by the inversion of system \eqref{density 1}-\eqref{density 2}.

Considering the parameters $\{ s_i^N \}$ and $\{ \tilde s	_i^N \} $ associated to $\{ x_i^N \}$ and $\{ \tilde x_i^N \} $ (see \eqref{xi}-\eqref{tildexi}), the system \eqref{point vortex} of $N$ equations can be recast as
\begin{equation}\label{point toy}
	\begin{aligned}
		& \frac1{N}\sum_{j=1}^N \gamma_{j}^N \frac{l\left(\tilde s_{i}^N\right) - l\left(s_{j}^N\right)}
		{\left|l\left(\tilde s_{i}^N\right) - l\left(s_{j}^N\right)\right|^2}\cdot \tau\left(l\left(\tilde s_{i}^N\right)\right)
		= f\left(\tilde s_i^N\right), \ \text{for all }i=1,\dots, N-1,\\
		& \frac 1N \sum_{i=1}^N \gamma_{i}^N = \gamma,
	\end{aligned}
\end{equation}
where $\gamma^N=(\gamma_{1}^N,\dots, \gamma_{N}^N)\in\mathbb{R}^N$ is the unknown and $f(s)=2\pi[u_P\cdot n]\left(l(s)\right)$, for all $s\in\left[0,\left|\partial\Omega\right|\right]$. Loosely speaking, solving system \eqref{point toy} amounts to inverting a discrete version of the operator $B$ introduced in \eqref{AB}. Indeed, \eqref{point toy} clearly is a discretization of \eqref{density 1}-\eqref{density 2}.

From now on, we will also conveniently denote the matrices:
	\begin{align*}
		A_{N} & := 
		\left(
		\frac{l\left(\tilde s_{i}^N\right) - l\left(s_{j}^N\right)}
		{\left|l\left(\tilde s_{i}^N\right) - l\left(s_{j}^N\right)\right|^2}\cdot n\left(l\left(\tilde s_{i}^N\right)\right)
		\right)_{1\leq i, j\leq N},
		\\
		\tilde A_{N} & := 
		\left(
		\frac{l\left(s_{i}^N\right) - l\left(\tilde s_{j}^N\right)}
		{\left|l\left(s_{i}^N\right) - l\left(\tilde s_{j}^N\right)\right|^2}\cdot n\left(l\left(s_{i}^N\right)\right)
		\right)_{1\leq i, j\leq N},
		\\
		B_{N} & := 
		\left(
		\frac{l\left(\tilde s_{i}^N\right) - l\left(s_{j}^N\right)}
		{\left|l\left(\tilde s_{i}^N\right) - l\left(s_{j}^N\right)\right|^2}\cdot \tau\left(l\left(\tilde s_{i}^N\right)\right)
		\right)_{1\leq i, j\leq N},
		\\
		\tilde B_{N} & := 
		\left(
		\frac{l\left(s_{i}^N\right) - l\left(\tilde s_{j}^N\right)}
		{\left|l\left(s_{i}^N\right) - l\left(\tilde s_{j}^N\right)\right|^2}\cdot \tau\left(l\left(s_{i}^N\right)\right)
		\right)_{1\leq i, j\leq N},
	\end{align*}
and we will make use of the following notations for $z\in \R^N$:
	\begin{align*}
		\| z\|_{\ell^p} & := \Big(\frac1N \sum_{i=1}^N |z_{i}|^p \Big)^{1/p}, \quad \text{for any }p\in [1,\infty),\\
		\|z\|_{\ell^\infty} & := \max_{i=1,\dots,N}  |z_{i}|,\\
		\langle z \rangle & := \frac1N\sum_{i=1}^N z_{i}.
	\end{align*}
Note that, with this normalization of the norms, we have:
\[
\|z\|_{\ell^p}\leq \| z \|_{\ell^q}, \text{ for any } 1\leq p\leq q \leq \infty.
\]

\subsection{Boundedness of discretized operators}

For a uniformly distributed mesh \eqref{mesh}, notice that, by odd symmetry of the cotangent function,
\begin{equation}\label{perfect distri}
	\sum_{1\leq j \leq N} \cot\left(\frac{\left(\tilde\theta_{i}^N-\theta_{j}^N\right)\pi}{\left|\partial\Omega\right|}\right)=0
	\quad\text{and}\quad
	\sum_{1\leq j \leq N} \cot\left(\frac{\left(\tilde\theta_{j}^N-\theta_{i}^N\right)\pi}{\left|\partial\Omega\right|}\right) =0,
\end{equation}
and
\begin{equation}\label{perfect distri 2}
	\sum_{\substack{1\leq j \leq N \\ j\neq i}} \cot\left(\frac{\left(\theta_{i}^N-\theta_{j}^N\right)\pi}{\left|\partial\Omega\right|}\right)=0,
\end{equation}
for each $i=1,\ldots,N$. In fact, it can be shown that the only possible mesh satisfying \eqref{perfect distri} and $\theta_1^N=0$ is necessarily given by \eqref{mesh}. Indeed, suppose that some other given mesh $\left\{\phi_i^N\right\}$ and $\left\{\tilde\phi_i^N\right\}$, with $\phi_1^N=0$, also satisfies \eqref{perfect distri}. Then, suitable linear combinations of \eqref{perfect distri} yield that
	\begin{multline*}
		\sum_{1\leq i,j \leq N} 
		\left(
		\left(\tilde\phi_{i}^N-\phi_{j}^N\right)
		-
		\left(\tilde\theta_{i}^N-\theta_{j}^N\right)
		\right)
		\\
		 \times
		\left(
		\cot\left(\frac{\left(\tilde\theta_{i}^N-\theta_{j}^N\right)\pi}{\left|\partial\Omega\right|}\right)
		-
		\cot\left(\frac{\left(\tilde\phi_{i}^N-\phi_{j}^N\right)\pi}{\left|\partial\Omega\right|}\right)
		\right)
		=0,
	\end{multline*}
whence, using that $(b-a)\left(\cot a - \cot b\right) = (b-a)\int_a^b\frac 1{\sin^2 x}dx\geq (b-a)^2$, for any $0<a,b<\pi$ (or $-\pi<a,b<0$),
\begin{equation*}
	\tilde\phi_{i}^N-\phi_{j}^N
	=
	\tilde\theta_{i}^N-\theta_{j}^N
	\quad\text{for all }1\leq i,j\leq N.
\end{equation*}
Further using that $\theta_1^N=\phi_1^N$, we conclude that $\theta_i^N=\phi_i^N$ and $\tilde \theta_i^N=\tilde \phi_i^N$, for all $1\leq i\leq N$.

The cancellations embodied in identities \eqref{perfect distri} and \eqref{perfect distri 2} are related with their continuous counterpart $\int_{0}^{\pi} \cot\left(\theta-\tilde\theta\right) d\theta =0$, for any $\tilde\theta\in\mathbb{R}$. As the oddness of the cotangent function plays a crucial role to define Cauchy's principal value, the symmetry of the points $(\theta_{i}^N,\tilde \theta_{i}^N)$ is important to make sure that singular integrals defined in the sense of Cauchy's principal value are suitably approximated by their corresponding discretization.

The first result in this section is technical and shows that a well distributed mesh retains sufficient approximate symmetry to satisfy an approximation of \eqref{perfect distri}. This property will be important to ensure that singular integrals are well approximated by discretizations corresponding to well distributed meshes.

\begin{lemma}\label{technical cot}
	For any $N\geq 2$, consider a well distributed mesh $(s_{1}^N,\dots , s_{N}^N)\in \left[0,\left|\partial\Omega\right|\right)^N$, $(\tilde s_{1}^N, \dots , \tilde s_{N}^N)\in \left[0,\left|\partial\Omega\right|\right)^N$. Then, as $N\to\infty$,
		\begin{align*}
			& \max_{1\leq i\leq N}\left|\sum_{1\leq j \leq N} \cot\left(\frac{\left(\tilde s_{i}^N-s_{j}^N\right)\pi}{\left|\partial\Omega\right|}\right)\right|=\mathcal{O}\left(N^{-\kappa+1}\right)
			\\
			\text{and }
			& \max_{1\leq i\leq N}\left|\sum_{1\leq j \leq N} \cot\left(\frac{\left(\tilde s_{j}^N-s_{i}^N\right)\pi}{\left|\partial\Omega\right|}\right)\right|=\mathcal{O}\left(N^{-\kappa+1}\right).
		\end{align*}
\end{lemma}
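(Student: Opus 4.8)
\textbf{Proof plan for Lemma \ref{technical cot}.} The plan is to compare the discrete cotangent sum over the well distributed mesh $\{s_i^N\},\{\tilde s_i^N\}$ with the corresponding sum over the uniformly distributed mesh $\{\theta_i^N\},\{\tilde\theta_i^N\}$, for which the sums vanish identically by the symmetry identities \eqref{perfect distri}. Writing $L=\left|\partial\Omega\right|$ for brevity, the quantity to estimate is
\[
S_i := \sum_{1\leq j\leq N}\cot\left(\frac{(\tilde s_i^N-s_j^N)\pi}{L}\right)
= \sum_{1\leq j\leq N}\left[\cot\left(\frac{(\tilde s_i^N-s_j^N)\pi}{L}\right)-\cot\left(\frac{(\tilde\theta_i^N-\theta_j^N)\pi}{L}\right)\right],
\]
since $\sum_j\cot\left(\frac{(\tilde\theta_i^N-\theta_j^N)\pi}{L}\right)=0$. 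First I would isolate the diagonal-type term $j$ with $\tilde\theta_i^N$ closest to $\theta_j^N$ (there are one or two such indices), where the cotangent argument is of order $1/N$, and bound it crudely: there the two cotangents are each $\mathcal{O}(N)$ by the spacing $|\tilde\theta_i^N-\theta_j^N|\geq cL/N$, and the $\mathcal{O}(N^{-(\kappa+1)})$ perturbation of the mesh points cannot bring the arguments any closer than $\sim 1/N$ for large $N$, so the contribution of these $O(1)$ terms is $\mathcal{O}(N)\cdot\mathcal{O}(N^{-(\kappa+1)})\cdot\mathcal{O}(N)=\mathcal{O}(N^{1-\kappa})$ — wait, that is too lossy; the cleaner route is to note that for these few terms each cotangent is individually $\mathcal{O}(N)$, so their difference is $\mathcal{O}(N)$, and there being boundedly many of them they contribute $\mathcal{O}(N)$ total, which is worse than claimed. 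I would instead keep the difference of cotangents and use the mean value theorem even near the diagonal: $\cot'(x)=-1/\sin^2 x$, and on the interval between $\frac{(\tilde s_i^N-s_j^N)\pi}{L}$ and $\frac{(\tilde\theta_i^N-\theta_j^N)\pi}{L}$ one has $|\sin x|\geq c/N$, so the difference is bounded by $\mathcal{O}(N^2)\cdot\mathcal{O}(N^{-(\kappa+1)})=\mathcal{O}(N^{1-\kappa})$ per term, and summing the $O(1)$ diagonal terms gives $\mathcal{O}(N^{1-\kappa})$, matching the claim.

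For the off-diagonal terms I would again apply the mean value theorem: for indices $j$ with $|\tilde\theta_i^N-\theta_j^N|\sim kL/N$ (for $k=1,\dots,\sim N/2$ on each side, by periodicity), the relevant $\sin$ stays bounded below by $c\sin(k\pi/N)\geq ck/N$ on the whole segment joining the perturbed and unperturbed arguments (using that the perturbation $\mathcal{O}(N^{-(\kappa+1)})$ is negligible compared to the spacing $\sim 1/N$ once $N$ is large), so each such term is bounded by $\mathcal{O}\big((N/k)^2\big)\cdot\mathcal{O}(N^{-(\kappa+1)})=\mathcal{O}\big(N^{1-\kappa}/k^2\big)$. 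Summing over $k$ gives $\mathcal{O}(N^{1-\kappa})\sum_{k\geq 1}k^{-2}=\mathcal{O}(N^{1-\kappa})$, which is the desired bound. The second estimate, involving $\sum_j\cot\left(\frac{(\tilde s_j^N-s_i^N)\pi}{L}\right)$, is handled identically after swapping the roles of the two meshes and using the second identity in \eqref{perfect distri}; by the symmetry of the argument there is nothing new to do.

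The main obstacle is the bookkeeping near the diagonal: one must be careful that the $\mathcal{O}(N^{-(\kappa+1)})$ control on the mesh points is genuinely small enough, for $N$ large, to guarantee that the perturbed argument $\frac{(\tilde s_i^N-s_j^N)\pi}{L}$ cannot wander onto the other side of $0$ (or of $\pm\pi$), and that $|\sin x|\gtrsim 1/N$ holds uniformly along the entire segment between the two arguments — this is what legitimizes the single application of the mean value theorem and prevents a spurious pole. Since $\kappa\geq 2$, the perturbation $N^{-(\kappa+1)}=o(N^{-3})$ is far smaller than the minimal spacing $cL/N$, so this is quantitatively safe for $N\geq N_0$, but it should be spelled out. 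The rest is the elementary estimate $|\cot a-\cot b|\leq |a-b|\max_{[a,b]}|\csc^2|$ together with the summable comparison $\sum k^{-2}$, and the vanishing of the unperturbed sums from \eqref{perfect distri}.
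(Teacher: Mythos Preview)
Your approach is essentially the same as the paper's: subtract the vanishing unperturbed sum \eqref{perfect distri}, apply the mean value theorem to each cotangent difference with the bound $|\csc^2 x|\lesssim N^2/|i-j+\tfrac12|^2$ (taking care of the periodic wrap-around via $\min(|i-j+\tfrac12|,N-|i-j+\tfrac12|)$), and sum the resulting $\mathcal{O}(N^{-(\kappa+1)})\cdot N^2/k^2$ terms over $k$. The paper carries out exactly this computation, first establishing the two-sided bound $\tfrac{|i-j+1/2|L}{2N}\leq |\tilde s_i^N-s_j^N|\leq \tfrac{L}{2}+\tfrac{|i-j+1/2|L}{2N}$ to make the ``no pole crossing'' step you flagged precise.
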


\begin{proof}
	Note first that, for all $1\leq i,j\leq N$ and large enough $N$,
		\begin{align*}
			\left|\tilde s_{i}^N-s_{j}^N\right|
			& \geq \left|\tilde \theta_{i}^N-\theta_{j}^N\right|
			-\left|\tilde s_{i}^N-\tilde\theta_{i}^N\right|
			-\left|s_{j}^N-\theta_{j}^N\right|
			= \frac{\left|i-j+\frac 12\right| \left|\partial\Omega\right|}{N}-\mathcal{O}\left(N^{-3}\right)
			\\
			& \geq \frac{\left|i-j+\frac 12\right| \left|\partial\Omega\right|}{N}-\frac{\left|\partial\Omega\right|}{4N}
			\geq \frac{\left|i-j+\frac 12\right| \left|\partial\Omega\right|}{2N},
		\end{align*}
	and
		\begin{align*}
			\left|\tilde s_{i}^N-s_{j}^N\right|
			& \leq \left|\tilde \theta_{i}^N-\theta_{j}^N\right|
			+\left|\tilde s_{i}^N-\tilde\theta_{i}^N\right|
			+\left|s_{j}^N-\theta_{j}^N\right|
			= \frac{\left|i-j+\frac 12\right| \left|\partial\Omega\right|}{N}+\mathcal{O}\left(N^{-3}\right)
			\\
			& \leq \frac{\left|i-j+\frac 12\right| \left|\partial\Omega\right|}{N}+\frac{\left|\partial\Omega\right|}{4N}
			\leq
			\frac{\left|\partial\Omega\right|}{2} +
			\frac{\left|i-j+\frac 12\right| \left|\partial\Omega\right|}{2N}.
		\end{align*}
	Therefore, by \eqref{mesh2} and the mean value theorem, defining the open interval
	\begin{equation*}
		I_{ij}=
		\left( \pi\frac{\left|i-j+\frac 12\right|}{2N},
		\pi\left(\frac{1}{2} +
		\frac{\left|i-j+\frac 12\right|}{2N}\right)\right),
	\end{equation*}
	we find that
	\begin{equation}\label{cot mean value}
		\begin{aligned}
			\left|\cot\left(\frac{\left(\tilde s_{i}^N-s_{j}^N\right)\pi}{\left|\partial\Omega\right|}\right)
			-
			\cot\left(\frac{\left(\tilde \theta_{i}^N-\theta_{j}^N\right)\pi}{\left|\partial\Omega\right|}\right)\right|
			\hspace{-50mm}&
			\\
			& \leq
			\frac \pi{\left|\partial\Omega\right|}
			\left|\tilde s_{i}^N-s_{j}^N - \tilde \theta_{i}^N+\theta_{j}^N\right|
			\sup_{x\in I_{ij}}
			\frac 1{\sin^2x}
			\\
			& \leq \mathcal{O}\left(N^{-(\kappa+1)}\right)\times\max\left\{\frac{N^2}{\left|i-j+\frac 12\right|^2},
			\frac{N^2}{\left(N-\left|i-j+\frac 12\right|\right)^2}\right\}
			\\
			& = \mathcal{O}\left(N^{-(\kappa+1)}\right)\times \mathcal{O}(N^2)=\mathcal{O}\left(N^{-\kappa+1}\right).
		\end{aligned}
	\end{equation}
	
	Then, summing over $1\leq i\leq N$ or $1\leq j\leq N$ yields
	\begin{equation}\label{cot mean value 2}
		\begin{aligned}
			\sum_{\substack{i=1\\\text{or}\\j=1}}^N
			\left|\cot\left(\frac{\left(\tilde s_{i}^N-s_{j}^N\right)\pi}{\left|\partial\Omega\right|}\right)
			-
			\cot\left(\frac{\left(\tilde \theta_{i}^N-\theta_{j}^N\right)\pi}{\left|\partial\Omega\right|}\right)\right|
			\hspace{-45mm}&
			\\
			& \leq
			\mathcal{O}\left(N^{-\kappa+1}\right)\sum_{k=1-N}^{N-1}
			\max\left\{\frac{1}{\left|k+\frac 12\right|^2},
			\frac{1}{\left(N-\left|k+\frac 12\right|\right)^2}\right\}
			\\
			& \leq
			\mathcal{O}\left(N^{-\kappa+1}\right)\sum_{k=1-N}^{N-1}
			\frac{1}{\left|k+\frac 12\right|^2}=\mathcal{O}\left(N^{-\kappa+1}\right),
		\end{aligned}
	\end{equation}
	which, when combined with the identities \eqref{perfect distri}, is sufficient to conclude the proof of the lemma.
\end{proof}

\begin{remark}
	The preceding lemma essentially asserts that approximate Riemann sums of $\int_{0}^{\pi} \cot\left(\theta-\tilde\theta\right) d\theta$, for any given $\tilde\theta\in\mathbb{R}$, on a well distributed mesh satisfying \eqref{mesh2} vanish with a convergence rate $\mathcal{O}\left(N^{-\kappa}\right)$. This is crucial if one aims at obtaining a convergence rate $\mathcal{O}\left(N^{-\kappa}\right)$ in Theorem \ref{main theo}. In other words, the consideration of a better mesh produces a faster convergence rate in Lemma \ref{technical cot} which, in turn, results in a faster rate in Theorem \ref{main theo}.
\end{remark}

The following lemma is a precise $\ell^2$-estimate for the uniformly distributed mesh \eqref{mesh}. The first part of this result was already featured in \cite{ADLproc} for the unit disk.

\begin{lemma}\label{est l2}
	Consider the uniformly distributed mesh $(\theta_{1}^N,\dots , \theta_{N}^N)\in \left[0,\left|\partial\Omega\right|\right)^N$, $(\tilde \theta_{1}^N, \dots , \tilde \theta_{N}^N)\in \left[0,\left|\partial\Omega\right|\right)^N$ defined by \eqref{mesh}. Then, for any $z\in\mathbb{R}^N$, we have that
	\begin{equation}\label{cot identity}
		\frac1N \left\| \left\{\sum_{1\leq j \leq N} \cot\left(\frac{\left(\tilde\theta_{k}^N-\theta_{j}^N\right)\pi}{\left|\partial\Omega\right|}\right)z_{j}
		\right\}_{1\leq k\leq N}
		\right\|_{\ell^2}
		=
		\| z - \langle z \rangle \mathbf{1} \|_{\ell^2},
	\end{equation}
	and
	\begin{equation}\label{cot identity 2}
		\frac1N \left\| \left\{\sum_{\substack{1\leq j \leq N \\ j\neq k}} \cot\left(\frac{\left(\theta_{k}^N-\theta_{j}^N\right)\pi}{\left|\partial\Omega\right|}\right)z_{j}
		\right\}_{1\leq k\leq N}
		\right\|_{\ell^2}
		\leq
		\| z - \langle z \rangle \mathbf{1} \|_{\ell^2},
	\end{equation}
	where $\mathbf{1}=(1,\ldots,1)\in\mathbb{R}^N$.
\end{lemma}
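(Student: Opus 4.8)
The plan is to recognize the discrete cotangent kernel as the multiplication-by-$\pm i$ operator on the discrete Fourier side and then compute the $\ell^2$-norm via Parseval's identity. Concretely, for the uniformly distributed mesh one has $\tilde\theta_k^N-\theta_j^N = (k-j+\tfrac12)|\partial\Omega|/N$, so the summation kernel in \eqref{cot identity} depends only on $k-j$ and defines a circular convolution on $\mathbb{Z}/N\mathbb{Z}$. Writing $c_m := \cot\!\big(\pi(m+\tfrac12)/N\big)$ for $m\in\mathbb{Z}/N\mathbb{Z}$, the operator on the left of \eqref{cot identity} is $z\mapsto c * z$ (circular convolution), and by Parseval its $\ell^2\to\ell^2$ behaviour is governed entirely by the discrete Fourier coefficients $\widehat{c}(n)=\sum_{m=0}^{N-1} c_m\, e^{-2\pi i mn/N}$. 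The key classical computation is that
\[
\widehat{c}(n) = \sum_{m=0}^{N-1}\cot\!\Big(\frac{\pi(m+\frac12)}{N}\Big)e^{-2\pi i mn/N} \in\{0,\pm iN\},
\]
more precisely $\widehat{c}(0)=0$ (this is exactly \eqref{perfect distri}) and $|\widehat c(n)|=N$ for $n\not\equiv 0$; the sign alternates but only the modulus matters. I would establish this either by summing the geometric-type series directly — using $\cot\theta = i\frac{e^{i\theta}+e^{-i\theta}}{e^{i\theta}-e^{-i\theta}} = i\big(1 + \frac{2}{e^{2i\theta}-1}\big)$ and the known finite sum $\sum_{m=0}^{N-1} \frac{e^{-2\pi imn/N}}{e^{2\pi i(m+1/2)/N}-1}$ — or by invoking the standard partial-fraction / Lagrange-interpolation identity for $\cot$ on roots of unity. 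Once $|\widehat c(n)| = N$ for all $n\neq 0$ and $\widehat c(0)=0$, Parseval gives
\[
\frac1{N^2}\big\|c*z\big\|_{\ell^2(\text{unnormalized})}^2
= \frac1{N^2}\cdot\frac1N\sum_{n} |\widehat c(n)|^2|\widehat z(n)|^2
= \frac1N\sum_{n\neq 0}|\widehat z(n)|^2 = \| z-\langle z\rangle\mathbf 1\|_{\ell^2}^2,
\]
after unwinding the $\tfrac1N$ normalizations in the definitions of $\|\cdot\|_{\ell^2}$ and $\langle\cdot\rangle$; subtracting $\langle z\rangle\mathbf 1$ exactly removes the $n=0$ mode killed by $\widehat c(0)=0$. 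This yields \eqref{cot identity}.

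For \eqref{cot identity 2} the kernel is $\cot\!\big(\pi(k-j)|\partial\Omega|/N /|\partial\Omega|\big) = \cot\!\big(\pi(k-j)/N\big)$ for $k\neq j$, with the diagonal term omitted; again this is a circular convolution with kernel $d_m := \cot(\pi m/N)$ for $m\not\equiv 0$ and $d_0 := 0$. Here the Fourier coefficients satisfy $|\widehat d(n)| \le N$ for $n\neq 0$ and $\widehat d(0)=0$ (by \eqref{perfect distri 2}); in fact $\widehat d(n) = \pm i(N-2n')$-type quantities bounded by $N$ — the relevant point is the bound, not the exact value. The inequality $|\widehat d(n)|\le N$ can be derived from the same geometric-sum manipulation as above, or more cheaply by writing $d$ as a perturbation of the shifted kernel $c$ and controlling the difference, or simply by noting $d_m$ is the imaginary part of a Herglotz-type kernel whose discrete transform is bounded. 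Then Parseval gives the stated inequality with the same normalization bookkeeping.

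The main obstacle is the clean evaluation of the discrete Fourier transform of $\cot$ at half-integer (resp.\ integer) shifts — i.e.\ proving $|\widehat c(n)|=N$ exactly and $|\widehat d(n)|\le N$. This is where one must be careful: the naive geometric summation produces boundary terms, and one needs the half-shift $+\tfrac12$ precisely so that the endpoint contributions combine into the clean value $\pm iN$ (for the disk this is the familiar statement that the discrete Hilbert transform on $N$ equispaced points is an isometry on mean-zero vectors, as already used in \cite{ADLproc}). Everything else — recognizing the convolution structure, applying Parseval, and tracking the factors of $N$ hidden in the normalized norms — is routine bookkeeping.
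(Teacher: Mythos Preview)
Your approach is correct and genuinely different from the paper's. The paper does not use the discrete Fourier transform at all: instead it expands the squared $\ell^2$-norm directly, symmetrizes to produce $\sum_{i,j}(z_i-z_j)^2\sum_k\cot(\cdots)\cot(\cdots)$, and then applies the elementary trigonometric identity
\[
\cot a\,\cot b=\cot(b-a)\big(\cot a-\cot b\big)-1
\]
together with the cancellation relations \eqref{perfect distri} and \eqref{perfect distri 2}. This collapses the triple sum to $\tfrac{N}{2}\sum_{i,j}(z_i-z_j)^2$ in the half-shifted case (giving equality) and to $\tfrac{N-2}{2}\sum_{i,j}(z_i-z_j)^2$ minus a nonnegative $\cot^2$ term in the integer case (giving the inequality). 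Your Fourier route is more conceptual and immediately explains \emph{why} the half-shifted kernel is exactly an isometry on mean-zero vectors (it diagonalizes with all nonzero eigenvalues of modulus $N$), while the paper's argument is entirely real-variable and self-contained, avoiding any appeal to a closed form for the DFT of $\cot$. Note that your sketch leaves the key evaluation $|\widehat c(n)|=N$ only indicated; the geometric-sum route via $\cot\theta=i+2i/(e^{2i\theta}-1)$ does work, and for the integer kernel one gets precisely $|\widehat d(n)|=|N-2n|\le N-2$ for $1\le n\le N-1$, which matches the paper's sharper constant $\sqrt{(N-2)/N}$ rather than just $1$.
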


\begin{proof}
	First, we compute, utilizing \eqref{perfect distri},
	\begin{align*}
		N \left\| \left\{\sum_{1\leq j \leq N} \cot\left(\frac{\left(\tilde\theta_{k}^N-\theta_{j}^N\right)\pi}{\left|\partial\Omega\right|}\right)z_{j}
		\right\}_{1\leq k\leq N}
		\right\|_{\ell^2}^2
		\hspace{-60mm}&
		\\
		= & \sum_{1\leq k \leq N} \left| \sum_{1\leq j \leq N}
		\cot\left(\frac{\left(\tilde\theta_{k}^N-\theta_{j}^N\right)\pi}{\left|\partial\Omega\right|}\right)z_{j}\right|^2
		\\
		= & \sum_{1\leq k \leq N}  \sum_{1\leq i,j \leq N}
		\cot\left(\frac{\left(\tilde\theta_{k}^N-\theta_{i}^N\right)\pi}{\left|\partial\Omega\right|}\right)
		\cot\left(\frac{\left(\tilde\theta_{k}^N-\theta_{j}^N\right)\pi}{\left|\partial\Omega\right|}\right)
		z_{i} z_{j}
		\\
		=&-\frac12 \sum_{1\leq i,j \leq N} (z_{i} - z_{j})^2 \sum_{1\leq k \leq N}
		\cot\left(\frac{\left(\tilde\theta_{k}^N-\theta_{i}^N\right)\pi}{\left|\partial\Omega\right|}\right)
		\cot\left(\frac{\left(\tilde\theta_{k}^N-\theta_{j}^N\right)\pi}{\left|\partial\Omega\right|}\right)
		\\
		&+ \sum_{1\leq i,k \leq N} |z_{i}|^2
		\cot\left(\frac{\left(\tilde\theta_{k}^N-\theta_{i}^N\right)\pi}{\left|\partial\Omega\right|}\right)
		\sum_{1\leq j \leq N}
		\cot\left(\frac{\left(\tilde\theta_{k}^N-\theta_{j}^N\right)\pi}{\left|\partial\Omega\right|}\right)
		\\
		=&-\frac12 \sum_{1\leq i,j \leq N} (z_{i} - z_{j})^2 \sum_{1\leq k \leq N}
		\cot\left(\frac{\left(\tilde\theta_{k}^N-\theta_{i}^N\right)\pi}{\left|\partial\Omega\right|}\right)
		\cot\left(\frac{\left(\tilde\theta_{k}^N-\theta_{j}^N\right)\pi}{\left|\partial\Omega\right|}\right).
	\end{align*}
	Similarly, employing \eqref{perfect distri 2} instead of \eqref{perfect distri}, we find
		\begin{align*}
			N \left\| \left\{\sum_{\substack{1\leq j \leq N \\ j\neq k}}
			\cot\left(\frac{\left(\theta_{k}^N-\theta_{j}^N\right)\pi}{\left|\partial\Omega\right|}\right)z_{j}
			\right\}_{1\leq k\leq N}
			\right\|_{\ell^2}^2
			\hspace{-60mm}&
			\\
			=&-\frac12 \sum_{1\leq i,j \leq N} (z_{i} - z_{j})^2 \sum_{\substack{1\leq k \leq N \\ k\neq i,j}}
			\cot\left(\frac{\left(\theta_{k}^N-\theta_{i}^N\right)\pi}{\left|\partial\Omega\right|}\right)
			\cot\left(\frac{\left(\theta_{k}^N-\theta_{j}^N\right)\pi}{\left|\partial\Omega\right|}\right).
		\end{align*}
	
	Next, we use the following elementary relation, valid for any $a,b$ such that $a,b,a-b \notin \pi \Z$:
	\[
		\cot a \cot b = \cot (b-a)[\cot a - \cot b]-1,
	\]
	to write, using \eqref{perfect distri},
		\begin{align*}
			N \left\| \left\{\sum_{1\leq j \leq N} \cot\left(\frac{\left(\tilde\theta_{k}^N-\theta_{j}^N\right)\pi}{\left|\partial\Omega\right|}\right)z_{j}
			\right\}_{1\leq k\leq N}
			\right\|_{\ell^2}^2
			\hspace{-40mm}&
			\\
			=& -\frac12 \sum_{\substack{1\leq i, j \leq N \\ i\neq j}} (z_{i} - z_{j})^2  \cot\left(\frac{\left(\theta_{i}^N-\theta_{j}^N\right)\pi}{\left|\partial\Omega\right|}\right)
			\\
			& \times \sum_{1\leq k \leq N}
			\left[
			\cot\left(\frac{\left(\tilde\theta_{k}^N-\theta_{i}^N\right)\pi}{\left|\partial\Omega\right|}\right)
			-
			\cot\left(\frac{\left(\tilde\theta_{k}^N-\theta_{j}^N\right)\pi}{\left|\partial\Omega\right|}\right)
			\right]
			\\
			&+\frac{N}2\sum_{\substack{1\leq i, j \leq N \\ i\neq j}} (z_{i} - z_{j})^2\\
			=&\frac{N}2\sum_{1\leq i,j \leq N} (z_{i} - z_{j})^2,
		\end{align*}
	and, similarly, using \eqref{perfect distri 2},
	\begin{align*}
		N \left\| \left\{\sum_{\substack{1\leq j \leq N \\ j\neq k}} \cot\left(\frac{\left( \theta_{k}^N-\theta_{j}^N\right)\pi}{\left|\partial\Omega\right|}\right)z_{j}
		\right\}_{1\leq k\leq N}
		\right\|_{\ell^2}^2
		\hspace{-50mm}&
		\\
		=& -\frac12 \sum_{\substack{1\leq i,j \leq N \\ i\neq j}} (z_{i} - z_{j})^2
		\cot\left(\frac{\left(\theta_{i}^N-\theta_{j}^N\right)\pi}{\left|\partial\Omega\right|}\right)
		\\
		& \times \sum_{\substack{1\leq k \leq N \\ k\neq i,j }}
		\left[
		\cot\left(\frac{\left(\theta_{k}^N-\theta_{i}^N\right)\pi}{\left|\partial\Omega\right|}\right)
		-
		\cot\left(\frac{\left(\theta_{k}^N-\theta_{j}^N\right)\pi}{\left|\partial\Omega\right|}\right)
		\right]
		\\
		&+\frac{N-2}2\sum_{\substack{1\leq i,j \leq N \\ i\neq j}} (z_{i} - z_{j})^2\\
		=&\frac{N-2}2\sum_{1\leq i,j \leq N} (z_{i} - z_{j})^2
		- \sum_{\substack{1\leq i,j \leq N \\ i\neq j}} (z_{i} - z_{j})^2
		\cot^2\left(\frac{\left(\theta_{i}^N-\theta_{j}^N\right)\pi}{\left|\partial\Omega\right|}\right)
		\\
		\leq & \frac{N-2}2\sum_{1\leq i,j \leq N} (z_{i} - z_{j})^2.
	\end{align*}
	
	Finally, the remaining sums are easily recast as
	\begin{align*}
	\frac{N}2\sum_{1\leq i,j \leq N} (z_{i} - z_{j})^2& = N\sum_{1\leq i,j \leq N} (z_{i} -\langle z\rangle)^2 - N \sum_{1\leq i,j \leq N} (z_{i} -\langle z\rangle)(z_{j} -\langle z\rangle)\\
	&=N^2 \sum_{1\leq i \leq N} (z_{i} -\langle z\rangle)^2 = N^3 \| z - \langle z\rangle\mathbf{1} \|_{\ell^2}^2.
	\end{align*}
	We have therefore obtained that
		\begin{align*}
			N \left\| \left\{\sum_{1\leq j \leq N} \cot\left(\frac{\left(\tilde\theta_{k}^N-\theta_{j}^N\right)\pi}{\left|\partial\Omega\right|}\right)z_{j}
			\right\}_{1\leq k\leq N}
			\right\|_{\ell^2}^2
			& = N^3 \| z - \langle z\rangle\mathbf{1} \|_{\ell^2}^2,
			\\
			N \left\| \left\{\sum_{\substack{1\leq j \leq N \\ j\neq k}} \cot\left(\frac{\left(\theta_{k}^N-\theta_{j}^N\right)\pi}{\left|\partial\Omega\right|}\right)z_{j}
			\right\}_{1\leq k\leq N}
			\right\|_{\ell^2}^2
			& \leq (N-2)N^2 \| z - \langle z\rangle\mathbf{1} \|_{\ell^2}^2,
		\end{align*}
	which ends the proof of the lemma.
\end{proof}

Combining \eqref{cot mean value} (or a slight variant of it without tildes) with \eqref{cot identity} and \eqref{cot identity 2}, it is readily seen that a well distributed mesh enjoys sufficient approximate symmetry to satisfy suitable approximations of \eqref{cot identity} and \eqref{cot identity 2}, which we record in precise terms in the corollary below.

\begin{corollary}\label{est l2 well}
	For any $N\geq 2$, consider a well distributed mesh $(s_{1}^N,\dots , s_{N}^N)\in \left[0,\left|\partial\Omega\right|\right)^N$, $(\tilde s_{1}^N, \dots , \tilde s_{N}^N)\in \left[0,\left|\partial\Omega\right|\right)^N$. Then, for any $z\in\mathbb{R}^N$, we have that
		\begin{align*}
			\left|
			\frac1N \left\| \left\{\sum_{1\leq j \leq N} \cot\left(\frac{\left(\tilde s_{k}^N-s_{j}^N\right)\pi}{\left|\partial\Omega\right|}\right)z_{j}
			\right\}_{1\leq k\leq N}
			\right\|_{\ell^2}
			-
			\| z - \langle z \rangle \mathbf{1} \|_{\ell^2}
			\right|
			\leq & \frac C{N^{\kappa-1}}\left\|z\right\|_{\ell^1},
			\\
			\frac1N \left\| \left\{\sum_{\substack{1\leq j \leq N \\ j\neq k}} \cot\left(\frac{\left(s_{k}^N-s_{j}^N\right)\pi}{\left|\partial\Omega\right|}\right)z_{j}
			\right\}_{1\leq k\leq N}
			\right\|_{\ell^2}
			\leq & \| z - \langle z \rangle \mathbf{1} \|_{\ell^2}
			\\
			& + \frac C{N^{\kappa-1}} \left\|z\right\|_{\ell^1},
		\end{align*}
	where $\mathbf{1}=(1,\ldots,1)\in\mathbb{R}^N$ and the constant $C>0$ is independent of $N$ and $z$.
\end{corollary}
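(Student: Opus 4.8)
The strategy is to transfer the exact identities \eqref{cot identity}--\eqref{cot identity 2} from the uniform mesh $\{\theta_i^N,\tilde\theta_i^N\}$ to the well distributed mesh $\{s_i^N,\tilde s_i^N\}$ by controlling the perturbation of the cotangent kernel, exactly as was done for Lemma \ref{technical cot}. Writing $K_{kj}=\cot\!\big(\pi(\tilde s_k^N-s_j^N)/|\partial\Omega|\big)$ and $\bar K_{kj}=\cot\!\big(\pi(\tilde\theta_k^N-\theta_j^N)/|\partial\Omega|\big)$ for the two kernels, and viewing them as operators $\R^N\to\R^N$ via $(Kz)_k=\sum_j K_{kj}z_j$, I would first bound the operator error $\|K-\bar K\|$ in a suitable norm. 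By the triangle inequality in $\ell^2$,
\[
\Big|\tfrac1N\|Kz\|_{\ell^2}-\tfrac1N\|\bar Kz\|_{\ell^2}\Big|\le \tfrac1N\|(K-\bar K)z\|_{\ell^2},
\]
so it suffices to show $\tfrac1N\|(K-\bar K)z\|_{\ell^2}\le \tfrac{C}{N^{\kappa-1}}\|z\|_{\ell^1}$, after which \eqref{cot identity} gives the first estimate and \eqref{cot identity 2} (which is an inequality, hence survives the extra error term) gives the second. The analogue without tildes is identical, replacing $\bar K$ by the kernel in \eqref{cot identity 2} and dropping the diagonal term.

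For the kernel error bound, the key input is the entrywise estimate \eqref{cot mean value}: for a well distributed mesh, $|K_{kj}-\bar K_{kj}|=\mathcal{O}(N^{-\kappa+1})$, \emph{uniformly} in $k,j$ (the worst case being the near-diagonal terms $|k-j+\tfrac12|\sim 1$ and the near-antidiagonal terms, both producing the $N^2$ blow-up of $1/\sin^2$ that is absorbed into the $N^{-(\kappa+1)}$ gain). Thus $\max_{k,j}|K_{kj}-\bar K_{kj}|\le C N^{-\kappa+1}$ for large $N$. Then, crudely,
\[
\tfrac1N\|(K-\bar K)z\|_{\ell^2}\le \tfrac1N\|(K-\bar K)z\|_{\ell^\infty}
\le \max_{k}\sum_j |K_{kj}-\bar K_{kj}|\,|z_j|
\le \Big(\max_{k,j}|K_{kj}-\bar K_{kj}|\Big)\sum_j|z_j|,
\]
and since $\sum_j|z_j|=N\|z\|_{\ell^1}$ with our normalization, this is $\le C N^{-\kappa+1}\cdot N\|z\|_{\ell^1}\cdot\tfrac1N = C N^{-\kappa+1}\|z\|_{\ell^1}$. (One could instead sum \eqref{cot mean value 2} and pair with $\|z\|_{\ell^\infty}$, but the above crude route already yields the stated $N^{-(\kappa-1)}$ rate with the $\ell^1$ norm on the right, as required.)

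The only mild subtlety — and the main point to be careful about — is that the entrywise bound \eqref{cot mean value} was derived under the implicit hypothesis that all differences $\tilde s_i^N-s_j^N$ stay away from $\pi\Z$ (equivalently, no two perturbed points collide and no perturbed point lands at distance a half-period), which \eqref{mesh2} guarantees for $N$ large since the perturbations are $\mathcal{O}(N^{-\kappa-1})$ while the gaps are $\sim |\partial\Omega|/N$; this is exactly the computation already carried out at the start of the proof of Lemma \ref{technical cot}, giving $|\tilde s_i^N-s_j^N|\ge \tfrac{|i-j+1/2||\partial\Omega|}{2N}$ and the matching upper bound. With that in hand the argument is routine. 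Finally, assembling the pieces: the first displayed inequality of the corollary follows from \eqref{cot identity} plus the kernel-error bound, and the second from \eqref{cot identity 2} plus the same bound (the error enters additively, preserving the inequality), which completes the proof.
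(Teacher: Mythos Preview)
Your proposal is correct and follows exactly the route the paper indicates (the paper's proof is literally the one-line remark ``combining \eqref{cot mean value} with \eqref{cot identity} and \eqref{cot identity 2}''): perturb the kernel entrywise via \eqref{cot mean value}, use the triangle inequality on the $\ell^2$ norm, and invoke Lemma~\ref{est l2}. One cosmetic slip: in your displayed chain the factor $\tfrac1N$ is dropped between $\tfrac1N\|(K-\bar K)z\|_{\ell^\infty}$ and $\max_k\sum_j|K_{kj}-\bar K_{kj}||z_j|$, though your subsequent prose correctly restores it to arrive at $CN^{-\kappa+1}\|z\|_{\ell^1}$.
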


A direct consequence of the preceding estimates on well distributed meshes concerns the uniform boundedness of the operators defined above.

\begin{corollary}\label{boundedness}
	For any $N\geq 2$, consider a well distributed mesh $(s_{1}^N,\dots , s_{N}^N)\in \left[0,\left|\partial\Omega\right|\right)^N$, $(\tilde s_{1}^N, \dots , \tilde s_{N}^N)\in \left[0,\left|\partial\Omega\right|\right)^N$. Then, there exists a constant $C>0$ independent of $N$ such that, for each $1\leq p\leq\infty$,
	\begin{equation*}
		\frac 1N
		\left(
		\left\|A_Nz\right\|_{\ell^p}
		+
		\left\|\tilde A_Nz\right\|_{\ell^p}
		\right)
		\leq C\left\|z\right\|_{\ell^1},
	\end{equation*}
	and
	\begin{equation*}
		\frac 1N
		\left(
		\left\|B_Nz\right\|_{\ell^2}
		+
		\left\|\tilde B_Nz\right\|_{\ell^2}
		\right)
		\leq C\left\|z\right\|_{\ell^2},
	\end{equation*}
	for all $z\in\mathbb{R}^N$.
\end{corollary}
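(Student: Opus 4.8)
The plan is to reduce all four bounds to the corresponding statements for the discretized cotangent kernels, already quantified in Corollary \ref{est l2 well}, together with an elementary bounded‑kernel estimate for the regular parts. First I would record the decomposition of the continuous kernels along the (periodically extended) arc‑length parametrization $l$ of the simple smooth closed curve $\partial\Omega$. Taylor‑expanding $l$ near the diagonal $\{s=t\}$ (compare the discussion in Section \ref{vortex sheet}), $\frac{l(s)-l(t)}{|l(s)-l(t)|^2}$ equals $\frac{\tau(l(t))}{s-t}$ plus a function continuous across the diagonal; moreover $\tau(l(t))\cdot n(l(s))$ vanishes to first order at $s=t$ whereas $\tau(l(t))\cdot\tau(l(s))=1+\mathcal{O}\left((s-t)^2\right)$ there. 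It follows that the kernel $k_A(s,t):=\frac{l(s)-l(t)}{|l(s)-l(t)|^2}\cdot n(l(s))$ extends to a continuous, hence bounded, function on $(\R/|\partial\Omega|\Z)^2$ — there being no singularity off the diagonal since $l$ is injective — and that $k_B(s,t):=\frac{l(s)-l(t)}{|l(s)-l(t)|^2}\cdot\tau(l(s))$ decomposes as $k_B(s,t)=\frac{\pi}{|\partial\Omega|}\cot\left(\frac{\pi(s-t)}{|\partial\Omega|}\right)+R_B(s,t)$, the two terms on the right having the same principal part $\frac1{s-t}$ across the diagonal, so that $R_B$ too extends to a continuous, hence bounded, function on $(\R/|\partial\Omega|\Z)^2$. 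By construction $(A_N)_{ij}=k_A(\tilde s_i^N,s_j^N)$, $(\tilde A_N)_{ij}=k_A(s_i^N,\tilde s_j^N)$, $(B_N)_{ij}=k_B(\tilde s_i^N,s_j^N)$ and $(\tilde B_N)_{ij}=k_B(s_i^N,\tilde s_j^N)$.

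The estimates for $A_N$ and $\tilde A_N$ are then immediate: writing $M$ for a bound of $|k_A|$, one has $|(A_Nz)_i|\leq M\sum_{j=1}^N|z_j|=MN\|z\|_{\ell^1}$, hence $\frac1N\|A_Nz\|_{\ell^\infty}\leq M\|z\|_{\ell^1}$, and the same for $\tilde A_N$; since $\|\cdot\|_{\ell^p}\leq\|\cdot\|_{\ell^\infty}$ for every $1\leq p\leq\infty$, this is the first asserted bound. For $B_N$ I would split $B_N=\frac{\pi}{|\partial\Omega|}M_N+\mathcal R_N$, where $(M_N)_{ij}=\cot\left(\frac{\pi(\tilde s_i^N-s_j^N)}{|\partial\Omega|}\right)$ and $(\mathcal R_N)_{ij}=R_B(\tilde s_i^N,s_j^N)$. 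The remainder is controlled exactly as for $A_N$: $\frac1N\|\mathcal R_Nz\|_{\ell^2}\leq\frac1N\|\mathcal R_Nz\|_{\ell^\infty}\leq\|R_B\|_{L^\infty}\|z\|_{\ell^1}\leq\|R_B\|_{L^\infty}\|z\|_{\ell^2}$, while Corollary \ref{est l2 well} applied to the singular part gives $\frac1N\|M_Nz\|_{\ell^2}\leq\|z-\langle z\rangle\mathbf 1\|_{\ell^2}+\frac{C}{N^{\kappa-1}}\|z\|_{\ell^1}\leq(1+C)\|z\|_{\ell^2}$, using that $\langle z\rangle\mathbf 1$ is the orthogonal projection of $z$ onto the constants, that $\|z\|_{\ell^1}\leq\|z\|_{\ell^2}$, and that $\kappa\geq2$ (so $N^{\kappa-1}\geq2$). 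Adding the two contributions yields the bound for $B_N$.

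It remains to treat $\tilde B_N$, for which the only new ingredient is the observation that, by oddness of the cotangent, its singular part is $-\frac{\pi}{|\partial\Omega|}M_N^{\mathrm T}$: indeed $(\tilde B_N)_{ij}$ has singular part $\frac{\pi}{|\partial\Omega|}\cot\left(\frac{\pi(s_i^N-\tilde s_j^N)}{|\partial\Omega|}\right)=-\frac{\pi}{|\partial\Omega|}\cot\left(\frac{\pi(\tilde s_j^N-s_i^N)}{|\partial\Omega|}\right)=-\frac{\pi}{|\partial\Omega|}(M_N)_{ji}$. Since the operator norm of a matrix on $\R^N$ equipped with $\|\cdot\|_{\ell^2}$ coincides with that of its transpose — the transpose being the adjoint for the normalized inner product $\langle z,w\rangle=\frac1N\sum_i z_iw_i$ — the bound $\frac1N\|M_Nz\|_{\ell^2}\leq(1+C)\|z\|_{\ell^2}$ just established transfers verbatim to $M_N^{\mathrm T}$, and adding the regular part $(s_i^N,\tilde s_j^N)\mapsto R_B(s_i^N,\tilde s_j^N)$, bounded exactly as above, finishes the proof. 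The one point requiring genuine care is getting the kernel decomposition right, and in particular that the singular part of $k_B$ is the \emph{periodic} cotangent rather than $\frac1{s-t}$ — this is forced by the fact that parameters near the two endpoints of $[0,|\partial\Omega|)$ correspond to nearby points of $\partial\Omega$ — after which everything reduces to a bounded‑kernel estimate or a direct appeal to Corollary \ref{est l2 well} together with the transpose identity.
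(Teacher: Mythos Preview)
Your proof is correct and follows essentially the same approach as the paper: the bounds for $A_N,\tilde A_N$ come from the uniform boundedness of the smooth kernel $k_A$, and the bounds for $B_N,\tilde B_N$ come from splitting off the periodic cotangent singularity and invoking Corollary~\ref{est l2 well} for the singular part plus a bounded-kernel estimate for the remainder. The only minor variation is your handling of $\tilde B_N$ via the transpose identity $M_N^{\mathrm T}$ having the same $\ell^2$ operator norm as $M_N$, whereas the paper implicitly relies on the symmetry of Corollary~\ref{est l2 well} under exchanging the roles of $\{s_i^N\}$ and $\{\tilde s_i^N\}$; both routes are equally short and yield the same conclusion.
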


\begin{proof}
	The boudedness of $A_N$ and $\tilde A_N$ easily follows from the uniform boundedness of each component of the corresponding matrices (recall that $\frac{l(s)-l(s_*)}{|l(s)-l(s_*)|^2}\cdot n(l(s))$ is a continuous bounded function, which follows directly from \eqref{kernel decomposition}).
	
	As for the boundedness of $B_N$ and $\tilde B_N$, it follows from Corollary \ref{est l2 well} combined with the fact that $\frac{l(s)-l(s_*)}{|l(s)-l(s_*)|^2}\cdot \tau(l(s))-\frac{\pi}{\left|\partial\Omega\right|}\cot\left(\frac{(s-s_*)\pi}{\left|\partial\Omega\right|}\right)$ is a continuous bounded function (which is a direct consequence of \eqref{kernel decomposition} and the fact that $\frac 1s-\frac{\pi}{\left|\partial\Omega\right|}\cot\left(\frac{s\pi}{\left|\partial\Omega\right|}\right)$ is smooth near $s=0$).
\end{proof}

\subsection{Approximation of the Poincar\'e--Bertrand identities}

The next proposition provides a crucial discretization of \eqref{mean} and the Poincar\'e--Bertrand identities \eqref{PB}.

\begin{proposition}\label{important}
	For any $N\geq 2$, consider a well distributed mesh $(s_{1}^N,\dots , s_{N}^N)\in \left[0,\left|\partial\Omega\right|\right)^N$, $(\tilde s_{1}^N, \dots , \tilde s_{N}^N)\in \left[0,\left|\partial\Omega\right|\right)^N$. Then, for all $z\in \mathbb{R}^N$, as $N\to\infty$,
	\begin{equation}\label{mean approx}
		\begin{aligned}
			\left|\ip{\frac{\left|\partial\Omega\right|}{N}B_Nz}\right|
			+
			\left|\ip{\frac{\left|\partial\Omega\right|}{N}A_Nz-\pi z}\right|
			& \leq
			\frac{C}{N^\kappa}
			\left\|z\right\|_{\ell^1},
			\\
			\left|\ip{\frac{\left|\partial\Omega\right|}{N}\tilde B_Nz}\right|
			+
			\left|\ip{\frac{\left|\partial\Omega\right|}{N}\tilde A_Nz-\pi z}\right|
			& \leq
			\frac{C}{N^\kappa}
			\left\|z\right\|_{\ell^1},
		\end{aligned}
	\end{equation}
	and
	\begin{equation}\label{PB approx}
		\begin{aligned}
			\left\|
			\frac{\left|\partial\Omega\right|^2}{N^2}
			\left( B_N\tilde B_N- A_N\tilde A_N\right) z
			+\pi^2 z
			\right\|_{\ell^2} \hspace{35mm}&
			\\
			+
			\left\|
			\frac{\left|\partial\Omega\right|^2}{N^2}
			\left( A_N\tilde B_N+B_N\tilde A_N\right) z
			\right\|_{\ell^2}
			& \leq \frac{C}{N} \| z \|_{\ell^2},
			\\
			\left\|
			\frac{\left|\partial\Omega\right|^2}{N^2}
			\left( \tilde B_NB_N- \tilde A_N A_N\right) z
			+\pi^2 z
			\right\|_{\ell^2} \hspace{35mm}&
			\\
			+
			\left\|
			\frac{\left|\partial\Omega\right|^2}{N^2}
			\left( \tilde A_N B_N+\tilde B_N A_N\right) z
			\right\|_{\ell^2}
			& \leq \frac{C}{N} \| z \|_{\ell^2},
		\end{aligned}
	\end{equation}
	where $C>0$ is an independent constant.
\end{proposition}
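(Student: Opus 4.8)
\emph{Strategy.} The plan is to split each of the four matrices into a \emph{singular} part (a cotangent) plus a \emph{smooth} remainder, and then to estimate the two contributions by different means: the cotangent parts are handled by the explicit trigonometric identities \eqref{perfect distri}--\eqref{perfect distri 2}, by Lemma~\ref{technical cot}, and by Lemma~\ref{est l2}, while the smooth remainders are handled by the fast convergence of Riemann sums on well distributed meshes (Appendix~\ref{riemann appendix}). Writing $L=\left|\partial\Omega\right|$, recall (as in the proof of Corollary~\ref{boundedness}) that there are $C^\infty$, $L$-periodic functions $r_A,r_B$ such that
\begin{equation*}
	\frac{l(s)-l(s_*)}{\left|l(s)-l(s_*)\right|^2}\cdot\tau(l(s))=\frac\pi L\cot\Big(\frac{(s-s_*)\pi}{L}\Big)+r_B(s,s_*),
	\qquad
	\frac{l(s)-l(s_*)}{\left|l(s)-l(s_*)\right|^2}\cdot n(l(s))=r_A(s,s_*).
\end{equation*}
I will use two elementary facts about these remainders. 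First, integrating each relation in $s$ over one period, using $B^*1=0$, $A^*1=\pi$ (see \eqref{B1}--\eqref{A1}) and $\int_0^L\cot\frac{(s-s_*)\pi}{L}\,ds=0$, gives $\int_0^L r_B(\cdot,s_*)\,ds=0$ and $\int_0^L r_A(\cdot,s_*)\,ds=\pi$ for every $s_*$. Second, since $\{\tau(l(s)),n(l(s))\}$ is orthonormal, one has the pointwise relation $r_A(s,s_*)^2+\big(\frac\pi L\cot\frac{(s-s_*)\pi}{L}+r_B(s,s_*)\big)^2=\left|l(s)-l(s_*)\right|^{-2}$.

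\emph{The mean estimates \eqref{mean approx}.} Since $\ip{\,\cdot\,}$ is an arithmetic mean, each quantity in \eqref{mean approx} is a $z$-weighted sum of column sums; e.g.
\begin{equation*}
	\ip{\tfrac LN B_Nz}=\frac1{N^2}\sum_{j=1}^N z_j\sum_{i=1}^N\Big(\pi\cot\big(\tfrac{(\tilde s_i^N-s_j^N)\pi}{L}\big)+L\,r_B(\tilde s_i^N,s_j^N)\Big).
\end{equation*}
The inner cotangent sum is $\mathcal O(N^{-\kappa+1})$ uniformly in $j$ by Lemma~\ref{technical cot}. The inner sum of $r_B(\cdot,s_j^N)$ is, up to an error $\mathcal O(N^{-\kappa})$ from replacing $\{\tilde s_i^N\}$ by the uniform mesh via \eqref{mesh2}, a midpoint Riemann sum of the smooth periodic \emph{zero-mean} function $r_B(\cdot,s_j^N)$, hence itself $\mathcal O(N^{-\kappa})$ (indeed $o(N^{-m})$ for all $m$; see Appendix~\ref{riemann appendix}). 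As $\frac1N\sum_j|z_j|=\|z\|_{\ell^1}$, this yields $\big|\ip{\tfrac LN B_Nz}\big|\le CN^{-\kappa}\|z\|_{\ell^1}$. The bound on $\ip{\tfrac LN A_Nz-\pi z}=\tfrac1N\sum_j z_j\big(\tfrac LN\sum_i r_A(\tilde s_i^N,s_j^N)-\pi\big)$ is identical, the Riemann sum now converging to $\int_0^L r_A(\cdot,s_j^N)\,ds=\pi$; the two ``tilde'' estimates follow verbatim, with Riemann sums over $\{s_i^N\}$ instead.

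\emph{The discrete Poincar\'e--Bertrand formula \eqref{PB approx}.} I will treat $\mathcal E:=\tfrac{L^2}{N^2}(B_N\tilde B_N-A_N\tilde A_N)+\pi^2 I$; the other three estimates are entirely analogous --- the ``$AB+BA$''-type ones being easier, since there each product of matrix entries carries at most one cotangent factor, consistently with $AB+BA=0$. Expanding $(B_N\tilde B_N)_{ij}=\sum_k(\tfrac\pi L\cot+r_B)(\tfrac\pi L\cot+r_B)$ and $(A_N\tilde A_N)_{ij}=\sum_k r_Ar_A$, the dominant contribution to the former is the cotangent--cotangent double sum, which collapses thanks to the elementary relation $\cot a\cot b=\cot(b-a)(\cot a-\cot b)-1$: taking $a=\tfrac{(\tilde s_i^N-s_k^N)\pi}{L}$, $b=\tfrac{(\tilde s_j^N-s_k^N)\pi}{L}$ (so $\cot\tfrac{(s_k^N-\tilde s_j^N)\pi}{L}=-\cot b$), the difference $b-a=\tfrac{(\tilde s_j^N-\tilde s_i^N)\pi}{L}$ does not depend on $k$, whence for $i\ne j$
\begin{equation*}
	\sum_k\cot\big(\tfrac{(\tilde s_i^N-s_k^N)\pi}{L}\big)\cot\big(\tfrac{(s_k^N-\tilde s_j^N)\pi}{L}\big)=N-\cot\big(\tfrac{(\tilde s_j^N-\tilde s_i^N)\pi}{L}\big)\sum_k\Big(\cot\big(\tfrac{(\tilde s_i^N-s_k^N)\pi}{L}\big)-\cot\big(\tfrac{(\tilde s_j^N-s_k^N)\pi}{L}\big)\Big),
\end{equation*}
the last two single sums being $\mathcal O(N^{-\kappa+1})$ by Lemma~\ref{technical cot} while $\big|\cot\tfrac{(\tilde s_j^N-\tilde s_i^N)\pi}{L}\big|=\mathcal O(N/|i-j|_N)$. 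For $i=j$ I will use instead the classical identity $\sum_k\csc^2(\theta+\tfrac{k\pi}{N})=N^2\csc^2(N\theta)$, i.e.\ $\sum_k\csc^2\tfrac{(k+\frac12)\pi}{N}=N^2$, perturbed by $\mathcal O(N^{2-\kappa})$ on a well distributed mesh, so $\sum_k\cot^2(\cdots)=N^2-N+\mathcal O(N^{2-\kappa})$. Multiplying by $\tfrac{\pi^2}{N^2}$, the cotangent--cotangent part of $\tfrac{L^2}{N^2}(B_N\tilde B_N)_{ij}$ is thus $-\pi^2\delta_{ij}+\tfrac{\pi^2}{N}+\mathcal O\big(N^{-\kappa}/(|i-j|_N+1)\big)$: the ``$-\pi^2 I$'' appears, together with a spurious $\tfrac{\pi^2}{N}$ in every entry. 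It remains to see that this spurious term is cancelled by the remaining ``mixed'' contributions (the $\cot\cdot r_B$, $r_B\cdot\cot$ and $r_B\cdot r_B$ terms in $B_N\tilde B_N$ and the $-r_A\cdot r_A$ term from $A_N\tilde A_N$), and this is where the continuous Poincar\'e--Bertrand identity \eqref{PB}, i.e.\ $B^2-A^2=-\pi^2 I$, is decisive: for $i\ne j$ it forces $\int_0^L\big(K_B(\tilde s_i^N,u)K_B(u,\tilde s_j^N)-K_A(\tilde s_i^N,u)K_A(u,\tilde s_j^N)\big)\,du=0$ (with $K_A,K_B$ the full kernels), and since the same cotangent collapse applied to the continuous integral gives $\int_0^L\tfrac{\pi^2}{L^2}\cot\big(\tfrac{(\tilde s_i^N-u)\pi}{L}\big)\cot\big(\tfrac{(u-\tilde s_j^N)\pi}{L}\big)\,du=\tfrac{\pi^2}{L}$, the continuous integral of the sum of the mixed-and-smooth pieces equals $-\tfrac{\pi^2}{L}$. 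Hence the corresponding discrete sums are quadrature approximations, over nodes $\{s_k^N\}$ staggered away from the simple poles at $\tilde s_i^N,\tilde s_j^N$, of a Cauchy principal value integral equal to $-\tfrac{\pi^2}{L}$, so that $\tfrac{L^2}{N^2}\times(\text{mixed})=-\tfrac{\pi^2}{N}+\tfrac LN(\text{quadrature error})$, which absorbs exactly the spurious $\tfrac{\pi^2}{N}$. Collecting everything, $\mathcal E$ has diagonal entries $\mathcal O(1/N)$ --- the bottleneck being precisely the ``$-N$'' in $\sum_k\cot^2=N^2-N$, which is why the bound in \eqref{PB approx} is $\mathcal O(1/N)$ and not faster --- and off-diagonal entries decaying in both $N$ and $|i-j|_N$; a Schur test then yields $\|\mathcal Ez\|_{\ell^2}\le CN^{-1}\|z\|_{\ell^2}$, which is the claimed estimate.

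\emph{Main obstacle.} The hard part is neither the cotangent algebra nor the Riemann-sum estimates but the uniform control of $\mathcal E$ in $\ell^2$ \emph{operator} norm near the diagonal: individual cotangent factors $\cot\tfrac{(\tilde s_i^N-s_j^N)\pi}{L}$ are as large as $\mathcal O(N)$ when $|i-j|$ is bounded, so a crude entrywise bound on the mixed contributions gives only $\mathcal O(1)$ --- not $\mathcal O(1/N)$ --- after the $\tfrac{L^2}{N^2}$ scaling. Extracting $\mathcal O(1/N)$ genuinely requires exploiting the cancellations dictated by the continuous Poincar\'e--Bertrand identity --- so that what remains after cancellation is a quadrature error rather than a bulk term --- together with the near-Toeplitz structure of the residual matrix.
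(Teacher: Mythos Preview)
Your treatment of \eqref{mean approx} is correct and essentially coincides with the paper's: it is the content of claim \eqref{claim1} (and its tilde-variant), proved by the same splitting into cotangent plus smooth remainder, then Lemma~\ref{technical cot} and Riemann-sum convergence, followed by duality.

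For \eqref{PB approx} your route diverges from the paper's and leaves the decisive step unproved. The paper bundles the kernels of $A$ and $B$ into the single complex Cauchy kernel $\frac{L'(s)}{L(s)-L(s_*)}$ (see \eqref{real im}) and applies the exact partial-fraction identity
\[
\frac{1}{(w-a)(w-b)}=\frac{1}{a-b}\Big(\frac{1}{w-a}-\frac{1}{w-b}\Big)
\]
at the level of the \emph{full} kernel. This reduces the complex double sum to a diagonal piece controlled by \eqref{claim2} plus products of single sums controlled by \eqref{claim1} and \eqref{claim3}; no ``mixed terms'' ever appear, because the algebraic identity is exact on the complete Cauchy kernel rather than on its cotangent part alone. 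The $\mathcal O(N^{-1})$ bound then follows by identifying real and imaginary parts and dualizing.

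You instead decompose each kernel into cotangent plus smooth first and apply $\cot a\cot b=\cot(b-a)(\cot a-\cot b)-1$ only to the cot--cot piece (this is the identity behind Lemma~\ref{est l2}, not this proposition). You are then left with four cross-terms whose combination must both cancel the spurious $\tfrac{\pi^2}{N}$ and leave a residual with $\ell^2$ operator norm $\mathcal O(N^{-1})$. You assert this follows from a ``quadrature error'' bound but do not carry it out --- your own ``Main obstacle'' paragraph concedes as much. That step is not routine: each cross-integrand carries a simple pole at $\tilde s_i^N$ or $\tilde s_j^N$, so a uniform-in-$(i,j)$ Riemann-sum error would require, for every term, subtracting the smooth factor's value at the pole, invoking Lemma~\ref{technical cot} on the pure cotangent sum, applying Corollary~\ref{riemann2} to the regularised remainder, and tracking all $C^\kappa$ norms uniformly in the two endpoints --- none of which you do. This program can be made to work, but it is substantially more laborious than the paper's approach, which sidesteps the whole difficulty because partial fractions hold exactly for the full Cauchy kernel and not merely for its singular part.
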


\begin{proof}
	Recall that $l:\left[0,\left|\partial\Omega\right|\right]\to\mathbb{R}^2$ denotes a given arc-length parametrization of $\partial \Omega$. For clarity, we also introduce the smooth path $L:\left[0,\left|\partial\Omega\right|\right]\to\mathbb{C}$ defining the contour $\Gamma\subset\mathbb{C}$ matching $\partial\Omega$ through the usual identification of $\mathbb{R}^2$ with the complex plane $\mathbb{C}$, so that $L(s)$ is identified with $l(s)$, for each $s\in \left[0,\left|\partial\Omega\right|\right]$.
	
	We claim that
	\begin{align}
		\label{claim1}
		\max_{1\leq k\leq N}
		\left|
		\frac{\left|\partial\Omega\right|}{N}\sum_{1\leq j\leq N}
		\frac{L'\left(\tilde s_j^N\right)}{L\left(\tilde s_j^N\right)-L\left(s_k^N\right)}
		-i\pi
		\right| & =\mathcal{O}\left(N^{-\kappa}\right),
		\\
		\label{claim2}
		\max_{1\leq k\leq N}
		\left|
		\frac{\left|\partial\Omega\right|^2}{N^2}\sum_{1\leq j\leq N}
		\frac{L'\left(\tilde s_j^N\right)L'\left(s_k^N\right)}{\left(L\left(\tilde s_j^N\right)-L\left(s_k^N\right)\right)^2}
		-\pi^2
		\right| & =\mathcal{O}\left(N^{-1}\right),
	\end{align}
	and, for any $z\in\mathbb{R}^N$, that
	\begin{equation}\label{claim3}
		\frac1N \left\| \left\{\sum_{\substack{1\leq j \leq N \\ j\neq k}}
		\frac{L'\left(s_j^N\right)}{L\left(s_k^N\right)-L\left(s_j^N\right)}
		z_{j}
		\right\}_{1\leq k\leq N}
		\right\|_{\ell^2}
		\leq C \| z \|_{\ell^2},
	\end{equation}
	where the constant $C>0$ only depends on fixed parameters.

	Indeed, the first claim \eqref{claim1} is obtained by writing
	\begin{equation*}
			\sum_{1\leq j\leq N}
			\frac{L'\left(\tilde s_j^N\right)}{L\left(\tilde s_j^N\right)-L\left(s_k^N\right)}
			=
			\sum_{1\leq j\leq N} f\left(\tilde s_j^N,s_k^N\right)
			+ \frac{\pi}{\left|\partial\Omega\right|} \sum_{1\leq j\leq N} \cot\left(\frac{\left(\tilde s_j^N-s_k^N\right)\pi}{\left|\partial\Omega\right|}\right),
	\end{equation*}
	where
	\begin{equation*}
		f(s,s_*)=
		\frac{L'(s)}{L(s)-L(s_*)}
		-\frac{\pi}{\left|\partial\Omega\right|}
		\cot\left(\frac{\left(s-s_*\right)\pi}{\left|\partial\Omega\right|}\right),
		\quad \text{with }s,s_*\in \left[0,\left|\partial\Omega\right|\right],
	\end{equation*}
	is a smooth function, which is a direct consequence of \eqref{kernel decomposition} and the fact that $\frac 1s-\frac{\pi}{\left|\partial\Omega\right|}\cot\left(\frac{s\pi}{\left|\partial\Omega\right|}\right)$ is smooth near $s=0$. Then, by virtue of Lemma \ref{technical cot} and the uniform convergence of Riemann sums for smooth functions (see Corollary \ref{riemann2} from Appendix \ref{riemann appendix}, if necessary), we deduce that
	\begin{equation*}
		\max_{1\leq k\leq N}
		\left|
		\frac{\left|\partial\Omega\right|}{N}\sum_{1\leq j\leq N}
		\frac{L'\left(\tilde s_j^N\right)}{L\left(\tilde s_j^N\right)-L\left(s_k^N\right)}
		-\int_0^{\left|\partial\Omega\right|} f\left(s,s_k^N\right)ds
		\right|= \mathcal{O}\left(N^{-\kappa}\right).
	\end{equation*}
	The justification of \eqref{claim1} is then completed upon noticing that, for any $s_*\in\left[0,\left|\partial\Omega\right|\right]$,
	\begin{equation*}
		\int_0^{\left|\partial\Omega\right|}f(s,s_*)ds
		=
		\int_\Gamma\frac{1}{z-L(s_*)}
		dz=i\pi,
	\end{equation*}
	where the last integral above is defined in the sense of Cauchy's principal value and is easily evaluated employing Plemelj's formulas \eqref{plemelj} with the representation of Cauchy integrals \eqref{complex representation}.

	We turn now to the justification of \eqref{claim2}. To this end, we use, again, that $f(s,s_*)$ is smooth to deduce that
	\begin{equation*}
		\frac{\partial f}{\partial s_*}(s,s_*) + \frac{\pi^2}{\left|\partial\Omega\right|^2}=
		\frac{L'(s)L'(s_*)}{\left(L(s)-L(s_*)\right)^2}
		-\frac{\pi^2}{\left|\partial\Omega\right|^2}
		\cot^2\left(\frac{\left(s-s_*\right)\pi}{\left|\partial\Omega\right|}\right),
	\end{equation*}
	is smooth, as well. Therefore, by Corollary \ref{riemann2} from Appendix \ref{riemann appendix} and employing \eqref{mesh2}-\eqref{mesh}, we find that
		\begin{align*}
			\left|
			\sum_{1\leq j\leq N}
			\left(\frac{L'\left(\tilde s_j^N\right)L'\left(s_k^N\right)}{\left(L\left(\tilde s_j^N\right)-L\left(s_k^N\right)\right)^2}
			-
			\frac{\pi^2}{\left|\partial\Omega\right|^2} \cot^2\left(\frac{\left(\tilde s_j^N-s_k^N\right)\pi}{\left|\partial\Omega\right|}\right)
			\right)
			\right|
			=\mathcal{O}(N).
		\end{align*}
	By \eqref{cot mean value 2}, we further find that
		\begin{align*}
			\sum_{1\leq j\leq N} \left|\cot^2\left(\frac{\left(\tilde s_j^N-s_k^N\right)\pi}{\left|\partial\Omega\right|}\right)
			-\cot^2\left(\frac{\left(\tilde \theta_j^N-\theta_k^N\right)\pi}{\left|\partial\Omega\right|}\right)\right|
			\hspace{-70mm}&
			\\
			& \leq C N \sum_{1\leq j\leq N}
			\left|\cot\left(\frac{\left(\tilde s_j^N-s_k^N\right)\pi}{\left|\partial\Omega\right|}\right)
			-\cot\left(\frac{\left(\tilde \theta_j^N-\theta_k^N\right)\pi}{\left|\partial\Omega\right|}\right)\right|
			= \mathcal{O}(N^{-\kappa+2}),
		\end{align*}
	for some independent constant $C>0$, which implies that
		\begin{align*}
			\left|
			\sum_{1\leq j\leq N}
			\left(\frac{L'\left(\tilde s_j^N\right)L'\left(s_k^N\right)}{\left(L\left(\tilde s_j^N\right)-L\left(s_k^N\right)\right)^2}
			-
			\frac{\pi^2}{\left|\partial\Omega\right|^2} \cot^2\left(\frac{\left(\tilde \theta_j^N-\theta_k^N\right)\pi}{\left|\partial\Omega\right|}\right)
			\right)
			\right|
			=\mathcal{O}(N).
		\end{align*}
	The justification of \eqref{claim2} is then completed upon noticing that
		\begin{align*}
			\frac {\pi^2}N + & \frac{\pi^2}{N^2} \sum_{1\leq j\leq N} \cot^2\left(\frac{\left(\tilde \theta_j^N-\theta_k^N\right)\pi}{\left|\partial\Omega\right|}\right)
			\\
			= &
			\frac{\pi^2}{N^2} \sum_{1-k\leq j\leq N-k}
			\frac{1}{\sin^2\left(\frac{\left(j+\frac 12\right)\pi}{N}\right)}
			=
			\frac{\pi^2}{N^2} \sum_{1-\left[\frac N2\right]\leq j\leq N-\left[\frac N2\right]}
			\frac{1}{\sin^2\left(\frac{\left(j+\frac 12\right)\pi}{N}\right)}
			\\
			= &
			\frac {\pi^2}{N^2}\sum_{1-\left[\frac N2\right]\leq j\leq N-\left[\frac N2\right]}
			\left(\frac{1}{\sin^2\left(\frac{\left(j+\frac 12\right)\pi}{N}\right)}
			-\frac{1}{\left(\frac{\left(j+\frac 12\right)\pi}{N}\right)^2}
			\right)
			\\
			& + \sum_{1-\left[\frac N2\right]\leq j\leq N-\left[\frac N2\right]}
			\frac{4}{\left(2j+1\right)^2},
		\end{align*}
	whereby, by the convergence of Riemann sums (see Corollary \ref{riemann2} from Appendix \ref{riemann appendix}, if necessary),
		\begin{align*}
			\frac{\pi^2}{N^2} \sum_{1\leq j\leq N} \cot^2\left(\frac{\left(\tilde \theta_j^N-\theta_k^N\right)\pi}{\left|\partial\Omega\right|}\right)
			& = \mathcal{O}\left(N^{-1}\right)
			+
			\sum_{0\leq j\leq \left[\frac N2\right]}
			\frac{8}{\left(2j+1\right)^2}
			\\
			& = \mathcal{O}\left(N^{-1}\right)
			+
			\sum_{0\leq j<\infty}
			\frac{8}{\left(2j+1\right)^2}
			\\
			& = \mathcal{O}\left(N^{-1}\right)
			+8
			\left(
			\sum_{1\leq j<\infty}
			\frac{1}{j^2}
			-\sum_{1\leq j<\infty}
			\frac{1}{(2j)^2}
			\right)
			\\
			& = \mathcal{O}\left(N^{-1}\right)
			+6
			\sum_{1\leq j<\infty}\frac{1}{j^2}
			= \mathcal{O}\left(N^{-1}\right)
			+\pi^2.
		\end{align*}

	As for our third claim \eqref{claim3}, it directly follows from Corollary\ \ref{est l2 well} using that $f(s,s_*)$ is a continuous---and therefore bounded---function.

	Now that \eqref{claim1}, \eqref{claim2} and \eqref{claim3} are established, we move on to the actual justification of \eqref{PB approx}. To this end, we decompose, for any $z\in\mathbb{R}^N$ and each $1\leq k\leq N$,
		\begin{align*}
			\sum_{1\leq i,j\leq N}
			\frac{L'\left(\tilde s_j^N\right)}{L\left(\tilde s_j^N\right)-L\left(s_k^N\right)}
			\frac{L'\left(s_i^N\right)}{L\left(\tilde s_j^N\right)-L\left(s_i^N\right)}
			z_i
			\hspace{-40mm}&
			\\
			= & \sum_{\substack{1\leq i\leq N \\ i\neq k}}
			\frac{L'\left(s_i^N\right)}{L\left(s_k^N\right)-L\left(s_i^N\right)}
			\\
			& \times\sum_{1\leq j\leq N}
			L'\left(\tilde s_j^N\right)
			\left[
			\frac{1}{L\left(\tilde s_j^N\right)-L\left(s_k^N\right)}
			-\frac{1}{L\left(\tilde s_j^N\right)-L\left(s_i^N\right)}
			\right]z_i
			\\
			& +
			\sum_{1\leq j\leq N}
			\frac{L'\left(\tilde s_j^N\right)L'\left(s_k^N\right)}{\left(L\left(\tilde s_j^N\right)-L\left(s_k^N\right)\right)^2}
			z_k
			\\
			= &
			\left[
			\sum_{1\leq j\leq N}
			\frac{L'\left(\tilde s_j^N\right)}{L\left(\tilde s_j^N\right)-L\left(s_k^N\right)}
			-\frac{i\pi N}{\left|\partial\Omega\right|}
			\right]
			\sum_{\substack{1\leq i\leq N \\ i\neq k}}
			\frac{L'\left(s_i^N\right)}{L\left(s_k^N\right)-L\left(s_i^N\right)}
			z_i
			\\
			& - \sum_{\substack{1\leq i\leq N \\ i\neq k}}
			\frac{L'\left(s_i^N\right)}{L\left(s_k^N\right)-L\left(s_i^N\right)}
			\left[
			\sum_{1\leq j\leq N}
			\frac{L'\left(\tilde s_j^N\right)}{L\left(\tilde s_j^N\right)-L\left(s_i^N\right)}
			-\frac{i\pi N}{\left|\partial\Omega\right|}
			\right]z_i
			\\
			& +
			\left[
			\sum_{1\leq j\leq N}
			\frac{L'\left(\tilde s_j^N\right)L'\left(s_k^N\right)}{\left(L\left(\tilde s_j^N\right)-L\left(s_k^N\right)\right)^2}
			-\frac{\pi^2N^2}{\left|\partial\Omega\right|^2}\right]
			z_k + \frac{\pi^2N^2}{\left|\partial\Omega\right|^2}z_k.
		\end{align*}
	Therefore, employing \eqref{claim1}, \eqref{claim2} and \eqref{claim3}, it follows that
		\begin{align*}
			\left\| \left\{
			\frac{\left|\partial\Omega\right|^2}{N^2}
			\sum_{1\leq i,j\leq N}
			\frac{L'\left(\tilde s_j^N\right)}{L\left(\tilde s_j^N\right)-L\left(s_k^N\right)}
			\frac{L'\left(s_i^N\right)}{L\left(\tilde s_j^N\right)-L\left(s_i^N\right)}
			z_i
			-\pi^2 z_k
			\right\}_{1\leq k\leq N}
			\right\|_{\ell^2}
			\hspace{-10mm}&
			\\
			& \leq \frac{C}{N} \| z \|_{\ell^2}.
		\end{align*}
	Clearly, exchanging the symmetric roles of $(s_{1}^N,\dots , s_{N}^N)$ and $(\tilde s_{1}^N, \dots , \tilde s_{N}^N)$ yields the equivalent estimate
		\begin{align*}
			\left\| \left\{
			\frac{\left|\partial\Omega\right|^2}{N^2}
			\sum_{1\leq i,j\leq N}
			\frac{L'\left(s_j^N\right)}{L\left(s_j^N\right)-L\left(\tilde s_k^N\right)}
			\frac{L'\left(\tilde s_i^N\right)}{L\left(s_j^N\right)-L\left(\tilde s_i^N\right)}
			z_i
			-\pi^2 z_k
			\right\}_{1\leq k\leq N}
			\right\|_{\ell^2}
			\hspace{-10mm}&
			\\
			& \leq \frac{C}{N} \| z \|_{\ell^2}.
		\end{align*}
	
	Finally, noticing that, for all $s,s_*\in\left[0,\left|\partial\Omega\right|\right]$,
	\begin{equation}\label{real im}
		\begin{aligned}
			\frac{L'\left(s\right)}{L\left(s\right)-L\left(s_*\right)}
			=
			\frac{l(s)-l(s_*)}{\left|l(s)-l(s_*)\right|^2}\cdot \tau\left(l(s)\right)
			+i
			\frac{l(s)-l(s_*)}{\left|l(s)-l(s_*)\right|^2}\cdot n\left(l(s)\right),
		\end{aligned}
	\end{equation}
	it is readily seen that (we suggest the reader to compare these identities with \eqref{complex representation})
		\begin{align*}
			\left\{\sum_{1\leq i,j\leq N}
			\frac{L'\left(\tilde s_j^N\right)}{L\left(\tilde s_j^N\right)-L\left(s_k^N\right)}
			\frac{L'\left(s_i^N\right)}{L\left(\tilde s_j^N\right)-L\left(s_i^N\right)}
			z_i
			\right\}_{1\leq k\leq N}
			\hspace{-10mm}&
			\\
			& =
			-\left( B_N^*+i A_N^*\right)\left(\tilde B_N^*+i\tilde A_N^*\right) z,
		\end{align*}
	and
		\begin{align*}
			\left\{
			\sum_{1\leq i,j\leq N}
			\frac{L'\left(s_j^N\right)}{L\left(s_j^N\right)-L\left(\tilde s_k^N\right)}
			\frac{L'\left(\tilde s_i^N\right)}{L\left(s_j^N\right)-L\left(\tilde s_i^N\right)}
			z_i
			\right\}_{1\leq k\leq N}
			\hspace{-10mm}&
			\\
			& =
			-\left( \tilde B_N^*+i \tilde A_N^*\right)\left( B_N^*+i A_N^*\right) z.
		\end{align*}
	Therefore, identifying the real and imaginary parts in the preceding estimates yields
		\begin{align*}
			\left\|
			\frac{\left|\partial\Omega\right|^2}{N^2}
			\left( B_N^*\tilde B_N^*- A_N^*\tilde A_N^*\right) z
			+\pi^2 z
			\right\|_{\ell^2}
			\hspace{-10mm}&
			\\
			& +
			\left\|
			\frac{\left|\partial\Omega\right|^2}{N^2}
			\left( A_N^*\tilde B_N^*+B_N^*\tilde A_N^*\right) z
			\right\|_{\ell^2}
			\leq \frac{C}{N} \| z \|_{\ell^2},
			\\
			\left\|
			\frac{\left|\partial\Omega\right|^2}{N^2}
			\left( \tilde B_N^*B_N^*- \tilde A_N^* A_N^*\right) z
			+\pi^2 z
			\right\|_{\ell^2}
			\hspace{-10mm}&
			\\
			& +
			\left\|
			\frac{\left|\partial\Omega\right|^2}{N^2}
			\left( \tilde A_N^* B_N^*+\tilde B_N^* A_N^*\right) z
			\right\|_{\ell^2}
			\leq \frac{C}{N} \| z \|_{\ell^2}.
		\end{align*}
	A standard duality argument concludes the proof of \eqref{PB approx}.
	
	As for \eqref{mean approx}, by combining \eqref{claim1} with \eqref{real im} and then exchanging the symmetric roles of $(s_{1}^N,\dots , s_{N}^N)$ and $(\tilde s_{1}^N, \dots , \tilde s_{N}^N)$, we easily obtain
	\begin{equation}\label{average adjoint}
		\begin{aligned}
			\left\|
			\frac{\left|\partial\Omega\right|}{N}\left(B^*_N+iA^*_N\right)\mathbf{1}
			-i\pi\mathbf{1}
			\right\|_{\ell^\infty} & =\mathcal{O}\left(N^{-\kappa}\right),
			\\
			\left\|
			\frac{\left|\partial\Omega\right|}{N}\left(\tilde B^*_N+i\tilde A^*_N\right)\mathbf{1}
			-i\pi\mathbf{1}
			\right\|_{\ell^\infty} & =\mathcal{O}\left(N^{-\kappa}\right),
		\end{aligned}
	\end{equation}
	where $\mathbf{1}=(1,\ldots,1)\in\mathbb{R}^N$. Consequently, by duality, we find that
		\begin{align*}
			\left|\ip{\frac{\left|\partial\Omega\right|}{N}\left(B_N+iA_N\right)z-i\pi z}\right|
			& \leq
			\left\|
			\frac{\left|\partial\Omega\right|}{N}\left(B^*_N+iA^*_N\right)\mathbf{1}
			-i\pi\mathbf{1}
			\right\|_{\ell^\infty}\left\|z\right\|_{\ell^1}
			\\
			& \leq \mathcal{O}\left(N^{-\kappa}\right)
			\left\|z\right\|_{\ell^1},
			\\
			\left|\ip{\frac{\left|\partial\Omega\right|}{N}\left(\tilde B_N+i\tilde A_N\right)z-i\pi z}\right|
			& \leq
			\left\|
			\frac{\left|\partial\Omega\right|}{N}\left(\tilde B^*_N+i\tilde A^*_N\right)\mathbf{1}
			-i\pi\mathbf{1}
			\right\|_{\ell^\infty}\left\|z\right\|_{\ell^1}
			\\
			& \leq \mathcal{O}\left(N^{-\kappa}\right)
			\left\|z\right\|_{\ell^1},
		\end{align*}
	which completes the proof of the proposition.
\end{proof}

\subsection{Approximation of spectral properties}

We will also need a discretized version of the estimate \eqref{gelfand} on the spectral radius of $A$ acting on zero mean elements, which is precisely the content of the coming lemma. To this end, recall from \eqref{mean} that the operator $A$ preserves the mean value of its argument. In order that the relevant discretized operators enjoy similar properties, we introduce a correction to $\tilde A_NA_N$ and $A_N\tilde A_N$. More precisely, we introduce the operators $D_N,\tilde D_N:\mathbb{R}^N\to\mathbb{R}^N$ defined by, for all $z\in\mathbb{R}^N$,
		\begin{align*}
			D_N z & = \tilde A_N A_N z - \ip{\tilde A_N A_N z - \frac{\pi^2 N^2}{|\partial\Omega|^2}z}\mathbf{1}
			\\
			& = \tilde A_N A_N z - \ip{\tilde A_N A_N z - \frac{\pi N}{|\partial\Omega|}A_Nz}\mathbf{1}
			- \ip{\frac{\pi N}{|\partial\Omega|}A_Nz - \frac{\pi^2 N^2}{|\partial\Omega|^2}z}\mathbf{1},
			\\
			\tilde D_N z & = A_N \tilde A_N z - \ip{A_N \tilde A_N z - \frac{\pi^2 N^2}{|\partial\Omega|^2}z}\mathbf{1}
			\\
			& = A_N \tilde A_N z - \ip{A_N \tilde A_N z - \frac{\pi N}{|\partial\Omega|}\tilde A_Nz}\mathbf{1}
			- \ip{\frac{\pi N}{|\partial\Omega|}\tilde A_Nz - \frac{\pi^2 N^2}{|\partial\Omega|^2}z}\mathbf{1},
		\end{align*}
	where $\mathbf{1}=(1,\ldots,1)\in\mathbb{R}^N$. Observe that $\frac{|\partial\Omega|^2}{N^2}\ip{D_Nz}=\frac{|\partial\Omega|^2}{N^2}\ip{\tilde D_Nz}=\pi^2\ip{z}$. We also define the subspace $\ell^p_0\subset\ell^p$, for any $1\leq p \leq \infty$, by
	\begin{equation*}
		\ell^p_0=\set{z\in \ell^p}{\ip{z}=0}.
	\end{equation*}

\begin{lemma}\label{inverse D}
	For any $N\geq 2$, consider a well distributed mesh $(s_{1}^N,\dots , s_{N}^N)\in \left[0,\left|\partial\Omega\right|\right)^N$, $(\tilde s_{1}^N, \dots , \tilde s_{N}^N)\in \left[0,\left|\partial\Omega\right|\right)^N$. Then, there exist $N_*,k_*\geq 1$ and $\delta>0$ such that
		\begin{align*}
			\left\|
			\left( \frac{\left|\partial\Omega\right|^2}{N^2} D_N\right)^k
			\right\|_{\mathcal{L}\left(\ell^2_0\right)}^\frac 1k
			\leq\left(\pi-\delta\right)^{2}
			\quad\text{and}\quad
			\left\|
			\left( \frac{\left|\partial\Omega\right|^2}{N^2}\tilde D_N\right)^k
			\right\|_{\mathcal{L}\left(\ell^2_0\right)}^\frac 1k
			\leq\left(\pi-\delta\right)^{2},
		\end{align*}
	for all $k\geq k_*$ and $N\geq N_*$.
	
	In particular, provided $N$ is sufficiently large, the Neumann series
		\begin{align*}
			\left(\frac{\left|\partial\Omega\right|^2}{N^2} D_N-\pi^2\right)^{-1}
			& =
			-\pi^{-2}\sum_{k=0}^\infty \left(\frac{\left|\partial\Omega\right|^2}{\pi^2 N^2} D_N\right)^{k},
			\\
			\left(\frac{\left|\partial\Omega\right|^2}{N^2} \tilde D_N-\pi^2\right)^{-1}
			& =
			-\pi^{-2}\sum_{k=0}^\infty \left(\frac{\left|\partial\Omega\right|^2}{\pi^2 N^2} \tilde D_N\right)^{k},
		\end{align*}
	are absolutely convergent in $\mathcal{L}\left(\ell^2_0\right)$ and the inverse operators they define are bounded in $\mathcal{L}\left(\ell^2_0\right)$ uniformly in $N$.
\end{lemma}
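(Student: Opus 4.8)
The plan is to transfer the continuous spectral estimate \eqref{gelfand} to the discrete setting by regarding $\frac{|\partial\Omega|^2}{N^2}D_N$ as a discretization of $A^2:L^2_0(\partial\Omega)\to L^2_0(\partial\Omega)$ whose iterates converge, in the operator norm on $\ell^2_0$, to the iterates of $A^2$, the norms of which are made small by \eqref{gelfand}. Throughout I identify $A$ with its conjugate under the arc-length parametrization $l$, i.e.\ with the integral operator on $L^2([0,|\partial\Omega|])$ with the smooth periodic kernel $\hat k(s,t)=\frac{l(s)-l(t)}{|l(s)-l(t)|^2}\cdot n(l(s))$; this conjugation is isometric, and the matrices $A_N,\tilde A_N$ are precisely $\hat k$ sampled on the staggered pairs of meshes. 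Note that $A^2$, hence $A^{2k_*}$, has a smooth kernel, since $A$ does.

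I first reduce to a single fixed power. By Corollary \ref{boundedness}, $\frac{|\partial\Omega|}{N}A_N$ and $\frac{|\partial\Omega|}{N}\tilde A_N$ are bounded on $\ell^2$ uniformly in $N$, hence so is $\frac{|\partial\Omega|^2}{N^2}D_N$; moreover, since $\frac{|\partial\Omega|^2}{N^2}\ip{D_Nz}=\pi^2\ip z$, it maps $\ell^2_0$ into $\ell^2_0$, and $\|\frac{|\partial\Omega|^2}{N^2}D_N\|_{\mathcal L(\ell^2_0)}\le C_0$ with $C_0$ independent of $N$. It therefore suffices to produce a fixed $k_*$ and a $\delta_0>0$ such that $\|(\frac{|\partial\Omega|^2}{N^2}D_N)^{k_*}\|_{\mathcal L(\ell^2_0)}\le(\pi-\delta_0)^{2k_*}$ for all $N\ge N_*$: writing a general $k=qk_*+r$ with $0\le r<k_*$ and combining submultiplicativity with $C_0$ then gives $\|(\frac{|\partial\Omega|^2}{N^2}D_N)^{k}\|^{1/k}_{\mathcal L(\ell^2_0)}\le(\pi-\delta_0)^{2qk_*/k}C_0^{r/k}$, which tends to $(\pi-\delta_0)^2$ as $k\to\infty$, so the claimed bound $\le(\pi-\delta)^2$ holds with a slightly smaller $\delta$, for all sufficiently large $k$ and all $N\ge N_*$; the Neumann series assertion then follows immediately, as $\|(\frac{|\partial\Omega|^2}{\pi^2N^2}D_N)^{k}\|_{\mathcal L(\ell^2_0)}$ decays geometrically uniformly in $N\ge N_*$. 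The operator $\tilde D_N$ is handled identically, with the two meshes exchanged.

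For the fixed-power estimate I compare $\frac{|\partial\Omega|^2}{N^2}D_N$ with a sampled kernel. By the uniform convergence of Riemann sums on well distributed meshes (Corollary \ref{riemann2}) applied to the smooth integrand $t\mapsto\hat k(s_i^N,t)\hat k(t,s_j^N)$, the entries of $\frac{|\partial\Omega|^2}{N^2}\tilde A_NA_N$ equal $\frac{|\partial\Omega|}{N}$ times the kernel $\hat k_2$ of $A^2$ at $(s_i^N,s_j^N)$, up to an error tending to $0$ uniformly in $i,j$; since $\|M\|^2_{\mathcal L(\ell^2)}\le\sum_{i,j}|M_{ij}|^2$ for every matrix $M$, and since \eqref{mean approx} and Corollary \ref{boundedness} bound the mean-correction built into $D_N$ by $\mathcal{O}(N^{-\kappa})$ in operator norm, this yields $\frac{|\partial\Omega|^2}{N^2}D_N=\mathcal K_N+E_N$, where $(\mathcal K_N)_{ij}=\frac{|\partial\Omega|}{N}\hat k_2(s_i^N,s_j^N)$ and $\|E_N\|_{\mathcal L(\ell^2)}\to0$. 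Raising to the power $k_*$, expanding, invoking the uniform boundedness of $\mathcal K_N$, and iterating the Riemann-sum convergence for the ensuing $(k_*-1)$-fold sums, one obtains $(\frac{|\partial\Omega|^2}{N^2}D_N)^{k_*}=\mathcal M_N+(\text{operator norm}\to0)$ in $\mathcal L(\ell^2)$, where $(\mathcal M_N)_{ij}=\frac{|\partial\Omega|}{N}\hat k_{2k_*}(s_i^N,s_j^N)$ and $\hat k_{2k_*}$ is the (smooth) kernel of $A^{2k_*}$. As $\frac{|\partial\Omega|^2}{N^2}D_N$ preserves $\ell^2_0$, inserting the orthogonal projection $\Pi_0$ onto $\ell^2_0$ gives $(\frac{|\partial\Omega|^2}{N^2}D_N)^{k_*}=\Pi_0\mathcal M_N\Pi_0+(\text{operator norm}\to0)$ on $\ell^2_0$. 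Finally, the entrywise action of $\Pi_0$ subtracts the discrete row- and column-means and adds the total mean, each being a Riemann sum for the corresponding integral, so $(\Pi_0\mathcal M_N\Pi_0)_{ij}=\frac{|\partial\Omega|}{N}(\varkappa(s_i^N,s_j^N)+\mathcal{O}(N^{-1}))$ uniformly in $i,j$, where $\varkappa$ is exactly the kernel of $\Pi_0A^{2k_*}\Pi_0=A^{2k_*}|_{L^2_0}$; hence, bounding the operator norm by the Hilbert-Schmidt sum, $\|(\frac{|\partial\Omega|^2}{N^2}D_N)^{k_*}\|_{\mathcal L(\ell^2_0)}\le\big(\sum_{i,j}|(\Pi_0\mathcal M_N\Pi_0)_{ij}|^2\big)^{1/2}+o(1)$, and $\sum_{i,j}|(\Pi_0\mathcal M_N\Pi_0)_{ij}|^2=\frac{|\partial\Omega|^2}{N^2}\sum_{i,j}\varkappa(s_i^N,s_j^N)^2+o(1)\to\iint|\varkappa|^2=\|A^{2k_*}|_{L^2_0}\|^2_{\mathrm{HS}}$. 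Since $A^2$ is Hilbert-Schmidt, $\|A^{2k_*}|_{L^2_0}\|_{\mathrm{HS}}\le\|A^{2(k_*-1)}\|_{\mathcal L(L^2_0)}\|A^2\|_{\mathrm{HS}}$, which by \eqref{gelfand} is $\le(\pi-\delta_0)^{2k_*}$ once $k_*$ is chosen large enough; fixing such a $k_*$ and then taking $N_*$ large enough to absorb the vanishing remainders finishes the argument.

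The hard part is precisely this passage from $\ell^2_0$ to $L^2_0(\partial\Omega)$: one cannot expect the operator norm of a single discretization to converge to that of its continuous limit, which forces the argument through a fixed high power $k_*$ for which \eqref{gelfand}, together with the Hilbert-Schmidt property of $A^2$, makes $A^{2k_*}|_{L^2_0}$ small not merely in operator norm but in Hilbert-Schmidt norm; and one must then check carefully that every discrete average and projection entering the definitions of $D_N$ and $\ell^2_0$ converges to its continuous counterpart, so that the limiting Hilbert-Schmidt norm is exactly that of $A^{2k_*}$ restricted to zero-mean functions.
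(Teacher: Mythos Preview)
Your argument is correct and follows a genuinely different route from the paper's. Both proofs share the reduction to a single fixed power $k_*$ via submultiplicativity and Gelfand's formula \eqref{gelfand}, and both rest on the uniform-in-$(i,j)$ convergence of the entries of $(\frac{|\partial\Omega|^2}{N^2}\tilde A_NA_N)^{k}$ to $\frac{|\partial\Omega|}{N}$ times the smooth kernel of $A^{2k}$. The divergence is in how this kernel convergence is turned into a bound on $\|(\frac{|\partial\Omega|^2}{N^2}D_N)^{k_*}\|_{\mathcal L(\ell^2_0)}$.

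The paper embeds $\ell^2_0$ isometrically into $L^2_0(\partial\Omega)$ via piecewise constant functions on the partition $[\theta_i^N,\theta_{i+1}^N)$: the operator with the piecewise constant kernel $\frac{|\partial\Omega|}{N}\sum_{i,j}\mathds 1_{[\theta_i^N,\theta_{i+1}^N)}(s)(D_N^{k_*})_{ij}\mathds 1_{[\theta_j^N,\theta_{j+1}^N)}(s_*)$ acts on all of $L^2_0$, has operator norm within $\eps$ of $\|A^{2k_*}\|_{\mathcal L(L^2_0)}$ by the $L^\infty$ closeness of kernels, and its restriction to the invariant subspace of piecewise constant zero-mean functions has operator norm exactly $\|(\frac{|\partial\Omega|^2}{N^2}D_N)^{k_*}\|_{\mathcal L(\ell^2_0)}$. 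Since restriction to a subspace can only decrease the norm, this lands directly on $\|A^{2k_*}\|_{\mathcal L(L^2_0)}$ with no loss. Your route instead controls the $\ell^2_0$ operator norm by the Frobenius norm of $\Pi_0\mathcal M_N\Pi_0$, lets that Riemann sum converge to $\|A^{2k_*}|_{L^2_0}\|_{\mathrm{HS}}$, and then recovers the operator-norm decay from \eqref{gelfand} via the factorization $\|A^{2k_*}|_{L^2_0}\|_{\mathrm{HS}}\le\|A^{2(k_*-1)}\|_{\mathcal L(L^2_0)}\|A^2\|_{\mathrm{HS}}$. The paper's embedding argument is sharper and shorter---it never leaves the operator norm---whereas your Hilbert--Schmidt detour is more pedestrian and concrete (everything is a Riemann sum of a smooth function), at the price of the extra factorization step and the need to identify the kernel $\varkappa$ of $\Pi_0A^{2k_*}\Pi_0$ explicitly.
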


\begin{proof}
	For each $k\geq 1$, we denote by $K_k(x,y)$ the kernel of $A^k$. Note that $K_k$ is smooth and satisfies, for all $x,y\in\partial\Omega$,
		\begin{align*}
			K_k(x,y)= &
			\int_{\partial\Omega\times\dots\times\partial\Omega}
			\frac{x-y_1}{|x-y_1|^2}\cdot n(x)
			\\
			&\times
			\left(\prod_{j=1}^{k-2}\frac{y_j-y_{j+1}}{|y_j-y_{j+1}|^2}\cdot n(y_j)\right)
			\frac{y_{k-1}-y}{|y_{k-1}-y|^2}\cdot n(y_{k-1})
			dy_1\dots dy_{k-1}.
		\end{align*}
	In particular, for even indices, we may write
		\begin{align*}
			K_{2k}(x,y) \hspace{-10mm}&
			\\
			= &
			\int_{\partial\Omega\times\dots\times\partial\Omega}
			\left(
			\int_{\partial\Omega}\frac{x-y_1}{|x-y_1|^2}\cdot n(x)
			\frac{y_1-y_2}{|y_1-y_2|^2}\cdot n(y_1)dy_1
			\right)
			\\
			&\times \prod_{j=1}^{k-2}
			\left(
			\int_{\partial\Omega}\frac{y_{2j}-y_{2j+1}}{|y_{2j}-y_{2j+1}|^2}\cdot n(y_{2j})
			\frac{y_{2j+1}-y_{2j+2}}{|y_{2j+1}-y_{2j+2}|^2}\cdot n(y_{2j+1})dy_{2j+1}
			\right)
			\\
			&\times
			\left(
			\int_{\partial\Omega}\frac{y_{2k-2}-y_{2k-1}}{|y_{2k-2}-y_{2k-1}|^2}\cdot n(y_{2k-2})
			\frac{y_{2k-1}-y}{|y_{2k-1}-y|^2}\cdot n(y_{2k-1})dy_{2k-1}
			\right)
			\prod_{j=1}^{k-1}dy_{2j}.
		\end{align*}
	Therefore, by smoothness of the kernel $\frac{x-y}{|x-y|^2}\cdot n(x)$, approximating the above integrals by their Riemann sums yields, as $N\to\infty$, that $K_{2k}(x,y)$ is arbitrarily close (which is symbolized here by $\sim$) in $L_{x,y}^\infty\left(\partial\Omega\times\partial\Omega\right)$ to
		\begin{align*}
			K_{2k} (x,y) \hspace{-9mm}&
			\\\sim &
			\int_{\partial\Omega\times\dots\times\partial\Omega}
			\left(
			\frac{\left|\partial\Omega\right|}{N}\sum_{i=1}^N \frac{x-l\left(\tilde s_i^N\right)}{\left|x-l\left(\tilde s_i^N\right)\right|^2}\cdot n(x)
			\frac{l\left(\tilde s_i^N\right)-y_2}{\left|l\left(\tilde s_i^N\right)-y_2\right|^2}\cdot n\left(l\left(\tilde s_i^N\right)\right)
			\right)
			\\
			&\times \prod_{j=1}^{k-2}
			\left(
			\frac{\left|\partial\Omega\right|}{N}\sum_{i=1}^N \frac{y_{2j}-l\left(\tilde s_i^N\right)}{\left|y_{2j}-l\left(\tilde s_i^N\right)\right|^2}\cdot n\left(y_{2j}\right)
			\frac{l\left(\tilde s_i^N\right)-y_{2j+2}}{\left|l\left(\tilde s_i^N\right)-y_{2j+2}\right|^2}\cdot n\left(l\left(\tilde s_i^N\right)\right)
			\right)
			\\
			&\times
			\left(
			\frac{\left|\partial\Omega\right|}{N}\sum_{i=1}^N \frac{y_{2k-2}-l\left(\tilde s_i^N\right)}{\left|y_{2k-2}-l\left(\tilde s_i^N\right)\right|^2}\cdot n\left(y_{2k-2}\right)
			\frac{l\left(\tilde s_i^N\right)-y}{\left|l\left(\tilde s_i^N\right)-y\right|^2}\cdot n\left(l\left(\tilde s_i^N\right)\right)
			\right)
			\prod_{j=1}^{k-1}dy_{2j}
			\\
			\sim &
			\left(\frac{\left|\partial\Omega\right|}{N}\right)^{2k-1}
			\\
			& \times \sum_{j_1,\ldots,j_{k-1}=1}^N
			\left(
			\sum_{i=1}^N \frac{x-l\left(\tilde s_i^N\right)}{\left|x-l\left(\tilde s_i^N\right)\right|^2}\cdot n(x)
			\frac{l\left(\tilde s_i^N\right)-l\left(s_{j_1}^N\right)}{\left|l\left(\tilde s_i^N\right)-l\left(s_{j_1}^N\right)\right|^2}\cdot n\left(l\left(\tilde s_i^N\right)\right)
			\right)
			\\
			&\times \prod_{n=1}^{k-2}
			\left(
			\sum_{i=1}^N \frac{l\left(s_{j_n}^N\right)-l\left(\tilde s_i^N\right)}{\left|l\left(s_{j_n}^N\right)-l\left(\tilde s_i^N\right)\right|^2}\cdot n\left(l\left(s_{j_n}^N\right)\right)
			\frac{l\left(\tilde s_i^N\right)-l\left(s_{j_{n+1}}^N\right)}{\left|l\left(\tilde s_i^N\right)-l\left(s_{j_{n+1}}^N\right)\right|^2}\cdot n\left(l\left(\tilde s_i^N\right)\right)
			\right)
			\\
			&\times
			\left(
			\sum_{i=1}^N \frac{l\left(s_{j_{k-1}}^N\right)-l\left(\tilde s_i^N\right)}{\left|l\left(s_{j_{k-1}}^N\right)-l\left(\tilde s_i^N\right)\right|^2}\cdot n\left(l\left(s_{j_{k-1}}^N\right)\right)
			\frac{l\left(\tilde s_i^N\right)-y}{\left|l\left(\tilde s_i^N\right)-y\right|^2}\cdot n\left(l\left(\tilde s_i^N\right)\right)
			\right).
		\end{align*}
	Further discretizing in $x=l(s)$ and $y=l(s_*)$, with $s,s_*\in\left[0,\left|\partial\Omega\right|\right]$, we deduce that, as $N\to\infty$,
	\begin{equation}\label{k approx}
		\left(K_{2k}(x,y)
		-
		\left(\frac{\left|\partial\Omega\right|}{N}\right)^{2k-1}
		\sum_{i,j=1}^N
		\mathds{1}_{\left[\theta_{i}^N, \theta_{i+1}^N\right)}(s)
		\left(\left(\tilde A_N A_N\right)^k\right)_{ij}
		\mathds{1}_{\left[\theta_{j}^N, \theta_{j+1}^N\right)}(s_*)
		\right)\to 0,
	\end{equation}
	in $L_{x,y}^\infty\left(\partial\Omega\times\partial\Omega\right)$, where the $\theta_i^N$'s are defined in \eqref{mesh}.
	
	Now, in view of Corollary \ref{boundedness} and Proposition \ref{important}, it holds that, denoting by $e^j$ the $j^\text{th}$ vector of the canonical basis of $\mathbb{R}^N$ and interpreting matrices in $\mathbb{R}^{N\times N}$ as vectors in $\mathbb{R}^{N^2}$ measured in the $\ell^\infty$-norm,
		\begin{align*}
			\left\|D_N-\tilde A_N A_N\right\|_{\ell^\infty} \hspace{-10mm}&
			\\
			& =
			\sup_{1\leq j\leq N}
			\left|\ip{\tilde A_N A_N e^j - \frac{\pi N}{|\partial\Omega|}A_Ne^j}
			+ \ip{\frac{\pi N}{|\partial\Omega|}A_Ne^j - \frac{\pi^2 N^2}{|\partial\Omega|^2}e^j}\right|
			\\
			& \leq \frac CN\left\|A_N e^j\right\|_{\ell^1} + C\left\|e^j\right\|_{\ell^1}
			= \mathcal{O}\left(N^{-1}\right),
			\\
			\left\|D_N-\tilde A_N A_N\right\|_{\mathcal{L}\left(\ell^2\right)} \hspace{-10mm}&
			\\
			& =
			\sup_{\substack{\|z\|_{\ell^2}=1}}
			\left|\ip{\tilde A_N A_N z - \frac{\pi N}{|\partial\Omega|}A_Nz}
			+ \ip{\frac{\pi N}{|\partial\Omega|}A_Nz - \frac{\pi^2 N^2}{|\partial\Omega|^2}z}\right|
			\\
			& \leq \frac CN\left\|A_N\right\|_{\mathcal{L}\left(\ell^2\right)} + C
			=\mathcal{O}\left(1\right).
		\end{align*}
	Further writing, for each $k\geq 1$,
	\begin{equation*}
		D_N^k-\left(\tilde A_N A_N\right)^k=\sum_{j=0}^{k-1}D_N^j\left(D_N-\tilde A_N A_N\right)\left(\tilde A_N A_N\right)^{k-1-j},
	\end{equation*}
	we obtain
		\begin{align*}
			\left\|D_N^k-\left(\tilde A_N A_N\right)^k\right\|_{\ell^\infty} \hspace{-20mm} &
			\\
			& \leq
			N^{2k-2}
			\sum_{j=0}^{k-1}
			N^{-j}
			\left\|D_N\right\|_{\ell^\infty}^j\left\|D_N-\tilde A_N A_N\right\|_{\ell^\infty}\left\|\tilde A_N\right\|_{\ell^\infty}^{k-1-j}\left\|A_N\right\|_{\ell^\infty}^{k-1-j}
			\\
			& =\mathcal{O}\left(N^{2k-3}\right),
			\\
			\left\|D_N^k-\left(\tilde A_N A_N\right)^k\right\|_{\mathcal{L}\left(\ell^2\right)} \hspace{-20mm} &
			\\
			& \leq
			\sum_{j=0}^{k-1}
			\left\|D_N\right\|_{\mathcal{L}\left(\ell^2\right)}^j\left\|D_N-\tilde A_N A_N\right\|_{\mathcal{L}\left(\ell^2\right)}\left\|\tilde A_N\right\|_{\mathcal{L}\left(\ell^2\right)}^{k-1-j}\left\|A_N\right\|_{\mathcal{L}\left(\ell^2\right)}^{k-1-j}
			\\
			& =\mathcal{O}\left(N^{2k-2}\right).
		\end{align*}
	Therefore, by \eqref{k approx}, since
		\begin{align*}
			\sup_{s,s_*\in\left[0,|\partial\Omega|\right]}\left|\sum_{i,j=1}^N
			\mathds{1}_{\left[\theta_{i}^N, \theta_{i+1}^N\right)}(s)
			\left(D_N^k-\left(\tilde A_N A_N\right)^k\right)_{ij}
			\mathds{1}_{\left[\theta_{j}^N, \theta_{j+1}^N\right)}(s_*)
			\right| \hspace{-20mm}&
			\\
			& \leq
			\left\|D_N^k-\left(\tilde A_N A_N\right)^k\right\|_{\ell^\infty},
		\end{align*}
	we find that, as $N\to\infty$,
	\begin{equation*}
		\left(K_{2k}(x,y)
		-
		\left(\frac{\left|\partial\Omega\right|}{N}\right)^{2k-1}
		\sum_{i,j=1}^N
		\mathds{1}_{\left[\theta_{i}^N, \theta_{i+1}^N\right)}(s)
		\left(D_N^k\right)_{ij}
		\mathds{1}_{\left[\theta_{j}^N, \theta_{j+1}^N\right)}(s_*)
		\right)\to 0,
	\end{equation*}
	in $L_{x,y}^\infty\left(\partial\Omega\times\partial\Omega\right)$.
	
	It follows that, for any fixed $k\geq 1$ and $\eps>0$, provided $N$ is sufficiently large,
		\begin{align*}
			\left\|A^{2k}\right\|_{\mathcal{L}\left(L^2_0\right)}+\eps \hspace{-20mm} &
			\\
			& \geq
			\sup_{\varphi\in L^2_0\left(\partial\Omega\right)}
			\frac{\left\|\left(\frac{\left|\partial\Omega\right|}{N}\right)^{2k-1}
			\sum_{i,j=1}^N
			\mathds{1}_{\left[\theta_{i}^N, \theta_{i+1}^N\right)}(s)
			\left(D_N^k\right)_{ij}
			\int_{\theta_{j}^N}^{\theta_{j+1}^N}\varphi(l(s_*))ds_*\right\|_{L^2_s}}
			{\left\|\varphi(l(s))\right\|_{L^2_s}}
			\\
			& =
			\sup_{\varphi\in L^2_0\left(\partial\Omega\right)}
			\frac{\left(\frac{\left|\partial\Omega\right|}{N}\sum_{i=1}^N \left( \left(\frac{\left|\partial\Omega\right|}{N}\right)^{2k-1}
			\sum_{j=1}^N
			\left(D_N^k\right)_{ij}
			\int_{\theta_{j}^N}^{\theta_{j+1}^N}\varphi(l(s_*))ds_*\right)^2\right)^\frac12}
			{\left\|\varphi(l(s))\right\|_{L^2_s}}
			\\
			& \geq
			\sup_{\substack{z\in\mathbb{R}^N \\ \ip{z}=0}}
			\frac{\left|\partial\Omega\right|^\frac 12
			\left\|
			\left(\frac{\left|\partial\Omega\right|}{N}\right)^{2k}D_N^k z
			\right\|_{\ell^2}}
			{\left\|\sum_{i=1}^N z_i \mathds{1}_{\left[\theta_{i}^N, \theta_{i+1}^N\right)}(s)\right\|_{L^2_s}}
			\\
			& =
			\sup_{\substack{z\in\mathbb{R}^N \\ \ip{z}=0}}
			\frac{
			\left\|
			\left(\frac{\left|\partial\Omega\right|}{N}\right)^{2k}D_N^k z
			\right\|_{\ell^2}}
			{\left\|z\right\|_{\ell^2}}
			=\left\|\left(\frac{\left|\partial\Omega\right|}{N}\right)^{2k}D_N^k\right\|_{\mathcal{L}\left(\ell^2_0\right)}.
		\end{align*}
	Further deducing from estimate \eqref{gelfand} that there exist $k_0\geq 1$ and $\delta>0$ such that $\left\|A^{2k_0}\right\|_{\mathcal{L}\left(L^2_0\right)}^\frac 1{2k_0} \leq \pi-3\delta$, we infer that, setting $\eps>0$ sufficiently small,
	\begin{equation*}
		\left\|\left(\frac{\left|\partial\Omega\right|}{N}\right)^{2k_0}D_N^{k_0}\right\|_{\mathcal{L}\left(\ell^2_0\right)}
		\leq
		\left\|A^{2k_0}\right\|_{\mathcal{L}\left(L^2_0\right)}+\eps
		\leq \left(\pi -3\delta\right)^{2k_0} + \eps \leq \left(\pi -2\delta\right)^{2k_0},
	\end{equation*}
	for $N$ sufficiently large.

	Now, for any $k\geq k_0$, we write $k=pk_0+q$ with positive integral numbers and $0\leq q\leq k_0-1$. Then, we obtain
		\begin{align*}
			\left\|\left(\left(\frac{\left|\partial\Omega\right|}{N}\right)^{2}D_N\right)^{k}\right\|_{\mathcal{L}\left(\ell^2_0\right)}
			& \leq
			\left\|\left(\left(\frac{\left|\partial\Omega\right|}{N}\right)^{2}D_N\right)^{k_0}\right\|_{\mathcal{L}\left(\ell^2_0\right)}^p
			\left\|\left(\frac{\left|\partial\Omega\right|}{N}\right)^{2}D_N\right\|_{\mathcal{L}\left(\ell^2_0\right)}^q
			\\
			& \leq \left(\pi -2\delta\right)^{2\left(k-q\right)}
			\left\|\left(\frac{\left|\partial\Omega\right|}{N}\right)^{2}D_N\right\|_{\mathcal{L}\left(\ell^2_0\right)}^q.
		\end{align*}
	Note that the preceding step could not possibly be performed for $\tilde A_NA_N$, for this operator does not in general preserve the subspace $\ell^2_0\subset\ell^2$, which justifies the introduction of $D_N$. Further using that $N^{-2}D_N$ is a bounded operator over $\ell^2$ uniformly in $N$, we arrive at, for some fixed constant $C_*>0$ independent of $N$ and $k$, and for sufficiently large $k$,
	\begin{equation*}
		\left\|\left(\left(\frac{\left|\partial\Omega\right|}{N}\right)^{2}D_N\right)^{k}\right\|_{\mathcal{L}\left(\ell^2_0\right)}
		\leq C_* \left(\pi -2\delta\right)^{2k}\leq \left(\pi -\delta\right)^{2k},
	\end{equation*}
	which, upon exchanging the roles of $(s_{1}^N,\dots , s_{N}^N)$ and $(\tilde s_{1}^N, \dots , \tilde s_{N}^N)$ to obtain an equivalent estimate on $\tilde D_N$, concludes the proof of the lemma.
\end{proof}

\subsection{Solving the discrete system \eqref{point toy}}

Combining the preceding results allows us to get the existence and the uniqueness of the solution to \eqref{point toy}. For mere convenience of notation, we introduce the matrix
\begin{equation*}
	B_{N-1,N} := 
	\left(
	\frac{l\left(\tilde s_{i}^N\right) - l\left(s_{j}^N\right)}
	{\left|l\left(\tilde s_{i}^N\right) - l\left(s_{j}^N\right)\right|^2}\cdot \tau\left(l\left(\tilde s_{i}^N\right)\right)
	\right)_{\substack{1\leq i\leq N-1 \\ 1\leq j\leq N }}.
\end{equation*}

\begin{proposition}\label{inverse perfect}
	For any $N\geq 2$, consider a well distributed mesh $(s_{1}^N,\dots , s_{N}^N)\in \left[0,\left|\partial\Omega\right|\right)^N$, $(\tilde s_{1}^N, \dots , \tilde s_{N}^N)\in \left[0,\left|\partial\Omega\right|\right)^N$. Then, provided $N$ is sufficiently large, the following problem:
	\begin{equation}\label{prob}
		z\in \R^N,\quad \frac{\left|\partial\Omega\right|}N B_{N-1,N}z=v, \quad \langle z \rangle = \gamma,
	\end{equation}
	has a unique solution for any given $v\in \R^{N-1}$ and $\gamma \in \R$. Moreover, this solution satisfies:
	\begin{equation}\label{est l1}
	\| z \|_{\ell^1}\leq \| z \|_{\ell^2}
	\leq
	C\left(\| v \|_{\ell^2} + |\gamma| + \sqrt N \left|\ip{v}\right|\right)
	\leq
	C\left(\| v \|_{\ell^\infty} + |\gamma| + \sqrt N \left|\ip{v}\right|\right),
	\end{equation}
	for some independent constant $C>0$.
\end{proposition}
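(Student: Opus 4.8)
The plan is to reduce everything to an a priori estimate. Problem \eqref{prob} amounts to inverting the square linear map $\Phi_N\colon z\mapsto\bigl(\tfrac{|\partial\Omega|}{N}(B_Nz)_1,\dots,\tfrac{|\partial\Omega|}{N}(B_Nz)_{N-1},\langle z\rangle\bigr)$ on $\mathbb R^N$, so it suffices to show that \emph{every} $z\in\mathbb R^N$ satisfies the a priori bound
$\|z\|_{\ell^2}\le C\bigl(\|v\|_{\ell^2}+|\gamma|+\sqrt N\,|\langle v\rangle|\bigr)+o(1)\|z\|_{\ell^2}$,
where $v:=\bigl(\tfrac{|\partial\Omega|}{N}(B_Nz)_i\bigr)_{i=1}^{N-1}$ and $\gamma:=\langle z\rangle$; for $N$ large the last term is absorbed, injectivity of $\Phi_N$ follows, hence bijectivity, and the same estimate then applies to the unique solution, giving \eqref{est l1} after $\|z\|_{\ell^1}\le\|z\|_{\ell^2}$ and $\|v\|_{\ell^2}\le\|v\|_{\ell^\infty}$. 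The mechanism behind the estimate is to discretize the continuous identity $B^{-1}=(A^2-\pi^2)^{-1}B$ from \eqref{inverse B}: apply the ``transpose grid'' operator $\tilde B_N$ to $w:=\tfrac{|\partial\Omega|}{N}B_Nz$ and then invert $A^2-\pi^2$ at the discrete level.

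First I would set $w:=\tfrac{|\partial\Omega|}{N}B_Nz$ and compute $\tfrac{|\partial\Omega|}{N}\tilde B_Nw=\tfrac{|\partial\Omega|^2}{N^2}\tilde B_NB_Nz$. By the discrete Poincaré--Bertrand identity \eqref{PB approx} this equals $\tfrac{|\partial\Omega|^2}{N^2}\tilde A_NA_Nz-\pi^2z$ up to an error of $\ell^2$-norm $\le\tfrac{C}{N}\|z\|_{\ell^2}$. Replacing the operator $\tilde A_NA_N$ by its mean-preserving surrogate $D_N$ changes the right-hand side only by the scalar $\bigl\langle\tfrac{|\partial\Omega|^2}{N^2}\tilde A_NA_Nz-\pi^2z\bigr\rangle$ times $\mathbf 1$; splitting this scalar as $\tfrac{|\partial\Omega|}{N}\bigl\langle\tfrac{|\partial\Omega|}{N}\tilde A_N(A_Nz)-\pi A_Nz\bigr\rangle+\pi\bigl\langle\tfrac{|\partial\Omega|}{N}A_Nz-\pi z\bigr\rangle$ and invoking \eqref{mean approx} together with the $\ell^1\!\to\!\ell^2$ bound on $\tfrac{1}{N}A_N$ from Corollary \ref{boundedness} shows it is $O(N^{-\kappa})\|z\|_{\ell^2}$, hence harmless. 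Altogether one obtains
\begin{equation*}
\Bigl(\frac{|\partial\Omega|^2}{N^2}D_N-\pi^2\Bigr)z=\frac{|\partial\Omega|}{N}\tilde B_N w+E,\qquad\|E\|_{\ell^2}\le\frac{C}{N}\|z\|_{\ell^2},
\end{equation*}
and both sides lie in $\ell^2_0$, since $\tfrac{|\partial\Omega|^2}{N^2}\langle D_Nz\rangle=\pi^2\langle z\rangle$ forces the left-hand side to have zero mean.

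Next I would invert. Splitting $z=z_0+\gamma\mathbf 1$ with $z_0\in\ell^2_0$ and applying the inverse of $\tfrac{|\partial\Omega|^2}{N^2}D_N-\pi^2$ on $\ell^2_0$, whose uniform boundedness in $N$ is exactly Lemma \ref{inverse D}, I get $z_0=\bigl(\tfrac{|\partial\Omega|^2}{N^2}D_N-\pi^2\bigr)^{-1}\bigl(\tfrac{|\partial\Omega|}{N}\tilde B_Nw+E\bigr)-\gamma\mu_N$, where $\mu_N:=\bigl(\tfrac{|\partial\Omega|^2}{N^2}D_N-\pi^2\bigr)^{-1}\bigl(\tfrac{|\partial\Omega|^2}{N^2}D_N-\pi^2\bigr)\mathbf 1\in\ell^2_0$ is uniformly bounded (because $\tfrac{|\partial\Omega|^2}{N^2}D_N$ is uniformly bounded on $\ell^2$). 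Using $\tfrac{1}{N}\|\tilde B_N\|_{\mathcal L(\ell^2)}\le C$ from Corollary \ref{boundedness} this yields $\|z\|_{\ell^2}\le\|z_0\|_{\ell^2}+|\gamma|\le C\bigl(|\gamma|+\|w\|_{\ell^2}\bigr)+\tfrac{C}{N}\|z\|_{\ell^2}$. It then remains to relate $w$ to $v$: the two agree except in the last entry $w_N=N\langle w\rangle-\sum_{i<N}w_i$, and by \eqref{mean approx} $|N\langle w\rangle|\le CN^{-(\kappa-1)}\|z\|_{\ell^2}$ while $\sum_{i<N}w_i=O(N|\langle v\rangle|)$, so $\tfrac1N w_N^2\le CN\langle v\rangle^2+CN^{-(2\kappa-1)}\|z\|_{\ell^2}^2$ and hence $\|w\|_{\ell^2}\le\|v\|_{\ell^2}+C\sqrt N\,|\langle v\rangle|+\tfrac{C}{N}\|z\|_{\ell^2}$ for $\kappa\ge2$. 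Substituting and absorbing the $\tfrac{C}{N}\|z\|_{\ell^2}$ terms for $N$ sufficiently large gives the desired a priori estimate and completes the proof.

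The step I expect to be the main obstacle is the bookkeeping of the mean corrections: one must verify that the gap between $\tilde A_NA_N$ and the mean-preserving operator $D_N$, the near-vanishing of $\langle\tfrac{|\partial\Omega|}{N}B_Nz\rangle$ supplied by \eqref{mean approx}, and the $O(1/N)$ defect in the discrete Poincaré--Bertrand identity \eqref{PB approx} all collapse into a single error of size $o(1)\|z\|_{\ell^2}$; and, relatedly, that the one-dimensional kernel of $\tfrac{|\partial\Omega|^2}{N^2}D_N-\pi^2$ on $\ell^2$ — the discrete analogue of $\operatorname{span}(H\cdot\tau)$ identified in Section \ref{kernels} — is correctly pinned down by the constraint $\langle z\rangle=\gamma$, which is precisely what makes the $|\gamma|$ and $\sqrt N\,|\langle v\rangle|$ contributions to \eqref{est l1} unavoidable.
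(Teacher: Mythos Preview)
Your argument is correct and rests on exactly the same machinery as the paper's proof: the discrete Poincar\'e--Bertrand relation \eqref{PB approx}, the mean approximations \eqref{mean approx}, the uniform boundedness from Corollary \ref{boundedness}, and the invertibility of $\tfrac{|\partial\Omega|^2}{N^2}D_N-\pi^2$ on $\ell^2_0$ from Lemma \ref{inverse D}. The organization, however, is genuinely different. The paper first packages these ingredients into the invertibility on $\ell^2_0$ of the mean-corrected operator $E_Nz:=B_Nz-\langle B_Nz\rangle\mathbf 1$, then treats the problem via an \emph{overdetermined} map $\Phi:\mathbb R^N\to\mathbb R^{N+1}$, $z\mapsto\bigl(\tfrac{|\partial\Omega|}{N}B_Nz,\langle z\rangle\bigr)$, whose image is a hyperplane $\{r_N\cdot u=0\}$; the delicate part is showing $r_N\to(\mathbf 1,0)$ in a quantitative sense, which is how the $\sqrt N\,|\langle v\rangle|$ term emerges (it controls the reconstructed last component $v_N$). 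You instead work with the \emph{square} map, derive a single a priori estimate, and recover the missing entry $w_N$ directly from $N\langle w\rangle-\sum_{i<N}v_i$ using \eqref{mean approx}. Your route is shorter and avoids the hyperplane bookkeeping; the paper's route makes the one-dimensional ``compatibility'' constraint on the data (the discrete analogue of $\int_{\partial\Omega}u_P\cdot n=0$) more geometrically explicit through the limit $r_N\to(\mathbf 1,0)$.
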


\begin{proof}
	Let the operators $E_N,\tilde E_N:\ell^2\to\ell^2_0$ be defined by
	\begin{equation*}
		E_Nz=B_Nz-\ip{B_Nz}\mathbf{1}
		\quad\text{and}\quad
		\tilde E_Nz=\tilde B_Nz-\ip{\tilde B_Nz}\mathbf{1},
	\end{equation*}
	for all $z\in\mathbb{R}^N$, where $\mathbf{1}=(1,\ldots,1)\in\mathbb{R}^N$. In view of Corollary \ref{boundedness}, $N^{-1}E_N$ and $N^{-1}\tilde E_N$ are bounded uniformly in both $\mathcal{L}\left(\ell^2\right)$ and $\mathcal{L}\left(\ell^2_0\right)$.
	
	Next, by \eqref{mean approx} and \eqref{PB approx} from Proposition \ref{important}, we obtain that
		\begin{align*}
			\left\|
			\frac{\left|\partial\Omega\right|^2}{N^2}
			\tilde E_NE_N z
			-
			\left(
			\frac{\left|\partial\Omega\right|^2}{N^2} D_N - \pi^2
			\right) z
			\right\|_{\ell^2}
			\hspace{-40mm}&
			\\
			\leq & \left\|
			\frac{\left|\partial\Omega\right|^2}{N^2}
			\tilde B_NB_N z
			-
			\left(
			\frac{\left|\partial\Omega\right|^2}{N^2} D_N - \pi^2
			\right) z
			\right\|_{\ell^2}
			\\
			& +\frac{\left|\partial\Omega\right|^2}{N^2}
			\left(
			\left|\ip{B_N z}\right|\left\|\tilde B_N\mathbf{1}\right\|_{\ell^2}
			+\left|\ip{\tilde B_N B_N z}\right|
			+\left|\ip{\tilde B_N z}\right|\left|\ip{B_N z}\right|
			\right)
			\\
			\leq & \left\|
			\frac{\left|\partial\Omega\right|^2}{N^2}
			\left( \tilde B_NB_N- \tilde A_N A_N\right) z
			+\pi^2 z
			\right\|_{\ell^2} + \frac{C}{N^2}\left\|z\right\|_{\ell^2}
			\\
			& +
			\frac{\left|\partial\Omega\right|}{N}
			\left|
			\ip{\frac{|\partial\Omega|}{N}\tilde A_N A_N z - \pi A_Nz}
			\right|
			+
			\pi
			\left|
			\ip{\frac{|\partial\Omega|}{N}A_Nz - \pi z}
			\right|
			\\
			\leq & \frac{C}{N}\left\|z\right\|_{\ell^2},
		\end{align*}
	where $C>0$ denotes various independent constants. Therefore, by Lemma \ref{inverse D}, we infer that, provided $N$ is sufficiently large,
	\begin{equation*}
		\left\|
		\left(
		\frac{\left|\partial\Omega\right|^2}{N^2} D_N - \pi^2
		\right)^{-1}
		\frac{\left|\partial\Omega\right|^2}{N^2}
		\tilde E_N E_N z
		- z
		\right\|_{\ell^2_0}
		\leq \frac 12 \left\|z\right\|_{\ell^2_0},
	\end{equation*}
	for all $z\in\ell^2_0$, which allows us to deduce, using yet another absolutely convergent Neumann series, that the operator $\left(
		\frac{\left|\partial\Omega\right|^2}{N^2} D_N - \pi^2
		\right)^{-1}
		\frac{\left|\partial\Omega\right|^2}{N^2}
		\tilde E_N E_N$ has an inverse in $\mathcal{L}\left(\ell^2_0\right)$, which is uniformly bounded in $N$ and given by
		\begin{align*}
			\left(\left(
			\frac{\left|\partial\Omega\right|^2}{N^2} D_N - \pi^2
			\right)^{-1}
			\frac{\left|\partial\Omega\right|^2}{N^2}
			\tilde E_N E_N\right)^{-1} \hspace{-20mm}&
			\\
			& =\sum_{k=0}^\infty
			\left(1-
			\left(
			\frac{\left|\partial\Omega\right|^2}{N^2} D_N - \pi^2
			\right)^{-1}
			\frac{\left|\partial\Omega\right|^2}{N^2}
			\tilde E_N E_N
			\right)^k.
		\end{align*}
	It therefore follows that, for large $N$, the operator $\frac{\left|\partial\Omega\right|}{N}E_N\in\mathcal{L}\left(\ell^2_0\right)$ is invertible with an inverse uniformly bounded in $N$ (in the operator norm over $\ell^2_0$) and given by
		\begin{align*}
			\left(\frac{\left|\partial\Omega\right|}{N}E_N\right)^{-1}
			\hspace{-10mm}&
			\\
			& =
			\left(\left(
			\frac{\left|\partial\Omega\right|^2}{N^2} D_N - \pi^2
			\right)^{-1}
			\frac{\left|\partial\Omega\right|^2}{N^2}
			\tilde E_N E_N\right)^{-1}
			\left(
			\frac{\left|\partial\Omega\right|^2}{N^2} D_N - \pi^2
			\right)^{-1}
			\frac{\left|\partial\Omega\right|}{N}
			\tilde E_N
			\\
			& \in\mathcal{L}\left(\ell^2_0\right).
		\end{align*}

	Now, let us define
		\begin{align*}
			\Phi\ :\quad \R^N &\to \R^{N+1}\\
			 z &\mapsto  
			\begin{pmatrix}
				\frac{\left|\partial\Omega\right|}N B_{N} z \\ \langle z \rangle
			\end{pmatrix},
		\end{align*}
	and let us suppose that $\Phi z=0$ for some $z\in\mathbb{R}^N$. In particular, one has that $z\in\ell^2_0$ and $\frac{\left|\partial\Omega\right|}{N}E_Nz=0$, whence $z=0$. Therefore, $\Phi$ is an injective linear mapping. In particular, it is bijective from $\R^N$ onto $\operatorname{Im}\Phi$, so that $\dim \left(\operatorname{Im} \Phi\right)=N$ and there exist vectors $r_N=\left(r_N^1,\ldots,r_N^N,r_N^{N+1}\right)=\left(r_N',r_N^{N+1}\right)\in\mathbb{R}^{N+1}$ such that
	\begin{equation*}
		\operatorname{Im}\Phi=\set{u\in\mathbb{R}^{N+1}}{r_N\cdot u=0}.
	\end{equation*}
	Without any loss of generality, we impose that the $r_N$'s satisfy
	\begin{equation}\label{scaling}
		r_N\cdot
		\begin{pmatrix}
			\mathbf{1}\\0
		\end{pmatrix}
		=N\ip{r_N'}
		\geq 0
		\quad\text{and}\quad
		\left\|r_N'\right\|_{\ell^2}+\left|r_N^{N+1}\right|=1.
	\end{equation}

	Observe that there is a unique $z_N\in\mathbb{R}^N$ such that
	\begin{equation*}
		\Phi (z_N)
		=
		\begin{pmatrix}
			\mathbf{1}\\0
		\end{pmatrix}
		-\frac{r_N'\cdot\mathbf{1}}{r_N\cdot r_N}r_N \in \operatorname{Im}\Phi.
	\end{equation*}
	On the one hand, since $\frac{\left|\partial\Omega\right|}{N}E_N\in\mathcal{L}\left(\ell^2_0\right)$ is invertible, we have the estimate
		\begin{align*}
			\left\|z_N\right\|_{\ell^2}
			& \leq C\left\|\frac{\left|\partial\Omega\right|}{N}E_N\left(z_N-\ip{z_N}\mathbf{1}\right)\right\|_{\ell^2_0}
			+\left|\ip{z_N}\right|
			\leq C\left\|\frac{\left|\partial\Omega\right|}{N}E_N\left(z_N\right)\right\|_{\ell^2_0}
			+C\left|\ip{z_N}\right|
			\\
			& = C\left\|\mathbf{1}-\frac{r_N'\cdot\mathbf{1}}{r_N\cdot r_N}r_N'-\ip{\mathbf{1}-\frac{r_N'\cdot\mathbf{1}}{r_N\cdot r_N}r_N'}\mathbf{1}\right\|_{\ell^2_0}
			+C\left|\frac{r_N'\cdot\mathbf{1}}{r_N\cdot r_N}r_N^{N+1}\right|
			\\
			& = C\left\|\frac{r_N'\cdot\mathbf{1}}{r_N\cdot r_N}r_N'-\frac{\left(r_N'\cdot\mathbf{1}\right)^2}{N r_N\cdot r_N}\mathbf{1}\right\|_{\ell^2_0}
			+C\left|\frac{r_N'\cdot\mathbf{1}}{r_N\cdot r_N}r_N^{N+1}\right|
			\\
			& \leq C\frac{\left\|r_N'\right\|_{\ell^2}^2 + \left\|r_N'\right\|_{\ell^2}\left|r_N^{N+1}\right|}{\left\|r_N\right\|_{\ell^2}^2}
			\leq \frac{C}{\left\|r_N\right\|_{\ell^2}^2},
		\end{align*}
	where $C>0$ denotes various constants independent of $N$, while, on the other hand, using \eqref{mean approx}, we find that
		\begin{align*}
			\frac{1}{\left\|r_N\right\|_{\ell^2}^2}
			\left\|
			r_N-\ip{r_N'}
			\begin{pmatrix}
				\mathbf{1}\\0
			\end{pmatrix}
			\right\|_{\ell^2}^2
			&
			=
			\left|
			1-\frac{\left(r_N'\cdot\mathbf{1}\right)^2}{Nr_N\cdot r_N}
			\right|
			\\
			& =
			\left|\ip{\mathbf{1}-\frac{r_N'\cdot\mathbf{1}}{r_N\cdot r_N}r_N'}\right|
			\\
			& =
			\left|\ip{\frac{\left|\partial\Omega\right|}{N}B_N(z_N)}\right|
			\leq \frac{C}{N^2} \left\|z_N\right\|_{\ell^1}.
		\end{align*}
	Combining the preceding estimates yields
	\begin{equation}\label{r to 1 l2}
			\left\|
			r_N-\ip{r_N'}
			\begin{pmatrix}
				\mathbf{1}\\0
			\end{pmatrix}
			\right\|_{\ell^2}^2
			=\mathcal{O}\left(N^{-2}\right),
	\end{equation}
	whence, using that $0\leq \ip{r_N'}\leq 1$,
	\begin{equation}\label{rN mean}
		\left|\left\|r_N\right\|_{\ell^2}-\ip{r_N'}\right|=\mathcal{O}\left(N^{-1}\right),
	\end{equation}
	and, since $\left\|z\right\|_{\ell^\infty}\leq \sqrt N\left\|z\right\|_{\ell^2}$, for any $z\in\mathbb{R}^N$,
	\begin{equation*}
			\left\|
			r_N-\ip{r_N'}
			\begin{pmatrix}
				\mathbf{1}\\0
			\end{pmatrix}
			\right\|_{\ell^\infty}
			=\mathcal{O}\left(N^{-\frac 12}\right).
	\end{equation*}
	In particular, we further deduce that $r_N^{N+1}=\mathcal{O}\left(N^{-\frac 12}\right)$. It therefore holds, by \eqref{scaling}, that $\left\|r_N'\right\|_{\ell^2}=1+\mathcal{O}\left(N^{-\frac 12}\right)$ and $\left\|r_N\right\|_{\ell^2}=1+\mathcal{O}\left(N^{-\frac 12}\right)$ so that $\ip{r_N'}=1+\mathcal{O}\left(N^{-\frac 12}\right)$, as well, by \eqref{rN mean}. On the whole, we conclude that
	\begin{equation*}
			\left\|
			r_N-
			\begin{pmatrix}
				\mathbf{1}\\0
			\end{pmatrix}
			\right\|_{\ell^\infty}
			=\mathcal{O}\left(N^{-\frac 12}\right).
	\end{equation*}
	In particular, considering sufficiently large values of $N$, we henceforth assume that all components of $r_N'$ are uniformly bounded away from zero.

	Now, let $v\in \R^{N-1}$ and $\gamma \in \R$ be fixed. There exists a unique $v_{N}$ such that
	$
	\begin{pmatrix}
	v \\ v_{N}\\ \gamma
	\end{pmatrix}
	\in \operatorname{Im}\Phi
	$, namely $v_{N}=-\frac{1}{r_N^N}\sum_{i=1}^{N-1}r_N^{i}v_{i} - \frac{r_N^{N+1}}{r_N^N}\gamma$. With this $v_{N}$, we then deduce the existence of $z\in \R^N$ such that $\Phi(z)=\begin{pmatrix}
	v\\v_{N}\\\gamma
	\end{pmatrix}
	$. In particular $z$ is a solution to \eqref{prob} and, by invertibility of $\frac{\left|\partial\Omega\right|}{N}E_N\in\mathcal{L}\left(\ell^2_0\right)$, it holds that
		\begin{align*}
			\left\|z-\gamma\mathbf{1}\right\|_{\ell^2_0}
			& =
			\left\|z-\ip{z}\mathbf{1}\right\|_{\ell^2_0}
			\leq C\left\|\frac{\left|\partial\Omega\right|}{N}E_N\left(z-\ip{z}\mathbf{1}\right)\right\|_{\ell^2_0}
			\\
			& =
			C\left\|
			\begin{pmatrix}
				v \\ v_N
			\end{pmatrix}
			-\gamma\frac{\left|\partial\Omega\right|}{N}B_N \mathbf{1}
			-
			\ip{\begin{pmatrix}
				v \\ v_N
			\end{pmatrix}}
			+\gamma\ip{\frac{\left|\partial\Omega\right|}{N}B_N \mathbf{1}}
			\right\|_{\ell^2_0}
			\\
			& \leq C
			\left(
			\left(\frac 1N\sum_{i=1}^N |v_i|^2\right)^\frac 12 + |\gamma|
			\right)
			\\
			& = C
			\left(
			\left(\frac 1N\sum_{i=1}^{N-1} |v_i|^2
			+\frac 1N\left|\frac{1}{r_N^N}\sum_{i=1}^{N-1}r_N^{i}v_{i} + \frac{r_N^{N+1}}{r_N^N}\gamma\right|^2\right)^\frac 12 + |\gamma|
			\right)
			\\
			& \leq C
			\left(\left\|v\right\|_{\ell^2} +
			\frac 1{\sqrt N}\left|\sum_{i=1}^{N-1}r_N^{i}v_{i}\right| + |\gamma|
			\right).
		\end{align*}
	As $\| z \|_{\ell^2}-|\gamma| \leq  \| z -\gamma\mathbf{1}\|_{\ell^2_0}$ and
		\begin{align*}
			\frac 1{\sqrt N}\left|\sum_{i=1}^{N-1}r_N^{i}v_{i}\right|
			& \leq
			\frac 1{\sqrt N}\left|\sum_{i=1}^{N-1}\left(r_N^{i}-\ip{r_N'}\right)v_{i}\right|
			+\frac{N-1}{\sqrt N}\left|\ip{r_N'}\ip{v}\right|
			\\
			& \leq
			\sqrt{\frac{N-1}{N}}\left\|v\right\|_{\ell^2}
			\left(\sum_{i=1}^{N-1}\left(r_N^{i}-\ip{r_N'}\right)^2\right)^\frac 12
			+\frac{N-1}{\sqrt N}\left|\ip{r_N'}\ip{v}\right|
			\\
			& =
			\mathcal{O}\left(N^{-\frac 12}\right)\left\|v\right\|_{\ell^2} + \mathcal{O}\left(N^\frac 12\right)\left|\ip{v}\right|,
		\end{align*}
	where we have used \eqref{r to 1 l2}, we conclude that
	\begin{equation*}
		\| z \|_{\ell^2} \leq C\left( \| v \|_{\ell^2} + |\gamma| + \sqrt N \left|\ip{v}\right| \right).
	\end{equation*}

	Finally, concerning the uniqueness of a solution to \eqref{prob}, let us consider $z$ and $\tilde z$ two solutions of \eqref{prob}. Then, $\Phi(z-\tilde z)=\begin{pmatrix}
	0_{\R^{N-1}}\\ x\\ 0
	\end{pmatrix}$ (for some $x\in \R$) belongs to $\operatorname{Im}\Phi$ if only if $x= 0$. By injectivity of $\Phi$, we conclude that necessarily $z=\tilde z$, thereby completing the proof of the proposition.
\end{proof}

\section{Weak convergence of discretized singular integral operators}\label{sect:conv}

The results in this section will serve to show that $(u_{R}-u_{\rm app}^N)\cdot n\vert_{\partial \Omega}$ vanishes in a weak sense.

The coming proposition establishes some weak convergence of the discretization of the singular integral operator $B$ defined in \eqref{AB}.

\begin{proposition}\label{prop 32}
	For any $N\geq 2$, consider a well distributed mesh $(s_{1}^N,\dots , s_{N}^N)\in \left[0,\left|\partial\Omega\right|\right)^N$, $(\tilde s_{1}^N, \dots , \tilde s_{N}^N)\in \left[0,\left|\partial\Omega\right|\right)^N$ satisfying \eqref{mesh2} and, according to Proposition \ref{inverse perfect}, consider the solution $\gamma^N=(\gamma_{1}^N,\dots, \gamma_{N}^N)\in\mathbb{R}^N$ to the system \eqref{point toy} for some periodic function $f\in C^{\kappa}\left(\left[0,\left|\partial\Omega\right|\right]\right)$, where $\kappa \geq 2$ is introduced in \eqref{mesh2}, with zero mean value $\int_0^{\left|\partial\Omega\right|} f(s)ds=0$ and some $\gamma\in\mathbb{R}$. We define the approximations
	\begin{equation}\label{f app}
		\begin{aligned}
			f_{\rm app}^N(s) & :=
			\frac1{N}\sum_{j=1}^N \gamma_{j}^N \frac{l\left(s\right) - l\left(s_{j}^N\right)}
			{\left|l\left(s\right) - l\left(s_{j}^N\right)\right|^2}\cdot \tau\left(l\left(s\right)\right),
			\\
			g_{\rm app}^N(s) & :=
			\frac1{N}\sum_{j=1}^N \gamma_{j}^N \frac{l\left(s\right) - l\left(s_{j}^N\right)}
			{\left|l\left(s\right) - l\left(s_{j}^N\right)\right|^2}\cdot n\left(l\left(s\right)\right).
		\end{aligned}
	\end{equation}

	Then, for any periodic test function $\varphi\in C^{\infty}\left(\left[0,\left|\partial\Omega\right|\right]\right)$,
		\begin{align*}
			\left|\int_{0}^{\left|\partial\Omega\right|} (f_{\rm app}^N - f )\varphi\right|
			& \leq
			\frac{C}{N^{\kappa}}
			\left(\left\|f\right\|_{C^{\kappa}}+|\gamma|
			\right)
			\left\|\varphi\right\|_{C^{\kappa+1}},
			\\
			\left|\int_{0}^{\left|\partial\Omega\right|} (g_{\rm app}^N - AB^{-1}f -\pi\gamma H\cdot\tau )\varphi\right|
			& \leq
			\frac{C}{N^{\kappa}}
			\left(\left\|f\right\|_{C^{\kappa}}+|\gamma|
			\right)
			\left\|\varphi\right\|_{L^2},
		\end{align*}
	where we identify the variable $x$ with the variable $s$ whenever $x=l(s)\in\partial\Omega$, the singular integrals are defined in the sense of Cauchy's principal value and $H$ is given by the limiting values from $\Omega$ of the harmonic vector field defined by \eqref{harmonic}.
\end{proposition}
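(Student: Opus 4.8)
The plan is to rewrite both quantities as pairings of the discrete vector $\gamma^N$ against smooth weights, to compare $\gamma^N$ with the sampled continuous density by means of the uniform invertibility of Proposition~\ref{inverse perfect}, and to dualize in order to avoid a loss of $\sqrt N$ stemming from the fact that only $N-1$ of the equations in \eqref{point toy} are tangency conditions.

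\textbf{Step 1 (reduction to discrete sums).} Write $K_\tau(s,t)=\frac{l(s)-l(t)}{|l(s)-l(t)|^2}\cdot\tau(l(s))$ and $K_n(s,t)=\frac{l(s)-l(t)}{|l(s)-l(t)|^2}\cdot n(l(s))$, so that, by \eqref{f app}, $f_{\rm app}^N(s)=\frac1N\sum_j\gamma_j^N K_\tau(s,s_j^N)$ and $g_{\rm app}^N(s)=\frac1N\sum_j\gamma_j^N K_n(s,s_j^N)$. One checks directly from the definitions of $A^*,B^*$ that $\int_0^{|\partial\Omega|}K_\tau(s,t)\varphi(s)\,ds=B^*\varphi(l(t))$ (as a principal value) and $\int_0^{|\partial\Omega|}K_n(s,t)\varphi(s)\,ds=A^*\varphi(l(t))$; both are smooth in $t$, since $A^*$ has a smooth kernel and $B^*$ carries $C^{\kappa+1}$ into $C^{\kappa}$ by Plemelj--Privalov. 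Fubini then gives $\int_0^{|\partial\Omega|}f_{\rm app}^N\varphi=\frac1N\sum_{j}\gamma_j^N\,B^*\varphi(l(s_j^N))$ and $\int_0^{|\partial\Omega|}g_{\rm app}^N\varphi=\frac1N\sum_{j}\gamma_j^N\,A^*\varphi(l(s_j^N))$. On the continuous side, the unique density with $Bg=f$ and $\oint_{\partial\Omega}g\,ds=\gamma$ is $g=B^{-1}f+\gamma\,H\cdot\tau$ (by \eqref{inverse B}, by $A[H\cdot\tau]=\pi H\cdot\tau$ in \eqref{harmonic4}, and $\oint_{\partial\Omega}H\cdot\tau\,ds=1$), and by adjointness \eqref{adjoint A}--\eqref{adjoint B},
\[
\int_{\partial\Omega}f\varphi=\int_{\partial\Omega}g\,B^*\varphi,\qquad
\int_{\partial\Omega}\bigl(AB^{-1}f+\pi\gamma H\cdot\tau\bigr)\varphi=\int_{\partial\Omega}(Ag)\varphi=\int_{\partial\Omega}g\,A^*\varphi.
\]
Hence both claims follow from the single estimate: for every smooth periodic $\Psi$, with $\Psi^N:=(\Psi(l(s_j^N)))_j$,
\[
\Bigl|\tfrac1N\textstyle\sum_{j}\gamma_j^N\Psi(l(s_j^N))-\int_{\partial\Omega}g\,\Psi\Bigr|\le\frac{C}{N^{\kappa}}\bigl(\|f\|_{C^{\kappa}}+|\gamma|\bigr)\,\|\Psi\|_{\ast},
\]
where $\|\Psi\|_\ast\lesssim\|\varphi\|_{C^{\kappa+1}}$ when $\Psi=B^*\varphi$, and $\|\Psi\|_\ast\lesssim\|\varphi\|_{L^2}$ when $\Psi=A^*\varphi$ (using that $A^*$ is smoothing, so $\|A^*\varphi\|_{C^m}\lesssim\|\varphi\|_{L^2}$ for all $m$).

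\textbf{Step 2 (sampled reference density and splitting).} Set $\hat g^N:=\bigl(|\partial\Omega|\,g(l(s_j^N))\bigr)_{j}\in\R^N$ and write $\gamma^N-\hat g^N=e^N+c_N\mathbf1$ with $e^N\in\ell^2_0$ and $c_N=\gamma-\ip{\hat g^N}$; since $\tfrac{|\partial\Omega|}{N}\sum_j g(l(s_j^N))$ is a Riemann sum for $\oint_{\partial\Omega}g=\gamma$ on a well distributed mesh, $c_N=\mathcal O(N^{-\kappa})$ (Appendix~\ref{riemann appendix}). Splitting
\[
\tfrac1N\textstyle\sum_j\gamma_j^N\Psi(l(s_j^N))-\int_{\partial\Omega}g\,\Psi=\Bigl(\ip{\hat g^N\Psi^N}-\int_{\partial\Omega}g\,\Psi\Bigr)+c_N\ip{\Psi^N}+\ip{e^N\Psi^N},
\]
the first bracket is a Riemann-sum error for $\int_{\partial\Omega}g\,\Psi$, of size $\mathcal O(N^{-\kappa})$ by the regularity of $g$ and $\Psi$ and Corollary~\ref{riemann2}, and the second term is trivially $\mathcal O(N^{-\kappa})$; everything reduces to estimating $\ip{e^N\Psi^N}$.

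\textbf{Step 3 (residual of $e^N$ and duality).} Consider $r^N:=\frac1N B_N e^N=\frac1N B_N\gamma^N-\frac1N B_N\hat g^N$. By \eqref{point toy}, $[\frac1N B_N\gamma^N]_i=f(\tilde s_i^N)$ for $i\le N-1$, while $[\frac1N B_N\hat g^N]_i=\frac{|\partial\Omega|}{N}\sum_j g(l(s_j^N))K_\tau(\tilde s_i^N,s_j^N)$ is a midpoint-type quadrature of the principal-value integral $Bg(l(\tilde s_i^N))=f(\tilde s_i^N)$; decomposing $K_\tau(s,t)=\frac{\pi}{|\partial\Omega|}\cot\!\bigl(\frac{(s-t)\pi}{|\partial\Omega|}\bigr)+\rho(s,t)$ with $\rho$ smooth, the smooth part is handled by ordinary Riemann sums and the singular part by the approximate cancellation of $\sum_j\cot$ in Lemma~\ref{technical cot}, giving $r_i^N=\mathcal O(N^{-\kappa})$ uniformly for $i\le N-1$; for $i=N$ only, the discrete mean identity \eqref{mean approx} forces $[\frac1N B_N\gamma^N]_N=f(\tilde s_N^N)+\mathcal O(N^{1-\kappa})$, whence $r_N^N=\mathcal O(N^{1-\kappa})$. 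Now $e^N\in\ell^2_0$, and $\frac1N E_N$ is uniformly invertible on $\ell^2_0$ (Proposition~\ref{inverse perfect} and its proof), so $e^N=(\tfrac1N E_N)^{-1}(r^N-\ip{r^N}\mathbf1)$ and, dualizing,
\[
\ip{e^N\Psi^N}=\ip{e^N\Psi^N_0}=\ip{(r^N-\ip{r^N}\mathbf1)\,\zeta^N},\qquad \zeta^N:=\bigl[(\tfrac1N E_N)^{-1}\bigr]^{*}\Psi^N_0\in\ell^2_0,
\]
with $\Psi^N_0=\Psi^N-\ip{\Psi^N}\mathbf1$; the $\ell^2_0$-adjoint of $\frac1N E_N$ is the ``tilded'' operator treated in the proof of Proposition~\ref{inverse perfect}, uniformly invertible on $\ell^2_0$, so $\|\zeta^N\|_{\ell^2}\le C\|\Psi\|_\ast$. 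The crucial extra fact is that $\zeta^N$ is bounded in $\ell^\infty$ uniformly in $N$: writing $\zeta^N=(\varphi_0(l(s_j^N)))_j+\xi^N$ with $\varphi_0$ the smooth continuous solution of the adjoint equation $B^*\varphi_0=\Psi_0+\mathrm{const}$ ($\|\varphi_0\|_{C^0}\lesssim\|\Psi\|_\ast$), one gets $\|\xi^N\|_{\ell^2}=\mathcal O(N^{-\kappa})$ (again by Lemma~\ref{technical cot}), hence $\|\xi^N\|_{\ell^\infty}\le\sqrt N\,\mathcal O(N^{-\kappa})\to0$ since $\kappa\ge2$. Since $\zeta^N\in\ell^2_0$ kills the $\ip{r^N}\mathbf1$ term,
\[
\bigl|\ip{e^N\Psi^N}\bigr|=\bigl|\ip{r^N\zeta^N}\bigr|\le\frac1N\Bigl((N-1)\,\mathcal O(N^{-\kappa})+\mathcal O(N^{1-\kappa})\Bigr)\|\zeta^N\|_{\ell^\infty}=\mathcal O(N^{-\kappa}),
\]
the lone ``large'' coordinate $r_N^N=\mathcal O(N^{1-\kappa})$ being harmless thanks to the $1/N$ normalization; combining with Step~2 yields the two stated bounds.

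\textbf{Main obstacle.} The technical heart is showing that midpoint-type Riemann sums of the principal-value operator $B$ (and of $B^*$) applied to smooth densities converge at rate $\mathcal O(N^{-\kappa})$ on a well distributed mesh: this is exactly where Lemma~\ref{technical cot} and the $N^{-(\kappa+1)}$ mesh quality are indispensable, the singular $\cot$-kernel being controlled by its (approximate) odd symmetry while the smooth remainder is routine; keeping careful track of the regularity of $g=B^{-1}f+\gamma H\cdot\tau$ --- only ``essentially'' $C^{\kappa}$, as $B$ barely loses regularity at integer Hölder exponents, which is why $\|\varphi\|_{C^{\kappa+1}}$ rather than $\|\varphi\|_{C^{\kappa}}$ appears in the first estimate --- and of the Riemann-sum constants is part of the same difficulty. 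A second, more bookkeeping-type point is that the last equation of \eqref{point toy} is the circulation condition and not a tangency condition, so $r^N$ carries one coordinate of size $\mathcal O(N^{1-\kappa})$; this is precisely why the argument must be dualized and $\zeta^N$ controlled in $\ell^\infty$, a naive $\|e^N\|_{\ell^2}$-bound losing a factor $\sqrt N$.
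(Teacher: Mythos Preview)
Your approach---sample the continuous density $g=B^{-1}f+\gamma H\cdot\tau$, set $\hat g^N=(|\partial\Omega|\,g(l(s_j^N)))_j$, and compare $\gamma^N$ to $\hat g^N$ through the discrete operator---is natural but runs into a genuine regularity obstruction. Both the Riemann-sum error $\ip{\hat g^N\Psi^N}-\int g\Psi$ in Step~2 and, more seriously, the residual bound $r_i^N=\mathcal O(N^{-\kappa})$ in Step~3 require control of $\|g\|_{C^\kappa}$ or even $\|g\|_{C^{\kappa+1}}$: after desingularizing the $\cot$ part of $K_\tau$, the integrand $t\mapsto\cot\bigl(\tfrac{(\tilde s_i^N-t)\pi}{|\partial\Omega|}\bigr)[g(l(t))-g(l(\tilde s_i^N))]$ is only $C^{m-1}$ when $g\in C^m$, so Corollary~\ref{riemann2} at rate $N^{-\kappa}$ needs $g\in C^{\kappa+1}$. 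But from $\pi^2 g=A^2g-Bf$ one only obtains $\|g\|_{C^{\kappa-1,1-\epsilon}}\lesssim\|f\|_{C^\kappa}+|\gamma|$, because $B$ (a Hilbert-transform-type operator) does not map $C^\kappa\to C^\kappa$ at integer exponents. Thus your argument gives at best $\mathcal O(N^{-(\kappa-1)})$ with the stated dependence on $\|f\|_{C^\kappa}$, not $\mathcal O(N^{-\kappa})$. Your remark that the limited regularity of $g$ ``is why $\|\varphi\|_{C^{\kappa+1}}$ appears'' has the roles reversed: in your scheme the extra derivative would have to fall on $g$, where it is unavailable, not on $\varphi$.

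The paper avoids this by never sampling $g$. For the first estimate it decomposes $\int(f_{\rm app}^N-f)\varphi$ into four pieces $D_1,\dots,D_4$: $D_3$ vanishes by construction, $D_2$ is a Riemann-sum error for $\int f\varphi$, $D_4$ is the lone $i=N$ term handled by \eqref{mean approx}, and $D_1$ is a Riemann-sum error for the \emph{desingularized} integrand $s\mapsto\tfrac1N\sum_j\gamma_j^N K_\tau(s,s_j^N)[\varphi(s)-\varphi(s_j^N)]$, whose $C^\kappa$-norm is controlled by $\|\gamma^N\|_{\ell^1}\|\varphi\|_{C^{\kappa+1}}$---this is where $\|\varphi\|_{C^{\kappa+1}}$ legitimately enters. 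For the second estimate, instead of sampling $AB^{-1}f$, the paper writes $\int g_{\rm app}^N\varphi=\int f_{\rm app}^N\,\psi+(\text{explicit }\gamma\text{-terms})$ with $\psi=(B^\#)^{-1}A^\#\varphi$ and then applies the first estimate to the test function $\psi$; the Poincar\'e--Bertrand identity $\pi^2\psi=A^{*2}\psi+A^*B^*\varphi$ shows $\|\psi\|_{C^{\kappa+1}}\lesssim\|\varphi\|_{L^2}$ since $A^*$ is smoothing. In short, the paper transfers the regularity burden to the test function, which is $C^\infty$ with all norms controlled, whereas your scheme places it on $g$, which is not regular enough.
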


\begin{proof}
	Let $\varphi \in C^{\infty}\left(\left[0,\left|\partial\Omega\right|\right]\right)$ be a periodic test function. Then, we decompose
		\begin{align*}
			\int_{0}^{\left|\partial\Omega\right|} (f_{\rm app}^N - f )\varphi
			=& \left(
			\int_{0}^{\left|\partial\Omega\right|} f_{\rm app}^N \varphi - \frac{\left|\partial\Omega\right|}{N} \sum_{i=1}^{N} f_{\rm app}^N(\tilde s_{i}^N) \varphi(\tilde s_{i}^N)
			\right)\\
			&- \left(
			\int_{0}^{\left|\partial\Omega\right|} f \varphi- \frac{\left|\partial\Omega\right|}{N} \sum_{i=1}^{N} f(\tilde s_{i}^N) \varphi(\tilde s_{i}^N)
			\right)\\
			&+
			\frac{\left|\partial\Omega\right|}{N} \sum_{i=1}^{N-1} \left( f_{\rm app}^N(\tilde s_{i}^N)- f(\tilde s_{i}^N) \right) \varphi(\tilde s_{i}^N)
			\\
			&+
			\frac{\left|\partial\Omega\right|}{N} \left( f_{\rm app}^N(\tilde s_{N}^N)- f(\tilde s_{N}^N) \right) \varphi(\tilde s_{N}^N)
			\\
			=&: D_1 + D_2 + D_3 + D_4.
		\end{align*}
	It is readily seen that $D_3$ is null, for $f_{\rm app}^N(\tilde s_{i}^N) = f(\tilde s_{i}^N)$, for all $i=1,\ldots,N-1$, by construction (see \eqref{point toy}).

	Next, note that $D_2$ is the error of approximation of the integral $\int_{0}^{\left|\partial\Omega\right|} f \varphi$ by its Riemann sum. Therefore, a direct application of Corollary \ref{riemann2} yields
	\begin{equation}\label{d2}
		|D_2|\leq \frac{C}{N^{\kappa}} \left\|f\varphi\right\|_{C^{\kappa}}
		\leq\frac{C}{N^{\kappa}} \left\|f\right\|_{C^{\kappa}}\left\|\varphi\right\|_{C^{\kappa}}.
	\end{equation}

	As for the term $D_1$, it is first rewritten, exploiting \eqref{B1} and \eqref{average adjoint}, as
		\begin{align*}
			D_1  =& \int_{0}^{\left|\partial\Omega\right|} f_{\rm app}^N \varphi - \frac{\left|\partial\Omega\right|}{N} \sum_{i=1}^{N} f_{\rm app}^N(\tilde s_{i}^N) \varphi(\tilde s_{i}^N)
			\\
			= & \frac1{N}\sum_{j=1}^N \gamma_{j}^N
			\int_0^{\left|\partial\Omega\right|}
			\frac{l\left(s\right) - l\left(s_{j}^N\right)}
			{\left|l\left(s\right) - l\left(s_{j}^N\right)\right|^2}\cdot \tau\left(l\left(s\right)\right)
			\varphi(s)ds
			\\
			& - \frac{\left|\partial\Omega\right|}{N^2} \sum_{i,j=1}^{N} \gamma_{j}^N
			\frac{l\left(\tilde s_{i}^N\right) - l\left(s_{j}^N\right)}
			{\left|l\left(\tilde s_{i}^N\right) - l\left(s_{j}^N\right)\right|^2}\cdot \tau\left(l\left(\tilde s_{i}^N\right)\right)
			\varphi\left(\tilde s_{i}^N\right)
			\\
			= & \int_0^{\left|\partial\Omega\right|} F(s){d}s
			- \frac{\left|\partial\Omega\right|}{N} \sum_{i=1}^N
			F(\tilde s_i^N)
			\\
			& + \frac1{N}\sum_{j=1}^N \gamma_{j}^N
			\underbrace{\left(\int_0^{\left|\partial\Omega\right|}
			\frac{l\left(s\right) - l\left(s_{j}^N\right)}
			{\left|l\left(s\right) - l\left(s_{j}^N\right)\right|^2}\cdot \tau\left(l\left(s\right)\right)
			ds\right)}_{=0\quad\text{by \eqref{B1}}}
			\varphi\left(s_j^N\right)
			\\
			& - \frac 1N \sum_{j=1}^{N} \gamma_{j}^N
			\underbrace{
			\left(\frac{\left|\partial\Omega\right|}{N}\sum_{i=1}^N\frac{l\left(\tilde s_{i}^N\right) - l\left(s_{j}^N\right)}
			{\left|l\left(\tilde s_{i}^N\right) - l\left(s_{j}^N\right)\right|^2}\cdot \tau\left(l\left(\tilde s_{i}^N\right)\right)
			\right)
			}_{=\mathcal{O}\left(N^{-\kappa}\right)\quad \text{in }\ell^\infty\text{ by \eqref{average adjoint}}}
			\varphi\left(s_j^N\right)
			\\
			 =&\int_0^{\left|\partial\Omega\right|} F(s){d}s
			- \frac{\left|\partial\Omega\right|}{N} \sum_{i=1}^N
			F(\tilde s_i^N)
			+\mathcal{O}\left(\frac {\left\|\gamma^N\right\|_{\ell^1}\left\|\varphi\right\|_{L^\infty}}{N^\kappa}\right),
		\end{align*}
	where
	\begin{equation*}
		F(s)=
		\frac1{N}\sum_{j=1}^N \gamma_{j}^N
		\frac{l\left(s\right) - l\left(s_{j}^N\right)}
		{\left|l\left(s\right) - l\left(s_{j}^N\right)\right|^2}\cdot \tau\left(l\left(s\right)\right)
		\left(\varphi(s)-\varphi\left(s_j^N\right)\right).
	\end{equation*}
	Note that the integrand $s\mapsto \frac{l\left(s\right) - l\left(s_{j}^N\right)}
		{\left|l\left(s\right) - l\left(s_{j}^N\right)\right|^2}\cdot \tau\left(l\left(s\right)\right)
		\left(\varphi(s)-\varphi\left(s_j^N\right)\right)$ above is now regular, thus assuring that the corresponding Riemann sums converge. It therefore follows from Corollary \ref{riemann2} that
		\begin{align*}
			|D_1|
			\leq & \frac{C}{N^{\kappa}}\left\|F\right\|_{C^{\kappa}}
			+\frac{C}{N^\kappa}\left\|\gamma^N\right\|_{\ell^1}\left\|\varphi\right\|_{L^\infty}
			\\
			\leq & \frac{C}{N^{\kappa}}\left\|\gamma^N\right\|_{\ell^1}
			\\
			& \times\sup_{s_*\in \left[0,\left|\partial\Omega\right|\right]}
			\left\|\frac{(s-s_*)(l\left(s\right) - l\left(s_*\right))}
			{\left|l\left(s\right) - l\left(s_*\right)\right|^2}\cdot \tau\left(l\left(s\right)\right)
			\right\|_{C^{\kappa}_s
			\left(\left[0,\left|\partial\Omega\right|\right]\right)}\left\|\varphi'\right\|_{C^{\kappa}}
			\\
			& +\frac{C}{N^\kappa}\left\|\gamma^N\right\|_{\ell^1}\left\|\varphi\right\|_{L^\infty}
			\\
			\leq & \frac{C}{N^{\kappa}}\left\|\gamma^N\right\|_{\ell^1}\left\|\varphi\right\|_{C^{\kappa+1}}.
		\end{align*}
	Then, further utilizing estimate \eqref{est l1}, Corollary \ref{riemann2}, that $\kappa\geq\frac 12$ and the fact that $f$ has zero mean value, we infer
	\begin{equation}\label{d1}
		\begin{aligned}
			|D_1|
			& \leq \frac{C}{N^{\kappa}}
			\left(\left\|f\right\|_{L^\infty}+|\gamma|+{\sqrt N}\left|\frac 1{N-1}\sum_{i=1}^{N-1}f(\tilde s_i^N)\right|\right)
			\left\|\varphi\right\|_{C^{\kappa+1}}
			\\
			& \leq \frac{C}{N^{\kappa}}
			\left(\left\|f\right\|_{L^\infty}+|\gamma|
			+{\sqrt N}\left|\int_0^{\left|\partial\Omega\right|}f(s){d}s - \frac {\left|\partial\Omega\right|}{N}\sum_{i=1}^{N}f(\tilde s_i^N)\right|\right)
			\left\|\varphi\right\|_{C^{\kappa+1}}
			\\
			& \leq \frac{C}{N^{\kappa}}
			\left(\left\|f\right\|_{C^{\kappa}}+|\gamma|
			\right)
			\left\|\varphi\right\|_{C^{\kappa+1}}.
		\end{aligned}
	\end{equation}

	Finally, regarding $D_4$, recalling that, by \eqref{point toy},
		\begin{align*}
			\sum_{i=1}^{N-1}f(\tilde s_{i}^N) & =
			\frac1{N}\sum_{i=1}^{N-1}\sum_{j=1}^N \gamma_{j}^N \frac{l\left(\tilde s_{i}^N\right) - l\left(s_{j}^N\right)}
			{\left|l\left(\tilde s_{i}^N\right) - l\left(s_{j}^N\right)\right|^2}\cdot \tau\left(l\left(\tilde s_{i}^N\right)\right)
			\\
			& =
			\ip{B_N\gamma^N}-
			\frac1{N}\sum_{j=1}^N \gamma_{j}^N \frac{l\left(\tilde s_{N}^N\right) - l\left(s_{j}^N\right)}
			{\left|l\left(\tilde s_{N}^N\right) - l\left(s_{j}^N\right)\right|^2}\cdot \tau\left(l\left(\tilde s_{N}^N\right)\right)
			\\
			& =
			\ip{B_N\gamma^N}-f_{\rm app}^N\left(\tilde s_N^N\right),
		\end{align*}
	we find
		\begin{align*}
			D_4 & =\frac{\left|\partial\Omega\right|}{N} \left( f_{\rm app}^N(\tilde s_{N}^N)- f(\tilde s_{N}^N) \right) \varphi(\tilde s_{N}^N)
			\\
			& =\left(\ip{\frac{\left|\partial\Omega\right|}{N}B_N\gamma^N}
			-\frac{\left|\partial\Omega\right|}{N} \sum_{i=1}^N f(\tilde s_{i}^N)\right)\varphi(\tilde s_{N}^N) \\
			& =\left(\ip{\frac{\left|\partial\Omega\right|}{N}B_N\gamma^N}
			+\int_0^{\left|\partial\Omega\right|}f(s)ds-\frac{\left|\partial\Omega\right|}{N} \sum_{i=1}^N f(\tilde s_{i}^N)\right)
			\varphi(\tilde s_{N}^N).
		\end{align*}
	Hence, utilizing \eqref{mean approx} and Corollary \ref{riemann2} again,
	\begin{equation*}
		\left|D_4\right|
		\leq \left(\frac{C}{N^\kappa}\left\|\gamma^N\right\|_{\ell^1}
		+ \frac{C}{N^{\kappa}}\|f \|_{C^{\kappa}}\right)\left\|\varphi\right\|_{L^\infty}.
	\end{equation*}
	Therefore, repeating the control of $\left\|\gamma^N\right\|_{\ell^1}$ performed in \eqref{d1} and based on \eqref{est l1}, we arrive at
	\begin{equation}\label{d4}
		\left|D_4\right|
		\leq
		\frac{C}{N^{\kappa}}
		\left(\left\|f\right\|_{C^{\kappa}}+|\gamma|
		\right)
		\left\|\varphi\right\|_{L^\infty}.
	\end{equation}

	On the whole, since $D_3=0$, combining \eqref{d2}, \eqref{d1} and \eqref{d4}, we deduce that
	\begin{equation}\label{fappestimate}
		\left|\int_{0}^{\left|\partial\Omega\right|} (f_{\rm app}^N - f )\varphi\right|
		\leq
		\frac{C}{N^{\kappa}}
		\left(\left\|f\right\|_{C^{\kappa}}+|\gamma|
		\right)
		\left\|\varphi\right\|_{C^{\kappa+1}},
	\end{equation}
	which concludes the convergence estimate for $f_{\rm app}^N$.

	As for $g_{\rm app}^N$, we first write
	\begin{equation*}
		\int_{0}^{\left|\partial\Omega\right|} g_{\rm app}^N \varphi
		=
		\frac1{N}\sum_{j=1}^N \gamma_{j}^N
		A^*\varphi\left(l\left(s_j^N\right)\right),
	\end{equation*}
	where we identify $\varphi(x)$ with $\varphi(s)$ whenever $x=l(s)\in\partial\Omega$.
	
	Now, recall from Section \ref{vortex sheet} that $B\in\mathcal{L}\left(L^2_0\right)$ is invertible. Moreover, it is readily seen, by \eqref{adjoint A} and \eqref{adjoint B}, that the adjoint operators of $A$ and $B$ over $L^2_0\left(\partial\Omega\right)$ are respectively given by
		\begin{align*}
			A^\# \varphi(x) & = A^*\varphi(x) - \frac{1}{\left|\partial\Omega\right|}\int_{\partial\Omega} A^*\varphi(y) dy,
			\\
			B^\# \varphi(x) & = B^*\varphi(x) - \frac{1}{\left|\partial\Omega\right|}\int_{\partial\Omega} B^*\varphi(y) dy.
		\end{align*}
	It follows that $B^\#\in\mathcal{L}\left(L^2_0\right)$ is invertible, as well.
	
	We then obtain, by \eqref{harmonic4} and utilizing the adjointness \eqref{adjoint A} and \eqref{adjoint B} of $A$ and $B$ in the $L^2\left(\partial\Omega\right)$ structure,
		\begin{align*}
			\int_{0}^{\left|\partial\Omega\right|} g_{\rm app}^N \varphi
			= &
			\frac1{N}\sum_{j=1}^N \gamma_{j}^N
			A^\#\varphi\left(l\left(s_j^N\right)\right)
			+ \frac{\gamma}{\left|\partial\Omega\right|}\int_{\partial\Omega} A^*\varphi
			\\
			= &
			\frac1{N}\sum_{j=1}^N \gamma_{j}^N
			\left[B^*\left(B^\#\right)^{-1}A^\#\varphi\right]\left(l\left(s_j^N\right)\right)
			\\
			& - \frac{\gamma}{\left|\partial\Omega\right|}\int_{\partial\Omega}
			B^*\left(B^\#\right)^{-1}A^\#\varphi
			+ \frac{\gamma}{\left|\partial\Omega\right|}\int_{\partial\Omega} A^*\varphi
			\\
			= &
			\int_{0}^{\left|\partial\Omega\right|} f_{\rm app}^N
			\left(B^\#\right)^{-1}A^\#\varphi
			- \frac{\gamma}{\left|\partial\Omega\right|}\int_{\partial\Omega}
			B^{-1}B1 A^\#\varphi
			+ \frac{\gamma}{\left|\partial\Omega\right|}\int_{\partial\Omega} A^*\varphi
			\\
			= &
			\int_{0}^{\left|\partial\Omega\right|} f_{\rm app}^N
			\left(B^\#\right)^{-1}A^\#\varphi
			+ \frac{\gamma}{\left|\partial\Omega\right|}\int_{\partial\Omega}
			\underbrace{\left[1-B^{-1}B1\right]}_{=|\partial\Omega|H\cdot\tau \text{ by \eqref{harmonic4}}} A^*\varphi
			\\
			& + \frac{\gamma}{\left|\partial\Omega\right|^2}
			\underbrace{\left[\int_{\partial\Omega}
			B^{-1}B1\right]}_{=0}\left[\int_{\partial\Omega}A^*\varphi\right]
			\\
			= &
			\int_{0}^{\left|\partial\Omega\right|} f_{\rm app}^N
			\left(B^\#\right)^{-1}A^\#\varphi
			+ \gamma \int_{\partial\Omega}\underbrace{A[H\cdot \tau]}_{=\pi H\cdot\tau \text{ by \eqref{harmonic4}}} \varphi
			\\
			= &
			\int_{0}^{\left|\partial\Omega\right|} \left(f_{\rm app}^N-f\right)
			\left(B^\#\right)^{-1}A^\#\varphi
			+\int_{0}^{\left|\partial\Omega\right|} AB^{-1}f
			\varphi
			+ \pi\gamma\int_{\partial\Omega}
			H\cdot\tau \varphi,
		\end{align*}
	where, as already emphasized, the values of $H$ on $\partial\Omega$ are given by its limiting values from inside $\Omega$. Therefore, according to \eqref{fappestimate}, we obtain that
		\begin{align*}
			\left|\int_{0}^{\left|\partial\Omega\right|} (g_{\rm app}^N - AB^{-1}f-\pi\gamma H\cdot\tau )\varphi\right|
			\hspace{-10mm} &
			\\
			& \leq
			\frac{C}{N^{\kappa}}
			\left(\left\|f\right\|_{C^{\kappa}}+|\gamma|
			\right)
			\left\|\left(B^\#\right)^{-1}A^\#\varphi\right\|_{C^{\kappa+1}}.
		\end{align*}
	
	There only remains to estimate the regularity of $\psi=\left(B^\#\right)^{-1}A^\#\varphi\in L^2_0\left(\partial\Omega\right)$. To this end, we rewrite
	\begin{equation*}
		B^*\psi = A^*\varphi + \frac{1}{\left|\partial\Omega\right|}\int_{\partial\Omega}\left(B^*\psi-A^*\varphi\right),
	\end{equation*}
	whence, utilizing \eqref{B1} and \eqref{PBadjoint}, we infer that
	\begin{equation*}
		\pi^2\psi = A^{*2}\psi + A^*B^*\varphi.
	\end{equation*}
	Further recalling that $A^*$ is a regularizing operator, for it has a smooth kernel, we deduce that
	\begin{equation*}
		\left\|\left(B^\#\right)^{-1}A^\#\varphi\right\|_{C^{\kappa+1}}
		=
		\left\|\psi\right\|_{C^{\kappa+1}}
		\leq C\left\|\varphi\right\|_{L^2},
	\end{equation*}
	which concludes the proof of the proposition.
\end{proof}

\begin{remark}
	Note that the use of \eqref{average adjoint}, which is a consequence of Lemma \ref{technical cot}, to estimate $D_1$ in the above proof is the reason why it is not possible to relax condition \eqref{mesh2} for a well distributed mesh if one aims at a convergence rate $\mathcal{O}\left(N^{-\kappa}\right)$ in Theorem \ref{main theo}.
\end{remark}

\section{Proof of Theorem \ref{main theo}}\label{proof of main}

We proceed now to the demonstration of our main result---Theorem \ref{main theo}---on the approximation of the boundary of an exterior domain by point vortices in system \eqref{elliptic2}.

First, for given $\omega\in C_c^{0,\alpha}$ and $\gamma\in\mathbb{R}$, recall that the full plane flow $u_{P}\in C^1\left(\overline \Omega\right)$ is obtained from \eqref{uP} and that the $\left|\partial\Omega\right|$-periodic function $f\in C^\infty\left([0,\left|\partial\Omega\right|]\right)$, which has zero mean value, is defined by $f(s)=2\pi[u_P\cdot n]\left(l(s)\right)$, for all $s\in\left[0,\left|\partial\Omega\right|\right]$. Therefore, with this given $f$, according to Proposition \ref{inverse perfect}, we find a unique solution $\gamma^N\in\mathbb{R}^N$ of \eqref{point toy}.

Next, the approximate flow $u_{\rm app}^N$ is introduced by \eqref{approx} and verifies
	\begin{align*}
		u_{\rm app}^N(x)\cdot n(x) & = - \frac1{2\pi} f_{\rm app}^N(s),
		\\
		u_{\rm app}^N(x)\cdot \tau(x) & = \frac1{2\pi} g_{\rm app}^N(s),
	\end{align*}
where $x=l(s)\in\partial\Omega$ and $f_{\rm app}^N$, $g_{\rm app}^N$ are defined by \eqref{f app}.
Utilizing identity \eqref{vortex identity} to rewrite the discrete Biot--Savart kernel of $u_{\rm app}^N$, we find that
	\begin{align*}
		u_{\rm app}^N(x)
		= & \frac1{2\pi^2} \sum_{j=1}^N \frac{\gamma_{j}^N}N
		\\
		& \times \left(
		\int_{\partial\Omega}\frac{x_{j}^N-z}{\left|x_{j}^N-z\right|^2} \cdot \tau(z) \frac{x-z}{|x-z|^2}dz
		-
		\int_{\partial\Omega}\frac{x_{j}^N-z}{\left|x_{j}^N-z\right|^2}\cdot n(z)\frac{(x-z)^\perp}{|x-z|^2}
		dz
		\right)
		\\
		= &
		-\frac1{2\pi^2}
		\int_0^{\left|\partial\Omega\right|}f_{\rm app}^N(s_*) \frac{x-l(s_*)}{\left|x-l(s_*)\right|^2}ds_*
		\\
		& +\frac1{2\pi^2}
		\int_0^{\left|\partial\Omega\right|}g_{\rm app}^N(s_*)\frac{(x-l(s_*))^\perp}{|x-l(s_*)|^2}
		ds_*, \quad\text{on }\Omega.
	\end{align*}

Furthermore, recall that, according to \eqref{harmonic3} and \eqref{vortex identity 2}, the remainder flow $u_R$ can be expressed as
	\begin{align*}
		u_R(x)
		= & \frac 1{2\pi^2}\int_{\partial\Omega} \frac{(x-y)^\perp}{|x-y|^2}
		\left[AB^{-1}f + \gamma\pi H\cdot\tau\right](y)dy
		\\
		& -
		\frac 1{2\pi^2}\int_{\partial\Omega} \frac{x-y}{|x-y|^2}
		f(y)dy,\quad\text{on }\Omega,
	\end{align*}
whereby
	\begin{align*}
		\left(u_R-u_{\rm app}^N\right)(x)
		= & \frac 1{2\pi^2}\int_{\partial \Omega} \frac{x-y}{|x-y|^2} \left(f^N_{\rm app}-f\right)(y)dy
		\\
		& +
		\frac 1{2\pi^2}\int_{\partial \Omega} \frac{\left(x-y\right)^\perp}{|x-y|^2} \left(AB^{-1}f+ \gamma\pi H\cdot\tau-g^N_{\rm app}\right)(y)dy
		, \quad\text{on }\Omega.
	\end{align*}

Therefore, in view of Proposition \ref{prop 32}, we deduce that, for any fixed $x\in\Omega$,
	\begin{align*}
		\left|\left(u_R-u_{\rm app}^N\right)(x)\right|
		& \leq \frac{C}{N^\kappa}
		\left(\left\|f\right\|_{C^{\kappa}}+|\gamma|
		\right)
		\left\|\frac{x-y}{|x-y|^2}\right\|_{C_y^{\kappa+1}}
		\\
		& \leq \frac{C}{N^\kappa}
		\left(\left\|f\right\|_{C^{\kappa}}+|\gamma|
		\right)
		\sup_{y\in\partial \Omega}\left(\frac{1}{|x-y|}+\frac{1}{|x-y|^{\kappa+2}}\right),
	\end{align*}
where the constant $C>0$ is independent of $x$, $\omega$ and $\gamma$. Since the support of $\omega$ is bounded away from $\partial\Omega$, it holds that
	\begin{align*}
		\left\|f\right\|_{C^{\kappa}}
		&
		\leq C \sup_{x\in\partial\Omega}
		\int_{\mathbb{R}^2}
		\left(\frac{1}{|x-y|}+\frac{1}{|x-y|^{\kappa+1}}\right)\left|\omega(y)\right|dy
		\\
		&
		\leq \frac C{\operatorname{dist}\left(\operatorname{supp}\omega,\partial\Omega\right)}\left(1 +\frac 1{\operatorname{dist}\left(\operatorname{supp}\omega,\partial\Omega\right)^\kappa}\right)
		\left\|\omega\right\|_{L^1\left(\mathbb{R}^2\right)}.
	\end{align*}
It follows that, for any closed set $K\subset\Omega$,
	\begin{align*}
		\| u_{R} - u_{\rm app}^N \|_{L^\infty(K)} \hspace{-5mm}&
		\\
		\leq & \frac{C}{N^\kappa}
		\left(
		\frac 1{\operatorname{dist}\left(K,\partial\Omega\right)} + \frac 1{\operatorname{dist}\left(K,\partial\Omega\right)^{\kappa+2}}
		\right)
		\\
		& \times \left(\left(
		\frac 1{\operatorname{dist}\left(\operatorname{supp}\omega,\partial\Omega\right)} + \frac 1{\operatorname{dist}\left(\operatorname{supp}\omega,\partial\Omega\right)^{\kappa+1}}
		\right)\left\|\omega\right\|_{L^1\left(\mathbb{R}^2\right)} + |\gamma|\right),
	\end{align*}
which, extending the above estimate to all $\omega\in L^1_c\left(\Omega\right)$ by a standard density argument, concludes the proof of the theorem. \qed

\section{Proofs of dynamic theorems}\label{dynamic proofs}

We give now complete justifications of Theorems \ref{wellposedness} and \ref{main convergence}. Both results heavily rely on the static approximation result Theorem \ref{main theo}.

\subsection{Wellposedness of vortex approximation}\label{dynamic proofs 1}

We have already mentioned, earlier in Section \ref{dynamic thm}, that a classical estimate (see e.g.\ \cite[Lemma 4.2]{lacave}) shows that the support in $x$ of the classical solution $\omega$ to \eqref{dynamic1} remains uniformly bounded away from the boundary $\partial\Omega$, i.e.\ $\omega\in C^1_c\left([0,t_1]\times \Omega\right)$. We will need to adapt this estimate to the vortex approximation \eqref{dynamic5} and therefore, for later use, we begin by reproducing here this control on the support of $\omega$.

\begin{lemma}\label{classical supp estimate}
	Let $\omega\in C^1_c\left([0,t_1]\times \overline\Omega\right)$ be the unique classical solution to \eqref{dynamic1}. Then, it holds that, for all $t\in [0,t_1]$,
	\begin{equation}\label{support away boundary}
		\operatorname{dist}\left(\operatorname{supp}\omega,\partial\Omega\right)
		\geq C'e^{-C\int_0^t\left\|u\right\|_{C^1(\Omega)}}\operatorname{dist}\left(\operatorname{supp}\omega_0,\partial\Omega\right),
	\end{equation}
	for some independent constants $C,C'>0$. In particular, it follows that $\omega\in C^1_c\left([0,t_1]\times \Omega\right)$.
\end{lemma}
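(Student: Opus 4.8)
The plan is to track the support of $\omega$ through the flow map associated with the velocity field $u$ and show that trajectories starting in $\operatorname{supp}\omega_0$ cannot approach $\partial\Omega$ too quickly. Since $\omega$ solves the transport equation $\partial_t\omega+u\cdot\nabla\omega=0$ with $u\in C^1\left([0,t_1]\times\overline\Omega\right)$ and $u\cdot n=0$ on $\partial\Omega$, the characteristic ODE $\dot X(t,x)=u\left(t,X(t,x)\right)$, $X(0,x)=x$, generates a well-defined flow of homeomorphisms of $\overline\Omega$ onto itself, and $\omega(t,\cdot)=\omega_0\circ X(t,\cdot)^{-1}$, so that $\operatorname{supp}\omega(t,\cdot)=X\left(t,\operatorname{supp}\omega_0\right)$. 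It therefore suffices to bound from below $\operatorname{dist}\left(X(t,x),\partial\Omega\right)$ for $x\in\operatorname{supp}\omega_0$.

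To do this I would introduce a boundary-distance-like function and estimate its evolution along trajectories. The cleanest device is to use the conformal map $T:\Omega\to\left\{|x|>1\right\}$ provided in Section~1, whose derivatives and those of its inverse are uniformly bounded up to $\partial\Omega$: then $|T(x)|-1$ is comparable, uniformly on $\Omega$, to $\operatorname{dist}(x,\partial\Omega)$ near the boundary (and bounded below away from it). Setting $\rho(t)=|T(X(t,x))|-1$, one computes $\dot\rho(t)=\frac{T(X)}{|T(X)|}\cdot DT(X)\,u(t,X)$. Since $u\cdot n=0$ on $\partial\Omega$ and $u\in C^1\left(\overline\Omega\right)$, the quantity $\frac{T(x)}{|T(x)|}\cdot DT(x)\,u(t,x)$, which is the ``radial component in the $T$-coordinates'' of $u$, vanishes on $\partial\Omega$; being Lipschitz in $x$ with constant controlled by $\|u\|_{C^1(\Omega)}$ (and the uniform bounds on $T$), it is bounded in absolute value by $C\|u\|_{C^1(\Omega)}\,\rho(t)$. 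Hence $|\dot\rho(t)|\leq C\|u(t)\|_{C^1(\Omega)}\rho(t)$, and Gr\"onwall's lemma gives $\rho(t)\geq\rho(0)\exp\left(-C\int_0^t\|u\|_{C^1(\Omega)}\right)$. Translating back via the comparability of $\rho$ with $\operatorname{dist}(\cdot,\partial\Omega)$ yields \eqref{support away boundary}, and since the right-hand side is strictly positive and bounded away from $0$ on $[0,t_1]$ (as $\|u\|_{C^1}$ is finite there), the support stays strictly inside $\Omega$, i.e.\ $\omega\in C^1_c\left([0,t_1]\times\Omega\right)$.

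The main technical point to be careful about is the claim that the ``radial'' component of $u$ in $T$-coordinates vanishes on $\partial\Omega$ and is therefore bounded by $C\|u\|_{C^1}\operatorname{dist}(x,\partial\Omega)$: this uses both the boundary condition $u\cdot n=0$ and the fact that $DT$ maps the unit inward normal of $\partial\Omega$ to a vector parallel to the radial direction at points of $\left\{|x|=1\right\}$ (since $T$ is conformal and maps $\partial\Omega$ to the unit circle), so that $\frac{T(x)}{|T(x)|}\cdot DT(x)n(x)$ stays away from zero while $\frac{T(x)}{|T(x)|}\cdot DT(x)\tau(x)=0$ on $\partial\Omega$; a first-order Taylor expansion off the boundary then produces the needed linear-in-distance bound. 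Alternatively, one can avoid the conformal map altogether and work directly with $d(x)=\operatorname{dist}(x,\partial\Omega)$ in a tubular neighborhood of $\partial\Omega$, where $d$ is smooth and $\nabla d=-n$ on $\partial\Omega$, noting $\frac{d}{dt}d(X)=\nabla d(X)\cdot u(t,X)$ with $\nabla d\cdot u$ vanishing on $\partial\Omega$ and Lipschitz; this is perhaps the more elementary route and is the one I would write up, citing \cite[Lemma 4.2]{lacave} for the original argument.
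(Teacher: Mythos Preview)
Your proposal is correct and follows essentially the same route as the paper: introduce characteristics, pass to $T$-coordinates, set $\rho(t)=|T(X(t,x))|-1$, observe that $\dot\rho$ is the radial component of $DT\,u$ which vanishes on $\partial\Omega$ by the tangency condition and the conformality of $T$, bound it by $C\|u\|_{C^1}\rho$ via a Lipschitz argument, and apply Gr\"onwall. The paper implements the Lipschitz step by the explicit subtraction $u^t(T^{-1}Y)DT^t(T^{-1}Y)\frac{Y}{|Y|}-u^t(T^{-1}\frac{Y}{|Y|})DT^t(T^{-1}\frac{Y}{|Y|})\frac{Y}{|Y|}$, which is exactly your first-order Taylor expansion off the boundary; your tubular-neighborhood alternative using $d(x)=\operatorname{dist}(x,\partial\Omega)$ is a legitimate variant that the paper does not pursue.
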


\begin{proof}
	Recall that a \emph{characteristic curve} (or simply a \emph{characteristic}) for \eqref{dynamic1} is a function
	\begin{equation*}
		X(s,x)\in C^1\left([0,t_2]\times\Omega; \Omega\right),
	\end{equation*}
	solving the differential equation
	\begin{equation}\label{char X}
		\frac{dX}{ds}=u\left(s,X\right),
	\end{equation}
	for some given initial data $X(0,x)=x$ and some existence time $0<t_2\leq t_1$ ($t_2$ may \emph{a priori} depend on $x$). Since the velocity field $u$ is of class $C^1$, standard results from the theory of ordinary differential equations (see \cite{coddington}, for instance) guarantee the existence, uniqueness and regularity of such characteristics for any given initial data.

	We show below that characteristics actually exist up to time $t_1$, i.e.\ that it is always possible to take $t_2=t_1$. This is a consequence of the fact that the flow $u$ is tangent to the boundary $\partial \Omega$, and it does not otherwise hold for general flows. Indeed, loosely speaking, it is easy to force $t_2<t_1$ by creating a constant flow that takes particles and crashes them into the boundary arbitrarily fast (take a constant vector field of large magnitude pointing from the initial data $x$ straight into $\partial\Omega$).
	
	It is also easy to see that a tangency condition on $u$ should be enough to guarantee that characteristics never hit the boundary. This follows from the uniqueness of solutions to the above differential equation. Indeed, it suffices to note that a tangent flow produces trajectories that remain on $\partial\Omega$ provided they start on the boundary. Therefore, a characteristic can only hit the boundary if it is on the boundary all along.
	
	The estimate we now provide is more precise and yields an accurate control on the distance from the characteristic to the boundary $\operatorname{dist}\left(X(s,x),\partial\Omega\right)$ by exploiting the tangency of $u$.

	Employing the conformal map $T:\Omega\to\left\{|x|>1\right\}$ used in \eqref{BS exterior} and defining the curves
	\begin{equation*}
		Y\left(s,T(x)\right):=T\left(X(s,x)\right)\in C^1\left([0,t_2]\times\Omega; \left\{|x|>1\right\}\right),
	\end{equation*}
	we obtain solutions of the differential equation
	\begin{equation*}
		\frac{dY}{ds}=DT\left(T^{-1}Y\right) u\left(s,T^{-1}Y\right),
	\end{equation*}
	for the initial data $Y(0,T(x))=T(x)$. Now, recalling that $DT^t(x)T(x)$ is normal to $\partial\Omega$ at $x\in\partial\Omega$ (see \eqref{harmonic field representation}; this property is easily obtained by differentiating the relation $\left|T(x)\right|=1$ on $\partial\Omega$) and that $u(s,x)$ is tangent to $\partial\Omega$ at the same location, we compute that
	\begin{equation}\label{dist boundary 2}
		\begin{aligned}
			\frac{d\left(|Y|-1\right)}{ds} = & DT\left(T^{-1}Y\right) u\left(s,T^{-1}Y\right)\cdot\frac{Y}{|Y|}
			= u^t\left(s,T^{-1}Y\right) DT^t\left(T^{-1}Y\right)\frac{Y}{|Y|}
			\\
			= & \left(u^t\left(s,T^{-1}Y\right)DT^t\left(T^{-1}Y\right)
			-u^t\left(s,T^{-1}\frac{Y}{|Y|}\right)DT^t\left(T^{-1}\frac{Y}{|Y|}\right)\right)\frac{Y}{|Y|}
			\\
			\geq & -C_T\left\|u\right\|_{C^1(\Omega)}\left|Y-\frac Y{|Y|}\right| = -C_T\left\|u\right\|_{C^1(\Omega)}\left(|Y|-1\right),
		\end{aligned}
	\end{equation}
	where the constant $C_T>0$ only depends on $T$. It follows that
	\begin{equation*}
		\left|Y(s,T(x))\right|-1\geq e^{-C_T\int_0^s\left\|u\right\|_{C^1(\Omega)}}\left(\left|T(x)\right|-1\right),
	\end{equation*}
	whereby, for any $x_0\in\partial\Omega$,
		\begin{align*}
			\left|X(s,x)-x_0\right|\hspace{-10mm}&
			\\
			\geq & C_T'\left|T(X(s,x))-T(x_0)\right|
			\\
			\geq & C_T'e^{-C_T\int_0^s\left\|u\right\|_{C^1(\Omega)}}\left(\left|T(x)\right|-1\right)
			= C_T'e^{-C_T\int_0^s\left\|u\right\|_{C^1(\Omega)}}\left|T(x)-\frac{T(x)}{|T(x)|}\right|
			\\
			\geq & C_T''e^{-C_T\int_0^s\left\|u\right\|_{C^1(\Omega)}}\left|x-T^{-1}\left(\frac{T(x)}{|T(x)|}\right)\right|
			\geq
			C_T''e^{-C_T\int_0^s\left\|u\right\|_{C^1(\Omega)}}\operatorname{dist}\left(x,\partial\Omega\right),
		\end{align*}
	with some constants $C_T',C_T''>0$ only depending on $T$. Further minimizing the above left-hand side over $x_0\in\partial\Omega$, we finally conclude that, for any $x\in\Omega$,
	\begin{equation}\label{dist boundary}
		\operatorname{dist}\left(X(s,x),\partial\Omega\right) \geq C_T''e^{-C_T\int_0^s\left\|u\right\|_{C^1(\Omega)}}\operatorname{dist}\left(x,\partial\Omega\right),
	\end{equation}
	which implies, in particular, that $X(t_2,x)\in\Omega$, for all $x\in\Omega$. Therefore, by continuing the characteristic beyond the existence time $t_2$ (which is always possible because $u$ is bounded on $[0,t_1]\times\Omega$; see \cite[Chapter~1, Theorem~4.1]{coddington}, for instance), we may always assume that $t_2=t_1$.

	Now, for any fixed $0\leq t\leq t_1$, it holds that the mapping $X(t,\cdot):\Omega\to\Omega$ is a $C^1$-diffeomorphism preserving the Lebesgue measure, for $u$ is solenoidal (see \cite[Chapter~1, Theorem~7.2]{coddington}). We denote its inverse by $X^{-1}(t,\cdot):\Omega\to\Omega$. In particular, recasting the transport equation \eqref{dynamic1} in Lagrangian coordinates
	\begin{equation*}
		\left\{
		\begin{aligned}
			& \frac{d}{dt}\omega\left(t,X(t,x)\right)=0,
			\\
			& \omega(t=0)=\omega_0,
		\end{aligned}
		\right.
	\end{equation*}
	we deduce that
	\begin{equation*}
		\omega\left(t,X(t,x)\right) = \omega_0(x)
		\quad\text{and}\quad
		\omega(t,x) = \omega_0\left(X^{-1}(t,x)\right),
	\end{equation*}
	for all $0\leq t\leq t_1$ and $x\in\Omega$. We therefore conclude from \eqref{dist boundary} that
		\begin{align*}
			\operatorname{dist}\left(\operatorname{supp}\omega,\partial\Omega\right)
			& =\inf_{\substack{x\in\Omega \\ \omega(t,x)\neq 0}}\operatorname{dist}\left(x,\partial\Omega\right)
			\\
			& \geq C_T''e^{-C_T\int_0^t\left\|u\right\|_{C^1(\Omega)}}\inf_{\substack{x\in\Omega \\ \omega_0\left(X^{-1}(t,x)\right)\neq 0}}\operatorname{dist}\left(X^{-1}(t,x),\partial\Omega\right)
			\\
			& = C_T''e^{-C_T\int_0^t\left\|u\right\|_{C^1(\Omega)}}\operatorname{dist}\left(\operatorname{supp}\omega_0,\partial\Omega\right),
		\end{align*}
	which, as announced, establishes that $\omega\in C^1_c\left([0,t_1]\times \Omega\right)$ and completes the proof of the lemma.
\end{proof}

We move on now to the justification of the wellposedness of the vortex approximation \eqref{dynamic5} asserted in Theorem \ref{wellposedness}. To this end, we begin with a few classical lemmas providing precise estimates on velocity flows. The next result is standard (see \cite[Lemmas 4.5 and 4.6]{MajdaBertozzi}, for instance).

\begin{lemma}
	For any vortex density $\omega\in C^1_c\left(\Omega\right)$, one has the estimates
	\begin{equation}\label{mu 1}
		\left\|K_{\mathbb{R}^2}\left[\omega\right]\right\|_{L^\infty(\mathbb{R}^2)}
		\leq
		C\left\|\omega\right\|_{L^1\cap L^\infty(\mathbb{R}^2)},
	\end{equation}
	and
	\begin{equation}\label{mu 3}
			\left\|\nabla K_{\mathbb{R}^2}\left[\omega\right]\right\|_{L^\infty(\mathbb{R}^2)}
			\leq C\left(
			1+
			\left\|\omega\right\|_{L^1\cap L^\infty(\mathbb{R}^2)}
			+
			\left\|\omega\right\|_{L^\infty(\mathbb{R}^2)}
			\log\left(1+\left\|\nabla\omega\right\|_{L^\infty \left(\mathbb{R}^2\right)}\right)
			\right),
	\end{equation}
	for some independent constant $C>0$.
\end{lemma}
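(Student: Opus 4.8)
The plan is to establish \eqref{mu 1} and \eqref{mu 3} by splitting the Biot--Savart kernel $\frac{x^\perp}{2\pi|x|^2}$ into a near-field part and a far-field part and estimating each contribution separately. These are entirely classical potential-theoretic bounds (the so-called log-Lipschitz estimate for the velocity gradient); the paper explicitly calls this ``a few classical lemmas,'' so the proof should be short and reference-driven rather than self-contained.

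\begin{proof}
	Write $u=K_{\mathbb{R}^2}[\omega]=\frac 1{2\pi}\int_{\mathbb{R}^2}\frac{(x-y)^\perp}{|x-y|^2}\omega(y)dy$, as in \eqref{BS R2}. For \eqref{mu 1}, fix $x\in\mathbb{R}^2$ and split the integral according to whether $|x-y|\leq 1$ or $|x-y|>1$. On the near region, $\left|\frac{(x-y)^\perp}{|x-y|^2}\right|=\frac 1{|x-y|}$ is integrable over the unit disk and we bound $\int_{|x-y|\leq 1}\frac{|\omega(y)|}{|x-y|}dy\leq \left\|\omega\right\|_{L^\infty}\int_{|z|\leq 1}\frac{dz}{|z|}=2\pi\left\|\omega\right\|_{L^\infty}$. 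On the far region, $\frac 1{|x-y|}\leq 1$ so that $\int_{|x-y|>1}\frac{|\omega(y)|}{|x-y|}dy\leq \left\|\omega\right\|_{L^1}$. Adding the two contributions yields \eqref{mu 1}.

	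For \eqref{mu 3}, differentiate under the integral sign away from the singularity: the kernel $\nabla\left(\frac{x^\perp}{|x|^2}\right)$ is homogeneous of degree $-2$, hence not locally integrable, so one must isolate its principal-value structure. Fix $x$ and a cutoff radius $0<\delta\leq 1$ to be chosen. Decompose $\nabla u(x)$ into an inner part over $\{|x-y|\leq\delta\}$, an intermediate part over $\{\delta<|x-y|\leq 1\}$, and an outer part over $\{|x-y|>1\}$. On the outer part the kernel is bounded by $C$, giving a contribution $\leq C\left\|\omega\right\|_{L^1}$. On the intermediate part, using $|\nabla(\tfrac{x^\perp}{|x|^2})|\leq \tfrac C{|x|^2}$ and integrating in polar coordinates produces a factor $\int_\delta^1\frac{dr}r=\log\frac 1\delta$, so this contribution is $\leq C\left\|\omega\right\|_{L^\infty}\log\frac 1\delta$. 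On the inner part, one exploits the cancellation $\int_{|z|=r}\nabla\left(\frac{z^\perp}{|z|^2}\right)d\sigma(z)=0$ to write the inner integral against $\omega(y)-\omega(x)$ (or, equivalently, a mollified version of the singular integral operator applied to $\omega$), and then $|\omega(y)-\omega(x)|\leq \left\|\nabla\omega\right\|_{L^\infty}|x-y|$ makes the kernel integrable, yielding a bound $\leq C\delta\left\|\nabla\omega\right\|_{L^\infty}$. Altogether
	\[
		\left\|\nabla u\right\|_{L^\infty(\mathbb{R}^2)}\leq C\left(\left\|\omega\right\|_{L^1}+\left\|\omega\right\|_{L^\infty}\log\tfrac 1\delta+\delta\left\|\nabla\omega\right\|_{L^\infty}\right).
	\]
	Choosing $\delta=\min\left\{1,\left(1+\left\|\nabla\omega\right\|_{L^\infty}\right)^{-1}\right\}$ optimizes the last two terms and gives exactly \eqref{mu 3}. (See e.g.\ \cite[p.\ 249]{courant} or \cite{MajdaBertozzi} for these standard estimates.)
\end{proof}

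The only genuinely delicate point is the inner-part bound in \eqref{mu 3}: one must respect the principal-value nature of the derivative of the Biot--Savart kernel and use the mean-zero property of its angular part, rather than naively differentiating, which is why the final estimate is only log-Lipschitz rather than Lipschitz in $\omega$. Everything else is routine splitting of domains of integration and polar-coordinate bookkeeping.
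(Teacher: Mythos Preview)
Your proof is correct and follows essentially the same approach as the paper: near/far splitting for \eqref{mu 1}, and a three-region decomposition (inner $\leq\delta$, intermediate $\delta$ to $1$, outer $>1$) for \eqref{mu 3}, with the same optimization $\delta=(1+\|\nabla\omega\|_{L^\infty})^{-1}$.

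One minor difference worth noting: the paper makes the inner-part step explicit by first writing $\nabla K_{\mathbb{R}^2}[\omega]=\int \frac{y^\perp}{2\pi|y|^2}\otimes\nabla\omega(x-y)\,dy$ (moving the derivative onto $\omega$, which is legitimate since $\omega\in C^1_c$), then integrating by parts with two cutoffs $R_0<R$ and letting $R_0\to 0$. This yields an exact pointwise identity \eqref{representation velocity} with a local term $\tfrac12\omega(x)\bigl(\begin{smallmatrix}0&-1\\1&0\end{smallmatrix}\bigr)$, an inner integral against $\omega(x-y)-\omega(x)$, and an outer integral---exactly the structure you describe, but derived rather than asserted. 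Your appeal to the angular mean-zero of the Calder\'on--Zygmund kernel is the correct heuristic, though strictly speaking one must also account for that local term (harmless here, being bounded by $\|\omega\|_{L^\infty}$). The paper's derivation has the side benefit that the formula \eqref{representation velocity} is reused elsewhere (e.g.\ to justify $C^1$-regularity of $K_{\mathbb{R}^2}[\omega]$ and of $\omega^N$ later in Section~\ref{dynamic proofs}).
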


\begin{proof}
	The first estimate \eqref{mu 1} is obtained straightforwardly by isolating the integrable singularity of the Biot--Savart kernel by a ball of fixed radius centered at the singularity and, then, by estimating the contributions of the integrand of $K_{\mathbb{R}^2}\left[\omega\right]$ within this ball and on its exterior separately.
	
	The second estimate \eqref{mu 3} is more delicate. To justify it, we first compute that, for any radii $0<R_0<R\leq 1$,
		\begin{align*}
			\nabla K_{\mathbb{R}^2}\left[\omega\right] \hspace{-11mm} &
			\\
			= &
			\frac 1{2\pi}\int_{\mathbb{R}^2} \frac{y^\perp}{|y|^2} \otimes \nabla\omega(x-y) dy
			\\
			= &
			\frac 1{2\pi}\int_{\left\{|y|\leq R_0\right\}} \frac{y^\perp}{|y|^2} \otimes \nabla\omega(x-y) dy
			+
			\frac 1{2\pi}\int_{\left\{|y|= R_0\right\}}
			\frac{1}{|y|^3}
			\begin{pmatrix}
				-y_1y_2 & -y_2^2
				\\
				y_1^2 & y_1y_2
			\end{pmatrix}
			\omega(x-y) dy
			\\
			&
			+
			\frac 1{2\pi}\int_{\left\{|y|> R_0\right\}}
			\frac{1}{|y|^4}
			\begin{pmatrix}
				2y_1y_2 & y_2^2-y_1^2
				\\
				y_2^2 - y_1^2 & -2y_1y_2
			\end{pmatrix}
			\omega(x-y) dy
			\\
			= &
			\frac 1{2\pi}\int_{\left\{|y|\leq R_0\right\}} \frac{y^\perp}{|y|^2} \otimes \nabla\omega(x-y) dy
			\\
			& +
			\frac 1{2\pi}\int_{\left\{|y|= R_0\right\}}
			\frac{1}{|y|^3}
			\begin{pmatrix}
				-y_1y_2 & -y_2^2
				\\
				y_1^2 & y_1y_2
			\end{pmatrix}
			\left(\omega(x-y)-\omega(x)\right) dy
			+
			\begin{pmatrix}
				0 & -\frac 12
				\\
				\frac 12 & 0
			\end{pmatrix}
			\omega(x)
			\\
			& +
			\frac 1{2\pi}\int_{\left\{R_0<|y|\leq R\right\}}
			\frac{1}{|y|^4}
			\begin{pmatrix}
				2y_1y_2 & y_2^2-y_1^2
				\\
				y_2^2 - y_1^2 & -2y_1y_2
			\end{pmatrix}
			\left(\omega(x-y)-\omega(x)\right) dy
			\\
			& +
			\frac 1{2\pi}\int_{\left\{|y|> R\right\}}
			\frac{1}{|y|^4}
			\begin{pmatrix}
				2y_1y_2 & y_2^2-y_1^2
				\\
				y_2^2 - y_1^2 & -2y_1y_2
			\end{pmatrix}
			\omega(x-y) dy,
		\end{align*}
	and then let $R_0\to 0$ to yield
	\begin{equation}\label{representation velocity}
		\begin{aligned}
			\nabla K_{\mathbb{R}^2}\left[\omega\right]  
			= &
			\begin{pmatrix}
				0 & -\frac 12
				\\
				\frac 12 & 0
			\end{pmatrix}
			\omega(x)\\
			&+
			\frac 1{2\pi}\int_{\left\{|y|\leq R\right\}}
			\frac{1}{|y|^4}
			\begin{pmatrix}
				2y_1y_2 & y_2^2-y_1^2
				\\
				y_2^2 - y_1^2 & -2y_1y_2
			\end{pmatrix}
			\left(\omega(x-y)-\omega(x)\right) dy
			\\
			& +
			\frac 1{2\pi}\int_{\left\{|y|> R\right\}}
			\frac{1}{|y|^4}
			\begin{pmatrix}
				2y_1y_2 & y_2^2-y_1^2
				\\
				y_2^2 - y_1^2 & -2y_1y_2
			\end{pmatrix}
			\omega(x-y) dy.
		\end{aligned}
	\end{equation}
	It follows that
		\begin{align*}
			\left\|\nabla K_{\mathbb{R}^2}\left[\omega\right]\right\|_{L^\infty(\mathbb{R}^2)}
			\leq & C\left(
			\left\|\omega\right\|_{L^1\cap L^\infty(\mathbb{R}^2)}
			+
			R\left\|\nabla\omega\right\|_{L^\infty(\mathbb{R}^2)}
			+
			\log\left(R^{-1}\right)
			\left\|\omega\right\|_{L^\infty(\mathbb{R}^2)}
			\right)
			\\
			\leq & C\left(
			1+
			\left\|\omega\right\|_{L^1\cap L^\infty(\mathbb{R}^2)}
			+
			\left\|\omega\right\|_{L^\infty(\mathbb{R}^2)}
			\log\left(1+\left\|\nabla\omega\right\|_{L^\infty \left(\mathbb{R}^2\right)}\right)
			\right),
		\end{align*}
	where we optimized the last estimate by setting $R=\frac 1{1+\left\|\nabla\omega\right\|_{L^\infty \left(\mathbb{R}^2\right)}}$, which completes the proof.
\end{proof}

\begin{lemma}
	For any vortex density $\omega\in C^1_c\left(\Omega\right)$ and any circulation $\gamma\in\mathbb{R}$, one has the estimates
	\begin{equation}\label{mu 2}
		\left\|u_{\rm app}^N\left[\omega,\gamma\right]\right\|_{L^\infty(K)}
		\leq
		\frac {C}{\operatorname{dist}\left(K,\partial\Omega\right)}
		\left(
		\left\|\omega\right\|_{L^1\cap L^\infty\left(\mathbb{R}^2\right)}
		+|\gamma|
		\right),
	\end{equation}
	and
	\begin{equation}\label{mu 4}
		\left\|\nabla u_{\rm app}^N\left[\omega,\gamma\right]\right\|_{L^\infty(K)}
		\leq
		\frac {C}{\operatorname{dist}\left(K,\partial\Omega\right)^2}
		\left(
		\left\|\omega\right\|_{L^1\cap L^\infty\left(\mathbb{R}^2\right)}
		+|\gamma|
		\right),
	\end{equation}
	for some independent constant $C>0$.
\end{lemma}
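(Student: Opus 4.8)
The plan is to bound both quantities by a single uniform estimate on the vortex densities, namely $\|\gamma^N\|_{\ell^1}\le C\bigl(\|\omega\|_{L^1\cap L^\infty(\mathbb{R}^2)}+|\gamma|\bigr)$, and then to exploit that every vortex $x_j^N$ sits on $\partial\Omega$, hence at distance at least $\operatorname{dist}(K,\partial\Omega)$ from any point of $K$. More precisely, for $x\in K$ one has $|x-x_j^N|\ge\operatorname{dist}(K,\partial\Omega)$ for all $j$, so directly from \eqref{approx},
\[
\bigl|u_{\rm app}^N[\omega,\gamma](x)\bigr|\le\frac1{2\pi}\sum_{j=1}^N\frac{|\gamma_j^N|}{N}\,\frac1{|x-x_j^N|}\le\frac{\|\gamma^N\|_{\ell^1}}{2\pi\operatorname{dist}(K,\partial\Omega)},
\]
and, differentiating \eqref{approx} and using $\bigl|\nabla_x\tfrac{(x-x_j^N)^\perp}{|x-x_j^N|^2}\bigr|\le C|x-x_j^N|^{-2}$,
\[
\bigl|\nabla u_{\rm app}^N[\omega,\gamma](x)\bigr|\le\frac{C}{2\pi}\sum_{j=1}^N\frac{|\gamma_j^N|}{N}\,\frac1{|x-x_j^N|^2}\le\frac{C\,\|\gamma^N\|_{\ell^1}}{\operatorname{dist}(K,\partial\Omega)^2}.
\]
Taking the supremum over $x\in K$ reduces both \eqref{mu 2} and \eqref{mu 4} to the claimed control of $\|\gamma^N\|_{\ell^1}$.

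To obtain that control I would invoke Proposition \ref{inverse perfect}. Rewriting \eqref{point vortex} in the arc-length form \eqref{point toy}, the density $\gamma^N$ is exactly the solution of \eqref{prob} with datum $v_i=2\pi|\partial\Omega|\,[u_P\cdot n]\bigl(\tilde x_i^N\bigr)$, where $u_P=K_{\mathbb{R}^2}[\omega]$; hence \eqref{est l1} gives $\|\gamma^N\|_{\ell^1}\le\|\gamma^N\|_{\ell^2}\le C\bigl(\|v\|_{\ell^\infty}+|\gamma|+\sqrt N\,|\langle v\rangle|\bigr)$. The bound $\|v\|_{\ell^\infty}\le C\|u_P\|_{L^\infty(\mathbb{R}^2)}\le C\|\omega\|_{L^1\cap L^\infty(\mathbb{R}^2)}$ is immediate from \eqref{mu 1}. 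For the mean term, the crucial point is that $\oint_{\partial\Omega}u_P\cdot n\,ds=0$, since $u_P$ is solenoidal in $\mathbb{R}^2$ (the same Stokes/divergence argument as around \eqref{eq uR}); consequently, up to the single omitted node $\tilde x_N^N$ which contributes $\mathcal{O}(N^{-1}\|u_P\|_{L^\infty})$, the quantity $\langle v\rangle$ is the error committed when approximating the vanishing integral $\oint_{\partial\Omega}u_P\cdot n\,ds$ by a Riemann sum on the well distributed mesh. By the rate of convergence of such Riemann sums (Corollary \ref{riemann2}, together with the approximate symmetry recorded in \eqref{average adjoint}) this error decays fast enough that $\sqrt N\,|\langle v\rangle|$ stays bounded, and tracking the constant through \eqref{mu 1} gives $\sqrt N\,|\langle v\rangle|\le C\|\omega\|_{L^1\cap L^\infty(\mathbb{R}^2)}$. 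Collecting the three contributions yields $\|\gamma^N\|_{\ell^1}\le C\bigl(\|\omega\|_{L^1\cap L^\infty(\mathbb{R}^2)}+|\gamma|\bigr)$ uniformly in $N$, and substituting into the two pointwise bounds above proves the lemma.

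The only genuinely delicate step is this last one: squeezing out of \eqref{est l1} a bound on $\|\gamma^N\|_{\ell^1}$ that sees only $\|\omega\|_{L^1\cap L^\infty}$, which forces one to use the cancellation $\oint_{\partial\Omega}u_P\cdot n=0$ and the near-symmetry of the mesh rather than crude size bounds on the datum $v$. Everything else is a routine computation with the explicit kernel $\tfrac{x^\perp}{|x|^2}$. Finally, I note that the same argument applies verbatim with $u_P=K_{\mathbb{R}^2}[\omega]$ replaced by $K_{\mathbb{R}^2}[\omega^N]$, which is the form of the estimate actually needed in the dynamic analysis of Section \ref{dynamic proofs}.
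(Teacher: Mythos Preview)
Your approach is exactly the paper's: bound $u_{\rm app}^N$ and $\nabla u_{\rm app}^N$ pointwise by $\|\gamma^N\|_{\ell^1}$ times an inverse power of $\operatorname{dist}(K,\partial\Omega)$, then control $\|\gamma^N\|_{\ell^1}$ via \eqref{est l1}, using the cancellation $\oint_{\partial\Omega}u_P\cdot n\,ds=0$ for the mean term.

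There is, however, one technical slip in the last step. You invoke Corollary~\ref{riemann2} for the Riemann sum error, but that corollary carries a constant $\|u_P\cdot n\|_{C^\kappa(\partial\Omega)}$ with $\kappa\ge 2$. Such a norm involves second derivatives of the Biot--Savart kernel, hence integrals of $|x-y|^{-3}|\omega(y)|$ with $x\in\partial\Omega$; in two dimensions this is not controlled by $\|\omega\|_{L^1\cap L^\infty}$ alone (the singularity is non-integrable), so ``tracking the constant through \eqref{mu 1}'' does not close. The paper avoids this by appealing instead to a $C^{0,1/2}$ Riemann sum rate (quoting \cite[Lemma~4.1]{ADLproc}), which is just enough to make $\sqrt N|\langle v\rangle|$ bounded and whose constant is $\|K_{\mathbb{R}^2}[\omega]\cdot n\|_{C^{0,1/2}(\partial\Omega)}$. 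That H\"older seminorm is dominated by $\sup_{x\in\partial\Omega}\int_{\mathbb{R}^2}\bigl(|x-y|^{-1}+|x-y|^{-3/2}\bigr)|\omega(y)|\,dy$, and since $|y|^{-3/2}$ is locally integrable in $\mathbb{R}^2$, the usual near/far splitting gives the desired bound by $\|\omega\|_{L^1\cap L^\infty}$ with a constant independent of $\operatorname{supp}\omega$. Once you swap Corollary~\ref{riemann2} for this lower-regularity Riemann sum estimate, your argument is complete and matches the paper's.
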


\begin{proof}
	Using \eqref{est l1} from Proposition \ref{inverse perfect} and the convergence of Riemann sums for $C^{0,\alpha}$ functions (see, e.g., \cite[Lemma 4.1]{ADLproc}), one has that, for any compact set $K\subset\Omega$,
		\begin{align*}
			\left\|u_{\rm app}^N\left[\omega,\gamma\right]\right\|_{L^\infty(K)}
			\hspace{-25mm}&
			\\
			\leq &
			\frac C{\operatorname{dist}\left(K,\partial\Omega\right)}\left\|\gamma^N\right\|_{\ell^1}
			\\
			\leq &
			\frac C{\operatorname{dist}\left(K,\partial\Omega\right)}
			\left(\left\|K_{\mathbb{R}^2}\left[\omega\right]\cdot n\right\|_{L^\infty\left(\partial\Omega\right)}
			+ |\gamma|
			+\sqrt N\left|\frac 1{N-1}\sum_{i=1}^{N-1}K_{\mathbb{R}^2}\left[\omega\right]\cdot n\left(\tilde x_i^N\right)\right|\right)
			\\
			\leq &
			\frac C{\operatorname{dist}\left(K,\partial\Omega\right)}
			\Bigg(\left\|K_{\mathbb{R}^2}\left[\omega\right]\cdot n\right\|_{L^\infty\left(\partial\Omega\right)}
			+|\gamma|
			\\
			& +\sqrt N\left|\frac {\left|\partial\Omega\right|}{N}\sum_{i=1}^{N}K_{\mathbb{R}^2}\left[\omega\right]\cdot n\left(\tilde x_i^N\right)
			-\int_{\partial\Omega}K_{\mathbb{R}^2}\left[\omega\right]\cdot n(x)dx\right|\Bigg)
			\\
			\leq &
			\frac C{\operatorname{dist}\left(K,\partial\Omega\right)}
			\left(
			\left\|K_{\mathbb{R}^2}\left[\omega\right]\cdot n\right\|_{C^{0,\frac 12}\left(\partial\Omega\right)}
			+|\gamma|
			\right)
			\\
			\leq &
			\frac C{\operatorname{dist}\left(K,\partial\Omega\right)}
			\left(
			\left\|\int_{\mathbb{R}^2}\left(\frac{1}{\left|x-y\right|}+\frac{1}{\left|x-y\right|^\frac 32}\right)\left|\omega(y)\right|dy\right\|_{L^\infty\left(\partial\Omega\right)}
			+|\gamma|\right)
			\\
			\leq &
			\frac {C}{\operatorname{dist}\left(K,\partial\Omega\right)}
			\left(
			\left\|\omega\right\|_{L^1\cap L^\infty\left(\mathbb{R}^2\right)}
			+|\gamma|
			\right),
		\end{align*}
	and, similarly,
		\begin{align*}
			\left\|\nabla u_{\rm app}^N\left[\omega,\gamma\right]\right\|_{L^\infty(K)}
			\leq &
			\frac C{\operatorname{dist}\left(K,\partial\Omega\right)^2}\left\|\gamma^N\right\|_{\ell^1}
			\\
			\leq &
			\frac {C}{\operatorname{dist}\left(K,\partial\Omega\right)^2}
			\left(
			\left\|\omega\right\|_{L^1\cap L^\infty\left(\mathbb{R}^2\right)}
			+|\gamma|
			\right),
		\end{align*}
	which completes the proof of the lemma.
\end{proof}

\begin{lemma}
	For any vortex density $\omega\in C^1_c\left(\Omega\right)$ and any circulation $\gamma\in\mathbb{R}$, considering the velocity flow $u$ defined by \eqref{BS exterior}, one has the estimates
	\begin{equation}\label{C1 estimate}
		\begin{aligned}
			\left\|u\right\|_{L^\infty\left(\Omega\right)}
			& \leq
			C\left(\left\|\omega\right\|_{L^1\cap L^\infty\left(\Omega\right)}
			+|\gamma|\right),
			\\
			\left\|\nabla u\right\|_{L^\infty\left(\Omega\right)}
			& \leq
			C\left(\left\|\omega\right\|_{L^1\cap L^\infty\left(\Omega\right)}
			+ \left\|\nabla\omega\right\|_{L^1\cap L^\infty\left(\Omega\right)}
			+|\gamma|\right),
		\end{aligned}
	\end{equation}
	and, for all $x,h\in\mathbb{R}^2$ such that $|h|\leq 1$ and $[x,x+h]\subset\Omega$,
	\begin{equation}\label{quasilipschitz}
		\left|u(x+h)-u(x)\right|
		\leq C
		\left(|\gamma|+\left(1+|x|^2\right)\left\|\omega\right\|_{L^1\cap L^\infty\left(\Omega\right)}\right)|h|\left(1-\log|h|\right),
	\end{equation}
	for some independent constant $C>0$.
\end{lemma}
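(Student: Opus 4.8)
The plan is to split $u=u_P+u_R$, where $u_P=K_{\mathbb{R}^2}[\omega]$ is given by \eqref{uP} and $u_R=u-u_P$ solves \eqref{eq uR}, and to estimate the two pieces separately: the first with the already established bounds on the full-plane Biot--Savart operator, the second with the representation formulas of Section~\ref{vortex sheet} and the uniform invertibility of $B$ and $A\pm\pi$. For $u_P$, the $L^\infty$ bound is \eqref{mu 1}, and for $\nabla u_P$ one uses the pointwise representation \eqref{representation velocity} with cutoff radius $R=1$: the algebraic term is $\mathcal{O}(\|\omega\|_{L^\infty})$, the inner integral is $\mathcal{O}(\|\nabla\omega\|_{L^\infty})$ (bound $|\omega(x-y)-\omega(x)|\leq|y|\|\nabla\omega\|_{L^\infty}$ against the $\mathcal{O}(|y|^{-2})$ kernel, whose product $\mathcal{O}(|y|^{-1})$ is integrable in the plane), and the outer integral is $\mathcal{O}(\|\omega\|_{L^1\cap L^\infty})$ since there $|y|^{-2}\leq\tfrac14$; this is the linear form of \eqref{C1 estimate}, so the logarithm in \eqref{mu 3} is not needed. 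The quasi-Lipschitz bound for $u_P$ is the classical one: splitting $\mathcal{K}(x+h-y)-\mathcal{K}(x-y)$, with $\mathcal{K}(z)=\tfrac{z^\perp}{2\pi|z|^2}$, at $|x-y|=2|h|$, then on the far region using the mean value theorem with $|\nabla\mathcal{K}|\leq C|\cdot|^{-2}$ and splitting once more at $|x-y|=1$, yields $|u_P(x+h)-u_P(x)|\leq C\|\omega\|_{L^1\cap L^\infty}|h|(1-\log|h|)$ with a constant independent of $x$.

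For $u_R$, recall from Section~\ref{vortex sheet} that $u_R$ is the boundary vortex sheet \eqref{boundary sheet} with density $g$ given by \eqref{inverse g}. Using $\|B^{-1}\|_{\mathcal{L}(L^2_0)},\|(A+\pi)^{-1}\|_{\mathcal{L}(L^2_0)}\leq C$ and $\|u_P\cdot n\|_{L^2(\partial\Omega)}\leq C\|\omega\|_{L^1\cap L^\infty}$ one gets $\|g\|_{L^2(\partial\Omega)}\leq C(\|\omega\|_{L^1\cap L^\infty}+|\gamma|)$; bootstrapping through $\pi^2g=A^2g-2\pi B[u_P\cdot n]$, the smoothing property of $A$ and the mapping properties of $B$ on Hölder and Zygmund spaces (Plemelj--Privalov, together with the invariance of the Zygmund class under the Hilbert transform), and using that $u_P\cdot n$ is log-Lipschitz on $\partial\Omega$ with seminorm $\leq C\|\omega\|_{L^1\cap L^\infty}$, one finds that $g$ — hence, by \eqref{tangent plemelj}, the trace $u_R|_{\partial\Omega}$ — is log-Lipschitz on $\partial\Omega$ with constant $M:=C(\|\omega\|_{L^1\cap L^\infty}+|\gamma|)$. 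The $L^\infty$ bound in \eqref{C1 estimate} then follows from the maximum principle applied componentwise to the harmonic field $u_R$ (which vanishes at infinity and is continuous up to $\partial\Omega$), and the gradient bound from classical Schauder/layer-potential estimates up to the boundary applied to \eqref{boundary sheet}, via a further bootstrap of $\|g\|_{C^{1,\alpha}(\partial\Omega)}$ that this time does bring in $\|\nabla\omega\|_{L^\infty}$ through $\|\nabla u_P\|_{C^{0,\alpha}(\partial\Omega)}$.

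The main obstacle is the quasi-Lipschitz bound \eqref{quasilipschitz} for $u_R$, which must not contain derivatives of $\omega$, so one cannot simply write $|u_R(x+h)-u_R(x)|\leq\|\nabla u_R\|_{L^\infty}|h|$. Instead I would combine the interior gradient estimate for harmonic functions, $|\nabla u_R(x)|\leq C\operatorname{dist}(x,\partial\Omega)^{-1}\operatorname{osc}_{B(x,\operatorname{dist}(x,\partial\Omega)/2)}u_R$, with a boundary modulus-of-continuity estimate: since $u_R|_{\partial\Omega}$ is log-Lipschitz with constant $M$, Poisson-kernel bounds give $|u_R(z)-u_R(p)|\leq CM\operatorname{dist}(z,\partial\Omega)(1-\log\operatorname{dist}(z,\partial\Omega))$ for $z$ near a boundary point $p$, whence $|\nabla u_R(x)|\leq CM(1-\log\operatorname{dist}(x,\partial\Omega))$ throughout $\Omega$. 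Integrating this along $[x,x+h]$, distinguishing $\operatorname{dist}(x,\partial\Omega)\geq 2|h|$ (the segment then stays at distance $\geq|h|$ from $\partial\Omega$, so $-\log$ is dominated by $-\log|h|$) from $\operatorname{dist}(x,\partial\Omega)<2|h|$ (project both endpoints onto $\partial\Omega$, use the boundary log-Lipschitz bound and that $t\mapsto t(1-\log t)$ increases near $0$), gives $|u_R(x+h)-u_R(x)|\leq CM|h|(1-\log|h|)$; adding the $u_P$ contribution proves \eqref{quasilipschitz}. The factor $(1+|x|^2)$ in the statement is a harmless over-estimate — the argument just outlined in fact yields a constant uniform in $x$ — and is in any case irrelevant for the applications, where the vorticity, and hence the relevant portion of $\Omega$ along characteristics, stays in a fixed compact set.
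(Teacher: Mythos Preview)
Your approach is correct but takes a genuinely different route from the paper. The paper works directly with the explicit representation \eqref{BS exterior} via the conformal map $T$: for \eqref{C1 estimate} it simply invokes \eqref{convolution T}, and for \eqref{quasilipschitz} it estimates $|\nabla_x G_\Omega(x,y)|$ and $|D_x^2 G_\Omega(x,y)|$ pointwise using the elementary inequality $|b-b^*|/|a-b^*|\leq 1+|b|$ for $|a|,|b|>1$, obtaining bounds of the form $C\bigl(1+(1+|x|)/|x-y|\bigr)$ and $C\bigl(1+(1+|x|^2)/|x-y|^2\bigr)$, and then performs the standard near/far split at $|x-y|=2|h|$ directly on $\int\nabla_xG_\Omega(x,y)\omega(y)\,dy$. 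This is precisely where the factor $(1+|x|^2)$ in \eqref{quasilipschitz} comes from: it is an artifact of the conformal-map bound on the Green function, not, as you correctly surmise, a sharp feature of the estimate. Your decomposition $u=u_P+u_R$ followed by harmonic-function arguments for $u_R$ (log-Lipschitz boundary trace from the density $g$ via $\pi^2g=A^2g-2\pi B[u_P\cdot n]$, then interior gradient estimates plus a boundary modulus-of-continuity bound) is more involved but yields an $x$-independent constant and does not rely on the conformal map, so it would adapt more readily to multiply connected domains or to higher dimensions. The paper's route is quicker given that \eqref{BS exterior} is already in hand, and it sidesteps the somewhat delicate bootstrap of $g$ through the mapping properties of $B$ on Zygmund-type spaces.
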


\begin{proof}
	The first estimates \eqref{C1 estimate} are straightforward and deduced directly from \eqref{BS exterior}, employing \eqref{convolution T}.
	
	Let us therefore focus on the more refined control \eqref{quasilipschitz}. Using that
	\begin{equation*}
		\frac{|b-b^*|}{|a-b^*|}\leq \frac{|b-b^*|}{\left|\frac{b}{|b|}-b^*\right|}= 1+|b|,
	\end{equation*}
	for any $a,b\in\mathbb{R}^2$ such that $|a|,|b|> 1$, we first compute that
		\begin{align*}
			\left|\nabla_{x} G_{\Omega}(x,y)\right|
			\leq &
			\frac{\left|DT(x)\right|\left|T(y)-T(y)^*\right|}{\left|T(x)-T(y)\right|\left|T(x)-T(y)^*\right|}
			\leq
			\frac{\left|DT(x)\right|\left(1+\left|T(y)\right|\right)}{\left|T(x)-T(y)\right|}
			\\
			\leq & C_T
			\frac{1+|y|}{|x-y|}
			\leq C_T
			\left(1+\frac{1+|x|}{|x-y|}\right),
			\\
			\left|D_{x}^2 G_{\Omega}(x,y)\right|
			\leq &
			\frac{\left|D^2T(x)\right|\left|T(y)-T(y)^*\right|}{\left|T(x)-T(y)\right|\left|T(x)-T(y)^*\right|}
			\\ & +7
			\frac{\left|DT(x)\right|^2\left|T(y)-T(y)^*\right|\left(\left|T(x)-T(y)\right|+\left|T(x)-T(y)^*\right|\right)}{\left|T(x)-T(y)\right|^2\left|T(x)-T(y)^*\right|^2}
			\\
			\leq &
			\frac{\left|D^2T(x)\right|\left(1+\left|T(y)\right|\right)}{\left|T(x)-T(y)\right|}
			+ 14\frac{\left|DT(x)\right|^2\left(1+\left|T(y)\right|\right)^2}{\left|T(x)-T(y)\right|^2}
			\\
			\leq & C_T
			\left(\frac{1+|y|}{|x-y|}+\frac{1+|y|^2}{|x-y|^2}\right)
			\leq C_T'
			\left(1+\frac{1+|x|^2}{|x-y|^2}\right),
		\end{align*}
	for some constants $C_T,C_T'>0$ only depending on $T$.
	
	Then, utilizing these estimates on the Green function, we deduce the quasi-Lipschitz estimate, for all $x,h\in\mathbb{R}^2$ such that $|h|\leq 1$ and $[x,x+h]\subset\Omega$:
		\begin{align*}
			\left|u(x+h)-u(x)\right|
			\hspace{-22mm} &
			\\
			\lesssim &
			|\alpha||h|
			+
			\int_{\Omega} \left|\nabla_{x} G_{\Omega}(x+h,y)
			-\nabla_{x} G_{\Omega}(x,y)\right| \left|\omega(y)\right|dy
			\\
			\lesssim &
			\left(|\gamma|+\left\|\omega\right\|_{L^1\left(\Omega\right)}\right)|h|
			+
			\int_{\left\{|x-y|\leq 2|h|\right\}} \left(\left|\nabla_{x} G_{\Omega}(x+h,y)\right|
			+\left|\nabla_{x} G_{\Omega}(x,y)\right|\right) \left|\omega(y)\right|dy
			\\
			& +
			\int_{\left\{|x-y|> 2|h|\right\}} \left|\nabla_{x} G_{\Omega}(x+h,y)
			-\nabla_{x} G_{\Omega}(x,y)\right| \left|\omega(y)\right|dy
			\\
			\lesssim &
			\left(|\gamma|+\left\|\omega\right\|_{L^1\cap L^\infty\left(\Omega\right)}\right)|h|
			+(1+|x|)\left\|\omega\right\|_{L^\infty\left(\Omega\right)}
			\int_{\left\{|x-y|\leq 3|h|\right\}}
			\frac 1{|x-y|}
			dy
			\\
			& +|h|\sup_{z\in[x,x+h]}
			\int_{\left\{|x-y|> 2|h|\right\}} \left|D_{x}^2 G_{\Omega}(z,y)\right| \left|\omega(y)\right|dy
			\\
			\lesssim &
			\left(|\gamma|+(1+|x|)\left\|\omega\right\|_{L^1\cap L^\infty\left(\Omega\right)}\right)|h|
			+|h|\left(1+|x|^2\right)
			\int_{\left\{|x-y|> 2|h|\right\}} \frac 1{|x-y|^2} \left|\omega(y)\right|dy
			\\
			\lesssim &
			\left(|\gamma|+\left(1+|x|^2\right)\left\|\omega\right\|_{L^1\cap L^\infty\left(\Omega\right)}\right)|h|\left(1-\log|h|\right),
		\end{align*}
	where $\lesssim$ denotes the possible presence of various multiplicative independent constants, which completes the proof of the lemma.
\end{proof}

For any given initial data $\omega_0\in C_c^1\left(\Omega\right)$ and some possibly small parameter $0<\eps\leq 1$ (how small is decided later on), we introduce now the subspace $C_{\omega_0,\eps}\subset C_c^1\left([0,t_1]\times\Omega\right)$ as follows: a vortex density $\xi\in C_c^1\left([0,t_1]\times\Omega\right)$ belongs to $C_{\omega_0,\eps}$ if and only if
\begin{itemize}
	\item $\left\|\xi\right\|_{L^1\left(\Omega\right)}=\left\|\omega_0\right\|_{L^1\left(\Omega\right)}$ and $\left\|\xi\right\|_{L^\infty\left(\Omega\right)}=\left\|\omega_0\right\|_{L^\infty\left(\Omega\right)}$, for every $t\in [0,t_1]$,
	\item $\xi(0,x)=\omega_0(x)$, for every $x\in\Omega$,
	\item $\operatorname{supp}\xi(t,\cdot)\subset \Omega\cap\left\{\eps<\operatorname{dist}\left(x,\partial\Omega\right)<\eps^{-1}\right\}$, for every $t\in [0,t_1]$.
\end{itemize}
Observe finally that the subspace $C_{\omega_0,\eps}$ inherits its topology from the metric topology of $C_c^1\left([0,t_1]\times\Omega\right)$.

The following proposition is a suitable wellposedness result for a linearized version of \eqref{dynamic5} in the metric subspace $C_{\omega_0,\eps}$. 

\begin{proposition}\label{linear wellposedness}
	Let $\omega_0\in C_c^1\left(\Omega\right)$, $\gamma\in\mathbb{R}$, $0<\eps\leq 1$ and consider any fixed time $t_1>0$. Then, for a well distributed mesh on $\partial \Omega$, provided $\eps$ is sufficiently small, there exists $N_1\geq N_0$ ($N_0$ is determined in Theorem \ref{main theo}) such that, for any $N\geq N_1$ and for any vortex density $\omega\in C_{\omega_0,\eps}$, there is a unique classical solution $\xi\in C_{\omega_0,\eps}$ to the linear equation
	\begin{equation}\label{linear transport}
		\left\{
		\begin{aligned}
			& \partial_{t} \xi + u\cdot \nabla \xi =0,
			\\
			& \xi(t=0) = \omega_0,
		\end{aligned}
		\right.
	\end{equation}
	with a velocity flow
	\begin{equation*}
		u=K_{\mathbb{R}^2}[\omega]+u_{\rm app}^N[\omega,\gamma].
	\end{equation*}
\end{proposition}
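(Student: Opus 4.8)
The plan is to solve \eqref{linear transport} by the method of characteristics for the given, $\xi$-independent velocity field $u=K_{\mathbb{R}^2}[\omega]+u_{\rm app}^N[\omega,\gamma]$, the essential point being to show that the characteristics issued from $\operatorname{supp}\omega_0$ remain uniformly away from $\partial\Omega$ on $[0,t_1]$ — so that they never reach the mesh points $x_i^N$ where $u$ is singular — as soon as $\eps$ is small and $N$ is large. To control these characteristics I would compare $u$ with the \emph{exact} Euler velocity $\bar u:=K_{\mathbb{R}^2}[\omega]+u_R[\omega,\gamma]$, which by \eqref{uP}--\eqref{uR} is the unique solution $u$ of \eqref{elliptic2} and is therefore tangent to $\partial\Omega$ and of class $C^1\left(\overline\Omega\right)$ by \eqref{BS exterior}, and which by Theorem \ref{main theo} differs from $u$ by $\mathcal{O}\left(N^{-\kappa}\right)$ on compact subsets of $\Omega$.

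First I would record that, for $\omega\in C_{\omega_0,\eps}$, one has $\|\omega(t,\cdot)\|_{L^1\cap L^\infty}=\|\omega_0\|_{L^1\cap L^\infty}$ for every $t\in[0,t_1]$. Hence $\|\bar u(t,\cdot)\|_{L^\infty(\Omega)}\leq C(\|\omega_0\|_{L^1\cap L^\infty}+|\gamma|)$ by \eqref{C1 estimate}, uniformly in $t$ and in $\omega\in C_{\omega_0,\eps}$, so the characteristics $\bar X(t,x)$ of $\bar u$ issued from $\operatorname{supp}\omega_0$ remain, for $t\in[0,t_1]$, in a fixed ball $\overline{B(0,R_1)}$ with $R_1=R_1(\omega_0,\gamma,t_1)$. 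On that ball the quasi-Lipschitz estimate \eqref{quasilipschitz} holds for $\bar u(t,\cdot)$ with a constant $\Lambda=\Lambda(\omega_0,\gamma,t_1)$; applying it with $h$ the vector joining $\bar X(t,x)$ to its nearest point of $\partial\Omega$, together with $\bar u\cdot n=0$ on $\partial\Omega$, gives $|\dot\rho(t)|\leq C\Lambda\,\rho(t)(1-\log\rho(t))$ for $\rho(t)=\operatorname{dist}(\bar X(t,x),\partial\Omega)$, whose integration yields a lower bound $\rho(t)\geq\rho_*$ with $\rho_*=\rho_*(\omega_0,\gamma,t_1)>0$ \emph{independent of} $\eps$. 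This is the analogue of Lemma \ref{classical supp estimate} with \eqref{quasilipschitz} in place of the $C^1$-bound, and it is precisely this substitution that makes the argument uniform over $C_{\omega_0,\eps}$, since the $C^1$-norm of $\bar u$ there is not uniformly bounded ($\|\nabla\omega\|$ being uncontrolled). Thus $\bar X(t,\operatorname{supp}\omega_0)$ stays in the fixed compact set $\mathcal{R}:=\{\operatorname{dist}(\cdot,\partial\Omega)\geq\rho_*/2\}\cap\overline{B(0,R_1+1)}\subset\subset\Omega$, and I fix $\eps$ once and for all small enough that $\operatorname{supp}\omega_0$ lies in the interior of $\mathcal{R}$ and $\mathcal{R}\subset\{\eps<\operatorname{dist}(\cdot,\partial\Omega)<\eps^{-1}\}$.

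With $\eps$ now frozen, I would take any $N_1\geq N_0$ and apply Theorem \ref{main theo} with $K=\mathcal{R}$, using $\operatorname{dist}(\operatorname{supp}\omega(t,\cdot),\partial\Omega)\geq\eps$ and $\|\omega(t,\cdot)\|_{L^1}=\|\omega_0\|_{L^1}$, to obtain $\|u(t,\cdot)-\bar u(t,\cdot)\|_{L^\infty(\mathcal{R})}=\|u_{\rm app}^N[\omega(t,\cdot),\gamma]-u_R[\omega(t,\cdot),\gamma]\|_{L^\infty(\mathcal{R})}\leq\eta_N$ for all $t$, with $\eta_N=\eta_N(\eps,\omega_0,\gamma)\to0$ as $N\to\infty$. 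Since $u$ is $C^1$ in $(t,x)$ away from the finitely many mesh points, its characteristics $X(t,x)$ are well defined and $C^1$ as long as they remain in $\mathcal{R}$; on the maximal such interval, writing $\phi(t)=|X(t,x)-\bar X(t,x)|$ and splitting $u(t,X)-\bar u(t,\bar X)$ into $(u-\bar u)(t,X)$ plus $\bar u(t,X)-\bar u(t,\bar X)$, the first piece is bounded by $\eta_N$ on $\mathcal{R}$ and the second by \eqref{quasilipschitz}, so $\dot\phi\leq\eta_N+C\Lambda\,\phi(1-\log\phi)$ with $\phi(0)=0$; an Osgood comparison then gives $\sup_{[0,t_1]}\phi\leq\Psi(\eta_N)$ with $\Psi(\eta_N)\to0$ as $N\to\infty$. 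Enlarging $N_1$ so that $\Psi(\eta_N)<\rho_*/4$ for $N\geq N_1$, a standard continuity argument shows that $X(t,\cdot)$ cannot leave the interior of $\mathcal{R}$ on $[0,t_1]$, hence the existence time is $t_1$ and $X(t,\cdot)$ is, on a neighbourhood of $\operatorname{supp}\omega_0$, a measure-preserving $C^1$-diffeomorphism (recall $u$ is divergence-free). Then $\xi(t,\cdot):=\omega_0\circ X(t,\cdot)^{-1}$, extended by $0$, is the desired classical solution of \eqref{linear transport}; it belongs to $C_{\omega_0,\eps}$ because the flow preserves the $L^1$ and $L^\infty$ norms, $\xi(0,\cdot)=\omega_0$, and $\operatorname{supp}\xi(t,\cdot)=X(t,\operatorname{supp}\omega_0)\subset\mathcal{R}\subset\{\eps<\operatorname{dist}(\cdot,\partial\Omega)<\eps^{-1}\}$. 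Uniqueness within $C_{\omega_0,\eps}$ — indeed within $C^1_c$ — is the classical uniqueness for linear transport along a $C^1$ flow.

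I expect the principal difficulty to lie in the ordering of the quantifiers and the attendant uniformity: one must first produce the $\eps$-independent confining set $\mathcal{R}$ (equivalently the $\eps$-independent lower bound $\rho_*$), which forces the use of the quasi-Lipschitz estimate \eqref{quasilipschitz} rather than a $C^1$-bound on $\bar u$, and only \emph{afterwards} freeze $\eps$ and let $N_1$ depend on it through Theorem \ref{main theo}. The remaining ingredients — the Osgood-type integration of the distance inequality, the Osgood comparison of the two flows, and the continuity (bootstrap) argument confining $X(t,\cdot)$ to $\mathcal{R}$ — are routine but need to be arranged with some care.
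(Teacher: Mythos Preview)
Your proposal is correct and follows essentially the same strategy as the paper: compare the approximate velocity $u$ with the exact Euler velocity $\bar u=K_{\mathbb{R}^2}[\omega]+u_R[\omega,\gamma]$ (the paper denotes it $v$), exploit Theorem \ref{main theo} for the discrepancy, and use the quasi-Lipschitz estimate \eqref{quasilipschitz} together with $\bar u\cdot n|_{\partial\Omega}=0$ to keep the characteristics away from the boundary.

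The organizational difference is the following. You proceed in two decoupled steps: first confine the characteristics $\bar X$ of the \emph{exact} flow to an $\eps$-independent set $\mathcal{R}$ via an Osgood inequality on $\rho(t)=\operatorname{dist}(\bar X,\partial\Omega)$, then freeze $\eps$, apply Theorem \ref{main theo} on the fixed compact $\mathcal{R}$, and run a second Osgood comparison between $X$ and $\bar X$. The paper instead works directly with the characteristics $Z$ of $u$, pulls them back by the conformal map $T$ to the exterior of the unit disk, and derives a \emph{single} differential inequality for $y=(|T(Z)|-1)^5$ that mixes the quasi-Lipschitz term and the Theorem \ref{main theo} error (the latter degenerating like $(|T(Z)|-1)^{-4}$ since $K$ there is the moving point). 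Integrating $|\dot y|\leq C\big(y(1+|\log y|)+N^{-2}\eps^{-3}\big)$ gives the lower bound in one shot, and the choice of $\eps$ then $N_1$ follows. Your two-step route is arguably cleaner and avoids the conformal map and the fifth-power trick; the paper's route is more self-contained but slightly more intricate. Either way the key inputs and the quantifier ordering you emphasize are the same.
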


\begin{proof}
	Let
	\begin{equation*}
		Z(s,x)\in C^1\left([0,t_2]\times\Omega; \Omega\right),
	\end{equation*}
	be the characteristic curve corresponding to $u$, i.e.\ the curve solving the differential equation
	\begin{equation*}
		\frac{dZ}{ds}=u\left(s,Z\right),
	\end{equation*}
	for some given initial data $Z(0,x)=x\in\Omega$ and some existence time $0<t_2\leq t_1$. As before (see proof of Lemma \ref{classical supp estimate}), since the velocity field $u$ is of class $C^1$ on $\Omega$ (but not on $\overline\Omega$, though), standard results from the theory of ordinary differential equations guarantee the existence, uniqueness and regularity of such curves for any given initial data. It seems {\it a priori} that the time $t_2$ may depend on $x$. However, since the support of $\omega_0$ is bounded and does not intersect $\partial \Omega$, and $u$ is uniformly bounded on compact subsets $K\subset\Omega$, by \eqref{mu 1} and \eqref{mu 2}, independently of $\omega\in C_{\omega_0,\eps}$, it is possible to set the time $t_2$ so small that $Z(t,x)$, for all $x\in\operatorname{supp}\omega_0$, is defined on $[0,t_2]$ and remains uniformly bounded away from $\partial\Omega$.

	In fact, we show now that $Z(t,x)$, for each $x\in\operatorname{supp}\omega_0$, remains uniformly bounded away from $\partial\Omega$ no matter how large $0<t_2\leq t_1$ is. To this end, consider the modified lagrangian flow $Y$ defined by
	\begin{equation*}
		Y\left(s,T(x)\right):=T\left(Z(s,x)\right)\in C^1\left([0,t_2]\times\operatorname{supp}\omega_0; \left\{|x|>1\right\}\right),
	\end{equation*}
	solving the differential equation
	\begin{equation*}
		\frac{dY}{ds}=DT\left(T^{-1}Y\right) u\left(s,T^{-1}Y\right),
	\end{equation*}
	for the initial data $Y(0,T(x))=T(x)$, with $x\in \operatorname{supp}\omega_0$, and the velocity flow
	\begin{equation*}
		v = K_{\Omega}[\omega] + \left(\gamma + \int_{\Omega}\omega(y) dy\right) H(x).
	\end{equation*}
	Then, in view of Theorem \ref{main theo} with $\kappa=2$ and employing \eqref{C1 estimate}-\eqref{quasilipschitz}, a variation of estimate \eqref{dist boundary 2} yields that
		\begin{align*}
			\left|\frac{d\left(\left|Y\right|-1\right)}{ds}\right|
			\hspace{-10mm}&
			\\
			= &
			\left|
			v^t\left(s,T^{-1}Y\right) DT^t\left(T^{-1}Y\right)\frac{Y}{\left|Y\right|}
			+
			\left(u-v\right)^t\left(s,T^{-1}Y\right) DT^t\left(T^{-1}Y\right)\frac{Y}{|Y|}
			\right|
			\\
			= & \bigg| \left(v^t\left(s,T^{-1}Y\right)DT^t\left(T^{-1}Y\right)
			-v^t\left(s,T^{-1}\frac{Y}{|Y|}\right)DT^t\left(T^{-1}\frac{Y}{|Y|}\right)\right)\frac{Y}{|Y|}
			\\
			& +
			\left(u-v\right)^t\left(s,T^{-1}Y\right) DT^t\left(T^{-1}Y\right)\frac{Y}{|Y|}\bigg|
			\\
			\leq & C\left\|v\right\|_{L^\infty(\Omega)}\left|Y-\frac {Y}{|Y|}\right|
			+
			C\left|v\left(s,T^{-1}Y\right)
			-v\left(s,T^{-1}\frac{Y}{|Y|}\right)\right|
			\\
			& + C \left|u-v\right|\left(s,T^{-1}Y\right)
			\\
			\leq &
			C\left(|Y|-1\right)\left(1+\left|\log\left(|Y|-1\right)\right|\right)
			+\frac C{N^2\eps^3}
			\left(
			\frac 1{|Y|-1} + \frac 1{\left(|Y|-1\right)^4}
			\right),
		\end{align*}
	where $C>0$ denotes various constants possibly depending on $T$, $\omega_0$ and $\gamma$, but independent of $\omega$, $\eps$ and $N$. Further denoting $y=\left(|Y|-1\right)^5$ for convenience and rearranging the preceding estimate yields
		\begin{align*}
			\left|\frac{dy}{ds}\right| & \leq C\left(y\left(1+\left|\log y\right|\right) +\frac 1{N^2\eps^3}\right)
			\\
			& \leq C\left(y+\frac 1{N^2\eps^3}\right)\left(1+\left|\log\left(y+\frac 1{N^2\eps^3}\right)\right|\right),
		\end{align*}
	and, therefore,
	\begin{equation*}
		\left|\frac{d\log\left(1+\left|\log \left(y+\frac 1{N^2\eps^3}\right)\right|\right)}{ds}\right| \leq C.
	\end{equation*}
	It follows that, for all $x\in\operatorname{supp}\omega_0$,
	\begin{equation*}
		y+\frac 1{N^2\eps^3}\geq e^{1-Ce^{Cs}},
	\end{equation*}
	where $C\geq 1$ only depends on $T$, $\omega_0$ and $\gamma$, whence
		\begin{align*}
			C'\left|Z(s,x)-x_0\right|^5
			& \geq
			\left|Y(s,T(x))-T(x_0)\right|^5
			\\
			& \geq
			\left(|Y(s,T(x))|-1\right)^5\geq e^{1-Ce^{Cs}}-\frac 1{N^2\eps^3},
		\end{align*}
	for any $x_0\in\partial\Omega$, where $C'>0$ only depends on $T$. Further taking the infimum of the above left-hand side over all $x_0\in\partial\Omega$ yields that
	\begin{equation*}
		C'\operatorname{dist}(Z(s,x),\partial\Omega)^5
		\geq e^{1-Ce^{Cs}}-\frac 1{N^2\eps^3}
		\geq e^{1-Ce^{Ct_1}}-\frac 1{N^2\eps^3},
	\end{equation*}
	for all $x\in\operatorname{supp}\omega_0$ and $s\in [0,t_2]$.

	Now comes the time to set the value of $\eps$ so small that the above right-hand side is larger than $C' \eps^5$. More precisely, the parameter $\eps$ is first chosen so that $e^{1-Ce^{Ct_1}}\geq 2C'\eps^5$. Once $\eps$ is set, it is readily seen that there exists $N_1\geq N_0$ such that $e^{1-Ce^{Ct_1}}-\frac 1{N^2\eps^3}\geq 2C'\eps^5-\frac 1{N^2\eps^3}>C'\eps^5$, for all $N\geq N_1$. In particular, since this implies that $\operatorname{dist}(Z(s,x),\partial\Omega)>\eps$ as long as $Z(s,x)$ exists within $[0,t_1]$, according to classical results on the continuation of solutions to differential equations (see \cite[Chapter~1, Theorem~4.1]{coddington}, for instance), since $u$ is continuous and uniformly bounded pointwise on compact sets independently of $\omega\in C_{\omega_0,\eps}$, we conclude that the solution $Z(s,x)$ exist over $[0,t_1]$. If necessary, it is possible to further reduce the value of $\eps$ so that
	\begin{equation}\label{distance epsilon}
			\eps<\operatorname{dist}(Z(s,x),\partial\Omega)<\eps^{-1},
			\quad\text{for all }x\in\operatorname{supp}\omega_0 \text{ and }s\in [0,t_1].
	\end{equation}

	For any fixed $t\in [0,t_1]$, it holds that the mapping $x\mapsto Z(t,x)$ is a $C^1$-diffeomorphism, preserving the Lebesgue measure, from $\operatorname{supp}\omega_0$ onto its own image. We denote its inverse by $Z^{-1}(t,\cdot)$ and we define the new vortex density $\xi\in C_c^1\left([0,t_1]\times\Omega\right)$ by
	\begin{equation}\label{lagrangian transport}
		\left\{
		\begin{aligned}
			\xi(t,x) & = \omega_0\left(Z^{-1}(t,x)\right), && \text{if } x\in Z\left(t,\operatorname{supp}\omega_0\right),
			\\
			\xi(t,x) & = 0, && \text{otherwise}.
		\end{aligned}
		\right.
	\end{equation}
	Then, by virtue of \eqref{distance epsilon}, one easily verifies that $\xi$ belongs to $C_{\omega_0,\eps}$ and solves the transport equation \eqref{linear transport}.
	
	Finally, the fact that any classical solution of \eqref{linear transport} necessarily satisfies \eqref{lagrangian transport} easily yields the uniqueness of $\xi$, which concludes the proof of the proposition.
\end{proof}

We are now ready to proceed to the actual proof of Theorem \ref{wellposedness}.

\begin{proof}[Proof of Theorem \ref{wellposedness}]
	This demonstration is somewhat lengthy and, so, we split it into several steps:
	\begin{enumerate}
		\item First, we build an approximating sequence using a standard iteration procedure based on the wellposedness of the linear transport equation established in Proposition \ref{linear wellposedness}.
		\item Second, we establish uniform $C^1$-bounds on this approximating sequence.
		\item Next, we show that it is actually a Cauchy sequence in $C^0$, which allows us to pass to the limit in the iteration scheme and obtain a solution of \eqref{dynamic5} in the sense of distributions.
		\item Finally, we explain why this solution is in fact of class $C^1$ and provide some concluding remarks.
	\end{enumerate}
	
	\bigskip
	
	\noindent\textbf{Construction of approximating sequence.}
	Now, in order to establish the existence of the classical solution to \eqref{dynamic5}, we first build an approximating sequence $\left\{\xi_n\right\}_{n\geq 0}$ of vortex densities within the complete metric subspace $C_{\omega_0,\eps}\subset C_c^1\left([0,t_1]\times\Omega\right)$ defined above.

	The first term $\xi_0\in C_{\omega_0,\eps}$ of the approximating sequence is simply given by $\xi_0(t,x)=\omega_0(x)$, for all $(t,x)\in [0,t_1]\times\Omega$. Then, for each $\xi_n\in C_{\omega_0,\eps}$, the following term $\xi_{n+1}\in C_{\omega_0,\eps}$ is defined, by virtue of Proposition \ref{linear wellposedness}, assuming $\eps>0$ is sufficiently small while $N$ is large enough, as the unique solution to
	\begin{equation*}
		\left\{
		\begin{aligned}
			& \partial_{t} \xi_{n+1} + \mu_n\cdot \nabla \xi_{n+1} =0,
			\\
			& \xi_{n+1}(t=0) = \omega_0,
		\end{aligned}
		\right.
	\end{equation*}
	where the velocity flow $\mu_n$ is given by
	\begin{equation*}
		\mu_n=K_{\mathbb{R}^2}[\xi_n]+u_{\rm app}^N[\xi_n,\gamma].
	\end{equation*}

	\bigskip
	
	\noindent\textbf{Uniform boundedness in $C^1$.}
	We show now that $\left\{\xi_n\right\}_{n\geq 0}$ is uniformly bounded in $C^1\left([0,t_1]\times\Omega\right)$.
	
	To this end, observe that the $\xi_n$'s also solve (in the sense of distributions) the equation, for $i=1,2$:
	\begin{equation*}
		\partial_{t} \partial_{x_i}\xi_{n+1} + \mu_n\cdot \nabla \partial_{x_i}\xi_{n+1} =-\partial_{x_i}\mu_n\cdot \nabla \xi_{n+1}.
	\end{equation*}
	It follows that, for any $[a,b]\subset [0,t_1]$,
	\begin{equation*}
		\partial_{x_i}\xi_{n+1}\left(b,Z_n(b,x)\right)
		=
		\partial_{x_i}\xi_{n+1}\left(a,Z_n(a,x)\right)
		-\int_a^b
		\partial_{x_i}\mu_n\cdot \nabla \xi_{n+1}\left(s,Z_n(s,x)\right)
		ds,
	\end{equation*}
	where
	\begin{equation*}
		Z_n(s,x)\in C^1\left([0,t_2]\times\Omega; \Omega\right),
	\end{equation*}
	is the characteristic curve corresponding to $\mu_n$, i.e.\ the curve solving the differential equation
	\begin{equation*}
		\frac{dZ_n}{ds}=\mu_n\left(s,Z_n\right),
	\end{equation*}
	for some given initial data $Z(0,x)=x\in\Omega$ and some existence time $0<t_2\leq t_1$. Recall that, as shown in the proof of Proposition \ref{linear wellposedness}, it is possible to set $t_2=t_1$, for all $x\in\operatorname{supp}\omega_0$.

	Then, by Gr\"onwall's lemma, we obtain
		\begin{align*}
			\left|\nabla\xi_{n+1}\left(b,Z_n(b,x)\right)\right|
			& \leq
			\left|\nabla\xi_{n+1}\left(a,Z_n(a,x)\right)\right|
			e^{\int_a^b \left|\nabla\mu_n\left(s,Z_n(s,x)\right)\right| ds},
			\\
			\left|\partial_t\xi_{n+1}\left(b,Z_n(b,x)\right)\right|
			& \leq
			\left|\nabla\xi_{n+1}\left(a,Z_n(a,x)\right)\right|
			\left|\mu_n\left(b,Z_n(b,x)\right)\right|
			e^{\int_a^b \left|\nabla\mu_n\left(s,Z_n(s,x)\right)\right| ds},
		\end{align*}
	for all $x\in\operatorname{supp}\omega_0$. Further combining the preceding estimates with \eqref{mu 1}-\eqref{mu 3} and \eqref{mu 2}-\eqref{mu 4}, we conclude that
		\begin{align*}
			\left\|\xi_{n+1}\right\|_{C^1\left([a,b]\times\Omega\right)}
			\hspace{-10mm}&
			\\
			\leq &
			\left\|\omega_0\right\|_{L^\infty\left(\Omega\right)}
			+
			\frac C\eps \left\|\nabla\xi_{n+1}(a,\cdot)\right\|_{L^\infty\left(\Omega\right)}
			\left(
			1+
			\left\|\omega_0\right\|_{L^1\cap L^\infty(\Omega)}
			+|\gamma|
			\right)
			\\
			& \times e^{
				(b-a)\frac {C}{\eps^2}
				\left(1+
				\left\|\omega_0\right\|_{L^1\cap L^\infty\left(\Omega\right)}
				+|\gamma|
				\right)
				+(b-a)C
				\left\|\omega_0\right\|_{L^\infty(\mathbb{R}^2)}
				\log\left(1+\left\|\nabla\xi_n\right\|_{L^\infty \left([a,b]\times\mathbb{R}^2\right)}\right)
				}.
		\end{align*}
	Therefore, we deduce
	\begin{equation*}
		\left\|\xi_{n+1}\right\|_{C^1\left([a,b]\times\Omega\right)}
		\leq
		C_0+
		C_0\left\|\nabla\xi_{n+1}(a,\cdot)\right\|_{L^\infty\left(\Omega\right)}
		\left(1+\left\|\xi_n\right\|_{C^1\left([a,b]\times\Omega\right)}\right)^{C_0(b-a)},
	\end{equation*}
	where $C_0>0$ may only depend on $\left\|\omega_0\right\|_{L^1\cap L^\infty\left(\Omega\right)}$, $\gamma$, $\eps$ and $t_1$ but remains independent of $\xi_n$, $\xi_{n+1}$ and $[a,b]$. It follows that, setting $(b-a)$ sufficiently small so that, say,
	\begin{equation}\label{small time 2}
		C_0(b-a)\leq \frac 12,
	\end{equation}
	yields
	\begin{equation*}
		\left\|\xi_{n+1}\right\|_{C^1\left([a,b]\times\Omega\right)}
		\leq
		\frac 12+C_0+\frac {C_0^2}2
		\left\|\nabla\xi_{n+1}(a,\cdot)\right\|_{L^\infty\left(\Omega\right)}^2
		+\frac 12\left\|\xi_n\right\|_{C^1\left([a,b]\times\Omega\right)},
	\end{equation*}
	whence, for each $k=0,\ldots,n$,
        \begin{align*}
            \left\|\xi_{n+1}\right\|_{C^1\left([a,b]\times\Omega\right)}
            \leq &
            \left(\frac 12+C_0+\frac {C_0^2}2
            \left\|\nabla\xi_{n+1}(a,\cdot)\right\|_{L^\infty\left(\Omega\right)}^2
            \right)
            +\frac 12\left\|\xi_n\right\|_{C^1\left([a,b]\times\Omega\right)}
            \\
            \leq &
            \left(\frac 12+C_0+\frac {C_0^2}2
            \sup_{p\geq 0}\left\|\nabla\xi_{p}(a,\cdot)\right\|_{L^\infty\left(\Omega\right)}^2
            \right)\left(\sum_{j=0}^k2^{-j}\right)
            \\
            & +\frac 1{2^{k+1}}\left\|\xi_{n-k}\right\|_{C^1\left([a,b]\times\Omega\right)}
            \\
            \leq &
            1+2C_0+
            C_0^2\sup_{p\geq 0}\left\|\nabla\xi_{p}(a,\cdot)\right\|_{L^\infty\left(\Omega\right)}^2
            +\frac 1{2^{n+1}}\left\|\omega_{0}\right\|_{C^1\left(\Omega\right)}.
        \end{align*}
  
	Since the initial data $\omega_0$ belongs to $C^1\left(\Omega\right)$, the constant $C_0$ only depends on fixed parameters and the bound \eqref{small time 2} on the maximal length of $[a,b]$ only involves $C_0$, we deduce that we may propagate the preceding $C^1$-bound on $[a,b]$ to the whole interval $[0,t_1]$. This yields a uniform bound
	\begin{equation}\label{C1 bound}
		\sup_{n\geq 0} \left\|\xi_{n}\right\|_{C^1\left([0,t_1]\times\Omega\right)}<\infty.
	\end{equation}

	\bigskip
	
	\noindent\textbf{Convergence properties.}
	We have thus produced an approximating sequence $\left\{\xi_n\right\}_{n\geq 0}\subset C_{\omega_0,\eps}$ bounded, by virtue of \eqref{C1 bound}, in the metric topology induced by $C^1\left([0,t_1]\times\Omega\right)$. We analyze now its convergence properties.
	
	To this end, note that
		\begin{align*}
			\partial_{t} \left(\xi_{n+1}-\xi_n\right) + \mu_n\cdot\nabla\left(\xi_{n+1}-\xi_n\right) & = \left(\mu_{n-1}-\mu_n\right)\cdot \nabla \xi_{n},
			\\
			\partial_{t} \left(\xi_{n+1}-\xi_n\right) + \mu_{n-1}\cdot\nabla\left(\xi_{n+1}-\xi_n\right) & = \left(\mu_{n-1}-\mu_n\right)\cdot \nabla \xi_{n+1},
		\end{align*}
	whence, for any $[a,b]\subset [0,t_1]$,
		\begin{align*}
			\left(\xi_{n+1}-\xi_n\right)\left(b,Z_n(b,x)\right)
			= &
			\left(\xi_{n+1}-\xi_n\right)\left(a,Z_n(a,x)\right)
			\\
			& +
			\int_a^b
			\left(\mu_{n-1}-\mu_n\right)\cdot \nabla \xi_{n}\left(s,Z_n(s,x)\right)
			ds,
			\\
			\left(\xi_{n+1}-\xi_n\right)\left(b,Z_{n-1}(b,x)\right)
			= &
			\left(\xi_{n+1}-\xi_n\right)\left(a,Z_{n-1}(a,x)\right)
			\\
			& +
			\int_a^b
			\left(\mu_{n-1}-\mu_n\right)\cdot \nabla \xi_{n+1}\left(s,Z_{n-1}(s,x)\right)
			ds,
		\end{align*}
	which implies, utilizing estimates \eqref{mu 1}, \eqref{mu 2} and \eqref{C1 bound}, for each $k=0,\ldots, n-1$,
	\begin{equation}\label{cauchy estimate}
		\begin{aligned}
			\left\|\xi_{n+1}-\xi_n\right\|_{L^\infty\left([a,b]\times\Omega\right)} \hspace{-15mm}&
			\\
			\leq &
			\left\|\left(\xi_{n+1}-\xi_n\right)(a,\cdot)\right\|_{L^\infty\left(\Omega\right)}
			+
			C_1(b-a)
			\left\|\xi_{n}-\xi_{n-1}\right\|_{L^\infty\left([a,b]\times\Omega\right)}
			\\
			\leq &
			\sum_{j=0}^k
			\left(C_1(b-a)\right)^j
			\left\|\left(\xi_{n+1-j}-\xi_{n-j}\right)(a,\cdot)\right\|_{L^\infty\left(\Omega\right)}
			\\
			& +
			\left(C_1(b-a)\right)^{k+1}
			\left\|\xi_{n-k}-\xi_{n-1-k}\right\|_{L^\infty\left([a,b]\times\Omega\right)}
			\\
			\leq &
			\sum_{j=0}^{n-1}
			\left(C_1(b-a)\right)^j
			\left\|\left(\xi_{n+1-j}-\xi_{n-j}\right)(a,\cdot)\right\|_{L^\infty\left(\Omega\right)}
			\\
			& +
			\left(C_1(b-a)\right)^{n}
			\left\|\xi_{1}-\xi_{0}\right\|_{L^\infty\left([a,b]\times\Omega\right)},
		\end{aligned}
	\end{equation}
	for some independent constant $C_1>0$. As before, we set $(b-a)$ sufficiently small so that, say,
	\begin{equation*}
		C_1(b-a)\leq \frac 12.
	\end{equation*}
	In particular, since the $\xi_n$'s all have the same initial data $\omega_0$, we find that
		\begin{align*}
			\left\|\xi_{n+1}-\xi_n\right\|_{L^\infty\left([0,b-a]\times\Omega\right)}
			& \leq
			\left(C_1(b-a)\right)^{n}
			\left\|\xi_{1}-\xi_{0}\right\|_{L^\infty\left([0,b-a]\times\Omega\right)}
			\\
			& \leq
			\frac 1{2^{n-1}}
			\left\|\omega_0\right\|_{L^\infty\left(\Omega\right)}.
		\end{align*}
	Therefore, utilizing the elementary identity
	\begin{equation*}
		\sum_{j=0}^n \begin{pmatrix}j+k\\ k\end{pmatrix}= \begin{pmatrix}n+k+1\\ k+1\end{pmatrix},
	\end{equation*}
	for each $n,k\in\mathbb{N}$, we obtain
		\begin{align*}
			\left\|\xi_{n+1}-\xi_n\right\|_{L^\infty\left([k(b-a),(k+1)(b-a)]\times\Omega\right)}
			\leq &
			2
			\begin{pmatrix}n+k\\ k\end{pmatrix}
			\left(C_1(b-a)\right)^n
			\left\|\omega_0\right\|_{L^\infty\left(\Omega\right)}
			\\
			\leq &
			\frac 1{2^{n-1}}
			\begin{pmatrix}n+k\\ k\end{pmatrix}
			\left\|\omega_0\right\|_{L^\infty\left(\Omega\right)},
		\end{align*}
	whence
		\begin{align*}
			\left\|\xi_{n+1}-\xi_n\right\|_{L^\infty\left([0,t_1]\times\Omega\right)}
			& \leq
			\frac {C}{2^{\frac n2}},
			&&\text{for all }n\geq 0,
			\\
			\left\|\xi_{m}-\xi_n\right\|_{L^\infty\left([0,t_1]\times\Omega\right)}
			& \leq
			\frac {C'}{2^{\frac n2}},
			&&\text{for all }m>n\geq 0,
		\end{align*}
	for some independent constants $C,C'>0$.

	It follows that $\left\{\xi_n\right\}_{n\geq 0}$ is a Cauchy sequence in $C^0\left([0,t_1]\times\Omega\right)$ and, therefore, there exists $\omega^N\in C\left([0,t_1]\times\Omega\right)$ such that
	\begin{equation}\label{convergence xi mu}
		\begin{aligned}
			\xi_n & \longrightarrow \omega^N
			\quad\text{in }L^\infty\left([0,t_1]\times\Omega\right),
			\\
			\mu_n & \longrightarrow u^N
			\quad\text{in }L^\infty\left([0,t_1]\times K\right),\text{ for any compact set }K\subset\Omega,
		\end{aligned}
	\end{equation}
	where $u^N$ is defined by \eqref{uN} and we have used \eqref{mu 1} and \eqref{mu 2} to derive the convergence of $\mu_n$ from that of $\xi_n$ in both $L^\infty\left([0,t_1]\times\Omega\right)$ and $L^1\left([0,t_1]\times\Omega\right)$ (recall that the vorticities $\xi_n$ have uniformly bounded supports since they belong to $C_{\omega_0,\eps}$). It is then readily seen that $\omega^N$ solves \eqref{dynamic5} in the sense of distributions.

	\bigskip
	
	\noindent\textbf{Regularity of solution and conclusion of proof.}
	In order to complete the proof of (global for large $N$) wellposedness of \eqref{dynamic5} in $C^1$, there only remains to show that $\omega^N$ is actually of class $C^1$. Indeed, the uniqueness of solutions will then easily ensue from an estimate similar to \eqref{cauchy estimate}.
	
	For the moment, the uniform boundedness of $\left\{\xi_n\right\}_{n\geq 0}$ in $C^1\left([0,t_1	]\times\Omega\right)$ only allows us to deduce that $\omega^N$ is Lipschitz continuous (in $t$ and $x$). However, standard estimates (see \cite[p.\ 249]{courant}, for instance, or use the respresentation formula \eqref{representation velocity}) show that this Lipschitz continuity implies that $\nabla u^N$ exists and is continuous in $[0,t_1]\times\Omega$. It follows that the associated characteristic curve $Z^N(t,x)$ solving
	\begin{equation}\label{char ZN}
		\frac{dZ^N}{ds}=u^N\left(s,Z^N\right),
	\end{equation}
	for some given initial data $Z^N(0,x)=x\in\Omega$, belongs to $C^1\left([0,t_3]\times\Omega; \Omega\right)$ for some possibly small existence time $0<t_3\leq t_1$. Moreover, one easily estimates, using \eqref{distance epsilon} and \eqref{convergence xi mu}, that, for all $t\in[0,t_3]$ and $x\in\operatorname{supp}\omega_0$,
		\begin{align*}
			\left|Z_n(t,x)-Z^N(t,x)\right|
			\leq &
			\int_0^t\left|\mu_n\left(s,Z_n(s,x)\right)-u^N\left(s,Z^N(s,x)\right)\right|ds
			\\
			\leq &
			\int_0^t\left|\mu_n\left(s,Z_n(s,x)\right)-u^N\left(s,Z_n(s,x)\right)\right|ds
			\\
			& +
			\int_0^t\left|u^N\left(s,Z_n(s,x)\right)-u^N\left(s,Z^N(s,x)\right)\right|ds
			\\
			\leq &
			o(1)
			+
			C\int_0^t\left|Z_n(s,x)-Z^N(s,x)\right|ds,
		\end{align*}
	which implies, through a straighforward application of Gr\"onwall's lemma, that $Z_n$ converges uniformly in $(t,x)\in [0,t_3]\times\operatorname{supp}\omega_0$ towards $Z^N$. One can therefore assume that $t_3=t_1$, for $Z^N(t_3,x)$ remains uniformly bounded away from $\partial\Omega$ (at a distance at least $\eps$, to be precise), for each $x\in \operatorname{supp}\omega_0$.
	
	Next, as before, since the mapping $x\mapsto Z^N(t,x)$ is a $C^1$-diffeomorphism from $\operatorname{supp}\omega_0$ onto its own image, we consider its inverse $\left(Z^N\right)^{-1}(t,x)$. It is then readily seen that
	\begin{equation*}
		\left\{
		\begin{aligned}
			\omega^N(t,x) & = \omega_0\left(\left(Z^N\right)^{-1}(t,x)\right), && \text{if } x\in Z^N\left(t,\operatorname{supp}\omega_0\right),
			\\
			\omega^N(t,x) & = 0, && \text{otherwise},
		\end{aligned}
		\right.
	\end{equation*}
	which establishes that $\omega^N\in C^1_c\left([0,t_1]\times\Omega\right)$, for $\omega_0\in C^1_c\left(\Omega\right)$, and thereby concludes the proof of Theorem \ref{wellposedness} on the wellposedness of \eqref{dynamic5} in $C^1_c\left([0,t_1]\times\Omega\right)$. Further observe that, repeating the estimates leading up to \eqref{C1 bound} (replacing $\xi_n$ and $\xi_{n+1}$ by $\omega^N$ and $\mu_n$ by $u^N$), it is possible to show that the $C^1$-bound on $\omega^N$ is uniform in $N$.
\end{proof}

\subsection{Proof of Theorem \ref{main convergence}}

Considering the difference of \eqref{dynamic1} and \eqref{dynamic5}, note that
	\begin{align*}
		\partial_{t} \left(\omega-\omega^N\right) + u\cdot\nabla\left(\omega-\omega^N\right) & = - \left(u-u^N\right)\cdot \nabla \omega^N,
		\\
		\partial_{t} \left(\omega-\omega^N\right) + u^N\cdot\nabla\left(\omega-\omega^N\right) & = - \left(u-u^N\right)\cdot \nabla \omega,
	\end{align*}
whence, for every $(t,x)\in [0,t_1]\times\operatorname{supp}\omega_0$,
\begin{equation}\label{dynamic4}
	\begin{aligned}
		\left(\omega-\omega^N\right)\left(t,X(t,x)\right)
		& =
		- \int_0^t \left(u-u^N\right)\cdot \nabla \omega^N\left(s,X(s,x)\right) ds,
		\\
		\left(\omega-\omega^N\right)\left(t,Z^N(t,x)\right)
		& =
		- \int_0^t \left(u-u^N\right)\cdot \nabla \omega\left(s,Z^N(s,x)\right) ds,
	\end{aligned}
\end{equation}
where the characteristics $X(t,x)$ and $Z^N(t,x)$ are respectively defined by \eqref{char X} and \eqref{char ZN} and, as previously explained, exist for all $(t,x)\in [0,t_1]\times\operatorname{supp}\omega_0$ provided $N$ is sufficiently large. We also introduce the velocity flow
\begin{equation*}
	\breve u^N=K_{\mathbb{R}^2}[\omega]+u_{\rm app}^N[\omega,\gamma],
\end{equation*}
where $u_{\rm app}^N[\omega,\gamma]$ is given by \eqref{approx}-\eqref{point vortex}, for the same prescribed $\gamma\in\mathbb{R}$ and where $u_P$ in the right-hand side of \eqref{point vortex} is $K_{\mathbb{R}^2}[\omega]$.

By Theorem \ref{main theo}, it holds that, for any compact set $K\subset\Omega$,
\begin{equation}\label{dynamic2}
	\begin{aligned}
		\| u - \breve u^N \|_{L^\infty(K)}
		\hspace{-10mm} &
		\\
		& \leq \frac{C}{N^\kappa}
		\left(\left(
		\frac 1{\operatorname{dist}\left(\operatorname{supp}\omega,\partial\Omega\right)} + \frac 1{\operatorname{dist}\left(\operatorname{supp}\omega,\partial\Omega\right)^{\kappa+1}}
		\right)\left\|\omega\right\|_{L^1\left(\mathbb{R}^2\right)} + |\gamma|\right),
	\end{aligned}
\end{equation}
where the constant $C>0$ may now depend on $K$.
We also estimate, using \eqref{mu 1} and \eqref{mu 2}, that
\begin{equation}\label{dynamic3}
	\begin{aligned}
		\| \breve u^N - u^N \|_{L^\infty(K)}
		\leq &
		\left\|K_{\mathbb{R}^2}\left[\omega-\omega^N\right]\right\|_{L^\infty(K)}
		+
		\left\|u_{\rm app}^N\left[\omega-\omega^N,0\right]\right\|_{L^\infty(K)}
		\\
		\leq &
		C\left\|\omega-\omega^N\right\|_{L^1\cap L^\infty\left(\Omega\right)}
		\\
		\leq &
		C\left(1+ \left|\operatorname{supp}\omega\right| + \left|\operatorname{supp}\omega^N\right| \right)
		\left\|\omega-\omega^N\right\|_{L^\infty(\Omega)}.
	\end{aligned}
\end{equation}
Therefore, recalling that $\omega$ and $\omega^N$ are all uniformly bounded in $C^1\left([0,t_1]\times\Omega\right)$, combining \eqref{dynamic2} and \eqref{dynamic3} with \eqref{dynamic4} and choosing the compact set $K$ so that $\operatorname{supp}\omega\cup\operatorname{supp}\omega^N\subset K$, for all $N$, we find that
	\begin{align*}
		\left\|(\omega-\omega^N)(t)\right\|_{L^\infty(\Omega)}
		= &
		\left\|(\omega-\omega^N)(t)\right\|_{L^\infty(K)}
		\\
		& \leq
		C
		\int_0^t\left\|(u-u^N)(s)\right\|_{L^\infty(K)}ds
		\\
		& \leq
		\frac C{N^\kappa}
		+C
		\int_0^t\left\|(\omega-\omega^N)(s)\right\|_{L^\infty(\Omega)}ds.
	\end{align*}
Finally, by Gr\"onwall's lemma, we deduce that
\begin{equation*}
	\left\|(\omega-\omega^N)(t)\right\|_{L^\infty(\Omega)}
	\leq \frac C{N^\kappa}e^{Ct},
\end{equation*}
which concludes the proof of the theorem. \qed

\section{An alternative approach: the fluid charge method}\label{charges}

We wish now to propose an alternative method for approximating $u_R$ by discretizing the boundary of the domain in \eqref{eq uR}. It consists in constructing an approximate flow
\begin{equation}\label{approx alt}
	\check u_{\rm app}^N(x):=\frac1{2\pi} \sum_{j=1}^N \frac{\check\gamma_{j}^N}N \frac{x - x_{j}^N}{|x - x_{j}^N|^2} + \gamma H_*(x),
\end{equation}
where the positions $(x_{1}^N, x_{2}^N,\dots , x_{N}^N)$ on the boundary $\partial\Omega$ are still determined by \eqref{xi} and we assume that $H_*(x)\in C^\infty\left(\Omega\right)$ is a given vector field solving \eqref{harmonic5}, such that all its derivatives are continuous up to the boundary $\partial\Omega$, $H_*\cdot n$ has mean zero over $\partial\Omega$ and $H_*\cdot n\in C^\infty\left(\partial\Omega\right)$.

For practical purposes, the field $H_*(x)$ should be either known explicitely or previously computed by other means. For instance, one may consider the harmonic vector field $H_*=H(x)$ defined by \eqref{harmonic} or a single point vortex velocity field $H_*(x)=\frac{(x-x_*)^\perp}{2\pi |x-x_*|^2}=K_{\R^2}[\delta_{x_*}]$, for any given $x_*\in {\overline \Omega^c}$.

Observe that \eqref{approx alt} is essentially a discretization of \eqref{boundary sheet omega 4}. The clear advantage of discretizing \eqref{boundary sheet omega 4} over \eqref{boundary sheet omega} resides in that \eqref{boundary sheet omega 4} only involves the inversion of the regular perturbation of the identity $A+\pi$ whereas \eqref{boundary sheet omega} requires the inversion of the singular integral operator $B$. We argue below, in Section \ref{diagonal}, that this provides a more efficient discretization method because it often yields better conditioned matrices.

By analogy with the electric field produced by single electric charges, we refer to the building blocks $\frac{x - y}{2\pi|x - y|^2}$, with $y\in\mathbb{R}^2$, of the above flow as \emph{fluid charges}. Recall that a fluid charge satisfies
\begin{equation*}
	\left\{
	\begin{array}{l}
		\div \frac{x}{2\pi|x |^2} = \delta(x),
		\\
		\curl \frac{x}{2\pi|x|^2} = 0.
	\end{array}
	\right.
\end{equation*}

\subsection{Static convergence of the fluid charge approximation}

As before, concerning $u_{\rm app}^N$, it is {\it a priori} not obvious that such a flow $\check u_{\rm app}^N$ can be made a good approximation of $u_R$. Nonetheless, note that $\check u_{\rm app}^N$ already naturally satisfies, for any smooth simple closed curve $c_0$ enclosing the obstacle $\mathcal{C}$,
\begin{equation*}
	\left\{
	\begin{array}{lcl}
		\div \check u_{\rm app}^N  =0 & \text{in}& \Omega, \\
		\curl \check u_{\rm app}^N  =0 & \text{in}& \Omega, \\
		\check u_{\rm app}^N  \rightarrow 0 &\text{as}& x\rightarrow\infty, \\
		\oint_{c_0} \check u_{\rm app}^N \cdot \tau ds  = \gamma, &&
	\end{array}
	\right.
\end{equation*}
where $\tau$ denotes the unit tangent vector on $c_0$. Following the developments leading to \eqref{point vortex}, it would now be tempting to enforce that the boundary condition be satisfied as $N\rightarrow\infty$ by setting the density $\check \gamma^N=(\check\gamma_{1}^N,\dots, \check\gamma_{N}^N)\in\mathbb{R}^N$ to be the solution of the following system of $N$ linear equations:
\begin{equation}\label{point vortex alt wrong}
	\frac1{2\pi}\sum_{j=1}^N \frac{\check\gamma_{j}^N}N \frac{\tilde x_{i}^N - x_{j}^N}{|\tilde x_{i}^N - x_{j}^N|^2}\cdot n(\tilde x_{i}^N)
	= -[\left(u_{P}+\gamma H_*\right)\cdot n](\tilde x_{i}^N), \ \text{for all }i=1,\dots, N,
\end{equation}
for some appropriate intermediate mesh points $(\tilde x_{1}^N,\tilde  x_{2}^N,\dots , \tilde x_{N}^N)$ on the boundary $\partial\Omega$. This is, however, not feasible in general because the resulting matrix may not be invertible (consider the case of the unit disk where $\frac{x-y}{|x-y|^2}\cdot n(x)=\frac 12$ for all $x,y\in\partial B(0,1)$). In fact, it turns out that, instead of \eqref{point vortex alt wrong}, the correct condition on the density $\check \gamma^N$ is given by the system
\begin{equation}\label{point vortex alt}
	\begin{aligned}
		\left(\frac1{2\pi}\sum_{j=1}^N \frac{\check\gamma_{j}^N}N \frac{\tilde x_{i}^N - x_{j}^N}{|\tilde x_{i}^N - x_{j}^N|^2}\cdot n(\tilde x_{i}^N)\right)
		+\frac{\check\gamma_i^N}{2\left|\partial\Omega\right|} \hspace{-20mm} &
		\\
		= - & [\left(u_{P}+\gamma H_*\right)\cdot n](\tilde x_{i}^N), \ \text{for all }i=1,\dots, N,
	\end{aligned}
\end{equation}
which is inspired by the integral representation \eqref{boundary sheet omega 4} and the inversion of the operator $A+\pi$. In order to emphasize the dependence of $\check u_{\rm app}^N$ on $\omega$ (through $u_P$) and $\gamma$, we may denote $\check u_{\rm app}^N=\check u_{\rm app}^N[\omega,\gamma]$. Note that $\check u_{\rm app}^N$ is linear in $(\omega,\gamma)$.

In the notation introduced in \eqref{xi}-\eqref{tildexi}, this system can be recast as
\begin{equation*}
	\frac1{N}\sum_{j=1}^N \check\gamma_{j}^N \frac{l\left(\tilde s_{i}^N\right) - l\left(s_{j}^N\right)}
	{\left|l\left(\tilde s_{i}^N\right) - l\left(s_{j}^N\right)\right|^2}\cdot n\left(l\left(\tilde s_{i}^N\right)\right)
	+\frac{\pi \check\gamma_i^N}{\left|\partial\Omega\right|}
	= f\left(\tilde s_i^N\right), \quad \text{for all }i=1,\dots, N,
\end{equation*}
where $f(s)=-2\pi[\left(u_{P}+\gamma H_*\right)\cdot n]\left(l(s)\right)$, for all $s\in\left[0,\left|\partial\Omega\right|\right]$. Equivalently, in the matrix notation of Section \ref{section discrete}, this system becomes
\begin{equation}\label{point toy alt}
	\left(\frac 1N A_N+\frac{\pi}{\left|\partial\Omega\right|}\right)\check\gamma^N
	= \left(f\left(\tilde s_i^N\right)\right)_{1\leq i\leq N}.
\end{equation}
We use here the convention that, whenever $\tilde s_i^N=s_j^N$, the entry of $A_N$ corresponding to $\frac{l\left(\tilde s_{i}^N\right) - l\left(s_{j}^N\right)} {\left|l\left(\tilde s_{i}^N\right) - l\left(s_{j}^N\right)\right|^2}\cdot n\left(l\left(\tilde s_{i}^N\right)\right)$ is naturally determined by the limiting value of the smooth kernel $\frac{l\left(s\right) - l\left(t\right)} {\left|l\left(s\right) - l\left(t\right)\right|^2}\cdot n\left(l\left(s\right)\right)$ as $t$ and $s$ converge to the same value $t_0$, say, that is
	\begin{align*}
		\lim_{s,t\to t_0}\frac{l\left(s\right) - l\left(t\right)} {\left|l\left(s\right) - l\left(t\right)\right|^2}\cdot n\left(l\left(s\right)\right)
		& =
		\lim_{s,t\to t_0} \frac{-\frac 12 l''\left(s\right)(t-s)^2+o(|t-s|^2)} {\left|l\left(s\right) - l\left(t\right)\right|^2}\cdot n\left(l\left(s\right)\right)
		\\
		& =
		-\frac 12 l''\left(t_0\right)\cdot n\left(l\left(t_0\right)\right).
	\end{align*}
In particular, by the uniform boundedness of each component of $A_N$, there exists a constant $C>0$ independent of $N$ such that, for each $1\leq p\leq\infty$,
\begin{equation}\label{boundedness different mesh}
	\frac 1N
	\left\|A_Nz\right\|_{\ell^p}
	\leq C\left\|z\right\|_{\ell^1},
\end{equation}
for all $z\in\mathbb{R}^N$.

In this section, we are going to adopt a new notion of well distributedness, introduced in the coming definition, which differs from the one defined by \eqref{mesh2}-\eqref{mesh}. In order to avoid any possible confusion, we will refer to these newly introduced meshes as being well-$*$ distributed, which will distinguish them from the well distributed meshes determined by \eqref{mesh2}-\eqref{mesh}. It is to be emphasized that well-$*$ distributed meshes are used in the present Section \ref{charges} only.

\begin{definition}
	We say that the points $\left\{x_i^N\right\}_{1\leq i\leq N}$ and $\left\{\tilde x_i^N\right\}_{1\leq i\leq N}$ given by $x_{i}^N:=l\left( s_i^N\right)$ and $\tilde x_{i}^N:=l\left(\tilde s_i^N\right)$, where $(s_{1}^N,\dots , s_{N}^N)\in \mathbb{R}^N$ and $(\tilde s_{1}^N, \dots , \tilde s_{N}^N)\in \mathbb{R}^N$, are \emph{well-$*$ distributed} on $\partial\Omega$ if there exists an integer $\kappa \geq 2$ such that, as $N\to\infty$,
	\begin{equation}\label{mesh3}
		\max_{i=1,\ldots,N}\left|s_i^N-\theta_i^N\right|=\mathcal{O}\left(N^{-\kappa}\right)
		\quad\text{and}\quad
		\max_{i=1,\ldots,N}\left|\tilde s_i^N-\theta_i^N\right|=\mathcal{O}\left(N^{-\kappa}\right),
	\end{equation}
	where
	\begin{equation}\label{mesh4}
	\theta_{i}^N = \frac{(i-1) \left|\partial\Omega\right|}{N}
	\quad \text{for all } i=1,\dots, N.
	\end{equation}
\end{definition}

Our main result concerning the alternative approximation method discussed in the present section is analog to Theorem \ref{main theo} and states that the approximate flow $\check u_{\rm app}^N$, constructed through the procedure \eqref{point vortex alt}, is a good approximation of $u_{R}$ provided the vortices are well-$*$ distributed on $\partial\Omega$:

\begin{theorem}\label{main theo alt}
	Let $\omega\in L^1_c\left(\Omega\right)$ and $\gamma\in\mathbb{R}$ be given. For any $N\geq 2$, we consider a well-$*$ distributed mesh satisfying \eqref{mesh3} and $u_{P}$ defined in \eqref{uP}.
	
	Then, there exists $N_0$ (independent of $\omega$ and $\gamma$) such that, for all $N\geq N_0$, the system \eqref{point vortex alt} admits a unique solution $\check \gamma^N\in \R^N$. Moreover, for any closed set $K\subset \Omega$  there exists a constant $C>0$ independent of $N$, $K$, $\omega$ and $\gamma$ such that
		\begin{align*}
			\| u_{R} - \check u_{\rm app}^N \|_{L^\infty(K)}
			\hspace{-5mm}&
			\\
			\leq & \frac{C}{N^\kappa}
			\left(
			\frac 1{\operatorname{dist}\left(K,\partial\Omega\right)} + \frac 1{\operatorname{dist}\left(K,\partial\Omega\right)^{\kappa+2}}
			\right)
			\\
			& \times \left(\left(
			\frac 1{\operatorname{dist}\left(\operatorname{supp}\omega,\partial\Omega\right)} + \frac 1{\operatorname{dist}\left(\operatorname{supp}\omega,\partial\Omega\right)^{\kappa+1}}
			\right)\left\|\omega\right\|_{L^1\left(\mathbb{R}^2\right)} + |\gamma|\right),
		\end{align*}
	where $\check u_{\rm app}^N$ is given by \eqref{approx alt} in terms of $\check \gamma^N$ and $u_{R}$ is the continuous flow \eqref{uR}.
\end{theorem}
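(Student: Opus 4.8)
The plan is to mirror the architecture of the proof of Theorem~\ref{main theo}, replacing the discrete operator $B$ by the discrete operator $A_N+\frac{\pi N}{|\partial\Omega|}$ throughout, and replacing the continuous representation formula \eqref{boundary sheet omega} by its variant \eqref{boundary sheet omega 4} (equivalently \eqref{vortex identity 3}). The key structural fact that makes the fluid charge method tractable is that $A+\pi:L^2(\partial\Omega)\to L^2(\partial\Omega)$ is a regular perturbation of the identity with a bounded inverse given by an absolutely convergent Neumann series (established in Section~\ref{vortex sheet}, following \eqref{gelfand 2}), so one never needs to invert a singular operator at the discrete level. Concretely, I would first establish, for a well-$*$ distributed mesh, the analogue of Proposition~\ref{inverse perfect}: the matrix $\frac1N A_N+\frac{\pi}{|\partial\Omega|}$ is invertible for $N$ large, with inverse uniformly bounded in $\mathcal{L}(\ell^2)$ (and even in $\mathcal{L}(\ell^\infty)$, since $A_N$ has a smooth kernel and is bounded $\ell^1\to\ell^\infty$ by \eqref{boundedness different mesh}). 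For this I would compare $\frac1N A_N$ to the integral operator $A$ via convergence of Riemann sums for the smooth kernel $\frac{l(s)-l(t)}{|l(s)-l(t)|^2}\cdot n(l(s))$—crucially, no Cauchy principal value and no cotangent cancellations are needed here, which is why the weaker mesh condition \eqref{mesh3} (with $\mathcal{O}(N^{-\kappa})$ rather than $\mathcal{O}(N^{-\kappa-1})$) suffices—and then invoke the spectral gap of $A$ at $-\pi$, exactly as in Lemma~\ref{inverse D} but in the much easier setting where the relevant operator is already a perturbation of the identity. From the uniform invertibility one immediately gets $\|\check\gamma^N\|_{\ell^1}\le\|\check\gamma^N\|_{\ell^2}\le C(\|f\|_{L^\infty}+|\gamma|)$ where $f(s)=-2\pi[(u_P+\gamma H_*)\cdot n](l(s))$, using that the right-hand side of \eqref{point toy alt} is just a Riemann-sampled smooth zero-mean function.

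Next I would prove the analogue of Proposition~\ref{prop 32}: define the boundary traces
\begin{equation*}
	\check f_{\rm app}^N(s)=\frac1N\sum_{j=1}^N\check\gamma_j^N\frac{l(s)-l(s_j^N)}{|l(s)-l(s_j^N)|^2}\cdot n(l(s)),\quad
	\check g_{\rm app}^N(s)=\frac1N\sum_{j=1}^N\check\gamma_j^N\frac{l(s)-l(s_j^N)}{|l(s)-l(s_j^N)|^2}\cdot\tau(l(s)),
\end{equation*}
and show that against smooth test functions $\varphi$ one has $|\int(\check f_{\rm app}^N+\frac{\pi}{|\partial\Omega|}\check\gamma^N\text{-interpolant}-f)\varphi|=\mathcal{O}(N^{-\kappa})$ and a companion estimate identifying the weak limit of $\check g_{\rm app}^N$ in terms of $B(A+\pi)^{-1}f$. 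The decomposition is the same $D_1+D_2+D_3+D_4$ splitting as in Proposition~\ref{prop 32}, with $D_3=0$ by construction (the discrete equation \eqref{point toy alt} is enforced at \emph{all} $N$ nodes here, so there is no leftover $\tilde s_N^N$ term and $D_4$ disappears—this is a genuine simplification over the vortex method). The error terms $D_1$ and $D_2$ are controlled by Riemann-sum convergence for smooth integrands (Corollary~\ref{riemann2}), now without needing Lemma~\ref{technical cot} or \eqref{average adjoint}, since the kernel $\frac{l(s)-l(t)}{|l(s)-l(t)|^2}\cdot n(l(s))$ is smooth rather than singular; the tangential trace $\check g_{\rm app}^N$ involves $B^*$ applied to the charge density and is handled by the adjointness relations \eqref{adjoint A}, \eqref{adjoint B} together with the Poincaré--Bertrand identities \eqref{PBadjoint}, exactly as in the treatment of $g_{\rm app}^N$ but with the roles of $A$ and $B$ swapped.

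Finally I would assemble the pieces as in Section~\ref{proof of main}: using \eqref{vortex identity} to rewrite the discrete fluid-charge kernel $\check u_{\rm app}^N$ in terms of boundary integrals of $\check f_{\rm app}^N$ and $\check g_{\rm app}^N$, and using \eqref{vortex identity 3} for the exact $u_R$, the difference $u_R-\check u_{\rm app}^N$ becomes boundary integrals of $(\check f_{\rm app}^N-f)$ and of $(B(A+\pi)^{-1}f-\check g_{\rm app}^N)$ against the smooth-on-$K$ kernels $\frac{x-y}{|x-y|^2}$ and $\frac{(x-y)^\perp}{|x-y|^2}$; applying the weak-convergence estimate with $\varphi(y)$ a component of these kernels yields the $\frac{C}{N^\kappa}\big(\frac1{\mathrm{dist}(K,\partial\Omega)}+\frac1{\mathrm{dist}(K,\partial\Omega)^{\kappa+2}}\big)$ factor, and bounding $\|f\|_{C^\kappa}$ in terms of $\mathrm{dist}(\mathrm{supp}\,\omega,\partial\Omega)$ and $\|\omega\|_{L^1}$ (plus the fixed contribution of $\gamma H_*$, whose boundary trace is smooth by hypothesis on $H_*$) gives the stated bound; a density argument extends it from $C^{0,\alpha}_c$ to $L^1_c$. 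The main obstacle I anticipate is not any single step but making the spectral argument for the invertibility of $\frac1N A_N+\frac{\pi}{|\partial\Omega|}$ fully quantitative and uniform in $N$: one must transfer the continuous spectral gap $\sigma(A)\subset(-\pi,\pi]$ to the discrete matrices, and although $A+\pi$ being a perturbation of the identity makes the Neumann series converge trivially in $\mathcal{L}(\ell^\infty)$ once $\|\frac1N A_N-A\|$ is small in the right norm, one must be careful that the correction term $\frac{\check\gamma_i^N}{2|\partial\Omega|}$ in \eqref{point vortex alt} is exactly the one that reproduces the $+\pi$ shift (i.e.\ the Plemelj jump), and that the diagonal convention for $A_N$ when $\tilde s_i^N=s_j^N$ does not spoil the Riemann-sum comparison—these are the delicate bookkeeping points, but none of them requires the heavy singular-integral machinery of Section~\ref{section discrete}.
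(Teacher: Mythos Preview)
Your overall architecture is correct and matches the paper: invertibility of $\frac{|\partial\Omega|}{N}A_N+\pi$ via the spectral gap of $A-\pi$ (Proposition~\ref{inverse D alt}), a weak-convergence statement on the boundary (Proposition~\ref{prop 32 alt}), and the final assembly via \eqref{vortex identity} and \eqref{vortex identity 3} exactly as you describe. The invertibility step and the assembly step are essentially identical to the paper's.

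Where you diverge is in the weak-convergence step, and the paper's route is both simpler and avoids a pitfall in yours. You propose to rerun the $D_1+D_2+D_3+D_4$ decomposition of Proposition~\ref{prop 32}, applied to the combination $\check g_{\rm app}^N+\frac{\pi}{|\partial\Omega|}(\text{interpolant of }\check\gamma^N)$ so that $D_3$ vanishes, and then to recover the tangential trace by ``swapping the roles of $A$ and $B$'' in the Poincar\'e--Bertrand argument. Two issues: first, to extract the limit of $\check g_{\rm app}^N$ alone you still need the weak limit of the $\check\gamma^N$-interpolant, which is circular unless you already know what $\check\gamma^N$ converges to; second, the role-swap does not go through verbatim because in Proposition~\ref{prop 32} one writes $A^*\varphi=B^*(B^\#)^{-1}A^\#\varphi+\text{mean}$, and the swapped version would require inverting $A$ (or $A^\#$) on $L^2_0$, which is not available ($0$ may lie in $\sigma_0(A)$).

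The paper bypasses all of this with a single direct estimate (equation \eqref{check gamma conv} in Proposition~\ref{prop 32 alt}): because the kernel of $A$ is \emph{smooth}, one can compare $A[(A+\pi)^{-1}f]$ to $\frac{|\partial\Omega|}{N}A_N$ applied to the sampled values of $(A+\pi)^{-1}f$ by a straight Riemann-sum bound, and then apply the uniformly bounded discrete inverse to get
\[
\Big\|\tfrac{\check\gamma^N}{|\partial\Omega|}-\big((A+\pi)^{-1}f(s_i^N)\big)_{1\le i\le N}\Big\|_{\ell^2}\le \frac{C}{N^\kappa}\|f\|_{C^\kappa}.
\]
This is the insight you are missing: the discrete \emph{density} itself converges to samples of the continuous density, with the full rate. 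Once you have this, both boundary traces follow in two lines by pairing with $A^*\varphi$ (smooth kernel, immediate) and with $B^*\varphi$ (where one only needs the elementary bound $\|B^*\varphi\|_{C^\kappa}\le C\|\varphi\|_{C^{\kappa+1}}$, proved at the end of Proposition~\ref{prop 32 alt}). No $D_1$--$D_4$ splitting, no Poincar\'e--Bertrand, no cotangent cancellations. This is precisely the payoff of the fluid-charge formulation that your proposal gestures at but does not fully exploit.
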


The proof of Theorem \ref{main theo alt} relies on the coming Propositions \ref{inverse D alt} and \ref{prop 32 alt}. It is given {\it per se} after the proof of Proposition \ref{prop 32 alt} is completed, below.

Adapting the proof of Lemma \ref{inverse D} to the present situation and using estimate \eqref{gelfand 2} on the spectral radius of $A-\pi$ rather than \eqref{gelfand} yields the following result.

\begin{proposition}\label{inverse D alt}
	For any integer $N\geq 2$, consider a well-$*$ distributed mesh $(s_{1}^N,\dots , s_{N}^N)\in \mathbb{R}^N$, $(\tilde s_{1}^N, \dots , \tilde s_{N}^N)\in \mathbb{R}^N$. Then, there exist $N_*,k_*\geq 1$ and $\delta>0$ such that
		\begin{align*}
			\left\|
			\left( \frac{\left|\partial\Omega\right|}{N} A_N-\pi\right)^k
			\right\|_{\mathcal{L}\left(\ell^2\right)}^\frac 1k
			\leq 2\pi-\delta,
		\end{align*}
	for all $k\geq k_*$ and $N\geq N_*$.
	
	In particular, provided $N$ is sufficiently large, the Neumann series
	\begin{equation*}
			\left(\frac{\left|\partial\Omega\right|}{N} A_N+\pi\right)^{-1}
			=
			\frac 1{2\pi}\sum_{k=0}^\infty \left(\frac{\pi - \frac{\left|\partial\Omega\right|}{N} A_N}{2\pi} \right)^{k},
	\end{equation*}
	is absolutely convergent in $\mathcal{L}\left(\ell^2\right)$ and the inverse operator it defines is bounded in $\mathcal{L}\left(\ell^2\right)$ uniformly in $N$. It follows that, provided $N$ is sufficiently large, the following problem:
	\begin{equation*}
		z\in \R^N,\quad \left(\frac{\left|\partial\Omega\right|}N A_{N}+\pi\right)z=v,
	\end{equation*}
	has a unique solution for any given $v\in \R^{N}$. Moreover, this solution satisfies:
	\begin{equation*}
	\| z \|_{\ell^1}\leq \| z \|_{\ell^2}
	\leq
	C\| v \|_{\ell^2}
	\leq
	C\| v \|_{\ell^\infty},
	\end{equation*}
	for some independent constant $C>0$.
\end{proposition}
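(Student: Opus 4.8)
The plan is to mimic the proof of Lemma \ref{inverse D}, replacing the quadratic operator $D_N$ built from $\tilde A_N A_N$ with the simpler linear operator $A_N$ itself (no need for a zero-mean correction here, since we will work on all of $\ell^2$ and exploit the spectral gap of $A-\pi$ at the point $0$ rather than the gap of $A$ at $\pm\pi$). First I would record that, by Corollary \ref{boundedness} (or directly by \eqref{boundedness different mesh}), the operators $\frac{|\partial\Omega|}{N}A_N$ are uniformly bounded in $\mathcal{L}(\ell^2)$; this is needed to absorb the low powers in the final interpolation step. Next, for any fixed $k\geq 1$, I would show that the kernel $K_k(x,y)$ of $(A-\pi)^k$ — equivalently, the kernel of $A^k$ expanded via the binomial theorem, each $A^j$ having the smooth kernel displayed in the proof of Lemma \ref{inverse D} — is approximated in $L^\infty_{x,y}(\partial\Omega\times\partial\Omega)$, as $N\to\infty$, by the piecewise-constant function built from the matrix entries of $\left(\frac{|\partial\Omega|}{N}A_N-\pi\right)^k$. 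This follows by the same repeated Riemann-sum argument: the kernels are smooth, the nodes $\tilde s_i^N$ are $O(N^{-\kappa})$-close to the uniform nodes by \eqref{mesh3}, and Corollary \ref{riemann2} gives uniform convergence of the approximating sums; the only new point is that one now also discretizes with the $s_j^N$ mesh in the ``inner'' slots and with $\tilde s_i^N$ in the ``outer'' ones, exactly as in \eqref{k approx}, but without the evenness restriction since we are not pairing $A$'s two at a time. One should note that here there is no analogue of the obstruction that forced the introduction of $D_N$: the discrete operator $\frac{|\partial\Omega|}{N}A_N-\pi$ acts on all of $\ell^2$ with no invariant-subspace issue, so the approximation of powers goes through directly.

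Granting the kernel approximation, I would then argue exactly as in Lemma \ref{inverse D}: for fixed $k$ and $\eps>0$, testing the approximating piecewise-constant kernel against $\varphi\in L^2(\partial\Omega)$ and passing to vectors $z\in\mathbb{R}^N$ yields, for $N$ large,
\begin{equation*}
	\left\|\left(\frac{|\partial\Omega|}{N}A_N-\pi\right)^{k}\right\|_{\mathcal{L}(\ell^2)}
	\leq
	\left\|(A-\pi)^{k}\right\|_{\mathcal{L}(L^2)}+\eps.
\end{equation*}
By \eqref{gelfand 2} there exist $k_0\geq 1$ and $\delta>0$ with $\left\|(A-\pi)^{k_0}\right\|_{\mathcal{L}(L^2)}^{1/k_0}\leq 2\pi-3\delta$, so choosing $\eps$ small enough gives $\left\|\left(\frac{|\partial\Omega|}{N}A_N-\pi\right)^{k_0}\right\|_{\mathcal{L}(\ell^2)}\leq(2\pi-2\delta)^{k_0}$ for all large $N$. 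Writing a general $k=pk_0+q$ with $0\leq q\leq k_0-1$ and using submultiplicativity together with the uniform bound $\left\|\frac{|\partial\Omega|}{N}A_N-\pi\right\|_{\mathcal{L}(\ell^2)}\leq C_*$ yields $\left\|\left(\frac{|\partial\Omega|}{N}A_N-\pi\right)^{k}\right\|_{\mathcal{L}(\ell^2)}\leq C_*^{k_0-1}(2\pi-2\delta)^{k-q}\leq(2\pi-\delta)^{k}$ for $k$ large, which is the claimed root estimate (after absorbing $N_*,k_*$).

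From that estimate the absolute convergence in $\mathcal{L}(\ell^2)$ of the Neumann series $\frac{1}{2\pi}\sum_{k\geq 0}\left(\frac{\pi-\frac{|\partial\Omega|}{N}A_N}{2\pi}\right)^{k}$ is immediate, since for $k$ large its $k$-th term has norm at most $\left(\frac{2\pi-\delta}{2\pi}\right)^{k}$; the sum is the inverse of $\frac{|\partial\Omega|}{N}A_N+\pi$, and it is bounded in $\mathcal{L}(\ell^2)$ uniformly in $N$. Unique solvability of $\left(\frac{|\partial\Omega|}{N}A_N+\pi\right)z=v$ for every $v\in\mathbb{R}^N$ and the bound $\|z\|_{\ell^1}\leq\|z\|_{\ell^2}\leq C\|v\|_{\ell^2}\leq C\|v\|_{\ell^\infty}$ follow at once from the uniform boundedness of the inverse and the normalization of the $\ell^p$-norms recalled in Section \ref{section discrete}. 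I expect the main obstacle to be purely bookkeeping: carefully carrying out the multi-layer Riemann-sum approximation of the kernel of $(A-\pi)^k$ — expanding the binomial, discretizing each of the $k$ integral layers with the correct interleaving of the $s^N$- and $\tilde s^N$-meshes, and controlling the accumulated error uniformly in $x,y$ — rather than any conceptual difficulty, since the spectral input \eqref{gelfand 2} and the uniform boundedness \eqref{boundedness different mesh} are already available.
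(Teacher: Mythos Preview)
Your proposal is correct and follows essentially the same approach as the paper: approximate the kernel of $A^k$ by the piecewise-constant function built from $(A_N^k)_{ij}$ via repeated Riemann sums, combine through the binomial expansion to compare $\left\|\left(\frac{|\partial\Omega|}{N}A_N-\pi\right)^k\right\|_{\mathcal{L}(\ell^2)}$ with $\|(A-\pi)^k\|_{\mathcal{L}(L^2)}$, invoke \eqref{gelfand 2} to find $k_0$ and $\delta$, and finish by submultiplicativity with $k=pk_0+q$. Your observation that no zero-mean correction or even-power pairing is needed here (since the well-$*$ mesh lets $A_N^k$ discretize $A^k$ directly and we work on all of $\ell^2$) is exactly the simplification the paper exploits.
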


\begin{proof}
	Following the proof of Lemma \ref{inverse D}, for each $k\geq 1$, we denote by $K_k(x,y)$ the kernel of $A^k$, which is smooth and satisfies, for all $x,y\in\partial\Omega$,
		\begin{align*}
			K_k(x,y)= &
			\int_{\partial\Omega\times\dots\times\partial\Omega}
			\frac{x-y_1}{|x-y_1|^2}\cdot n(x)
			\\
			&\times
			\left(\prod_{j=1}^{k-2}\frac{y_j-y_{j+1}}{|y_j-y_{j+1}|^2}\cdot n(y_j)\right)
			\frac{y_{k-1}-y}{|y_{k-1}-y|^2}\cdot n(y_{k-1})
			dy_1\dots dy_{k-1}.
		\end{align*}
	Therefore, by smoothness, approximating in $L_{x,y}^\infty\left(\partial\Omega\times\partial\Omega\right)$ the kernel $\frac{x-y}{|x-y|^2}\cdot n(x)$ by
	\begin{equation*}
		\sum_{i,j=1}^N
		\mathds{1}_{\left[\theta_{i}^N, \theta_{i+1}^N\right)}(s)
		\frac{l\left(\tilde s_{i}^N\right) - l\left(s_{j}^N\right)}
		{\left|l\left(\tilde s_{i}^N\right) - l\left(s_{j}^N\right)\right|^2}\cdot n\left(l\left(\tilde s_{i}^N\right)\right)
		\mathds{1}_{\left[\theta_{j}^N, \theta_{j+1}^N\right)}(s_*),
	\end{equation*}
	where the $\theta_i^N$'s are defined in \eqref{mesh4} and we identify $x=l(s)$ and $y=l(s_*)$, we find, as $N\to\infty$, that $K_{k}(x,y)$ is arbitrarily close in $L_{x,y}^\infty\left(\partial\Omega\times\partial\Omega\right)$ to
		\begin{align*}
			& \sum_{i,j=1}^N
			\mathds{1}_{\left[\theta_{i}^N, \theta_{i+1}^N\right)}(s)
			\mathds{1}_{\left[\theta_{j}^N, \theta_{j+1}^N\right)}(s_*)
			\\
			& \times \left(\frac{\left|\partial\Omega\right|}{N}\right)^{k-1}
			\sum_{j_1,\ldots,j_{k-1}=1}^N
			\frac{l\left(\tilde s_{i}^N\right)-l\left(s_{j_1}^N\right)}{\left|l\left(\tilde s_{i}^N\right)-l\left(s_{j_1}^N\right)\right|^2}\cdot n\left(l\left(\tilde s_{i}^N\right)\right)
			\\
			&\times
			\left(\prod_{n=1}^{k-2}
			\frac{l\left(\tilde s_{j_n}^N\right)-l\left(s_{j_{n+1}}^N\right)}{\left|l\left(\tilde s_{j_n}^N\right)-l\left(s_{j_{n+1}}^N\right)\right|^2}\cdot n\left(l\left(\tilde s_{j_n}^N\right)\right)
			\right)
			\frac{l\left(\tilde s_{j_{k-1}}^N\right)-l\left(s_{j}^N\right)}{\left|l\left(\tilde s_{j_{k-1}}^N\right)-l\left(s_{j}^N\right)\right|^2}\cdot n\left(l\left(\tilde s_{j_{k-1}}^N\right)\right)
			\\
			& =
			\left(\frac{\left|\partial\Omega\right|}{N}\right)^{k-1}
			\sum_{i,j=1}^N
			\mathds{1}_{\left[\theta_{i}^N, \theta_{i+1}^N\right)}(s)
			\left(A_N^k\right)_{ij}
			\mathds{1}_{\left[\theta_{j}^N, \theta_{j+1}^N\right)}(s_*).
		\end{align*}

	It follows that, for any fixed $k\geq 1$ and $\eps>0$, provided $N$ is sufficiently large, with the convention that $K_0(l(s),l(s_*))$ denotes a Dirac mass at $s=s_*$,
		\begin{align*}
			\left\|\left(A-\pi\right)^{k}\right\|_{\mathcal{L}\left(L^2\right)}+\eps \hspace{-20mm} &
			\\
			& =
			\left\|\sum_{n=0}^k \begin{pmatrix}k\\ n\end{pmatrix}(-\pi)^{k-n}A^n\right\|_{\mathcal{L}\left(L^2\right)}+\eps
			\\
			& = \sup_{\varphi\in L^2\left(\partial\Omega\right)}
			\frac{\left\|
			\sum_{n=0}^k \begin{pmatrix}k\\ n\end{pmatrix}(-\pi)^{k-n}\int_0^{\left|\partial\Omega\right|}K_n\left(l(s),l(s_*)\right)\varphi(l(s_*))ds_*\right\|_{L^2_s}}{\left\|\varphi(l(s))\right\|_{L^2_s}}
			+\eps
			\\
			& \geq
			\sup_{z\in\mathbb{R}^N}
			\frac
			{\left\|
			\sum_{n=0}^k \begin{pmatrix}k\\ n\end{pmatrix}(-\pi)^{k-n}
			\left(\frac{\left|\partial\Omega\right|}{N}\right)^{n}
			\sum_{i,j=1}^N
			\mathds{1}_{\left[\theta_{i}^N, \theta_{i+1}^N\right)}(s)
			\left(A_N^n\right)_{ij}
			z_j\right\|_{L^2_s}}
			{\left\|\sum_{i=1}^N z_i \mathds{1}_{\left[\theta_{i}^N, \theta_{i+1}^N\right)}(s)\right\|_{L^2_s}}
			\\
			& =
			\sup_{z\in\mathbb{R}^N}
			\frac
			{\left\|
			\sum_{n=0}^k \begin{pmatrix}k\\ n\end{pmatrix}(-\pi)^{k-n}
			\left(\frac{\left|\partial\Omega\right|}{N}\right)^{n}
			A_N^n z\right\|_{\ell^2}}
			{\left\|z\right\|_{\ell^2}}
			\\
			& =
			\sup_{z\in\mathbb{R}^N}
			\frac{
			\left\|\left(\frac{\left|\partial\Omega\right|}{N}A_N-\pi\right)^kz\right\|_{\ell^2}
			}{\left\|z\right\|_{\ell^2}}
			=\left\|\left(\frac{\left|\partial\Omega\right|}{N}A_N-\pi\right)^k\right\|_{\mathcal{L}\left(\ell^2\right)}.
		\end{align*}
	Further deducing from estimate \eqref{gelfand 2} that there exist $k_0\geq 1$ and $\delta>0$ such that $\left\|\left(A-\pi\right)^{k_0}\right\|_{\mathcal{L}\left(L^2\right)}^\frac 1{k_0} \leq 2\pi-3\delta$, we infer that, setting $\eps>0$ sufficiently small,
	\begin{equation*}
		\left\|\left(\frac{\left|\partial\Omega\right|}{N}A_N-\pi\right)^{k_0}\right\|_{\mathcal{L}\left(\ell^2\right)}
		\leq
		\left\|\left(A-\pi\right)^{k_0}\right\|_{\mathcal{L}\left(L^2\right)}+\eps
		\leq \left(2\pi -3\delta\right)^{k_0} + \eps \leq \left(2\pi -2\delta\right)^{k_0},
	\end{equation*}
	for $N$ sufficiently large.

	Now, for any $k\geq k_0$, we write $k=pk_0+q$ with positive integral numbers and $0\leq q\leq k_0-1$. Then, we obtain
		\begin{align*}
			\left\|\left(\frac{\left|\partial\Omega\right|}{N}A_N-\pi\right)^{k}\right\|_{\mathcal{L}\left(\ell^2\right)}
			& \leq
			\left\|\left(\frac{\left|\partial\Omega\right|}{N}A_N-\pi\right)^{k_0}\right\|_{\mathcal{L}\left(\ell^2\right)}^p
			\left\|\frac{\left|\partial\Omega\right|}{N}A_N-\pi\right\|_{\mathcal{L}\left(\ell^2\right)}^q
			\\
			& \leq \left(2\pi -2\delta\right)^{k-q}
			\left\|\frac{\left|\partial\Omega\right|}{N}A_N-\pi\right\|_{\mathcal{L}\left(\ell^2\right)}^q.
		\end{align*}
	Further using that $N^{-1}A_N$ is a bounded operator over $\ell^2$ uniformly in $N$ (see \eqref{boundedness different mesh}), we arrive at, for some fixed constant $C_*>0$ independent of $N$ and $k$, and for sufficiently large $k$,
	\begin{equation*}
		\left\|\left(\frac{\left|\partial\Omega\right|}{N}A_N-\pi\right)^{k}\right\|_{\mathcal{L}\left(\ell^2\right)}
		\leq C_* \left(2\pi -2\delta\right)^{k}\leq \left(2\pi -\delta\right)^{k},
	\end{equation*}
	which concludes the proof of the proposition.
\end{proof}

The coming result is an adaptation of Proposition \ref{prop 32} and establishes the weak convergence of the discretization of the operator $A+\pi$.

\begin{proposition}\label{prop 32 alt}
	For any integer $N\geq 2$, consider a well-$*$ distributed mesh $(s_{1}^N,\dots , s_{N}^N)\in \mathbb{R}^N$, $(\tilde s_{1}^N, \dots , \tilde s_{N}^N)\in \mathbb{R}^N$ satisfying \eqref{mesh3} and, according to Proposition \ref{inverse D alt}, consider the solution $\check\gamma^N=(\check\gamma_{1}^N,\dots, \check\gamma_{N}^N)\in\mathbb{R}^N$ to the system \eqref{point toy alt} for some periodic function $f\in C^{\kappa}\left(\left[0,\left|\partial\Omega\right|\right]\right)$, where $\kappa \geq 2$. We define the approximations
	\begin{equation}\label{f app alt}
		\begin{aligned}
			\check f_{\rm app}^N(s) & :=
			\frac1{N}\sum_{j=1}^N \check\gamma_{j}^N \frac{l\left(s\right) - l\left(s_{j}^N\right)}
			{\left|l\left(s\right) - l\left(s_{j}^N\right)\right|^2}\cdot \tau\left(l\left(s\right)\right),
			\\
			\check g_{\rm app}^N(s) & :=
			\frac1{N}\sum_{j=1}^N \check\gamma_{j}^N \frac{l\left(s\right) - l\left(s_{j}^N\right)}
			{\left|l\left(s\right) - l\left(s_{j}^N\right)\right|^2}\cdot n\left(l\left(s\right)\right).
		\end{aligned}
	\end{equation}

	Then, for any periodic test function $\varphi\in C^{\infty}\left(\left[0,\left|\partial\Omega\right|\right]\right)$,
		\begin{align*}
			\left|\int_{0}^{\left|\partial\Omega\right|} \left(\check f_{\rm app}^N - B\left(A+\pi\right)^{-1}f \right)\varphi\right|
			& \leq
			\frac{C}{N^{\kappa}}
			\left\|f\right\|_{C^{\kappa}}
			\left\|\varphi\right\|_{C^{\kappa+1}},
			\\
			\left|\int_{0}^{\left|\partial\Omega\right|} \left(\check g_{\rm app}^N - A\left(A+\pi\right)^{-1}f \right)\varphi\right|
			& \leq
			\frac{C}{N^{\kappa}}
			\left\|f\right\|_{C^{\kappa}}
			\left\|\varphi\right\|_{L^2},
		\end{align*}
	where we identify the variable $x$ with the variable $s$ whenever $x=l(s)\in\partial\Omega$ and the singular integrals are defined in the sense of Cauchy's principal value.
\end{proposition}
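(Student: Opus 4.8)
The plan is to mirror the proof of Proposition~\ref{prop 32}, but with the singular operator $B$ replaced by the regular perturbation of the identity $A+\pi$, which actually simplifies several points. Let $\varphi\in C^\infty([0,|\partial\Omega|])$ be a periodic test function. The starting point is the discrete equation \eqref{point toy alt}, which tells us that $\check g_{\rm app}^N(\tilde s_i^N)+\frac{\pi}{|\partial\Omega|}\check\gamma_i^N=f(\tilde s_i^N)$ for \emph{all} $i=1,\dots,N$ (not just $i=1,\dots,N-1$, as in the vortex method). This is a significant structural simplification: there is no exceptional last equation to account for, so the terms analogous to $D_4$ in the proof of Proposition~\ref{prop 32} disappear entirely. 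I would first treat $\check g_{\rm app}^N$. Write $\int_0^{|\partial\Omega|}\check g_{\rm app}^N\varphi=\frac1N\sum_{j=1}^N\check\gamma_j^N\,A^*\varphi(l(s_j^N))$, exactly as in the earlier proof, using that $\check g_{\rm app}^N(s)$ is built from the smooth kernel $\frac{l(s)-l(s_*)}{|l(s)-l(s_*)|^2}\cdot n(l(s))$. Since $A^*$ is regularizing, $A^*\varphi$ is smooth, so the Riemann sum $\frac1N\sum_j \check\gamma_j^N A^*\varphi(l(s_j^N))$ compared against $\frac1N\sum_j\check\gamma_j^N A^*\varphi(l(\tilde s_j^N))$ (and the latter against the continuous integral) converges at rate $\mathcal{O}(N^{-\kappa})$ by Corollary~\ref{riemann2}, after also using \eqref{mesh3} to swap $s_j^N$ for $\tilde s_j^N$ up to $\mathcal{O}(N^{-\kappa})$. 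Then one uses the discrete identity from \eqref{point toy alt} to replace $\frac1N\sum_j\check\gamma_j^N A^*\varphi(l(\tilde s_j^N))$-type quantities by the right-hand side $f$ and the diagonal correction; the upshot is that $\int\check g_{\rm app}^N\varphi$ is, up to $\mathcal{O}(N^{-\kappa})\|f\|_{C^\kappa}\|\varphi\|$, equal to $\int A(A+\pi)^{-1}f\,\varphi$, using the $L^2_0$-adjointness relations and the continuous identity $A^*(A+\pi)^{-\#}=\bigl(A(A+\pi)^{-1}\bigr)^{*}$ on the appropriate spaces together with $\frac{|\partial\Omega|}{N}A_N+\pi\approx A+\pi$ in the sense made precise by the Riemann-sum approximation of the kernel $K_1$ used in Proposition~\ref{inverse D alt}. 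The needed uniform control $\|\check\gamma^N\|_{\ell^1}\le C\|f\|_{C^\kappa}$ (to absorb the $\mathcal{O}(N^{-\kappa})$ errors) comes directly from the last estimate in Proposition~\ref{inverse D alt}.

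For $\check f_{\rm app}^N$, the argument follows the treatment of $D_1$, $D_2$, $D_3$ in the proof of Proposition~\ref{prop 32}. Decompose
\[
\int_0^{|\partial\Omega|}(\check f_{\rm app}^N-B(A+\pi)^{-1}f)\varphi
=\Bigl(\int \check f_{\rm app}^N\varphi-\tfrac{|\partial\Omega|}{N}\sum_i\check f_{\rm app}^N(\tilde s_i^N)\varphi(\tilde s_i^N)\Bigr)
+\Bigl(\tfrac{|\partial\Omega|}{N}\sum_i\check f_{\rm app}^N(\tilde s_i^N)\varphi(\tilde s_i^N)-\int B(A+\pi)^{-1}f\,\varphi\Bigr).
\]
The first bracket is handled exactly as $D_1$: insert $B_N^*\mathbf1$ cancellations via \eqref{B1} and \eqref{average adjoint} (the latter being the mechanism that demands condition \eqref{mesh3}, though here one only needs the simpler counterpart for well-$*$ distributed meshes), reduce to a Riemann sum of a \emph{regularized} integrand $\frac{l(s)-l(s_*)}{|l(s)-l(s_*)|^2}\cdot\tau(l(s))(\varphi(s)-\varphi(s_j^N))$, and invoke Corollary~\ref{riemann2} together with $\|\check\gamma^N\|_{\ell^1}\le C\|f\|_{C^\kappa}$ to get $\mathcal{O}(N^{-\kappa})\|f\|_{C^\kappa}\|\varphi\|_{C^{\kappa+1}}$. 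For the second bracket, note $\frac{|\partial\Omega|}{N}\sum_i\check f_{\rm app}^N(\tilde s_i^N)\varphi(\tilde s_i^N)=\frac{|\partial\Omega|}{N}\langle \tilde B_N\check\gamma^N\cdot(\varphi(\tilde s_i^N))\rangle$-type expression, which should be matched to the continuous $\int B g^*\varphi$ where $g^*=(A+\pi)^{-1}f$, by writing the discrete $\tilde B_N$ against the discrete $(A_N+\pi)^{-1}$ and passing to the continuum using the boundedness estimates of Corollary~\ref{boundedness} (or its well-$*$ analog \eqref{boundedness different mesh}) and the kernel-approximation argument.

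The main obstacle I anticipate is bookkeeping the passage from the discrete inverse $\bigl(\tfrac{|\partial\Omega|}{N}A_N+\pi\bigr)^{-1}$ to the continuous $(A+\pi)^{-1}$ with a uniform $\mathcal{O}(N^{-\kappa})$ (weak) error rate: the Neumann series for the discrete inverse must be compared term-by-term with the Neumann series for $(A+\pi)^{-1}$, and while each power $A_N^k$ approximates the kernel of $A^k$ in $L^\infty$ as in Proposition~\ref{inverse D alt}, one must sum these errors against $\varphi$ with geometric decay from \eqref{gelfand 2}, which is why the $C^{\kappa+1}$-regularity of $\varphi$ (rather than mere $L^2$) is needed in the $\check f_{\rm app}^N$ estimate, whereas for $\check g_{\rm app}^N$ the regularizing property of $A^*$ buys back enough smoothness to close with only $\|\varphi\|_{L^2}$. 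A secondary technical point is that, because $A+\pi$ is invertible on all of $L^2$ (not just $L^2_0$) by \eqref{gelfand 2}, one does not need the zero-mean reductions and the delicate $\langle\cdot\rangle$-correction operators $D_N$, $E_N$ from Section~\ref{section discrete}; but one should still verify that the discrete right-hand side $f(\tilde s_i^N)$ and the quantities $\langle f\rangle$ match up so that no spurious mean-value term survives, which is routine given $f=-2\pi(u_P+\gamma H_*)\cdot n$ has zero mean and the well-$*$ mesh symmetry. Once these pieces are assembled, combining the two brackets yields the claimed bounds, completing the proof.
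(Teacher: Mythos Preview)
Your proposal has a genuine gap in the treatment of $\check f_{\rm app}^N$, and it misses the key simplification that makes this proposition much easier than Proposition~\ref{prop 32}.

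The gap is this: in the well-$*$ distributed mesh, \emph{both} $s_i^N$ and $\tilde s_i^N$ are within $\mathcal{O}(N^{-\kappa})$ of the same uniform node $\theta_i^N$; they are not interleaved as in the well-distributed mesh of Section~\ref{section discrete}. Consequently, evaluating $\check f_{\rm app}^N(\tilde s_i^N)$ produces a term $\frac{l(\tilde s_i^N)-l(s_i^N)}{|l(\tilde s_i^N)-l(s_i^N)|^2}\cdot\tau(\cdot)$ which blows up like $|\tilde s_i^N-s_i^N|^{-1}$ (and is undefined if the two points coincide, which the definition allows). So your decomposition into Riemann-sum errors based on $\sum_i \check f_{\rm app}^N(\tilde s_i^N)\varphi(\tilde s_i^N)$ does not make sense here, and the cancellation estimate \eqref{average adjoint} you invoke is proved only for the well-distributed mesh, where the near-coincidence singularity is absent. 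The convention after \eqref{point toy alt} for coincident points applies to the \emph{smooth} $n$-kernel of $A_N$, not to the singular $\tau$-kernel defining $\check f_{\rm app}^N$.

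The paper's route avoids this entirely and is in fact simpler than what you sketch. Because the kernel of $A$ is smooth, Corollary~\ref{riemann2} gives $\|(Ah(\tilde s_i^N))_i-\tfrac{|\partial\Omega|}{N}A_N(h(\tilde s_i^N))_i\|_{\ell^\infty}\le CN^{-\kappa}\|h\|_{C^\kappa}$ for any $h\in C^\kappa$. Taking $h=(A+\pi)^{-1}f$ and applying the uniformly bounded inverse $\bigl(\tfrac{|\partial\Omega|}{N}A_N+\pi\bigr)^{-1}$ from Proposition~\ref{inverse D alt} yields directly the \emph{strong} estimate
\[
\Bigl\|\tfrac{\check\gamma^N}{|\partial\Omega|}-\bigl((A+\pi)^{-1}f(s_i^N)\bigr)_i\Bigr\|_{\ell^2}\le \frac{C}{N^\kappa}\|f\|_{C^\kappa}.
\]
This pointwise control of $\check\gamma^N$ is the whole engine. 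For $\check g_{\rm app}^N$ one writes $\int\check g_{\rm app}^N\varphi=\frac{1}{N}\sum_j\check\gamma_j^N A^*\varphi(l(s_j^N))$, splits into the $\check\gamma^N$-error above and a Riemann-sum error for $(A+\pi)^{-1}f\cdot A^*\varphi$; since $A^*$ is regularizing, $\|\varphi\|_{L^2}$ suffices. For $\check f_{\rm app}^N$ one writes $\int\check f_{\rm app}^N\varphi=\frac{1}{N}\sum_j\check\gamma_j^N B^*\varphi(l(s_j^N))$ and does the same splitting; the only extra work is to show $\|B^*\varphi\|_{C^\kappa}\le C\|\varphi\|_{C^{\kappa+1}}$, which follows by writing $B^*\varphi(s)=\int_0^1\int\bigl|\tfrac{l(s)-l(s_*)}{s-s_*}\bigr|^{-2}\tfrac{l(s)-l(s_*)}{s-s_*}\cdot\tau(l(s_*))\,\varphi'(s_*+t(s-s_*))\,ds_*\,dt$. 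No discrete $B_N$ or $\tilde B_N$, no $D_1$--$D_4$ decomposition, and no Neumann-series comparison are needed.
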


\begin{proof}
	For any $h\in C^{\kappa}\left(\left[0,\left|\partial\Omega\right|\right]\right)$, an estimate based on Corollary \ref{riemann2} yields that
		\begin{align*}
			\left\|\left(Ah\left(\tilde s_i^N\right)\right)_{1\leq i\leq N}
			-
			\frac{\left|\partial\Omega\right|}{N}A_N\left(h\left(s_i^N\right)\right)_{1\leq i\leq N}
			\right\|_{\ell^\infty}
			\hspace{-70mm} &
			\\
			\leq &
			\sup_{s\in \left[0,\left|\partial\Omega\right|\right]}
			\left|
			\int_0^{\left|\partial\Omega\right|}\frac{l\left(s\right) - l\left(s_*\right)}
			{\left|l\left(s\right) - l\left(s_*\right)\right|^2}\cdot n\left(l\left(s\right)\right)
			h\left(l(s_*)\right)ds_*
			\vphantom{
			-
			\frac{\left|\partial\Omega\right|}{N} \sum_{i=1}^N
			\frac{l\left(s\right) - l\left(s_i^N\right)}
			{\left|l\left(s\right) - l\left(s_i^N\right)\right|^2}\cdot n\left(l\left(s\right)\right)
			h\left(l(s_i^N)\right)
			}
			\right.
			\\
			& -
			\left.
			\vphantom{
			\int_0^{\left|\partial\Omega\right|}\frac{l\left(s\right) - l\left(s_*\right)}
			{\left|l\left(s\right) - l\left(s_*\right)\right|^2}\cdot n\left(l\left(s\right)\right)
			h\left(l(s_*)\right)ds_*
			}
			\frac{\left|\partial\Omega\right|}{N} \sum_{i=1}^N
			\frac{l\left(s\right) - l\left(s_i^N\right)}
			{\left|l\left(s\right) - l\left(s_i^N\right)\right|^2}\cdot n\left(l\left(s\right)\right)
			h\left(l(s_i^N)\right)
			\right|
			\\
			\leq & \frac{C}{N^{\kappa}}
			\sup_{s\in \left[0,\left|\partial\Omega\right|\right]}
			\left\|\frac{l\left(s\right) - l\left(s_*\right)}
			{\left|l\left(s\right) - l\left(s_*\right)\right|^2}\cdot n	\left(l\left(s\right)\right)
			\right\|_{C^{\kappa}_{s_*}
			\left(\left[0,\left|\partial\Omega\right|\right]\right)}\left\|h\right\|_{C^{\kappa}}
			\leq
			\frac{C}{N^{\kappa}}\left\|h\right\|_{C^{\kappa}}.
		\end{align*}
	Exploiting \eqref{mesh3} and that $N^{-1}A_N:\ell^1\to\ell^\infty$ is a bounded operator (see \eqref{boundedness different mesh}), uniformly in $N$, we also find that
		\begin{align*}
			\left\|\left(Ah\left(\tilde s_i^N\right)\right)_{1\leq i\leq N}
			-
			\frac{\left|\partial\Omega\right|}{N}A_N\left(h\left(\tilde s_i^N\right)\right)_{1\leq i\leq N}
			\right\|_{\ell^\infty} \hspace{-30mm} &
			\\
			\leq &
			\frac{C}{N^{\kappa}}\left\|h\right\|_{C^{\kappa}}
			+
			C\left\|
			\left(h\left(s_i^N\right)-h\left(\tilde s_i^N\right)\right)_{1\leq i\leq N}
			\right\|_{\ell^\infty}
			\\
			\leq &
			\frac{C}{N^{\kappa}}\left(\left\|h\right\|_{C^{\kappa}}
			+\left\|h\right\|_{C^{1}}\right)
			\leq
			\frac{C}{N^{\kappa}}\left\|h\right\|_{C^{\kappa}}.
		\end{align*}
		In particular, setting $h=\left(A+\pi\right)^{-1}f$ in the preceding estimate, exploiting the relation \eqref{point toy alt} and using the uniform boundedness of the inverse operator 
		\begin{equation*}
			\left(\frac{\left|\partial\Omega\right|}{N}A_N+\pi\right)^{-1}
		\end{equation*}
		examined in Proposition \ref{inverse D alt}, we deduce that
		\begin{align*}
			\left\|
			\frac{\check\gamma^N}{\left|\partial\Omega\right|}
			-
			\left(\left(A+\pi\right)^{-1}f\left(\tilde s_i^N\right)\right)_{1\leq i\leq N}
			\right\|_{\ell^2} \hspace{-50mm} &
			\\
			= &
			\left\|\left(\frac{\left|\partial\Omega\right|}{N}A_N+\pi\right)^{-1}\left(f\left(\tilde s_i^N\right)\right)_{1\leq i\leq N}
			-
			\left(\left(A+\pi\right)^{-1}f\left(\tilde s_i^N\right)\right)_{1\leq i\leq N}
			\right\|_{\ell^2}
			\\
			\leq &
			C\left\|\left(f\left(\tilde s_i^N\right)\right)_{1\leq i\leq N}
			-
			\left(\frac{\left|\partial\Omega\right|}{N}A_N+\pi\right)
			\left(\left(A+\pi\right)^{-1}f\left(\tilde s_i^N\right)\right)_{1\leq i\leq N}
			\right\|_{\ell^\infty}
			\\
			= &
			C\left\|\left(A\left(A+\pi\right)^{-1}f\left(\tilde s_i^N\right)\right)_{1\leq i\leq N}
			-
			\frac{\left|\partial\Omega\right|}{N}A_N\left(\left(A+\pi\right)^{-1}f\left(\tilde s_i^N\right)\right)_{1\leq i\leq N}
			\right\|_{\ell^\infty}
			\\
			\leq &
			\frac{C}{N^{\kappa}}\left\|\left(A+\pi\right)^{-1}f\right\|_{C^{\kappa}}
			\leq
			\frac{C}{N^{\kappa}}\left(
			\left\|f\right\|_{C^{\kappa}}
			+
			\left\|A\left(A+\pi\right)^{-1}f\right\|_{C^{\kappa}}
			\right)
			\\
			\leq &
			\frac{C}{N^{\kappa}}
			\left\|f\right\|_{C^\kappa},
		\end{align*}
	where we have used in the last step above that $A$ has a smooth kernel and $\left(A+\pi\right)^{-1}$ is bounded over $L^2$. Further using \eqref{mesh3} finally yields the similar estimate
	\begin{equation}\label{check gamma conv}
		\begin{aligned}
			\left\|
			\frac{\check\gamma^N}{\left|\partial\Omega\right|}
			-
			\left(\left(A+\pi\right)^{-1}f\left(s_i^N\right)\right)_{1\leq i\leq N}
			\right\|_{\ell^2} \hspace{-30mm} &
			\\
			\leq &
			\left\|
			\frac{\check\gamma^N}{\left|\partial\Omega\right|}
			-
			\left(\left(A+\pi\right)^{-1}f\left(\tilde s_i^N\right)\right)_{1\leq i\leq N}
			\right\|_{\ell^2}
			\\
			& +
			\left\|
			\left(\left(A+\pi\right)^{-1}f\left(\tilde s_i^N\right)
			-\left(A+\pi\right)^{-1}f\left(s_i^N\right)\right)_{1\leq i\leq N}
			\right\|_{\ell^2}
			\\
			\leq &
			\frac{C}{N^{\kappa}}
			\left(
			\left\|f\right\|_{C^\kappa}
			+\left\|\left(A+\pi\right)^{-1}f\right\|_{C^{1}}\right)
			\leq
			\frac{C}{N^{\kappa}}
			\left\|f\right\|_{C^{\kappa}},
		\end{aligned}
	\end{equation}
	where we have used that $A$ is a regularizing operator.

	Now, note that
	\begin{equation*}
		\int_{0}^{\left|\partial\Omega\right|} \check g_{\rm app}^N \varphi
		=
		\frac1{N}\sum_{j=1}^N \check\gamma_{j}^N
		A^*\varphi\left(l\left(s_j^N\right)\right),
	\end{equation*}
	where we identify $\varphi(x)$ with $\varphi(s)$ whenever $x=l(s)\in\partial\Omega$. Then, employing \eqref{check gamma conv} and Corollary \ref{riemann2}, we deduce that
		\begin{align*}
			\left|\int_{0}^{\left|\partial\Omega\right|} \left(\check g_{\rm app}^N - A\left(A+\pi\right)^{-1}f \right)\varphi\right| \hspace{-40mm} &
			\\
			= & \left|\frac1{N}\sum_{j=1}^N \check\gamma_{j}^N
			A^*\varphi\left(l\left(s_j^N\right)\right)
			- \int_{0}^{\left|\partial\Omega\right|} \left(A+\pi\right)^{-1}f A^*\varphi\right|
			\\
			\leq &
			\left|
			\frac{\left|\partial\Omega\right|}{N}\sum_{j=1}^N
			\left(\frac{\check\gamma^N}{\left|\partial\Omega\right|}-\left(A+\pi\right)^{-1}f\left(s_j^N\right)\right)
			A^*\varphi\left(l\left(s_j^N\right)\right)
			\right|
			\\
			& +
			\left|
			\frac{\left|\partial\Omega\right|}{N}\sum_{j=1}^N \left(A+\pi\right)^{-1}f\left( s_j^N\right)
			A^*\varphi\left(l\left( s_j^N\right)\right)
			- \int_{0}^{\left|\partial\Omega\right|} \left(A+\pi\right)^{-1}f A^*\varphi\right|
			\\
			\leq &
			\frac{C}{N^{\kappa}}
			\left(
			\left\|f\right\|_{C^{\kappa}} \left\|A^*\varphi\right\|_{L^\infty}
			+
			\left\|\left(A+\pi\right)^{-1}f\right\|_{C^{\kappa}} \left\|A^*\varphi\right\|_{C^{\kappa}}
			\right)
			\\
			\leq &
			\frac{C}{N^{\kappa}}
			\left\|f\right\|_{C^{\kappa}} \left\|\varphi\right\|_{L^2},
		\end{align*}
	where we have used, again, in the last step above, that $A$ and $A^*$ are regularizing, which concludes the convergence estimate on $\check g_{\rm app}^N$.

	As for $\check f_{\rm app}^N$, observe that
	\begin{equation*}
		\int_{0}^{\left|\partial\Omega\right|} \check f_{\rm app}^N \varphi
		=
		\frac1{N}\sum_{j=1}^N \check\gamma_{j}^N
		B^*\varphi\left(l\left(s_j^N\right)\right).
	\end{equation*}
	Therefore, a similar estimate based on \eqref{check gamma conv} and Corollary \ref{riemann2} yields that
		\begin{align*}
			\left|\int_{0}^{\left|\partial\Omega\right|} \left(\check f_{\rm app}^N - B\left(A+\pi\right)^{-1}f \right)\varphi\right| \hspace{-40mm} &
			\\
			= & \left|\frac1{N}\sum_{j=1}^N \check\gamma_{j}^N
			B^*\varphi\left(l\left(s_j^N\right)\right)
			- \int_{0}^{\left|\partial\Omega\right|} \left(A+\pi\right)^{-1}f B^*\varphi\right|
			\\
			\leq &
			\left|
			\frac{\left|\partial\Omega\right|}{N}\sum_{j=1}^N
			\left(\frac{\check\gamma^N}{\left|\partial\Omega\right|}-\left(A+\pi\right)^{-1}f\left(s_j^N\right)\right)
			B^*\varphi\left(l\left(s_j^N\right)\right)
			\right|
			\\
			& +
			\left|
			\frac{\left|\partial\Omega\right|}{N}\sum_{j=1}^N \left(A+\pi\right)^{-1}f\left( s_j^N\right)
			B^*\varphi\left(l\left( s_j^N\right)\right)
			- \int_{0}^{\left|\partial\Omega\right|} \left(A+\pi\right)^{-1}f B^*\varphi\right|
			\\
			\leq &
			\frac{C}{N^{\kappa}}
			\left(
			\left\|f\right\|_{C^{\kappa}} \left\|B^*\varphi\right\|_{L^\infty}
			+
			\left\|\left(A+\pi\right)^{-1}f\right\|_{C^{\kappa}} \left\|B^*\varphi\right\|_{C^{\kappa}}
			\right)
			\\
			\leq &
			\frac{C}{N^{\kappa}}
			\left\|f\right\|_{C^{\kappa}} \left\|B^*\varphi\right\|_{C^{\kappa}}.
		\end{align*}
	This will complete the demonstration provided we show that $B^*\varphi$ is sufficiently regular, which is not obvious since $B^*$ has a singular kernel. We have already discussed, in Section \ref{vortex sheet}, the Plemelj-Privalov theorem, which guarantees that $B^*\varphi \in C^{0,\alpha}$ provided $\varphi\in C^{0,\alpha}$, for any given $0<\alpha<1$. This, however, is not sufficient for our purpose and we need now to establish some higher regularity estimate on $B^*\varphi$. To this end, employing \eqref{B1}, we first write
		\begin{align*}
			B^*\varphi(s)
			= & \int_0^{\left|\partial\Omega\right|}\frac{l(s)-l(s_*)}{\left|l(s)-l(s_*)\right|^2}\cdot \tau(l(s_*))\left(\varphi(s)-\varphi(s_*)\right)ds_*
			\\
			= & \int_0^{\left|\partial\Omega\right|}\int_0^1\left|\frac{l(s)-l(s_*)}{s-s_*}\right|^{-2}\frac{l(s)-l(s_*)}{s-s_*}
			\cdot \tau(l(s_*))\varphi'(s_*+t(s-s_*))dtds_*.
		\end{align*}
	Then, upon noticing that $\frac{l(s)-l(s_*)}{s-s_*}$ and $\tau(l(s_*))$ are smooth and that $\frac{l(s)-l(s_*)}{s-s_*}$ remains bounded away from zero, this representation of $B^*\varphi$ easily yields that
	\begin{equation*}
		\left\|B^*\varphi\right\|_{C^{\kappa}}
		\leq C
		\left\|\varphi'\right\|_{C^{\kappa}}
		\leq C
		\left\|\varphi\right\|_{C^{\kappa+1}},
	\end{equation*}
	which ends the proof.
\end{proof}

We are now in a position to give a complete justification of Theorem \ref{main theo alt}.

\begin{proof}[Proof of Theorem \ref{main theo alt}]
	We follow here the method of proof of Theorem \ref{main theo} presented in Section \ref{proof of main}.

	First, for given $\omega\in C_c^{0,\alpha}$ and $\gamma\in\mathbb{R}$, recall that the full plane flow $u_{P}\in C^1\left(\overline \Omega\right)$ is obtained from \eqref{uP} and that the $\left|\partial\Omega\right|$-periodic function $f\in C^\infty\left([0,\left|\partial\Omega\right|]\right)$ is defined by $f(s)=-2\pi[\left(u_{P}+\gamma H_*\right)\cdot n]\left(l(s)\right)$, for all $s\in\left[0,\left|\partial\Omega\right|\right]$. Therefore, with this given $f$, according to Proposition \ref{inverse D alt}, we find a unique solution $\check\gamma^N\in\mathbb{R}^N$ of \eqref{point toy alt}.

	Next, the approximate flow $\check u_{\rm app}^N$ is introduced by \eqref{approx alt} and verifies
		\begin{align*}
			\check u_{\rm app}^N(x)\cdot \tau(x) & = \frac1{2\pi} \check f_{\rm app}^N(s) + \gamma H_*\cdot\tau,
			\\
			\check u_{\rm app}^N(x)\cdot n(x) & = \frac1{2\pi} \check g_{\rm app}^N(s) + \gamma H_*\cdot n,
		\end{align*}
	where $x=l(s)\in\partial\Omega$ and $\check f_{\rm app}^N$, $\check g_{\rm app}^N$ are defined by \eqref{f app alt}. Recall that the values of $H_*$ on $\partial\Omega$ are determined by its limiting values from within $\Omega$. Utilizing identity \eqref{vortex identity} to rewrite the discrete kernel of $\check u_{\rm app}^N$, we find that
		\begin{align*}
			\check u_{\rm app}^N(x)
			= & \frac{-1}{2\pi^2} \sum_{j=1}^N \frac{\check\gamma_{j}^N}N
			\\
			& \times \left(
			\int_{\partial\Omega}\frac{x_{j}^N-z}{\left|x_{j}^N-z\right|^2} \cdot \tau(z) \frac{(x-z)^\perp}{|x-z|^2}dz
			+
			\int_{\partial\Omega}\frac{x_{j}^N-z}{\left|x_{j}^N-z\right|^2}\cdot n(z)\frac{x-z}{|x-z|^2}
			dz
			\right)
			\\
			& + \gamma H_*(x)
			\\
			= &
			\frac1{2\pi^2}
			\int_0^{\left|\partial\Omega\right|}\check f_{\rm app}^N(s_*) \frac{\left(x-l(s_*)\right)^\perp}{\left|x-l(s_*)\right|^2}ds_*
			\\
			& +\frac1{2\pi^2}
			\int_0^{\left|\partial\Omega\right|}\check g_{\rm app}^N(s_*)\frac{x-l(s_*)}{|x-l(s_*)|^2}
			ds_*
			+\gamma H_*(x), \quad\text{on }\Omega.
		\end{align*}

	Furthermore, recall that, according to \eqref{vortex identity 3}, the remainder flow $u_R$ can be expressed as
		\begin{align*}
			u_R(x)
			= & \frac 1{2\pi^2}\int_{\partial\Omega} \frac{(x-y)^\perp}{|x-y|^2}
			B\left(A+\pi\right)^{-1}f(y)dy
			\\
			& +
			\frac 1{2\pi^2}\int_{\partial\Omega} \frac{x-y}{|x-y|^2}
			A\left(A+\pi\right)^{-1}f(y)dy + \gamma H_*(x),\quad\text{on }\Omega,
		\end{align*}
	whereby
		\begin{align*}
			\left(\check u_{\rm app}^N-u_R\right)(x)
			= & \frac 1{2\pi^2}\int_{\partial \Omega} \frac{(x-y)^\perp}{|x-y|^2} \left(\check f^N_{\rm app}-B\left(A+\pi\right)^{-1} f\right)(y)dy
			\\
			& +
			\frac 1{2\pi^2}\int_{\partial \Omega} \frac{x-y}{|x-y|^2} \left(\check g^N_{\rm app} - A\left(A+\pi\right)^{-1}f\right)(y)dy, \quad\text{on }\Omega.
		\end{align*}

	Therefore, in view of Proposition \ref{prop 32 alt}, we deduce that, for any fixed $x\in\Omega$,
		\begin{align*}
			\left|\left(\check u_{\rm app}^N-u_R\right)(x)\right|
			& \leq \frac{C}{N^\kappa}
			\left\|f\right\|_{C^{\kappa}}
			\left\|\frac{x-y}{|x-y|^2}\right\|_{C_y^{\kappa+1}}
			\\
			& \leq \frac{C}{N^\kappa}
			\left\|f\right\|_{C^{\kappa}}
			\sup_{y\in\partial \Omega}\left(\frac{1}{|x-y|}+\frac{1}{|x-y|^{\kappa+2}}\right),
		\end{align*}
	where the constant $C>0$ is independent of $x$, $\omega$ and $\gamma$. Since the support of $\omega$ is bounded away from $\partial\Omega$, it holds that
		\begin{align*}
			\left\|f\right\|_{C^{\kappa}}
			&
			\leq C \sup_{x\in\partial\Omega}
			\int_{\mathbb{R}^2}
			\left(\frac{1}{|x-y|}+\frac{1}{|x-y|^{\kappa+1}}\right)\left|\omega(y)\right|dy
			+C|\gamma|\left\|H_*\cdot n\right\|_{C^{\kappa}}
			\\
			&
			\leq \frac C{\operatorname{dist}\left(\operatorname{supp}\omega,\partial\Omega\right)}\left(1 +\frac 1{\operatorname{dist}\left(\operatorname{supp}\omega,\partial\Omega\right)^\kappa}\right)
			\left\|\omega\right\|_{L^1\left(\mathbb{R}^2\right)}+C|\gamma|\left\|H_*\cdot n\right\|_{C^{\kappa}}.
	\end{align*}
	It follows that, for any closed set $K\subset\Omega$,
\begin{equation*}
		\begin{aligned}
			\| u_{R} - \check u_{\rm app}^N \|_{L^\infty(K)}
			\hspace{-5mm}&
			\\
			\leq & \frac{C}{N^\kappa}
			\left(
			\frac 1{\operatorname{dist}\left(K,\partial\Omega\right)} + \frac 1{\operatorname{dist}\left(K,\partial\Omega\right)^{\kappa+2}}
			\right)
			\\
			& \times \left(\left(
			\frac 1{\operatorname{dist}\left(\operatorname{supp}\omega,\partial\Omega\right)} + \frac 1{\operatorname{dist}\left(\operatorname{supp}\omega,\partial\Omega\right)^{\kappa+1}}
			\right)\left\|\omega\right\|_{L^1\left(\mathbb{R}^2\right)} + |\gamma|\right),
		\end{aligned}
		\end{equation*}
	which, extending the above estimate to all $\omega\in L^1_c\left(\Omega\right)$ by a standard density argument, concludes the proof of the theorem.
\end{proof}

\subsection{A further refinement of the fluid charge method}

With the purpose of improving the accuracy and efficiency of potential numerical methods based on the preceding theorems, we develop now a refinement of Theorem \ref{main theo alt} which is simply based upon noticing that, by \eqref{mean}, the subspace $L^2_0\left(\partial\Omega\right)\subset L^2\left(\partial\Omega\right)$ is invariant under the action of $A+\pi$. In particular, defining the averaging operator over $\partial\Omega$ by
\begin{equation*}
	\ip{h}=
	\int_{\partial\Omega}h(y)dy,
\end{equation*}
for any $h\in L^1\left(\partial\Omega\right)$, since the operators $A+\pi$ and $A-\lambda\ip{\cdot}+\pi$, for any given smooth function $\lambda(x)$ on $\partial\Omega$, coincide on $L^2_0\left(\partial\Omega\right)$, it clearly holds that $A-\lambda\ip{\cdot}+\pi:L^2_0\left(\partial\Omega\right)\to L^2_0\left(\partial\Omega\right)$ has a bounded inverse given by
\begin{equation*}
	\left(A-\lambda\ip{\cdot}+\pi\right)^{-1}
	=\left(A+\pi\right)^{-1}:L^2_0\left(\partial\Omega\right)\to L^2_0\left(\partial\Omega\right).
\end{equation*}
It follows that \eqref{boundary sheet omega 4} may be recast, for any smooth $\lambda(x)$, as
\begin{equation}\label{boundary sheet omega 3}
	u_R(x) = -\int_{\partial\Omega} \frac{x-y}{|x-y|^2}\left(A-\lambda\ip{\cdot}+\pi\right)^{-1}\left[\left(u_P+\gamma H_*\right)\cdot n\right](y)dy
	+ \gamma H_*(x),
\end{equation}
for $\left(u_P+\gamma H_*\right)\cdot n$ in \eqref{boundary sheet omega 4} has mean value zero (recall that $u_P$ is divergence free in $\mathbb{R}^2$).

In fact, if $\ip{\lambda}\neq 2\pi$, then it is straightforward to verify that $A-\lambda\ip{\cdot}+\pi:L^2\left(\partial\Omega\right)\to L^2\left(\partial\Omega\right)$ also has a bounded inverse given by
\begin{equation}\label{bounded inverse}
	\left(A-\lambda\ip{\cdot}+\pi\right)^{-1}h
	=\left(A+\pi\right)^{-1}\left[h+\frac{\ip{h}}{2\pi-\ip{\lambda}}\lambda\right],
\end{equation}
for any $h\in L^2\left(\partial\Omega\right)$.

Aiming at discretizing \eqref{boundary sheet omega 3} rather than \eqref{boundary sheet omega 4}, we propose now to build an approximate flow
\begin{equation}\label{approx alt alt}
	\check{\check u}_{\rm app}^N(x):=\frac1{2\pi} \sum_{j=1}^N \frac{\check{\check\gamma}_{j}^N}N \frac{x - x_{j}^N}{|x - x_{j}^N|^2} + \gamma H_*(x),
\end{equation}
where the density $\check{\check\gamma}^N=\left(\check{\check\gamma}_{1}^N,\dots, \check{\check\gamma}_{N}^N\right)\in\mathbb{R}^N$ solves the following system of $N$ linear equations, for any prescribed smooth function $\lambda(x)$ such that $\ip{\lambda}\neq 2\pi$:
\begin{multline}\label{point vortex alt alt}
		\left(\frac1{2\pi}\sum_{j=1}^N
		\frac{\check{\check\gamma}_{j}^N}N
		\left(\frac{\tilde x_{i}^N - x_{j}^N}{|\tilde x_{i}^N - x_{j}^N|^2}\cdot n(\tilde x_{i}^N)-\lambda\left(\tilde x_i^N\right)\right)\right)
		+\frac{\check{\check\gamma}_i^N}{2\left|\partial\Omega\right|} 
		\\
		= - [\left(u_{P}+\gamma H_*\right)\cdot n] (\tilde x_{i}^N) , \ \text{for all }i=1,\dots, N,
\end{multline}
As usual, we may use the notation $\check{\check u}_{\rm app}^N=\check{\check u}_{\rm app}^N[\omega,\gamma]$ to emphasize the linear dependence of $\check{\check u}_{\rm app}^N$ in $(\omega,\gamma)$.

Equivalently, this system may be recast as
\begin{equation}\label{point toy alt alt}
	\left(\frac 1N A_N-\lambda^N\ip{\cdot}+\frac{\pi}{\left|\partial\Omega\right|}\right)\check{\check\gamma}^N
	= \left(f\left(\tilde s_i^N\right)\right)_{1\leq i\leq N},
\end{equation}
where $f(s)=-2\pi[\left(u_{P}+\gamma H_*\right)\cdot n]\left(l(s)\right)$, for all $s\in\left[0,\left|\partial\Omega\right|\right]$, and the vector $\lambda^N=\left(\lambda^N_1,\ldots,\lambda^N_N\right)\in\mathbb{R}^N$ is defined by $\lambda^N_i=\lambda\left(\tilde x_i^N\right)$, for all $i=1,\ldots,N$.

We arrive now at the following main theorem concerning the convergence of the approximate flow $\check{\check u}_{\rm app}^N$ defined in \eqref{approx alt alt}.

\begin{theorem}\label{main theo alt alt}
	Let $\omega\in L^1_c\left(\Omega\right)$, $\gamma\in\mathbb{R}$ and a smooth function $\lambda$ such that $\ip{\lambda}\neq 2\pi$ be given. For any $N\geq 2$, we consider a well-$*$ distributed mesh satisfying \eqref{mesh3} and $u_{P}$ defined in \eqref{uP}.
	
	Then, there exists $N_0$ (independent of $\omega$ and $\gamma$) such that, for all $N\geq N_0$, the system \eqref{point vortex alt alt} admits a unique solution $\check{\check \gamma}^N\in \R^N$. Moreover, for any closed set $K\subset \Omega$  there exists a constant $C>0$ independent of $N$, $K$, $\omega$ and $\gamma$ such that
		\begin{align*}
			\| u_{R} - \check{\check u}_{\rm app}^N \|_{L^\infty(K)}
			\hspace{-5mm}&
			\\
			\leq & \frac{C}{N^\kappa}
			\left(
			\frac 1{\operatorname{dist}\left(K,\partial\Omega\right)} + \frac 1{\operatorname{dist}\left(K,\partial\Omega\right)^{\kappa+2}}
			\right)
			\\
			& \times \left(\left(
			\frac 1{\operatorname{dist}\left(\operatorname{supp}\omega,\partial\Omega\right)} + \frac 1{\operatorname{dist}\left(\operatorname{supp}\omega,\partial\Omega\right)^{\kappa+1}}
			\right)\left\|\omega\right\|_{L^1\left(\mathbb{R}^2\right)} + |\gamma|\right),
		\end{align*}
	where $\check{\check u}_{\rm app}^N$ is given by \eqref{approx alt alt} in terms of $\check{\check \gamma}^N$ and $u_{R}$ is the continuous flow \eqref{uR}.
\end{theorem}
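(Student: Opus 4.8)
The plan is to follow, step by step, the architecture of the proof of Theorem \ref{main theo alt}. The convergence of $\check{\check u}_{\rm app}^N$ will be reduced to two ingredients: first, an invertibility statement for the matrix appearing in \eqref{point toy alt alt}, uniform in $N$, playing the role of Proposition \ref{inverse D alt}; and second, a weak convergence estimate for the discretizations $\check{\check f}_{\rm app}^N$ and $\check{\check g}_{\rm app}^N$ (defined as in \eqref{f app alt} with $\check{\check\gamma}^N$ in place of $\check\gamma^N$) towards $B(A+\pi)^{-1}f$ and $A(A+\pi)^{-1}f$, playing the role of Proposition \ref{prop 32 alt}. The key structural observation is that, after multiplication by $\left|\partial\Omega\right|$, the matrix in \eqref{point toy alt alt} is a rank-one perturbation of $\frac{\left|\partial\Omega\right|}N A_N+\pi$, so that its inversion can be reduced, via the discrete transcription of the algebraic identity \eqref{bounded inverse}, to the inversion of $\frac{\left|\partial\Omega\right|}N A_N+\pi$ already obtained in Proposition \ref{inverse D alt}.

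The first step is a discrete counterpart of the mean identity $\ip{(A+\pi)\varphi}=2\pi\ip{\varphi}$ on $L^2\left(\partial\Omega\right)$ (which follows from \eqref{mean}): namely, for a well-$*$ distributed mesh, $\left|\ip{\frac{\left|\partial\Omega\right|}N A_Nz+\pi z}-2\pi\ip z\right|\leq CN^{-\kappa}\|z\|_{\ell^1}$, where $\ip\cdot$ now denotes the discrete average $\frac1N\sum_i(\cdot)_i$. This holds because the $j$-th column sum $\frac{\left|\partial\Omega\right|}N\sum_i\frac{l\left(\tilde s_i^N\right)-l\left(s_j^N\right)}{\left|l\left(\tilde s_i^N\right)-l\left(s_j^N\right)\right|^2}\cdot n\left(l\left(\tilde s_i^N\right)\right)$ is a Riemann sum of the smooth function $s\mapsto\frac{l(s)-l\left(s_j^N\right)}{\left|l(s)-l\left(s_j^N\right)\right|^2}\cdot n(l(s))$, whose integral over $\left[0,\left|\partial\Omega\right|\right]$ equals $\pi$ for every $j$ (the dual form of $\int_{\partial\Omega}A\varphi=\pi\int_{\partial\Omega}\varphi$), so that Corollary \ref{riemann2} and \eqref{mesh3} give column sums $\pi+\mathcal{O}\left(N^{-\kappa}\right)$ uniformly in $j$. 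With this in hand I would introduce the candidate approximate inverse
\[
\mathcal{N}_N h:=\left(\frac{\left|\partial\Omega\right|}N A_N+\pi\right)^{-1}\left[h+\frac{\left|\partial\Omega\right|\ip h}{2\pi-\left|\partial\Omega\right|\ip{\lambda^N}}\,\lambda^N\right],
\]
which is well defined and uniformly bounded in $\mathcal{L}\left(\ell^2\right)$ for $N$ large, since $\left|\partial\Omega\right|\ip{\lambda^N}\to\int_{\partial\Omega}\lambda\ne2\pi$ and $\left(\frac{\left|\partial\Omega\right|}N A_N+\pi\right)^{-1}$ is uniformly bounded by Proposition \ref{inverse D alt}. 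Composing $\mathcal{N}_N$ with $\left|\partial\Omega\right|$ times the matrix of \eqref{point toy alt alt} and using the discrete mean identity shows that this composition is $\mathrm{Id}+\mathcal{O}\left(N^{-\kappa}\right)$ in $\mathcal{L}\left(\ell^2\right)$; a Neumann series then yields unique solvability of \eqref{point toy alt alt} for $N$ large, together with $\|\check{\check\gamma}^N\|_{\ell^1}\leq\|\check{\check\gamma}^N\|_{\ell^2}\leq C\left\|\left(f\left(\tilde s_i^N\right)\right)\right\|_{\ell^2}\leq C\left\|\left(f\left(\tilde s_i^N\right)\right)\right\|_{\ell^\infty}$, uniformly in $N$. (Alternatively, one may invoke the Sherman--Morrison formula for this rank-one perturbation, the only issue being uniform control of the associated scalar denominator, which is again $2\pi-\left|\partial\Omega\right|\ip{\lambda^N}+\mathcal{O}\left(N^{-\kappa}\right)$.)

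The second step is the analogue of estimate \eqref{check gamma conv}. Writing $h=(A+\pi)^{-1}f$ and recalling that in the application $f=-2\pi\left[(u_P+\gamma H_*)\cdot n\right]\circ l$ has zero mean, one has $\int_{\partial\Omega}h=\frac1{2\pi}\int_{\partial\Omega}f=0$, hence $f=(A+\pi)h=\left(A-\lambda\ip\cdot+\pi\right)h$. Applying the matrix of \eqref{point toy alt alt} to the sampled vector $\left(h\left(\tilde s_i^N\right)\right)$ then reproduces $\left(f\left(\tilde s_i^N\right)\right)$ up to an $\ell^\infty$-error $\mathcal{O}\left(N^{-\kappa}\right)\|h\|_{C^\kappa}$: the $A_N$-part is treated exactly as in Proposition \ref{prop 32 alt} (via Corollary \ref{riemann2} and \eqref{mesh3}), while the extra term $\lambda^N\ip{\left(h\left(\tilde s_i^N\right)\right)}$ is negligible because $\ip{\left(h\left(\tilde s_i^N\right)\right)}$ is, up to the factor $\left|\partial\Omega\right|^{-1}$, a Riemann sum of $\int_{\partial\Omega}h=0$, hence $\mathcal{O}\left(N^{-\kappa}\right)\|h\|_{C^\kappa}$. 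Multiplying by the uniformly bounded inverse from the first step, and using $\|h\|_{C^\kappa}=\|(A+\pi)^{-1}f\|_{C^\kappa}\leq C\|f\|_{C^\kappa}$ (because $A$ is regularizing and $(A+\pi)^{-1}$ is bounded on $L^2$), yields $\left\|\frac{\check{\check\gamma}^N}{\left|\partial\Omega\right|}-\left((A+\pi)^{-1}f\left(s_i^N\right)\right)\right\|_{\ell^2}\leq CN^{-\kappa}\|f\|_{C^\kappa}$, after the harmless replacement of $\tilde s_i^N$ by $s_i^N$ permitted by \eqref{mesh3}. From this point, the remaining arguments in the proof of Proposition \ref{prop 32 alt} — expressing $\int\check{\check g}_{\rm app}^N\varphi=\frac1N\sum_j\check{\check\gamma}_j^N A^*\varphi\left(l\left(s_j^N\right)\right)$ and $\int\check{\check f}_{\rm app}^N\varphi=\frac1N\sum_j\check{\check\gamma}_j^N B^*\varphi\left(l\left(s_j^N\right)\right)$, and using $\|B^*\varphi\|_{C^\kappa}\leq C\|\varphi\|_{C^{\kappa+1}}$ — go through verbatim with $\check{\check\gamma}^N$ in place of $\check\gamma^N$, giving the two weak convergence estimates with the same right-hand sides as in Proposition \ref{prop 32 alt}.

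Finally, the conclusion follows exactly as in the proof of Theorem \ref{main theo alt}: rewrite the discrete fluid-charge kernel of $\check{\check u}_{\rm app}^N$ using identity \eqref{vortex identity}, and compare with the representation \eqref{boundary sheet omega 3} of $u_R$, which — since $(u_P+\gamma H_*)\cdot n$ has zero mean — coincides with \eqref{boundary sheet omega 4}, hence with \eqref{vortex identity 3}. Then $\check{\check u}_{\rm app}^N-u_R$ is expressed as boundary integrals of $\frac{(x-y)^\perp}{|x-y|^2}\left(\check{\check f}_{\rm app}^N-B(A+\pi)^{-1}f\right)(y)$ and $\frac{x-y}{|x-y|^2}\left(\check{\check g}_{\rm app}^N-A(A+\pi)^{-1}f\right)(y)$, to which the weak convergence estimates apply with test function $y\mapsto\frac{x-y}{|x-y|^2}$ for each fixed $x\in\Omega$ (bounded in $C^{\kappa+1}_y$ by $\sup_{y\in\partial\Omega}\left(|x-y|^{-1}+|x-y|^{-\kappa-2}\right)$); combined with the bound on $\|f\|_{C^\kappa}$ coming from $\operatorname{dist}\left(\operatorname{supp}\omega,\partial\Omega\right)>0$ and the assumed smoothness of $H_*\cdot n$, and a density argument from $C_c^{0,\alpha}$ to $L^1_c$, this gives the claimed estimate. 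The only genuinely new content relative to Theorem \ref{main theo alt} is the rank-one perturbation argument of the second paragraph; the main technical point to monitor throughout is the uniformity in $N$ of all constants, which rests on the uniform boundedness of $\left(\frac{\left|\partial\Omega\right|}N A_N+\pi\right)^{-1}$ from Proposition \ref{inverse D alt} and on the $\mathcal{O}\left(N^{-\kappa}\right)$ rate in the discrete mean identity.
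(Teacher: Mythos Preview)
Your proposal is correct and rests on the same core ideas as the paper: the rank-one perturbation structure of \eqref{point toy alt alt}, the discrete mean identity $\ip{\frac{|\partial\Omega|}N A_Nz+\pi z}=2\pi\ip z+\mathcal{O}(N^{-\kappa})\|z\|_{\ell^1}$, and the reduction to the machinery already built for Theorem \ref{main theo alt}.

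There is one organizational difference worth noting. For the weak convergence estimate, the paper (Proposition \ref{prop 32 alt alt}) first isolates the scalar $\ip{\check{\check\gamma}^N}$ and shows directly, via \eqref{mean identity} and the discrete mean identity, that $|\ip{\check{\check\gamma}^N}|\leq CN^{-\kappa}\|f\|_{C^\kappa}$; it then rewrites \eqref{point toy alt alt} as the \emph{unperturbed} system $(\frac1N A_N+\frac{\pi}{|\partial\Omega|})\check{\check\gamma}^N=(h^N(\tilde s_i^N))$ with $h^N=f+\lambda\ip{\check{\check\gamma}^N}$, and invokes Proposition \ref{prop 32 alt} as a black box for $h^N$. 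Your route is instead a direct consistency argument: apply the perturbed matrix to the sampled continuous solution $(h(\tilde s_i^N))$, $h=(A+\pi)^{-1}f$, and use $\int_{\partial\Omega}h=0$ to kill the $\lambda^N$-term up to $\mathcal{O}(N^{-\kappa})$. Both work; the paper's version is a bit more modular (Proposition \ref{prop 32 alt} is reused verbatim rather than its proof being partially replayed), while yours avoids the separate lemma on $\ip{\check{\check\gamma}^N}$. For the invertibility step, the paper (Proposition \ref{inverse D alt alt}) writes down the exact Sherman--Morrison-type inverse and checks the scalar denominator via \eqref{lambda convergence}; your approximate-inverse-plus-Neumann-series argument is an equivalent packaging of the same computation.
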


The justification of the above theorem is based on Propositions \ref{inverse D alt alt} and \ref{prop 32 alt alt}, which are established below. Using these results, the proof of the above theorem follows the same steps {\it mutatis mutandis} as the proof of Theorem \ref{main theo alt} and, so, we leave it to the reader.

The next result is a simple extension of Proposition \ref{inverse D alt} and allows us to solve \eqref{point toy alt alt}.

\begin{proposition}\label{inverse D alt alt}
	For any integer $N\geq 2$, consider a well-$*$ distributed mesh $(s_{1}^N,\dots , s_{N}^N)\in \mathbb{R}^N$, $(\tilde s_{1}^N, \dots , \tilde s_{N}^N)\in \mathbb{R}^N$ and let $\lambda$ be a smooth function such that $\ip{\lambda}\neq 2\pi$.
	
	Then, provided $N$ is sufficiently large, the operator $\frac 1N A_N-\lambda^N\ip{\cdot}+\frac{\pi}{\left|\partial\Omega\right|}\in \mathcal{L}\left(\ell^2\right)$ is invertible. Furthermore, its inverse operator is bounded in $\mathcal{L}\left(\ell^2\right)$ uniformly in $N$. It follows that, provided $N$ is sufficiently large, the following problem:
	\begin{equation*}
		z\in \R^N,\quad
		\left(\frac{1}N A_{N}
		-\lambda^N\ip{\cdot}+\frac{\pi}{\left|\partial\Omega\right|}\right)z=v,
	\end{equation*}
	has a unique solution for any given $v\in \R^{N}$. Moreover, this solution satisfies:
	\begin{equation*}
	\| z \|_{\ell^1}\leq \| z \|_{\ell^2}
	\leq
	C\| v \|_{\ell^2}
	\leq
	C\| v \|_{\ell^\infty},
	\end{equation*}
	for some independent constant $C>0$.
\end{proposition}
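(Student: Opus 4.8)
The plan is to treat the operator appearing in \eqref{point toy alt alt} as a rank-one perturbation of the discretized operator already inverted in Proposition \ref{inverse D alt}, and to invert it by a discrete Sherman--Morrison formula, in exact analogy with the continuous identity \eqref{bounded inverse}. I would write $P_N:=\frac 1N A_N+\frac{\pi}{\left|\partial\Omega\right|}=\frac{1}{\left|\partial\Omega\right|}\left(\frac{\left|\partial\Omega\right|}{N}A_N+\pi\right)$; by Proposition \ref{inverse D alt}, for all sufficiently large $N$ the operator $P_N\in\mathcal{L}\left(\ell^2\right)$ is invertible and $\left\|P_N^{-1}\right\|_{\mathcal{L}\left(\ell^2\right)}$ is bounded uniformly in $N$. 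Since $\lambda^N\ip{\cdot}$ is the rank-one operator $z\mapsto\ip{z}\lambda^N$, setting $w^N:=P_N^{-1}\lambda^N$ gives $\frac 1N A_N-\lambda^N\ip{\cdot}+\frac{\pi}{\left|\partial\Omega\right|}=P_N\left(I-w^N\ip{\cdot}\right)$, and I would invoke the elementary fact that $I-w^N\ip{\cdot}$ is invertible precisely when the scalar $1-\ip{w^N}$ does not vanish, in which case
\begin{equation*}
	\left(\frac 1N A_N-\lambda^N\ip{\cdot}+\frac{\pi}{\left|\partial\Omega\right|}\right)^{-1}v
	=P_N^{-1}v+\frac{\ip{P_N^{-1}v}}{1-\ip{w^N}}\,w^N,\qquad v\in\R^N.
\end{equation*}

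The heart of the argument is then to show that $1-\ip{w^N}$ stays bounded away from zero as $N\to\infty$, which is exactly where the hypothesis $\ip{\lambda}=\int_{\partial\Omega}\lambda\neq 2\pi$ is used. First I would record that, since the kernel $\frac{x-y}{|x-y|^2}\cdot n(x)$ of $A$ is smooth and $\int_{\partial\Omega}\frac{x-y}{|x-y|^2}\cdot n(x)\,dx=\pi$ for every $y\in\partial\Omega$ by \eqref{mean}, a uniform Riemann-sum estimate on the well-$*$ distributed mesh --- obtained from Corollary \ref{riemann2} and \eqref{mesh3} just as in the proof of \eqref{mean approx} and \eqref{average adjoint} --- yields $\left|\ip{\frac{\left|\partial\Omega\right|}{N}A_N z}-\pi\ip{z}\right|\leq\frac{C}{N^\kappa}\left\|z\right\|_{\ell^1}$ for all $z\in\R^N$. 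Applying $\ip{\cdot}$ to the identity $P_N w^N=\lambda^N$ and using $\left\|w^N\right\|_{\ell^1}\leq\left\|w^N\right\|_{\ell^2}=\left\|P_N^{-1}\lambda^N\right\|_{\ell^2}\leq C\left\|\lambda\right\|_{L^\infty\left(\partial\Omega\right)}$ together with the convergence of the Riemann sums $\ip{\lambda^N}=\frac1N\sum_{i=1}^N\lambda(\tilde x_i^N)\to\frac{1}{\left|\partial\Omega\right|}\int_{\partial\Omega}\lambda$, I would deduce $\frac{2\pi}{\left|\partial\Omega\right|}\ip{w^N}=\ip{\lambda^N}+\mathcal{O}\left(N^{-\kappa}\right)\to\frac{1}{\left|\partial\Omega\right|}\int_{\partial\Omega}\lambda$, hence $\ip{w^N}\to\frac{1}{2\pi}\ip{\lambda}\neq 1$. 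This makes the displayed formula for the inverse legitimate for $N$ large, establishing invertibility; uniqueness is then immediate.

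For the quantitative estimate I would simply read it off the inverse formula: combining the uniform bound on $P_N^{-1}$, the inequality $\left|\ip{P_N^{-1}v}\right|\leq\left\|P_N^{-1}v\right\|_{\ell^2}\leq C\left\|v\right\|_{\ell^2}$, the bound $\left\|w^N\right\|_{\ell^2}\leq C\left\|\lambda\right\|_{L^\infty\left(\partial\Omega\right)}$, and the lower bound on $\left|1-\ip{w^N}\right|$, I obtain $\left\|z\right\|_{\ell^1}\leq\left\|z\right\|_{\ell^2}\leq C\left\|v\right\|_{\ell^2}\leq C\left\|v\right\|_{\ell^\infty}$ for the solution $z$. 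I expect the only genuinely delicate step to be the uniform-in-$N$ lower bound on $\left|1-\ip{w^N}\right|$: once Proposition \ref{inverse D alt} is available the rest is bookkeeping, but the limit $\ip{w^N}\to\ip{\lambda}/(2\pi)$ has to be extracted with enough care to exploit exactly the hypothesis $\ip{\lambda}\neq 2\pi$ (and no more).
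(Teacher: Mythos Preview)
Your proposal is correct and follows essentially the same route as the paper: both treat the operator as a rank-one perturbation of $P_N=\frac 1N A_N+\frac{\pi}{\left|\partial\Omega\right|}$, derive the same Sherman--Morrison-type inverse formula (the paper's displayed formula for $z$ is identical to yours with $w^N=P_N^{-1}\lambda^N$), and obtain the key lower bound on $\left|1-\ip{w^N}\right|$ by the same Riemann-sum estimate $\left|\ip{\frac{\left|\partial\Omega\right|}{N}A_N z}-\pi\ip{z}\right|\leq CN^{-\kappa}\|z\|_{\ell^1}$ (the paper's \eqref{average estimate}, derived from $A^*1\equiv\pi$) leading to $\ip{w^N}\to\ip{\lambda}/(2\pi)$. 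The only cosmetic difference is that the paper first writes the equation for $\ip{z}$ and then reconstructs the inverse, whereas you factor $P_N(I-w^N\ip{\cdot})$ first; the substance is the same.
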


\begin{proof}
	In accordance with Proposition \ref{inverse D alt}, the operator $\left(\frac{1}N A_{N}+\frac{\pi}{\left|\partial\Omega\right|}\right)$ is invertible provided $N$ is large. Therefore, any solution $z\in\mathbb{R}^N$ of the above system has to satisfy
	\begin{equation*}
		z-\ip{z}
		\left(\frac{1}N A_{N}
		+\frac{\pi}{\left|\partial\Omega\right|}\right)^{-1}\lambda^N=\left(\frac{1}N A_{N}
		+\frac{\pi}{\left|\partial\Omega\right|}\right)^{-1}v,
	\end{equation*}
	whence
	\begin{equation}\label{mean identity}
		\ip{z}\left(1-
		\ip{\left(\frac{1}N A_{N}
		+\frac{\pi}{\left|\partial\Omega\right|}\right)^{-1}\lambda^N}\right)
		=
		\ip{\left(\frac{1}N A_{N}
		+\frac{\pi}{\left|\partial\Omega\right|}\right)^{-1}v}.
	\end{equation}
	It follows that, provided $\ip{\left(\frac{1}N A_{N}+\frac{\pi}{\left|\partial\Omega\right|}\right)^{-1}\lambda^N}\neq 1$, any solution has to be given by the formula
	\begin{equation*}
		z
		=
		\left(\frac{1}N A_{N}
		+\frac{\pi}{\left|\partial\Omega\right|}\right)^{-1}
		\left(v
		+\frac{\ip{\left(\frac{1}N A_{N}
		+\frac{\pi}{\left|\partial\Omega\right|}\right)^{-1}v}
		}{
		1-
		\ip{\left(\frac{1}N A_{N}
		+\frac{\pi}{\left|\partial\Omega\right|}\right)^{-1}\lambda^N}
		}
		\lambda^N
		\right),
	\end{equation*}
	which is a discrete analog to \eqref{bounded inverse}. It is straightforward to check that, for any given $v\in\mathbb{R}^N$, the above identity also provides a viable solution $z\in\mathbb{R}^N$.
	
	Therefore, there only remains to verify that $\ip{\left(\frac{1}N A_{N}+\frac{\pi}{\left|\partial\Omega\right|}\right)^{-1}\lambda^N}\neq 1$. To this end, observe first that $\left(\frac{\left|\partial\Omega\right|}{N}A_N^*\mathbf{1}\right)_j$, with $\mathbf{1}=\left(1,\ldots,1\right)\in\mathbb{R}^N$, for each $j=1,\ldots,N$, is a discretization of the integral $\left(A^*1\right)\left(l\left(s_{j}^N\right)\right)$. Since $A^*1\equiv\pi$, by \eqref{A1}, we conclude, by the uniform convergence of Riemann sums for smooth functions (see Corollary \ref{riemann2}), that
	\begin{equation*}
		\left\|\frac{\left|\partial\Omega\right|}{N}A_N^*\mathbf{1}-\pi\mathbf{1}\right\|_{\ell^\infty}=\mathcal{O}\left(N^{-\kappa}\right).
	\end{equation*}
	This elementary estimate is similar to \eqref{average adjoint}, the difference being that, here, the mesh is not well distributed and the control does not concern $B_N^*$. By duality, it follows that, for all $z\in\mathbb{R}^N$,
	\begin{equation}\label{average estimate}
		\left|\ip{\left(\frac{1}{N}A_N-\frac{\pi}{\left|\partial\Omega\right|}\right) z}\right|
		\leq \frac C{N^\kappa}\left\|z\right\|_{\ell^1},
	\end{equation}
	for some uniform constant $C>0$. Then, we deduce
		\begin{align*}
			\left|\ip{\lambda^N}-\frac{2\pi}{\left|\partial\Omega\right|}\ip{\left(\frac{1}N A_{N}+\frac{\pi}{\left|\partial\Omega\right|}\right)^{-1}\lambda^N}\right|
			\hspace{-10mm}&
			\\
			= &
			\left|\ip{\left(\frac{1}{N}A_N-\frac{\pi}{\left|\partial\Omega\right|}\right) \left(\frac{1}N A_{N}+\frac{\pi}{\left|\partial\Omega\right|}\right)^{-1}\lambda^N}\right|
			\\
			\leq & \frac C{N^\kappa}\left\|\left(\frac{1}N A_{N}+\frac{\pi}{\left|\partial\Omega\right|}\right)^{-1}\lambda^N	\right\|_{\ell^1},
		\end{align*}
	which, since $\left|\partial\Omega\right|\ip{\lambda^N}=\ip{\lambda}+o\left(1\right)$ by the convergence of Riemann sums for continuous functions, and by the uniform boundedness of $\left(\frac{1}N A_{N}+\frac{\pi}{\left|\partial\Omega\right|}\right)^{-1}\in\mathcal{L}\left(\ell^2\right)$ asserted in Proposition \ref{inverse D alt}, implies that
	\begin{equation}\label{lambda convergence}
		\left|\ip{\left(\frac{1}N A_{N}+\frac{\pi}{\left|\partial\Omega\right|}\right)^{-1}\lambda^N}
		-\frac{\ip{\lambda}}{2\pi}
		\right|
		=o(1).
	\end{equation}
	Recalling that $\ip{\lambda}\neq 2\pi$, we finally deduce that $\ip{\left(\frac{1}N A_{N}+\frac{\pi}{\left|\partial\Omega\right|}\right)^{-1}\lambda^N}\neq 1$ for large values of $N$.
	
	Since $\ip{\left(\frac{1}N A_{N}+\frac{\pi}{\left|\partial\Omega\right|}\right)^{-1}\lambda^N}$ is, in fact, uniformly bounded away from $1$, as $N\to\infty$, the uniform boundedness in $N$ of the inverse operator easily ensues, which concludes the proof of the proposition.
\end{proof}

It is also possible to obtain an extension of Proposition \ref{prop 32 alt} concerning the weak convergence of the discretization of the operator $A-\lambda\ip{\cdot}+\pi$.

\begin{proposition}\label{prop 32 alt alt}
	For any integer $N\geq 2$, consider a well-$*$ distributed mesh $(s_{1}^N,\dots , s_{N}^N)\in \mathbb{R}^N$, $(\tilde s_{1}^N, \dots , \tilde s_{N}^N)\in \mathbb{R}^N$ satisfying \eqref{mesh3} and, according to Proposition \ref{inverse D alt alt}, consider the solution $\check{\check\gamma}^N=(\check{\check\gamma}_{1}^N,\dots, \check{\check\gamma}_{N}^N)\in\mathbb{R}^N$ to the system \eqref{point toy alt alt} for some periodic function $f\in C^{\kappa}\left(\left[0,\left|\partial\Omega\right|\right]\right)$, where $\kappa \geq 2$, with zero mean value $\int_0^{\left|\partial\Omega\right|} f(s)ds=0$ and some smooth function $\lambda$ such that $\ip{\lambda}\neq 2\pi$. We define the approximations
		\begin{align*}
			\check{\check f}_{\rm app}^N(s) & :=
			\frac1{N}\sum_{j=1}^N \check{\check\gamma}_{j}^N \frac{l\left(s\right) - l\left(s_{j}^N\right)}
			{\left|l\left(s\right) - l\left(s_{j}^N\right)\right|^2}\cdot \tau\left(l\left(s\right)\right),
			\\
			\check{\check g}_{\rm app}^N(s) & :=
			\frac1{N}\sum_{j=1}^N \check{\check\gamma}_{j}^N \frac{l\left(s\right) - l\left(s_{j}^N\right)}
			{\left|l\left(s\right) - l\left(s_{j}^N\right)\right|^2}\cdot n\left(l\left(s\right)\right).
		\end{align*}

	Then, for any periodic test function $\varphi\in C^{\infty}\left(\left[0,\left|\partial\Omega\right|\right]\right)$,
		\begin{align*}
			\left|\int_{0}^{\left|\partial\Omega\right|} \left(\check{\check f}_{\rm app}^N - B\left(A+\pi\right)^{-1}f \right)\varphi\right|
			& \leq
			\frac{C}{N^{\kappa}}
			\left\|f\right\|_{C^{\kappa}}
			\left\|\varphi\right\|_{C^{\kappa+1}},
			\\
			\left|\int_{0}^{\left|\partial\Omega\right|} \left(\check{\check g}_{\rm app}^N - A\left(A+\pi\right)^{-1}f \right)\varphi\right|
			& \leq
			\frac{C}{N^{\kappa}}
			\left\|f\right\|_{C^{\kappa}}
			\left\|\varphi\right\|_{L^2},
		\end{align*}
	where we identify the variable $x$ with the variable $s$ whenever $x=l(s)\in\partial\Omega$ and the singular integrals are defined in the sense of Cauchy's principal value.
\end{proposition}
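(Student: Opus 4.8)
The plan is to reduce Proposition \ref{prop 32 alt alt} to Proposition \ref{prop 32 alt} by showing that the solution $\check{\check\gamma}^N$ of the modified system \eqref{point toy alt alt} is close, with the right rate, to the solution $\check\gamma^N$ of the unmodified system \eqref{point toy alt}. Concretely, I would first recall from the proof of Proposition \ref{inverse D alt alt} (see the identity preceding \eqref{lambda convergence}) that the averaging estimate \eqref{average estimate} applies on well-$*$ distributed meshes, so that $\left|\ip{(\frac 1N A_N - \frac{\pi}{|\partial\Omega|})z}\right|\leq \frac{C}{N^\kappa}\|z\|_{\ell^1}$. Since $f$ has zero mean, the discrete mean $\ip{(f(\tilde s_i^N))_i}$ is, by Corollary \ref{riemann2}, of size $\mathcal{O}(N^{-\kappa})$; plugging this into the explicit formula for $\check{\check\gamma}^N$ from the proof of Proposition \ref{inverse D alt alt}, namely
\begin{equation*}
	\check{\check\gamma}^N
	=
	\left(\tfrac{1}N A_{N}
	+\tfrac{\pi}{\left|\partial\Omega\right|}\right)^{-1}
	\left(\left(f(\tilde s_i^N)\right)_i
	+\frac{\ip{\left(\frac{1}N A_{N}
	+\frac{\pi}{\left|\partial\Omega\right|}\right)^{-1}\left(f(\tilde s_i^N)\right)_i}
	}{
	1-
	\ip{\left(\frac{1}N A_{N}
	+\frac{\pi}{\left|\partial\Omega\right|}\right)^{-1}\lambda^N}
	}
	\lambda^N
	\right),
\end{equation*}
and comparing with $\check\gamma^N = (\frac 1N A_N+\frac{\pi}{|\partial\Omega|})^{-1}(f(\tilde s_i^N))_i$, I would conclude that $\|\check{\check\gamma}^N-\check\gamma^N\|_{\ell^2}\leq \frac{C}{N^\kappa}\|f\|_{C^\kappa}$, using the uniform boundedness of $(\frac 1N A_N+\frac{\pi}{|\partial\Omega|})^{-1}$ in $\mathcal{L}(\ell^2)$ from Proposition \ref{inverse D alt} and the fact that $\lambda^N$, $\frac{1}N A_N\lambda^N$ are bounded in $\ell^2$. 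Here the denominator $1-\ip{(\frac 1N A_N+\frac{\pi}{|\partial\Omega|})^{-1}\lambda^N}$ is bounded away from zero by \eqref{lambda convergence} since $\ip{\lambda}\neq 2\pi$.

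Once this $\ell^2$-closeness of the two density vectors is in hand, the estimates for $\check{\check f}_{\rm app}^N$ and $\check{\check g}_{\rm app}^N$ follow by writing, exactly as in the proof of Proposition \ref{prop 32 alt},
\begin{equation*}
	\int_0^{|\partial\Omega|}\check{\check g}_{\rm app}^N\varphi
	=\frac 1N\sum_{j=1}^N\check{\check\gamma}_j^N\, A^*\varphi\left(l(s_j^N)\right),
	\qquad
	\int_0^{|\partial\Omega|}\check{\check f}_{\rm app}^N\varphi
	=\frac 1N\sum_{j=1}^N\check{\check\gamma}_j^N\, B^*\varphi\left(l(s_j^N)\right),
\end{equation*}
and splitting the difference $\check{\check g}_{\rm app}^N - A(A+\pi)^{-1}f$ into two pieces: one involving $\check{\check\gamma}^N - \check\gamma^N$ (controlled by the $\ell^2$ bound just obtained together with $\|A^*\varphi\|_{L^\infty}\leq C\|\varphi\|_{L^2}$, since $A^*$ is regularizing), and one which is precisely $\check g_{\rm app}^N - A(A+\pi)^{-1}f$ and is handled verbatim by Proposition \ref{prop 32 alt}. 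The same decomposition with $B^*\varphi$ in place of $A^*\varphi$ handles $\check{\check f}_{\rm app}^N$, the regularity $\|B^*\varphi\|_{C^\kappa}\leq C\|\varphi\|_{C^{\kappa+1}}$ being the one already established at the end of the proof of Proposition \ref{prop 32 alt}.

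The only point requiring a small amount of care — and hence the main potential obstacle — is making sure that the mean-correction term does not degrade the convergence rate: the numerator $\ip{(\frac 1N A_N+\frac{\pi}{|\partial\Omega|})^{-1}(f(\tilde s_i^N))_i}$ must be shown to be $\mathcal{O}(N^{-\kappa})$ rather than merely $o(1)$, which is where one crucially uses both the zero-mean hypothesis on $f$ and the sharp averaging estimate \eqref{average estimate}. More precisely, since $A(A+\pi)^{-1}f$ has zero mean (as $f$ does and $A$ preserves the mean of $L^2_0$ functions by \eqref{mean}), one writes $\ip{(\frac 1N A_N+\frac{\pi}{|\partial\Omega|})^{-1}(f(\tilde s_i^N))_i}$ in terms of $\ip{((A+\pi)^{-1}f(\tilde s_i^N))_i}$ plus a remainder controlled by \eqref{average estimate} and \eqref{check gamma conv}, and the former is a Riemann sum of $\int_{\partial\Omega}(A+\pi)^{-1}f = \frac1\pi\int_{\partial\Omega}f - \frac1\pi\int_{\partial\Omega}A(A+\pi)^{-1}f = 0$, hence $\mathcal{O}(N^{-\kappa})$ by Corollary \ref{riemann2} applied to the smooth integrand $(A+\pi)^{-1}f$. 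With this observation the proof is complete; everything else is a routine repetition of the arguments already carried out for Proposition \ref{prop 32 alt}.
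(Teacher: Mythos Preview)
Your proposal is correct and follows essentially the same strategy as the paper: both proofs control the mean-correction term by showing $\left|\ip{(\tfrac 1N A_N+\tfrac{\pi}{|\partial\Omega|})^{-1}(f(\tilde s_i^N))_i}\right|=\mathcal{O}(N^{-\kappa}\|f\|_{C^\kappa})$ (the paper via the algebraic identity $\ip{z}-\tfrac{|\partial\Omega|}{2\pi}\ip{(\tfrac 1N A_N+\tfrac{\pi}{|\partial\Omega|})z}=\ip{-\tfrac{|\partial\Omega|}{2\pi}(\tfrac 1N A_N-\tfrac{\pi}{|\partial\Omega|})z}$ combined with \eqref{average estimate}, you via \eqref{check gamma conv} and the zero-mean of $(A+\pi)^{-1}f$), and then reduce to Proposition~\ref{prop 32 alt}. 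The only cosmetic difference is that the paper rewrites the system as $(\tfrac 1N A_N+\tfrac{\pi}{|\partial\Omega|})\check{\check\gamma}^N=(h^N(\tilde s_i^N))_i$ with $h^N=f+\lambda\ip{\check{\check\gamma}^N}$ and applies Proposition~\ref{prop 32 alt} to $h^N$, whereas you bound $\|\check{\check\gamma}^N-\check\gamma^N\|_{\ell^2}$ and treat the difference directly; these are equivalent reorganizations of the same argument.
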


\begin{proof}
	First, we estimate, by \eqref{average estimate} and by the uniform boundedness of the operator $\left(\frac{\left|\partial\Omega\right|}{N} A_N+\pi\right)^{-1}$ in $\mathcal{L}\left(\ell^2\right)$ established in Proposition \ref{inverse D alt},
		\begin{align*}
			\left|
			\ip{\left(\frac{1}N A_{N}
			+\frac{\pi}{\left|\partial\Omega\right|}\right)^{-1}\left(f\left(\tilde s_i^N\right)\right)_{1\leq i\leq N}}
			-\frac{\left|\partial\Omega\right|}{2\pi}\ip{\left(f\left(\tilde s_i^N\right)\right)_{1\leq i\leq N}}\right|
			\hspace{-80mm} &
			\\
			= &
			\left|
			\ip{
			\frac{\left|\partial\Omega\right|}{2\pi}
			\left(\frac{1}N A_{N}
			-\frac{\pi}{\left|\partial\Omega\right|}\right)
			\left(\frac{1}N A_{N}
			+\frac{\pi}{\left|\partial\Omega\right|}\right)^{-1}\left(f\left(\tilde s_i^N\right)\right)_{1\leq i\leq N}}
			\right|
			\\
			\leq & \frac{C}{N^\kappa}
			\left\|
			\left(\frac{1}N A_{N}
			+\frac{\pi}{\left|\partial\Omega\right|}\right)^{-1}\left(f\left(\tilde s_i^N\right)\right)_{1\leq i\leq N}
			\right\|_{\ell^1}
			\\
			\leq & \frac{C}{N^\kappa}
			\left\|
			\left(f\left(\tilde s_i^N\right)\right)_{1\leq i\leq N}
			\right\|_{\ell^2}
			\leq
			\frac{C}{N^\kappa}\left\|f\right\|_{L^\infty},
		\end{align*}
	whence, since $f$ has zero mean value over $\partial\Omega$, by the convergence of Riemann sums for smooth functions (see Corollary \ref{riemann2}),
	\begin{equation*}
		\left|
		\ip{\left(\frac{1}N A_{N}
		+\frac{\pi}{\left|\partial\Omega\right|}\right)^{-1}\left(f\left(\tilde s_i^N\right)\right)_{1\leq i\leq N}}
		\right|
		\leq\frac{C}{N^{\kappa}}\left\|f\right\|_{C^{\kappa}}.
	\end{equation*}
	Further using \eqref{mean identity} and recalling from \eqref{lambda convergence} that $\ip{\left(\frac{1}N A_{N}+\frac{\pi}{\left|\partial\Omega\right|}\right)^{-1}\lambda^N}$ remains bounded away from $1$, as $N\to\infty$, we conclude that
	\begin{equation}\label{mean alt alt}
		\left|
		\ip{\check{\check\gamma}^N}
		\right|
		\leq\frac{C}{N^{\kappa}}\left\|f\right\|_{C^{\kappa}}.
	\end{equation}
	
	Next, according to \eqref{point toy alt alt}, it holds that
	\begin{equation*}
		\left(\frac 1N A_N+\frac{\pi}{\left|\partial\Omega\right|}\right)\check{\check\gamma}^N
		= \left(h^N\left(\tilde s_i^N\right)\right)_{1\leq i\leq N},
	\end{equation*}
	where the periodic function $h^N\in C^{\kappa}\left(\left[0,\left|\partial\Omega\right|\right]\right)$, for each $N$, is defined by $h^N(s)=f(s)+\lambda\left(l(s)\right)\ip{\check{\check\gamma}^N}$, for all $s\in\left[0,\left|\partial\Omega\right|\right]$. Therefore, by Proposition \ref{prop 32 alt}, we infer that
		\begin{align*}
			\left|\int_{0}^{\left|\partial\Omega\right|} \left(\check{\check f}_{\rm app}^N - B\left(A+\pi\right)^{-1}h^N \right)\varphi\right|
			& \leq
			\frac{C}{N^{\kappa}}
			\left\|h^N\right\|_{C^{\kappa}}
			\left\|\varphi\right\|_{C^{\kappa+1}},
			\\
			\left|\int_{0}^{\left|\partial\Omega\right|} \left(\check{\check g}_{\rm app}^N - A\left(A+\pi\right)^{-1}h^N \right)\varphi\right|
			& \leq
			\frac{C}{N^{\kappa}}
			\left\|h^N\right\|_{C^{\kappa}}
			\left\|\varphi\right\|_{L^2},
		\end{align*}
	which, by boundedness of $A$, $B$ and $(A+\pi)^{-1}$ over $L^2\left(\partial\Omega\right)$, implies that
		\begin{align*}
			\left|\int_{0}^{\left|\partial\Omega\right|} \left(\check{\check f}_{\rm app}^N - B\left(A+\pi\right)^{-1}f \right)\varphi\right|
			& \leq C
			\left(\frac{1}{N^{\kappa}}
			\left\|f\right\|_{C^{\kappa}}
			+\left|\ip{\check{\check\gamma}^N}\right|
			\right)
			\left\|\varphi\right\|_{C^{\kappa+1}},
			\\
			\left|\int_{0}^{\left|\partial\Omega\right|} \left(\check{\check g}_{\rm app}^N - A\left(A+\pi\right)^{-1}f \right)\varphi\right|
			& \leq C
			\left(\frac{1}{N^{\kappa}}
			\left\|f\right\|_{C^{\kappa}}
			+\left|\ip{\check{\check\gamma}^N}\right|
			\right)
			\left\|\varphi\right\|_{L^2}.
		\end{align*}
	Finally, incorporating \eqref{mean alt alt} into the preceding estimate completes the proof of the proposition.
\end{proof}

\subsection{Good conditioning of discretized systems}\label{diagonal}

Theorem \ref{main theo alt} provides an alternative method for building approximate flows which may, in many cases, yield efficient numerical methods outperforming the corresponding methods based on Theorem \ref{main theo}. Indeed, Theorem \ref{main theo alt} requires the resolution of systems given by the matrices $\frac 1N A_N+\frac{\pi}{\left|\partial\Omega\right|}$ (for a well-$*$ distributed mesh). The fact that the coefficients of $\frac 1N A_N+\frac{\pi}{\left|\partial\Omega\right|}$ on its diagonal are of order $\mathcal{O}(1)$ and, thus, dominate those off the diagonal, which are of order $\mathcal{O}\left(N^{-1}\right)$, guarantees good conditioning properties, which allow to solve the corresponding systems with good numerical accuracy.

More precisely, supposing for instance that $\partial\Omega$ is strictly convex so that the kernel of the operator $A$, defined in \eqref{AB}, satisfies $\frac{x-y}{|x-y|^2}\cdot n(x)\geq C_0 > 0$, for all $x,y\in\partial\Omega$ and for some $C_0>0$, we see that, for each given $j=1,\ldots, N$, according to \eqref{A1} and Corollary \ref{riemann2}, for sufficiently large $N$,
\begin{equation}\label{strict convex}
	\begin{aligned}
		\sum_{\substack{1\leq i\leq N \\ i\neq j}}\Bigg|\Bigg(\frac{\left|\partial\Omega\right|} N  &A_N+\pi\Bigg)_{ij}\Bigg|
		= 
		\frac{\left|\partial\Omega\right|}N \sum_{\substack{1\leq i\leq N \\ i\neq j}}\left(A_N\right)_{ij}
		\\
		= & \int_{\partial\Omega} \frac{x-x_j^N}{\left|x-x_j^N\right|^2}\cdot n(x) dx +\mathcal{O}\left(N^{-2}\right)
		-\frac{\left|\partial\Omega\right|}N \left(A_N\right)_{jj}
		\\
		\leq & \int_{\partial\Omega} \frac{x-x_j^N}{\left|x-x_j^N\right|^2}\cdot n(x) dx +
		\underbrace{\mathcal{O}\left(N^{-2}\right)
		-\frac{C_0\left|\partial\Omega\right|}N}_{<0 \text{ for large }N}
		\\
		< & \int_{\partial\Omega} \frac{x-x_j^N}{\left|x-x_j^N\right|^2}\cdot n(x) dx
		=\pi < \left|\left(\frac{\left|\partial\Omega\right|}N A_N+\pi\right)_{jj}\right|.
	\end{aligned}
\end{equation}
In other words, whenever $\partial\Omega$ is striclty convex, the matrix $\frac 1N A_N+\frac{\pi}{\left|\partial\Omega\right|}$ is strictly diagonally dominant with respect to columns, which opens the door to efficient and accurate numerical resolution methods for the corresponding systems. In particular, the $LU$ decomposition exists and no pivoting is necessary in Gaussian elimination (see \cite[Section 4.1]{golub}).

Such diagonal dominance properties never hold for the methods based on Theorem \ref{main theo}, which require the resolution of large systems whose coefficients stem from the nonintegrable kernel of the singular operator $B$ defined in \eqref{AB} and are therefore prone to large numerical errors.

Like Theorem \ref{main theo alt}, Theorem \ref{main theo alt alt} provides an alternative method for building approximate flows. In fact, Theorem \ref{main theo alt alt} is more general and reduces to Theorem \ref{main theo alt} by setting $\lambda\equiv 0$ therein. Numerically, the extra degree of freedom provided by the parameter $\lambda$ is significant, for it may lead, in numerous case (depending on the geometry of $\partial\Omega$), to large linear systems whose coefficient matrices can be better conditioned with an appropriate choice of $\lambda$. As previously mentioned, the case $\lambda\equiv 0$ is sufficient to produce well conditioned systems in strictly convex geometries (recall that $\frac 1N A_N+\frac{\pi}{\left|\partial\Omega\right|}$ is strictly diagonally dominant with respect to columns whenever $\partial\Omega$ is strictly convex; see \eqref{strict convex}). Now, we are also able to handle some non-convex geometries by appropriately setting $\lambda\not\equiv 0$.

Indeed, let us suppose, for instance, that the geometry of $\partial\Omega$ is such that the following analytical condition is satisfied:
\begin{equation}\label{non convex condition}
	\sup_{y\in\partial\Omega}\int_{\partial\Omega} \left| \frac{x-y}{\left|x-y\right|^2}\cdot n(x)-\lambda(x) \right| dx
	<\pi,
\end{equation}
for some smooth $\lambda$. Observe that, by \eqref{A1}, it necessarily holds that $0<\ip{\lambda}<2\pi$. Then, we see, for each given $j=1,\ldots, N$, according to \eqref{A1} and Corollary \ref{riemann2}, for sufficiently large $N$, that
	\begin{align*}
		\sum_{\substack{1\leq i\leq N \\ i\neq j}}
		\left|\left(\frac{\left|\partial\Omega\right|}N A_N
		-\left|\partial\Omega\right|\lambda^N\ip{\cdot}+\pi\right)_{ij}\right|
		\hspace{-40mm} &
		\\
		= &
		\frac{\left|\partial\Omega\right|}N \sum_{\substack{1\leq i\leq N \\ i\neq j}}
		\left|\left(A_N\right)_{ij}-\lambda_i^N\right|
		\\
		= & \int_{\partial\Omega} \left| \frac{x-x_j^N}{\left|x-x_j^N\right|^2}\cdot n(x)-\lambda(x) \right| dx +\mathcal{O}\left(N^{-1}\right)
		- \frac{\left|\partial\Omega\right|}N
		\left|\left(A_N\right)_{jj}-\lambda_j^N\right|
		\\
		< & \pi - \frac{\left|\partial\Omega\right|}N
		\left|\left(A_N\right)_{jj}-\lambda_j^N\right|
		\leq \left|\frac{\left|\partial\Omega\right|}N\left(\left( A_N\right)_{jj}-\lambda_j^N\right)+\pi\right|
		\\
		= & \left|\left(\frac{\left|\partial\Omega\right|}N A_N
		-\left|\partial\Omega\right|\lambda^N\ip{\cdot}+\pi\right)_{jj}\right|.
	\end{align*}
In other words, the matrix $\frac 1N A_N-\lambda^N\ip{\cdot}+\frac{\pi}{\left|\partial\Omega\right|}$ is strictly diagonally dominant with respect to columns, for large $N$, as soon as \eqref{non convex condition} is satisfied. Again, we insist on the fact that this property leads to well conditioned systems and thus a significant potential improvement of the corresponding numerical resolution.

\subsection{Geometric interpretation of \eqref{non convex condition}}

For simplicity, we first consider the case where $\lambda(x)$ in \eqref{non convex condition} is identically equal to a constant which we also denote by $0<\lambda<\frac{2\pi}{\left|\partial\Omega\right|}$.

Then, it is readily seen that the $L^1$-condition \eqref{non convex condition} is implied by the stricter $L^2$-condition
\begin{equation*}
	\sup_{y\in\partial\Omega}\int_{\partial\Omega} \left( \frac{x-y}{\left|x-y\right|^2}\cdot n(x)-\lambda \right)^2 dx
	<\frac{\pi^2}{\left|\partial\Omega\right|},
\end{equation*}
whose left-hand side is quadratic in $\lambda$ and therefore minimized, in view of \eqref{A1}, by the value $\lambda=\frac{\pi}{\left|\partial\Omega\right|}$. It follows that \eqref{non convex condition} holds with $\lambda(x)\equiv \frac{\pi}{\left|\partial\Omega\right|}$ provided
\begin{equation*}
	\sup_{y\in\partial\Omega}\int_{\partial\Omega} \left( \frac{x-y}{\left|x-y\right|^2}\cdot n(x) \right)^2 dx
	<\frac{2\pi^2}{\left|\partial\Omega\right|},
\end{equation*}
or, even more stringently,
\begin{equation}\label{non convex condition 2}
	\sup_{x,y\in\partial\Omega} \left| \frac{x-y}{\left|x-y\right|^2}\cdot n(x) \right|
	< \sqrt{2}\frac{\pi}{\left|\partial\Omega\right|}.
\end{equation}

Notice that $\frac{x-y}{\left|x-y\right|^2}\cdot n(x)=\frac{\pi}{\left|\partial\Omega\right|}$, for all $x,y\in\partial\Omega$, if $\partial\Omega$ is a circle. Therefore, we may interpret the preceding conditions with $\lambda(x)\equiv\frac{\pi}{\left|\partial\Omega\right|}$ as a requirement that $\partial\Omega$ does not deviate too much from a circle of equal circumference.

More precisely, for any $R\in\mathbb{R}\setminus\left\{0\right\}$, one easily verifies that the constraint
\begin{equation}\label{constraint 1}
	\frac{x-y}{\left|x-y\right|^2}\cdot n(x)=\frac{1}{2R},
\end{equation}
is equivalent to the relation
\begin{equation}\label{constraint 2}
	\left|y-\left(x-Rn(x)\right)\right|=|R|.
\end{equation}
Recalling that $\frac{x-y}{\left|x-y\right|^2}\cdot n(x)$, for each fixed $y\in\partial\Omega$, has an average value of $\frac\pi{\left|\partial\Omega\right|}$ over $x\in\partial\Omega$, we further introduce $R_{\rm sup}\in \left(0,\frac{\left|\partial\Omega\right|}{2\pi}\right]$ and $R_{\rm inf}\in \left(-\infty,0\right)\cup\left[\frac{\left|\partial\Omega\right|}{2\pi},\infty\right]$ defined by
	\begin{align*}
		\sup_{x,y\in\partial\Omega} \left(\frac{x-y}{\left|x-y\right|^2}\cdot n(x)\right) & =\frac 1{2R_{\rm sup}},
		\\
		\inf_{x,y\in\partial\Omega} \left(\frac{x-y}{\left|x-y\right|^2}\cdot n(x)\right) & =\frac 1{2R_{\rm inf}}.
	\end{align*}
Observe that $R_{\rm inf}\in \left[\frac{\left|\partial\Omega\right|}{2\pi},\infty\right]$ if $\overline\Omega^c$ is convex, whereas $R_{\rm inf}\in \left(-\infty,0\right)$ if $\overline\Omega^c$ is non-convex. In view of the equivalence between between \eqref{constraint 1} and \eqref{constraint 2}, we have the following properties:
\begin{itemize}
	\item $R_{\rm sup}$ is the largest radius $R>0$ such that, for each $x\in\partial\Omega$, the domain $\overline{\Omega}^c$ contains an open ball of radius $R$ tangent to $\partial\Omega$ at $x$,
	\item if $\overline\Omega^c$ is convex, $R_{\rm inf}$ is the smallest radius $R>0$ such that, for each $x\in\partial\Omega$, the domain $\overline{\Omega}^c$ is contained in an open ball of radius $R$ tangent to $\partial\Omega$ at $x$,
	\item if $\overline\Omega^c$ is non-convex, $R_{\rm inf}$ is negative and $|R_{\rm inf}|$ is the largest radius $R>0$ such that, for each $x\in\partial\Omega$, the exterior domain $\Omega$ contains an open ball of radius $R$ tangent to $\partial\Omega$ at $x$.
\end{itemize}

Thus, we arrive at the following geometric interpretation: condition \eqref{non convex condition 2} holds if and only if there exists $\frac{\left|\partial\Omega\right|}{2\pi\sqrt{2}}<R\leq \frac{\left|\partial\Omega\right|}{2\pi}$ such that, for each $x\in\partial\Omega$, there are two open balls of radius $R$, one contained in $\Omega$ and the other in $\overline{\Omega}^c$, both tangent to $\partial\Omega$ at $x$ (note that $R>\frac{\left|\partial\Omega\right|}{2\pi}$ is not possible by the isoperimetric inequality). This criterion includes a large variety of non-convex geometries.

\bigskip

The preceding analysis can also offer a more general geometric interpretation of \eqref{non convex condition} with non-constant parameters $\lambda(x)$. For instance, considering the following reasonable choice of parameter
\begin{equation*}
	\lambda(x)=(1-\sigma)\sup_{y\in\partial\Omega} \left(\frac{x-y}{\left|x-y\right|^2}\cdot n(x)\right)
	+\sigma
	\inf_{y\in\partial\Omega} \left(\frac{x-y}{\left|x-y\right|^2}\cdot n(x)\right),
\end{equation*}
for some given $0\leq\sigma\leq 1$, it is readily seen that \eqref{non convex condition} holds provided that
	\begin{align*}
		(1-\sigma)
		\left(\int_{\partial\Omega}
		\sup_{y\in\partial\Omega} \left(\frac{x-y}{\left|x-y\right|^2}\cdot n(x)\right)
		dx
		-\pi\right)
		\hspace{-20mm}&
		\\
		& +\sigma\left(\pi
		-\int_{\partial\Omega}
		\inf_{y\in\partial\Omega} \left(\frac{x-y}{\left|x-y\right|^2}\cdot n(x)\right)
		dx\right)
		<\pi,
	\end{align*}
or, even more stringently,
\begin{equation}\label{non convex condition 3}
	(1-\sigma)\left(\frac 1{2R_{\rm sup}}-\frac{\pi}{\left|\partial\Omega\right|}\right)
	+\sigma\left(\frac{\pi}{\left|\partial\Omega\right|}-\frac 1{2R_{\rm inf}}\right)
	<\frac{\pi}{\left|\partial\Omega\right|}.
\end{equation}

Note, then, that setting $\sigma=1$ reduces the above condition to the simple requirement that $\partial\Omega$ be strictly convex, i.e.\ $R_{\rm inf}>0$, whereas the value $\sigma=0$ yields the new criterion
\begin{equation}\label{non convex condition 4}
	R_{\rm sup}> \frac{\left|\partial\Omega\right|}{4\pi},
\end{equation}
which is much less restrictive than \eqref{non convex condition 2}. The geometric interpretation of \eqref{non convex condition 4} is as follows: condition \eqref{non convex condition 4} holds if and only if there exists $\frac{\left|\partial\Omega\right|}{4\pi}<R\leq \frac{\left|\partial\Omega\right|}{2\pi}$ such that, for each $x\in\partial\Omega$, there is an open ball of radius $R$ contained in $\overline{\Omega}^c$ and tangent to $\partial\Omega$ at $x$.

Finally, other values $0<\sigma<1$ in \eqref{non convex condition 3} can be loosely interpreted as an interpolation of the geometric conditions for $\sigma=0$ and $\sigma=1$.

\subsection{Dynamic convergence of the fluid charge approximation}

We end this section on the fluid charge method by providing dynamic theorems which are analog to Theorems \ref{wellposedness} and \ref{main convergence} on the vortex method.

To this end, for any prescribed smooth $\lambda$ such that $\ip{\lambda}\neq 2\pi$ and for sufficiently large integers $N$ (at least as large as $N_0$ determined by Theorem \ref{main theo alt alt} so that \eqref{point vortex alt alt} is invertible; see also Proposition \ref{inverse D alt alt}), we consider the approximate system
\begin{equation}\label{dynamic alt}
	\left\{
	\begin{aligned}
		& \partial_{t} \check{\check \omega}^N + \check{\check u}^N\cdot \nabla \check{\check \omega}^N =0,
		\\
		& \check{\check \omega}^N(t=0) = \omega_0,
	\end{aligned}
	\right.
\end{equation}
for some initial data $\omega_0\in C_c^1\left(\Omega\right)$ extended by zero outside $\Omega$ and with a velocity flow
\begin{equation*}
	\check{\check u}^N=K_{\mathbb{R}^2}\left[\check{\check\omega}^N\right]+\check{\check u}_{\rm app}^N\left[\check{\check\omega}^N,\gamma\right],
\end{equation*}
where $\check{\check u}_{\rm app}^N\left[\check{\check\omega}^N,\gamma\right]$ is given by \eqref{approx alt alt}-\eqref{point vortex alt alt}, for some prescribed $\gamma\in\mathbb{R}$ and where $u_P$ in the right-hand side of \eqref{point vortex alt alt} is now $K_{\mathbb{R}^2}\left[\check{\check\omega}^N\right]$.

By repeating the arguments leading up to Theorem \ref{wellposedness}, we deduce the following corresponding result. Its proof only requires slight adaptations from the proof of Theorem \ref{wellposedness} and so we omit it.

\begin{theorem}\label{wellposedness alt}
	Let $\omega_0\in C_c^1\left(\Omega\right)$, $\gamma\in\mathbb{R}$, a smooth function $\lambda$ be such that $\ip{\lambda}\neq 2\pi$ and consider any fixed time $t_1>0$. Then, for a well-$*$ distributed mesh on $\partial \Omega$, there exists $N_1\geq N_0$ ($N_0$ is determined in Theorem \ref{main theo alt alt}) such that, for any $N\geq N_1$, there is a unique classical solution $\check{\check\omega}^N\in C^1_c\left([0,t_1]\times \Omega\right)$ to \eqref{dynamic alt}. Moreover, the sequence of solutions $\left\{\check{\check\omega}^N\right\}_{N\geq N_1}$ is uniformly bounded in $C^1_c\left([0,t_1]\times \Omega\right)$.
\end{theorem}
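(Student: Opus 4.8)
\textbf{Proof proposal for Theorem~\ref{wellposedness alt}.}

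The plan is to mirror, essentially verbatim, the four-step scheme used in the proof of Theorem~\ref{wellposedness}, replacing the vortex approximation $u_{\rm app}^N[\,\cdot\,,\gamma]$ by the fluid charge approximation $\check{\check u}_{\rm app}^N[\,\cdot\,,\gamma]$ defined in \eqref{approx alt alt}--\eqref{point vortex alt alt}, and invoking the static estimate Theorem~\ref{main theo alt alt} wherever Theorem~\ref{main theo} was used. First I would record the analogs of the basic velocity bounds: the estimates \eqref{mu 1} and \eqref{mu 3} on $K_{\mathbb{R}^2}$ are geometry-independent and carry over unchanged, while \eqref{mu 2} and \eqref{mu 4} must be re-derived for $\check{\check u}_{\rm app}^N$. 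For this one uses the $\ell^1$-bound $\|\check{\check\gamma}^N\|_{\ell^1}\le C\|v\|_{\ell^\infty}$ from Proposition~\ref{inverse D alt alt} (with $v=(f(\tilde s_i^N))_i$, $f=-2\pi[(u_P+\gamma H_*)\cdot n]\circ l$) exactly as \eqref{est l1} was used in the vortex case, together with the convergence of Riemann sums for $C^{0,\alpha}$ functions; note $H_*$ and all its derivatives are continuous up to $\partial\Omega$ by hypothesis, so the extra term $\gamma H_*(x)$ in \eqref{approx alt alt} is harmless on any compact $K\subset\Omega$. This yields $\|\check{\check u}_{\rm app}^N[\omega,\gamma]\|_{L^\infty(K)}\le C\operatorname{dist}(K,\partial\Omega)^{-1}(\|\omega\|_{L^1\cap L^\infty}+|\gamma|)$ and the corresponding gradient bound with $\operatorname{dist}(K,\partial\Omega)^{-2}$.

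Next I would prove the analog of Proposition~\ref{linear wellposedness}: for a well-$*$ distributed mesh, provided $\eps$ is small and $N\ge N_1$, the linear transport equation with velocity $u=K_{\mathbb{R}^2}[\omega]+\check{\check u}_{\rm app}^N[\omega,\gamma]$, $\omega\in C_{\omega_0,\eps}$, has a unique solution in $C_{\omega_0,\eps}$. The heart of this is the characteristic-curve estimate controlling $\operatorname{dist}(Z(s,x),\partial\Omega)$ from below, carried out via the conformal map $T$ and the quantity $|Y|-1$ as in \eqref{dist boundary 2}. The only change is that the perturbation term, previously $|u-v|\le CN^{-2}\eps^{-3}(\cdots)$ from Theorem~\ref{main theo} with $\kappa=2$, is now bounded using Theorem~\ref{main theo alt alt} with $\kappa=2$ (applied with $H_*$ and $\lambda$ fixed); this gives a bound of the same form $\le CN^{-2}\eps^{-3}((|Y|-1)^{-1}+(|Y|-1)^{-4})$, so the differential inequality for $y=(|Y|-1)^5$, and hence the conclusion $\operatorname{dist}(Z(s,x),\partial\Omega)>\eps$ for $N$ large, go through unchanged. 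Here one must keep track that the constant $C$ in Theorem~\ref{main theo alt alt} depends on $\gamma$ and $\lambda$ but not on $\omega\in C_{\omega_0,\eps}$ nor on $N$ or $\eps$, which is exactly what the statement provides. Once this is in hand, the construction of the Lagrangian solution via \eqref{lagrangian transport} and its uniqueness are identical.

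Finally, with the linear wellposedness result, the Picard-type iteration $\xi_{n+1}$ solving the linear transport with velocity $\mu_n=K_{\mathbb{R}^2}[\xi_n]+\check{\check u}_{\rm app}^N[\xi_n,\gamma]$ is set up exactly as before, the uniform $C^1$-bound \eqref{C1 bound} is obtained from \eqref{mu 1}--\eqref{mu 3} and the fluid-charge analogs of \eqref{mu 2}--\eqref{mu 4} combined with Gr\"onwall (using the quasi-Lipschitz bound \eqref{quasilipschitz}, which is geometric and unchanged), the Cauchy estimate \eqref{cauchy estimate} in $C^0$ is reproduced with $\mu_n$ replaced accordingly, and the limit $\check{\check\omega}^N$ is shown to be $C^1$ by the same characteristic-curve argument using \eqref{representation velocity}. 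I expect no genuinely new obstacle here: the one point demanding a little care is checking that every invocation of ``Theorem~\ref{main theo} with $\kappa=2$'' in the original proof can be legitimately replaced by ``Theorem~\ref{main theo alt alt} with $\kappa=2$'' with constants independent of $\omega\in C_{\omega_0,\eps}$ and of $N$ --- in particular that the $\operatorname{dist}(\operatorname{supp}\omega,\partial\Omega)$-dependence in Theorem~\ref{main theo alt alt} is uniform over $C_{\omega_0,\eps}$ (it is, since $\operatorname{supp}\omega\subset\{\eps<\operatorname{dist}(x,\partial\Omega)<\eps^{-1}\}$ for all such $\omega$) and that the additive $|\gamma|$ term and the prescribed $H_*$, $\lambda$ contribute only fixed constants. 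Given that, the uniform-in-$N$ $C^1$-bound on $\check{\check\omega}^N$ follows by repeating the estimates leading to \eqref{C1 bound} with $\omega^N$ in place of $\xi_n$ and $\check{\check u}^N$ in place of $\mu_n$, which completes the proof.
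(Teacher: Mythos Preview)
Your proposal is correct and follows exactly the approach the paper intends: the paper explicitly omits the proof of Theorem~\ref{wellposedness alt}, stating only that it ``requires slight adaptations from the proof of Theorem~\ref{wellposedness}.'' Your detailed outline of those adaptations---replacing $u_{\rm app}^N$ by $\check{\check u}_{\rm app}^N$, re-deriving the analogs of \eqref{mu 2}--\eqref{mu 4} via Proposition~\ref{inverse D alt alt}, and substituting Theorem~\ref{main theo alt alt} for Theorem~\ref{main theo} in the characteristic-curve estimate---is precisely what is needed and goes somewhat beyond what the paper itself records.
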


The following theorem establishes the convergence of system \eqref{dynamic alt} towards system \eqref{dynamic1} as $N\to\infty$. Its proof is similar to the justification of Theorem \ref{main convergence} and so we leave it to the reader.

\begin{theorem}\label{main convergence alt}
	Let $\omega_0\in C_c^1\left(\Omega\right)$, $\gamma\in\mathbb{R}$, a smooth function $\lambda$ be such that $\ip{\lambda}\neq 2\pi$ and consider any fixed time $t_1>0$. Then, for a well-$*$ distributed mesh on $\partial\Omega$, as $N\to\infty$, the unique classical solution $\check{\check\omega}^N\in C^1_c\left([0,t_1]\times \Omega\right)$ to \eqref{dynamic alt} converges uniformly towards the unique classical solution $\omega\in C^1_c\left([0,t_1]\times \Omega\right)$ to \eqref{dynamic1}. More precisely, it holds that
	\begin{equation*}
		\left\|\omega-\check{\check \omega}^N\right\|_{L^\infty([0,t_1]\times\Omega)}=\mathcal{O}\left(N^{-\kappa}\right).
	\end{equation*}
\end{theorem}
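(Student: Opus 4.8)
The plan is to mirror exactly the proof of Theorem~\ref{main convergence} given in Section~\ref{dynamic proofs}, replacing $u^N$, $\omega^N$, $Z^N$ by $\check{\check u}^N$, $\check{\check\omega}^N$, $\check{\check Z}^N$ and invoking Theorem~\ref{main theo alt alt} in place of Theorem~\ref{main theo}. First I would subtract \eqref{dynamic1} from \eqref{dynamic alt} and write the difference along characteristics: setting $X(t,x)$ the characteristic of $u$ (defined by \eqref{char X}) and $\check{\check Z}^N(t,x)$ the characteristic of $\check{\check u}^N$, both of which exist on $[0,t_1]\times\operatorname{supp}\omega_0$ and keep their arguments uniformly away from $\partial\Omega$ thanks to Lemma~\ref{classical supp estimate} and the support estimate established in the proof of Theorem~\ref{wellposedness alt}, one gets, for $(t,x)\in[0,t_1]\times\operatorname{supp}\omega_0$,
\begin{equation*}
	\left(\omega-\check{\check\omega}^N\right)\!\left(t,X(t,x)\right)
	= -\int_0^t \left(u-\check{\check u}^N\right)\!\cdot\nabla\check{\check\omega}^N\!\left(s,X(s,x)\right)ds,
\end{equation*}
together with the analogous identity along $\check{\check Z}^N$ with $\nabla\omega$ in the integrand.

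Next I would introduce the intermediate flow $\breve{\check{\check u}}^N=K_{\mathbb{R}^2}[\omega]+\check{\check u}_{\rm app}^N[\omega,\gamma]$, where now $u_P$ in \eqref{point vortex alt alt} is $K_{\mathbb{R}^2}[\omega]$, and split $u-\check{\check u}^N=\bigl(u-\breve{\check{\check u}}^N\bigr)+\bigl(\breve{\check{\check u}}^N-\check{\check u}^N\bigr)$. The first term is controlled on any compact $K\subset\Omega$ by Theorem~\ref{main theo alt alt} directly, giving an $\mathcal{O}(N^{-\kappa})$ bound uniform in time since $\operatorname{supp}\omega(t,\cdot)$ stays at a fixed positive distance from $\partial\Omega$ by Lemma~\ref{classical supp estimate}. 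The second term is estimated, exactly as in \eqref{dynamic3}, by linearity of $\check{\check u}_{\rm app}^N$ in its first argument and by \eqref{mu 1} together with \eqref{mu 2} (whose proof, based on \eqref{est l1}, carries over to $\check{\check u}_{\rm app}^N$ using Proposition~\ref{inverse D alt alt} in place of Proposition~\ref{inverse perfect}), yielding
\begin{equation*}
	\bigl\|\breve{\check{\check u}}^N-\check{\check u}^N\bigr\|_{L^\infty(K)}
	\leq C\bigl(1+|\operatorname{supp}\omega|+|\operatorname{supp}\check{\check\omega}^N|\bigr)\bigl\|\omega-\check{\check\omega}^N\bigr\|_{L^\infty(\Omega)}.
\end{equation*}
Choosing $K$ so that $\operatorname{supp}\omega\cup\operatorname{supp}\check{\check\omega}^N\subset K$ for all large $N$ (possible because of the uniform support bounds), and using the uniform $C^1$ bound on $\omega$ and $\check{\check\omega}^N$ from Theorems~\ref{wellposedness alt} and Lemma~\ref{classical supp estimate} to absorb $\nabla\check{\check\omega}^N$ and $\nabla\omega$, the characteristic identities above give
\begin{equation*}
	\bigl\|(\omega-\check{\check\omega}^N)(t)\bigr\|_{L^\infty(\Omega)}
	\leq \frac{C}{N^\kappa}+C\int_0^t\bigl\|(\omega-\check{\check\omega}^N)(s)\bigr\|_{L^\infty(\Omega)}ds.
\end{equation*}
Gr\"onwall's lemma then closes the estimate and gives $\|\omega-\check{\check\omega}^N\|_{L^\infty([0,t_1]\times\Omega)}=\mathcal{O}(N^{-\kappa})$.

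The only genuine point requiring care—and the main obstacle—is verifying that the a~priori estimates used for the vortex method transfer without loss to the fluid charge setting: concretely, that the bound \eqref{mu 2} on $\check{\check u}_{\rm app}^N$ holds with the same dependence on $\operatorname{dist}(K,\partial\Omega)$ and $\|\omega\|_{L^1\cap L^\infty}$, and that the support-away-from-boundary estimate in the proof of Theorem~\ref{wellposedness alt} holds uniformly in $N$ with the quantitative rate furnished by Theorem~\ref{main theo alt alt} with $\kappa=2$. Both follow from Proposition~\ref{inverse D alt alt}, which gives $\|\check{\check\gamma}^N\|_{\ell^1}\leq C\|v\|_{\ell^\infty}$ uniformly in $N$ just as \eqref{est l1} did, so the arguments of the three lemmas on velocity flows in Section~\ref{dynamic proofs 1} and of Proposition~\ref{linear wellposedness} go through verbatim after replacing the relevant operators; everything else is a routine repetition of Section~\ref{dynamic proofs}. \qed
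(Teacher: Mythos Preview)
Your proposal is correct and matches the paper's approach exactly: the paper does not write out a proof of Theorem~\ref{main convergence alt} but simply states that it ``is similar to the justification of Theorem~\ref{main convergence} and so we leave it to the reader.'' Your write-up carries out precisely that repetition, with the correct substitutions (Theorem~\ref{main theo alt alt} for Theorem~\ref{main theo}, Proposition~\ref{inverse D alt alt} for Proposition~\ref{inverse perfect}, Theorem~\ref{wellposedness alt} for Theorem~\ref{wellposedness}), and you have correctly identified the only nontrivial checkpoint, namely that the analogue of \eqref{mu 2} holds for $\check{\check u}_{\rm app}^N$ via the uniform $\ell^2$-bound on the inverse in Proposition~\ref{inverse D alt alt}.
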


\appendix

\section{Convergence rates of Riemann sums}\label{riemann appendix}

The following elementary lemma is a reminder about standard estimates on the rate of convergence of Riemann sums.

\begin{lemma}\label{riemann}
	Consider the uniformly distributed mesh $(\theta_{1}^N,\dots , \theta_{N}^N)\in \left[0,\left|\partial\Omega\right|\right)^N$, $(\tilde \theta_{1}^N, \dots , \tilde \theta_{N}^N)\in \left[0,\left|\partial\Omega\right|\right)^N$ defined by \eqref{mesh} and let $g$ be a smooth periodic function on $\left[0,\left|\partial\Omega\right|\right]$.
	
		Then, for any $\kappa \geq 2$ and $N\geq 2$
	\begin{equation*}
		\left|\int_0^{\left|\partial\Omega\right|}g(\theta){d}\theta - \frac{\left|\partial\Omega\right|}{N} \sum_{i=1}^{N} g(\tilde \theta_{i}^N) \right|\leq \frac{\pi^2 | \partial\Omega|^{\kappa +1}}{3(2\pi N)^{\kappa }}\|g^{(\kappa )} \|_{L^\infty}.
	\end{equation*}
\end{lemma}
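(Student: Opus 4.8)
The plan is to exploit the midpoint structure of the mesh so that the leading error term cancels, leaving a quantity governed by the $\kappa$-th derivative of $g$. First I would split the integral and sum into contributions from each subinterval, writing
\[
\int_0^{\left|\partial\Omega\right|}g(\theta)d\theta - \frac{\left|\partial\Omega\right|}{N}\sum_{i=1}^N g(\tilde\theta_i^N)
=\sum_{i=1}^N\left(\int_{\theta_i^N}^{\theta_{i+1}^N}g(\theta)d\theta - \frac{\left|\partial\Omega\right|}{N}g(\tilde\theta_i^N)\right),
\]
using periodicity to set $\theta_{N+1}^N=\left|\partial\Omega\right|$, and noting that $\tilde\theta_i^N$ is exactly the midpoint of $[\theta_i^N,\theta_{i+1}^N]$, which has length $h:=\left|\partial\Omega\right|/N$. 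Each summand is the error of the midpoint rule on one subinterval.

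The key step is the classical midpoint error estimate. One way is via Taylor expansion of $g$ around $\tilde\theta_i^N$ up to order $\kappa-1$ with integral remainder: the odd-order terms integrate to zero over a symmetric interval, so after integrating one is left only with a remainder controlled by $\|g^{(\kappa)}\|_{L^\infty}$ times a power of $h$. Carrying this out carefully — using that $\int_{-h/2}^{h/2}t^{2j}\,dt = \frac{h^{2j+1}}{(2j+1)2^{2j}}$ and bounding the integral remainder term — yields a per-subinterval bound of the form $C_\kappa h^{\kappa+1}\|g^{(\kappa)}\|_{L^\infty([\theta_i^N,\theta_{i+1}^N])}$. A slightly more economical route, if one wants the explicit constant $\pi^2/3$ appearing in the statement, is to use the Euler--Maclaurin / Fourier-series estimate: since $g$ is periodic, expand $g$ in a Fourier series and observe that the midpoint rule on a uniform mesh exactly integrates all Fourier modes $e^{2\pi i k\theta/\left|\partial\Omega\right|}$ except those with $k$ a nonzero multiple of $N$; the error is then $-\left|\partial\Omega\right|\sum_{m\neq 0}\hat g(mN)$, and bounding $|\hat g(mN)|\le \left|\partial\Omega\right|\|g^{(\kappa)}\|_{L^\infty}/(2\pi |mN|/\left|\partial\Omega\right|)^\kappa$ followed by $\sum_{m\geq 1}m^{-\kappa}\le \sum_{m\geq 1}m^{-2}=\pi^2/6$ gives exactly $\frac{\pi^2\left|\partial\Omega\right|^{\kappa+1}}{3(2\pi N)^\kappa}\|g^{(\kappa)}\|_{L^\infty}$ after accounting for the factor $2$ from $m$ and $-m$.

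Then I would simply sum over $i=1,\dots,N$: in the Taylor-remainder approach the $N$ subinterval bounds each carry a factor $h^{\kappa+1}=(\left|\partial\Omega\right|/N)^{\kappa+1}$, and summing $N$ of them produces the global rate $h^\kappa = (\left|\partial\Omega\right|/N)^\kappa$, after collecting the constant; in the Fourier approach the summation over intervals has already been performed implicitly. Finally I would assemble the constant to match $\frac{\pi^2\left|\partial\Omega\right|^{\kappa+1}}{3(2\pi N)^\kappa}$.

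I expect the main obstacle to be pinning down the \emph{explicit} constant $\pi^2/3$ rather than merely obtaining some $C_\kappa$; the Fourier-series route makes this transparent but requires being careful that $g$ periodic and smooth justifies termwise manipulation and the decay bound $|\hat g(mN)|\le \left|\partial\Omega\right|\|g^{(\kappa)}\|_{L^\infty}/(2\pi|mN|/\left|\partial\Omega\right|)^\kappa$ (which follows from integrating by parts $\kappa$ times in the definition of the Fourier coefficient). Everything else is routine, and the statement is uniform in $N\geq 2$ since the bound on $\sum_{m\geq 1}m^{-\kappa}$ only improves for larger $\kappa$ and does not depend on $N$.
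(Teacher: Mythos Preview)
Your Fourier-series route is exactly the paper's proof: expand $g$ in its Fourier series, use that $\sum_{j=0}^{N-1}(e^{i2\pi n/N})^j$ vanishes unless $N\mid n$, bound $|c_{nN}(g)|$ by $\kappa$ integrations by parts, and finish with $\sum_{n\neq 0}|n|^{-\kappa}\le\sum_{n\neq 0}|n|^{-2}=\pi^2/3$. One cosmetic slip: because the $\tilde\theta_i^N$ are midpoints, the surviving terms carry a factor $(-1)^m$ (from $e^{i\pi m}$), so the error is $|\partial\Omega|\sum_{m\neq 0}(-1)^m c_{mN}(g)$ rather than $-|\partial\Omega|\sum_{m\neq 0}\hat g(mN)$, but this is irrelevant once absolute values are taken.
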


\begin{proof}
As $g$ is of class $C^1$, its Fourier series converges uniformly to $g$:
\[
g (x)= \sum_{n=-\infty}^{+\infty} c_{n}(g) e^{i 2\pi nx/| \partial\Omega|}, \quad \text{where } c_{n}(g) = \frac{1}{| \partial\Omega|} \int_{0}^{| \partial\Omega|} g(t) e^{-i 2\pi n t/| \partial\Omega| }\, dt.
\]
Hence we compute for any $N$
\begin{align*}
  \frac{| \partial\Omega|}{N} \sum_{j=1}^{N}  g(\tilde \theta_{j}^N)
  =&  \frac{| \partial\Omega|}{N} \sum_{j=1}^{N}   \sum_{n=-\infty}^{+\infty} c_{n}(g) e^{i 2\pi n \tilde \theta_{j}^N/| \partial\Omega| }\\
  =& | \partial\Omega|c_{0}(g) +\frac{| \partial\Omega|}{N} \sum_{n\neq 0}c_{n}(g) e^{i\pi n  /N}  \sum_{j=0}^{N-1}  (e^{i2\pi n  /N})^{j}\\
    =&| \partial\Omega| c_{0}(g) +| \partial\Omega|\sum_{n\neq 0}c_{n N}(g)(-1)^n .
\end{align*}
By $\kappa $ integrations by parts in the definition of $c_{nN}(g)$, we get for any $\kappa \geq 2$
\begin{align*}
		\left|\int_0^{| \partial\Omega|}g(\theta){d}\theta - \frac{| \partial\Omega|}{N} \sum_{i=1}^{N} g(\tilde \theta_{i}^N) \right|
		& \leq 
		  \frac{| \partial\Omega|^{\kappa +1}}{ (2\pi N)^\kappa } \| g^{(\kappa )} \|_{L^\infty} \sum_{n\neq 0} \frac1{|n|^\kappa }
		  \\
		& \leq 
		  \frac{ | \partial\Omega|^{\kappa +1}}{(2\pi N)^\kappa } \| g^{(\kappa )} \|_{L^\infty} \frac{\pi^2}{3}
\end{align*}
	which ends the proof of the lemma.
\end{proof}

The preceding lemma can also be easily adapted to more general meshes, which is the content of the following result.

\begin{corollary}\label{riemann2}
	For any $N\geq 2$, consider a mesh $(\tilde s_{1}^N,\dots , \tilde s_{N}^N)\in \left[0,\left|\partial\Omega\right|\right)^N$ satisfying
	\begin{equation*}
		\max_{i=1,\ldots,N}\left|\tilde s_i^N-\tilde \theta_i^N\right|=\mathcal{O}\left(N^{-\kappa }\right),
	\end{equation*}
	for some $\kappa \geq 2$ and let $g$ be a smooth periodic function on $\left[0,\left|\partial\Omega\right|\right]$.
	
	Then, there exists a constant $C>0$ depending only on $\kappa $ and
	\begin{equation*}
		\sup_{N\geq 2} \max_{i=1,\ldots,N}N^\kappa  \left|\tilde s_i^N-\tilde \theta_i^N\right|
	\end{equation*}
	such that
	\begin{equation*}
		\left|\int_0^{\left|\partial\Omega\right|}g(s){d}s - \frac{\left|\partial\Omega\right|}{N} \sum_{i=1}^{N} g(\tilde s_{i}^N) \right|\leq \frac{C}{N^{\kappa }}\|g \|_{C^{\kappa }},
	\end{equation*}
	for any $N\geq 2$.
\end{corollary}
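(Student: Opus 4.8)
\textbf{Proof plan for Corollary \ref{riemann2}.}

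The plan is to reduce the estimate on the general mesh $\{\tilde s_i^N\}$ to the estimate already obtained in Lemma \ref{riemann} for the uniformly distributed mesh $\{\tilde\theta_i^N\}$, by controlling the discrepancy between the two Riemann sums through a first-order Taylor expansion of $g$. First I would write
\begin{equation*}
	\int_0^{\left|\partial\Omega\right|}g(s)\,{d}s - \frac{\left|\partial\Omega\right|}{N}\sum_{i=1}^N g(\tilde s_i^N)
	=
	\left(\int_0^{\left|\partial\Omega\right|}g(s)\,{d}s - \frac{\left|\partial\Omega\right|}{N}\sum_{i=1}^N g(\tilde\theta_i^N)\right)
	+ \frac{\left|\partial\Omega\right|}{N}\sum_{i=1}^N\left(g(\tilde\theta_i^N)-g(\tilde s_i^N)\right).
\end{equation*}
The first parenthesis is bounded by $\frac{\pi^2\left|\partial\Omega\right|^{\kappa+1}}{3(2\pi N)^\kappa}\left\|g^{(\kappa)}\right\|_{L^\infty}$ directly from Lemma \ref{riemann} (here one uses that $g$ is smooth and periodic, so $g^{(\kappa)}$ is well defined and bounded), which is already of the desired order $\mathcal{O}(N^{-\kappa})$.

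For the second sum, I would apply the mean value theorem to each term: $\left|g(\tilde\theta_i^N)-g(\tilde s_i^N)\right|\leq \left\|g'\right\|_{L^\infty}\left|\tilde\theta_i^N-\tilde s_i^N\right|$. By hypothesis, $\max_i\left|\tilde s_i^N-\tilde\theta_i^N\right|\leq M N^{-\kappa}$ where $M:=\sup_{N\geq 2}\max_i N^\kappa\left|\tilde s_i^N-\tilde\theta_i^N\right|<\infty$ is precisely the quantity appearing in the statement. Summing over $i=1,\dots,N$ and multiplying by $\left|\partial\Omega\right|/N$ gives a bound
\begin{equation*}
	\frac{\left|\partial\Omega\right|}{N}\sum_{i=1}^N\left|g(\tilde\theta_i^N)-g(\tilde s_i^N)\right|
	\leq \left|\partial\Omega\right| M N^{-\kappa}\left\|g'\right\|_{L^\infty},
\end{equation*}
which is again $\mathcal{O}(N^{-\kappa})$. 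Combining the two pieces and bounding $\left\|g^{(\kappa)}\right\|_{L^\infty}+\left\|g'\right\|_{L^\infty}\leq 2\left\|g\right\|_{C^\kappa}$ (valid since $\kappa\geq 2\geq 1$) yields the claimed inequality with $C$ depending only on $\kappa$, $\left|\partial\Omega\right|$ and $M$; since $\left|\partial\Omega\right|$ is a fixed parameter of the problem, this matches the statement.

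I do not anticipate any genuine obstacle here: the only mild point is making sure the constant $C$ is advertised as depending exactly on the two quantities named in the corollary (and on the fixed domain), rather than on $g$ or $N$, which the above decomposition makes transparent. One could alternatively bypass Lemma \ref{riemann} entirely and redo the Fourier-series argument directly on the perturbed mesh, but the two-step reduction above is shorter and reuses the work already done.
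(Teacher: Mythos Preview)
Your proposal is correct and follows essentially the same approach as the paper's proof: the same two-term decomposition via the triangle inequality, applying Lemma~\ref{riemann} to the uniform-mesh piece and the mean value theorem to the perturbation piece, then absorbing both $\|g^{(\kappa)}\|_{L^\infty}$ and $\|g'\|_{L^\infty}$ into $\|g\|_{C^\kappa}$.
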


\begin{proof}
	By Lemma \ref{riemann}, it is readily seen that
		\begin{align*}
			\left|\int_0^{\left|\partial\Omega\right|}g(s){d}s - \frac{\left|\partial\Omega\right|}{N} \sum_{i=1}^{N} g(\tilde s_{i}^N) \right|
			\hspace{-20mm} &
			\\
			& \leq
			\left|\int_0^{\left|\partial\Omega\right|}g(\theta){d}\theta - \frac{\left|\partial\Omega\right|}{N} \sum_{i=1}^{N} g(\tilde \theta_{i}^N) \right|
			+
			\frac{\left|\partial\Omega\right|}{N} \sum_{i=1}^{N}\left| g(\tilde \theta_{i}^N) - g(\tilde s_{i}^N)\right|
			\\
			& \leq
			\frac{C}{N^{\kappa }}\|g \|_{C^{\kappa }}
			+
			\left|\partial\Omega\right|
			\|g \|_{C^{1}}
			\left(\max_{i=1,\ldots,N}\left|\tilde s_i^N-\tilde \theta_i^N\right|\right)
			\\
			& \leq
			\frac{C}{N^{\kappa }}\|g \|_{C^{\kappa }},
		\end{align*}
which concludes the proof.
\end{proof}

\bigskip

\noindent {\bf Acknowledgements.} The authors were partially supported by the project \emph{Instabilities in Hydrodynamics} funded by the Paris city hall (program \emph{Emergences}) and the \emph{Fondation Sciences Math\'ematiques de Paris}. E.D.\ and C.L.\ are partially supported by the Agence Nationale de la Recherche, Project SINGFLOWS, grant ANR-18-CE40-0027-01. C.L.\ is partially supported by the Agence Nationale de la Recherche, Project IFSMACS, grant ANR-15-CE40-0010.

\bibliographystyle{abbrv}
\bibliography{VM}

\end{document}